\documentclass[11pt]{article}

\usepackage[a4paper,textwidth=15cm,textheight=23cm,centering]{geometry}
\usepackage{amsmath,amssymb,amsfonts,amsthm}
\usepackage{url}
\newcommand{\Gr}{\mbox{gr}\,}
\newcommand{\PX}{2^X \setminus \{\emptyset \}}
\newcommand{\varchi}{\raisebox{2.5pt}{$\chi$}}

\newcommand{\Q}{\mathbb{Q}}

\newcommand{\R}{\mathbb{R}}
\newcommand{\TAK}{T^{\lower-0.05cm \hbox{$\scriptstyle A$}}_{\lower 0.05cm \hbox{$\scriptstyle K$}}}
\newcommand{\TAKadj}{\widetilde T^{\lower-0.05cm \hbox{$\scriptstyle A$}}_{\lower 0.05cm \hbox{$\scriptstyle K$}}}
\newcommand{\TK}{T_{\lower 0.05cm \hbox{$\scriptstyle K$}}}
\newcommand{\keywords}[1]{%
  \par\smallskip\noindent\begingroup
  \def\and{\unskip; }\textbf{Keywords. }#1\par\endgroup}
\newcommand{\subclass}[1]{%
  \par\smallskip\noindent\begingroup
  \def\and{\unskip; }\textbf{Mathematics Subject Classification (2020). }#1\par\endgroup}

\theoremstyle{plain}
\newtheorem{theorem}{Theorem}[section]
\newtheorem{lemma}[theorem]{Lemma}
\newtheorem{proposition}[theorem]{Proposition}
\newtheorem{corollary}[theorem]{Corollary}
\theoremstyle{definition}
\newtheorem{definition}[theorem]{Definition}
\theoremstyle{remark}
\newtheorem{remark}[theorem]{Remark}
\numberwithin{equation}{section}

\begin{document}

\title{Existence and qualitative theory for nonlinear accretive evolutions with Carath\'eodory forcing}
\author{Dieter Bothe}
\date{\small Mathematical Modeling and Analysis, Department of Mathematics,\\
Technische Universit\"at Darmstadt, Peter-Gr\"unberg-Str. 32,
64287 Darmstadt, Germany\\
\texttt{bothe@mma.tu-darmstadt.de}}
\maketitle

\begin{abstract}
Let $A$ be $m$-accretive in a real Banach space $X$.
Given closed sets $K(t)\subset X$, we develop an existence
theory for mild solutions of the constrained problem
\[
 u'(t)+Au(t)\ni f(t,u(t)),
 \qquad u(t)\in K_A(t):=K(t)\cap\overline{D(A)},
\]
where the forcing is Carath\'eodory on the effective tube $K_A$.
We also assume joint measurability of $f$ on the moving graph or a
Carath\'eodory extension to a fixed cylinder.
Under a linear-growth hypothesis and the corresponding regular- and exceptional-time subtangential conditions,
we reduce the problem to a
bounded effective tube with a common integrable bound.
A first central result constructs a closed separable resolvent-invariant
subspace preserving distances to the moving sets and the subtangential
conditions, permitting use of the Scorza--Dragoni property for the reduced forcing. A second
central ingredient is a two-level approximation. Mild solutions
$u_\varepsilon$ driven by $w_\varepsilon$ are accompanied by auxiliary
paths $v_\varepsilon$ and current-time selectors $x_\varepsilon$ satisfying
$x_\varepsilon(t)\in K_A(t)$ and
$w_\varepsilon(t)=f(t,x_\varepsilon(t))$ for almost every $t$ in a closed
regularity set of almost full measure. For every $t$ there is also
$\sigma_\varepsilon(t)\in[(t-\varepsilon)^+,t]$ with
$v_\varepsilon(\sigma_\varepsilon(t))\in
K_A(\sigma_\varepsilon(t))$. The current-time selectors identify the
limiting forcing, while the lagged nodes guarantee viability.
This yields
well-posedness under time-dependent constraints for such forcings $f$ when
they are locally Lipschitz in the state variable.
Further viability results follow under various compactness conditions.
Application of the abstract viability theory yields comparison principles, nonautonomous Lyapunov
pairs, periodic solutions, and time-dependent bounds for abstract
reaction--diffusion systems. 

\keywords{$m$-accretive operators \and viability \and time-dependent constraints
\and periodic solutions \and Lyapunov pairs \and comparison principles \and reaction--diffusion systems}
\subclass{Primary 47J35 \and Secondary 34G25, 35B51, 35K57, 47H20}
\end{abstract}

\section{Introduction}\label{s:intro}
Let $X$ be a real Banach space and $A:D(A)\subset X\to2^X$ be
$m$-accretive. Let $J=[0,a]\subset \R$, and let $K:J\to2^X$ have closed values. Set $K_A(t):=K(t)\cap\overline{D(A)}$.  
We call $K_A$ the \emph{effective tube} associated with $K$ and
$\Gr(K_A)$ denotes its graph.  We consider Carath\'eodory forcings $f:\Gr(K_A)\to X$ and assume in
addition either joint measurability on the moving effective graph or the
existence of a Carath\'eodory extension to a fixed cylinder.  Given
$(t_0,x_0)\in\Gr(K_A)$ with $t_0<a$, we study mild solutions
$u:[t_0,a]\to\overline{D(A)}$ of
\begin{equation}\label{sivp4}
 u'(t)+Au(t)\ni f(t,u(t)),\qquad u(t_0)=x_0.
\end{equation}
Since $f$ is defined only on $\Gr(K_A)$, such a solution is understood to
satisfy the time-dependent constraints $u(t)\in K_A(t)$ for every $t\in[t_0,a]$.

Existence of viable solutions under time-dependent constraints is the
organizing principle of the present paper.  The qualitative results follow by
choosing appropriate tubes that encode order, dissipation, return conditions, or
pointwise bounds.  Solution-dependent upper and lower tubes then yield comparison, moving
epigraphs yield Lyapunov inequalities, viable return tubes yield periodic
solutions under well-posedness and fixed-point geometry, and moving
rectangles yield bounds for reaction--diffusion systems.

Before the existence construction, the linear-growth and subtangential hypotheses
are used to reduce the problem, for each prescribed bounded set of initial values, to
a bounded effective tube on which a single finite-valued $L^1$ function controls both
the forcing and the exceptional-time estimate.  
The existence theory is then developed
entirely for the bounded effective tube and transferred back to the original
tube at the end of Section~\ref{s:exist}.

The underlying viability theorem has two principal features.  First,
it constructs from the bounded data a closed separable subspace $Y\subset X$ such
that $J_\lambda Y\subset Y$ for every $\lambda>0$, the forcing is $Y$-valued
on the reduced graph outside one null set, and the regular and exceptional
subtangential conditions pass to the part $A\cap(Y\times Y)$.  Since the
construction also preserves the distances to the constraint sets $K_A(t)$ at every time,
the argument can be carried out in the separable subspace and provides viable solutions
in the original space.  
Second, the approximate solutions contain lagged constrained nodes, which
recover membership in the moving tube, together with current-time selectors
\[
 x_\varepsilon(t)\in K_A(t),\qquad
 w_\varepsilon(t)=f(t,x_\varepsilon(t))
 \quad\hbox{for a.e. }t\in E_\varepsilon.
\]
The current-time selectors permit forcing values to be compared at the same time and thereby
reconcile moving constraints with strong measurability in time and local (one-sided)
Lipschitz continuity in the state variable.  The corresponding result for static
constraints was obtained in \cite{Bo-JEE03}, while moving constraints with
continuous forcing were treated in \cite{Bo9}.
Without a local Lipschitz assumption, compact-semigroup, forced-trajectory compactness,
and measure-of-noncompactness hypotheses provide alternative existence mechanisms for
mild solutions under time-dependent constraints.

\medskip
\noindent
\textit{Nonlinear semigroups and flow invariance.}
The analytic background is the generation theory for accretive operators;
see the Crandall--Liggett theorem \cite{CrLi} and the nonlinear-semigroup
treatments \cite{Barbu,Barbu2010,BCP,ItoKa,Mi}.  In ordered spaces,
$T$-accretivity expresses contraction of the positive part, while complete
accretivity gives contraction for a broad class of convex integral
functionals \cite{BenCr}.  Quantitative results for implicit Euler
approximations are given in \cite{BeSh24}.

Flow-invariance and viability criteria originate with Nagumo's tangency
theorem \cite{Nagumo}; in the terminology introduced in Section~\ref{s:subt},
this is expressed by a subtangential condition at the boundary of the constraint set.
The classical tangency principle subsequently developed into an extensive theory of
invariance and viability for ordinary differential equations and differential
inclusions, in finite- and infinite-dimensional state spaces; see, for example,
Amann \cite{AmannODE}, Aubin and Cellina \cite{AubinCellina}, Pavel
\cite{PavelBook}, and Deimling \cite{MDE}.
Semigroup-adapted conditions in Banach spaces were
developed by Pavel \cite{Pavel77}, Pierre \cite{Pierre78}, and Vrabie
\cite{Vra1}.  Finite-dimensional differential inclusions on graphs of
time-dependent constraints, including the distinction between regular and
exceptional times and measurability obstructions for graph-defined right-hand
sides, were studied in \cite{Bo92}.  For $m$-accretive evolutions with
continuous forcing, static and moving constraints were treated in \cite{Bo9},
with extensions to range conditions in \cite{Bo-JEE05}.  Carath\'eodory
forcing was studied for nonlinear $m$-accretive evolutions in
\cite{CV,Bo-JEE03}.  Related flow-invariance results for nonlinear partial
differential delay equations with nonlinear multivalued accretive operators
and history-dependent perturbations defined on restricted subsets of the
history space were obtained by Ruess \cite{Ruess09} and, in the
nonautonomous case, by Ghavidel and Ruess \cite{GhavidelRuess12}.

Complementary criteria and solution concepts appear in
\cite{CaDaFr18,CaDaFr20,CGK22,CGMS22,Tol23,Tol24,JiKe25}. 

\par\medskip
\noindent
\textit{Comparison, quasimonotonicity, and monotone dynamics.}
The classical finite-dimensional comparison theory is based on the
quasimonotonicity conditions of M.~M\"uller and Kamke
\cite{Mueller,Kamke}.  Extensions to ordered topological vector spaces and
Banach spaces are found in \cite{Volkmann72,DeLak79,Herzog01}; monotone
dynamical systems and their cooperative or competitive variants are
surveyed in \cite{Smith95,Hirsch}.  Infinite-dimensional semilinear
comparison and invariant-set methods, including reaction--diffusion and
delay equations, were developed in \cite{MartinSmith90,MartinSmith91}.

Here the diffusion may be nonlinear and multivalued, so order preservation
is imposed through $m$-$T$-accretivity, and the forcing has
Carath\'eodory time--state regularity and is defined only on a moving tube.  Quasimonotonicity makes the
solution-dependent upper tube subtangential and hence yields ordered
companion solutions even without uniqueness.  Under well-posedness this gives order
preservation.  A direct positive-part argument gives comparison for two
given locally Lipschitz solutions.

\medskip
\noindent
\textit{Lyapunov couples and dissipation.}
Lyapunov methods for nonlinear contraction semigroups go back to Pazy
\cite{Pazy81}.  Lyapunov couples and their iteration into Lyapunov sequences
are developed systematically in \cite[Chapter~19]{BCP}; see also
\cite{Barbu2010}.  Characterizations of Lyapunov
pairs by Hamilton--Jacobi, contingent-derivative, and nonsmooth criteria
were obtained in \cite{KoSo,CaMot,AHT12,AHN18}.  See also \cite{STD26} for
differential inclusions.  The nonautonomous result below
applies the viability theorem to a moving
epigraph and separates regular-time subtangentiality from exceptional-time
control of the epigraph motion.

\medskip
\noindent
\textit{Abstract reaction--diffusion systems.}
Invariant order intervals provide positivity, bounds, continuation, and
periodic solutions.  Semilinear cooperative and competitive systems are
treated in \cite{MartinSmith90,MartinSmith91,Smith95}.  For nonlinear
componentwise diffusion, flow invariance and viability methods were used in
\cite{Bo9,Bo10}; further background and measurable-reaction results are
contained in \cite{BoWi12}, while the periodic theory for static constraints
without uniqueness was developed in \cite{Bo12}.  Section~\ref{s:react} considers the moving
rectangles
$K^y(t)=\{u\in L^p(\Omega;\mathbb R^m):0\leq u\leq y(t)\}$, where
quasipositivity controls the lower faces, and an upper-envelope inequality
controls the upper faces.
Its periodic application uses a moving return rectangle, i.e.\ a tube for which $K^y(T)\subset K^y(0)$.

\section{Preliminaries}\label{s:prel}
This section fixes notation and collects auxiliary results used in the
subsequent analysis.  For the
Banach space material we refer to \cite{MDE}; for Banach lattices to
\cite{peressini,Schaefer}; and for accretive operators, nonlinear semigroups,
and mild solutions to \cite{Barbu,Barbu2010,BCP,ItoKa,Mi}.
Let $\mathcal L(J)$ denote the Lebesgue $\sigma$-algebra on $J$.  Unless
stated otherwise, time maps are $\mathcal L(J)$-measurable, state spaces carry
their Borel $\sigma$-algebras, and product measurability refers to
$\mathcal L(J)\otimes\mathcal B(X)$, with the trace understood on subsets.
Every $L^1_+$ bound has a finite-valued measurable representative;
changes on a null set do not affect the arguments.

\subsection{Banach-space notation and accretive operators}
Throughout, $X$ is a real Banach space with norm $\|\cdot\|$, and $X^*$ is
its dual.  For $x\in X$ and $K\subset X$, put
$d(x,K):=\inf_{y\in K}\|x-y\|$ with $d(x,\emptyset)=\infty$.  
The open and closed balls with centre $x$ and radius $r>0$ are denoted by
$B_r(x)$ and $\overline B_r(x)$, respectively.

Recall that $X$ is called \emph{uniformly convex} if, for every
$\varepsilon>0$, there is $\delta>0$ such that
$\|x\|,\|y\|\leq1$ and $\|x+y\|>2-\delta$ implies $\|x-y\|<\varepsilon$.
Every uniformly convex Banach space is reflexive.
The normalized duality mapping is
\[
 \mathcal F:X\rightarrow 2^{X^*}\setminus\{\emptyset\},
 \qquad
 \mathcal F(x):=\{x^*\in X^*:x^*(x)=\|x\|^2,
                         \ \|x^*\|=\|x\|\}.
\]
The upper and lower semi-inner products, defined by
\[
 (x,y)_+:=\max_{y^*\in\mathcal F(y)}y^*(x),
 \qquad
 (x,y)_-:=\min_{y^*\in\mathcal F(y)}y^*(x),
\]
satisfy $|(x,y)_\pm|\leq\|x\|\,\|y\|$.  If $X^*$ is strictly convex,
then $\mathcal F$ is single-valued; if $X^*$ is uniformly convex, then this
single-valued map is uniformly continuous on bounded subsets of $X$.  In the
single-valued case the two semi-inner products coincide.  The resulting
semi-inner product is linear in its first argument, but generally not in its
second argument.

Let $A:D(A)\subset X\to2^X\setminus\{\emptyset\}$ be a possibly
multivalued operator.  Its domain, range, and graph are
$D(A):=\{x\in X:Ax\neq\emptyset\}$,
$R(A):=\bigcup_{x\in D(A)}Ax$, and
$\Gr(A):=\{(x,y)\in X\times X:x\in D(A),\ y\in Ax\}$.
The operator $A$ is \emph{accretive} if
\begin{equation}\label{accretive-semi-inner}
 (y-\bar y,x-\bar x)_+\geq0
 \quad\text{for all }x,\bar x\in D(A),\ y\in Ax,\ \bar y\in A\bar x.
\end{equation}
For $x,y\in X$ define the bracket
\begin{equation}\label{ordinary-bracket}
 [x,y]:=\lim_{h\to0+}\frac{\|x+hy\|-\|x\|}{h}.
\end{equation}
Then accretivity is
equivalent to $[x-\bar x,y-\bar y]\geq0$ for all
$x,\bar x\in D(A)$, $y\in Ax$, and $\bar y\in A\bar x$.  An accretive
operator is called \emph{$m$-accretive} if $R(I+\lambda A)=X$ for every
$\lambda>0$.  Its resolvent is denoted by $J_\lambda:=(I+\lambda A)^{-1}$.

For a bounded set $B\subset X$, the Hausdorff measure of noncompactness is
\[
 \beta(B):=\inf\{r>0:B\text{ admits a finite cover by balls of radius }r\},
\]
and the Kuratowski measure of noncompactness is
\[
 \alpha(B):=\inf\{d>0:\, B\text{ admits a finite cover by sets of diameter at most }d\}.
\]
Both vanish exactly on relatively compact sets.  They are monotone under
inclusion, homogeneous, subadditive under standard set addition, and invariant
under closure and convex hull.  For either measure $\gamma\in\{\alpha,\beta\}$,
$\gamma(B_1\cup B_2)=\max\{\gamma(B_1),\gamma(B_2)\}$.  In particular, adding or deleting finitely many points does not change the
measure of noncompactness.  Moreover, $\beta(B)\leq\alpha(B)\leq2\beta(B)$ and
$\beta(B+r\overline B_1(0))\leq\beta(B)+r$.  Accordingly, any statement formulated with $\beta$ may be reformulated
with $\alpha$, with the usual possible factor $2$ in numerical estimates.

\subsection{Semigroups and mild solutions}
Let $J=[0,a]$, let $A$ be $m$-accretive in $X$, let $w\in L^1(J;X)$, and
consider
\begin{equation}\label{QA}
 u'(t)+Au(t)\ni w(t)\quad\text{on }J,
 \qquad u(0)=u_0\in\overline{D(A)}.
\end{equation}
Given $\varepsilon>0$, an $\varepsilon$-discretization of \eqref{QA}
consists of a partition
\[
 0=t_0<t_1<\cdots<t_N=a,
 \qquad \max_{1\leq k\leq N}(t_k-t_{k-1})<\varepsilon,
\]
and vectors $w_1,\ldots,w_N\in X$ such that
$\sum_{k=1}^N\int_{t_{k-1}}^{t_k}\|w(t)-w_k\|\,dt<\varepsilon$.
For such a discretization, let $v_0=u_0$ and define $v_1,\ldots,v_N$ by the backward Euler scheme \[ \frac{v_k-v_{k-1}}{t_k-t_{k-1}}+Av_k\ni w_k, \qquad k=1,\ldots,N. \] Since $A$ is $m$-accretive, these vectors are uniquely determined. The step function associated with the discretization is defined by
$v_\varepsilon(0)=u_0$ and
$v_\varepsilon(t)=v_k$ for $t\in(t_{k-1},t_k]$.

A function $u\in C(J;X)$ is a \emph{mild solution} of \eqref{QA} if, for
every $\varepsilon>0$, an $\varepsilon$-discretization can be chosen such that
$\sup_{t\in J}\|u(t)-v_\varepsilon(t)\|<\varepsilon$.  We write $u(\cdot;s,x,w)$ for the solution starting from $x$ at time $s$.
The restriction of $w$ to the relevant interval is understood.  Uniqueness
implies $u(t;s,x,w)=u(t;r,u(r;s,x,w),w)$ for
$0\leq s\leq r\leq t\leq a$.

Every mild solution of \eqref{QA} is also an \emph{integral solution}; that
is,
\begin{equation}\label{integral-solution-inequality}
 \|u(t)-x\|\leq\|u(s)-x\|
 +\int_s^t[u(\tau)-x,w(\tau)-y]\,d\tau
\end{equation}
for all $0\leq s\leq t\leq a$ and $(x,y)\in\Gr(A)$.

\begin{proposition}\label{mild-difference}
Let $A$ be $m$-accretive in the real Banach space $X$.  For every
$u_0\in\overline{D(A)}$ and $w\in L^1(J;X)$, problem \eqref{QA} has a unique mild solution $u$, and $u\in C(J;\overline{D(A)})$.  If $u$ and $v$
are mild solutions with right-hand sides $f,g\in L^1(J;X)$, respectively,
then
\begin{align*}
 \|u(t)-v(t)\|
 &\leq\|u(s)-v(s)\|+\int_s^t\|f(\tau)-g(\tau)\|\,d\tau,\\
 \|u(t)-v(t)\|
 &\leq\|u(s)-v(s)\|+\int_s^t
 [u(\tau)-v(\tau),f(\tau)-g(\tau)]\,d\tau,\\
 \|u(t)-v(t)\|^2
 &\leq\|u(s)-v(s)\|^2
 +2\int_s^t(f(\tau)-g(\tau),u(\tau)-v(\tau))_+\,d\tau
\end{align*}
for $0\leq s\leq t\leq a$.
In particular, the solution map $w\mapsto u(\cdot;s,x,w)$ is continuous
from $L^1([s,a];X)$ to $C([s,a];X)$.
\end{proposition}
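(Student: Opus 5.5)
This proposition collects standard facts from the Crandall--Liggett--B\'enilan theory, and I would prove it by working at the level of the backward Euler scheme and passing to the limit. The central estimate is the discrete comparison of two $\varepsilon$-discretizations. Both schemes are first brought to a common partition by refining: the step values of the data change only by an $L^1$-small amount, controlled by the given $\varepsilon$. For steps
\[
 \frac{v_k-v_{k-1}}{h_k}+Av_k\ni f_k,\qquad \frac{\hat v_k-\hat v_{k-1}}{h_k}+A\hat v_k\ni g_k,
\]
accretivity in the bracket form gives
\[
 \|v_k-\hat v_k\|\leq\|v_{k-1}-\hat v_{k-1}\|+h_k[v_k-\hat v_k,f_k-g_k].
\]
Summing and using $[x,y]\leq\|y\|$ yields the discrete version of the first inequality.

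Existence then comes from this estimate together with the Crandall--Liggett convergence theorem. For step data $w$, the latter is precisely the statement that the Euler step functions converge uniformly as the mesh tends to zero. For general $w\in L^1(J;X)$, I would approximate $w$ in $L^1$ by step functions. The discrete comparison inequality shows that the approximating Euler functions form a uniformly Cauchy family, so any two $\varepsilon$-discretizations have step functions within $O(\varepsilon)$ of each other. The limit $u$ is continuous, takes values in $\overline{D(A)}$ because every $v_k\in D(A)$, and is a mild solution. Uniqueness follows because any mild solution is the uniform limit of discretizations, and these are mutually close. Passing to the limit in the discrete inequality gives the first estimate, and continuity of $w\mapsto u(\cdot;s,x,w)$ from $L^1$ to $C$ is then immediate.

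The second and third estimates require a finer limit passage, and I expect this to be the main obstacle. The bracket $[x,y]$ is only upper semicontinuous in $(x,y)$, and $(y,x)_+$ is likewise upper semicontinuous. For the third estimate I would start from
\[
 \|v_k-\hat v_k\|^2\leq \|v_{k-1}-\hat v_{k-1}\|^2+2h_k(f_k-g_k,v_k-\hat v_k)_+ .
\]
This follows by taking $x^*\in\mathcal F(v_k-\hat v_k)$ that attains accretivity, together with the bound $x^*(v_{k-1}-\hat v_{k-1})\leq\frac12(\|v_k-\hat v_k\|^2+\|v_{k-1}-\hat v_{k-1}\|^2)$. I would first take $f,g$ to be step functions, so that on each interval the second arguments $f_k-g_k$ are fixed while the step functions converge uniformly to $u-v$. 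Upper semicontinuity together with Fatou's lemma (reverse form, using the bound $|(x,y)_\pm|\leq\|x\|\|y\|$ for domination) lets me pass to the limsup. For general $f,g$ I would approximate in $L^1$, extracting subsequences that converge a.e. The uniform convergence of the solutions from the first step, and again upper semicontinuity with reverse Fatou, then handle the integrands.

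An alternative route for the second inequality is to deduce it from the integral inequality for the difference, in the manner of B\'enilan's argument. If the direct passage fails, I would instead cite the standard references \cite{BCP,Barbu2010}, where these estimates are Theorems on integral solutions.
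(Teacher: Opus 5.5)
The paper does not prove this proposition; it quotes it as standard background from the monographs cited at the start of Section~\ref{s:prel}, so your sketch has to stand on its own. Where you choose the discretizations yourself, it does. Take a common partition containing the jumps of both step data, with equal substeps on each constancy interval (Crandall--Liggett concerns equal steps). Then your per-step inequalities are correct. The limit passages you single out as the main obstacle are routine: joint upper semicontinuity of $[\cdot,\cdot]$ and $(\cdot,\cdot)_+$, domination by a multiple of $\|f-g\|$, and reverse Fatou along a.e.\ convergent subsequences. This gives a solution in $C(J;\overline{D(A)})$ satisfying all three estimates. The genuine gap is uniqueness. With it goes the validity of the estimates for \emph{arbitrary} mild solutions $u,v$, which is what the proposition asserts.

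A mild solution is witnessed by $\varepsilon$-discretizations whose partitions and data vectors you do not control. Your comparison inequality only relates two schemes on the same partition. Refining a partition is not harmless, because it changes the Euler iterates themselves, not just the data step function. For $A=\lambda I$ on $\mathbb R$ and $w=0$, one step of size $h$ gives $(1+\lambda h)^{-1}x$, while two steps of size $h/2$ give $(1+\lambda h/2)^{-2}x$.

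Convergence of Euler schemes for a \emph{fixed} step datum does not help either. The witnessing data and partitions change with $\varepsilon$, and the Euler error is not uniform in the data. In particular, your claim that any two $\varepsilon$-discretizations are $O(\varepsilon)$-close is false already for $w=0$. For $x\in D(A)$ the standard Crandall--Liggett bound is of order $|Ax|\sqrt h$, and for $x\in\overline{D(A)}\setminus D(A)$ convergence can be arbitrarily slow.

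The missing ingredient is one of the following.
\begin{itemize}
\item A double-index estimate of Kobayashi/Crandall--Evans type, comparing Euler schemes on unrelated partitions with different data, uniformly in the $L^1$ distance of the data.
\item The integral-solution route. Pass to the limit in $\|v_k-x\|\leq\|v_{k-1}-x\|+h_k[v_k-x,w_k-y]$, $y\in Ax$, to show that every mild solution is an integral solution. Then show by B\'enilan's doubling-of-variables argument that an integral solution coincides with the constructed one.
\end{itemize}
This is precisely the nontrivial content of the cited theorems. Without it, uniqueness and the three inequalities for general mild solutions remain unproved.
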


A family $S(t):D\to D$, $t\geq0$, on a closed set $D\subset X$ is a
\emph{strongly continuous semigroup of contractions} if
$S(0)=I$, $S(t+s)=S(t)S(s)$, $\lim_{t\to0+}S(t)x=x$, and
$\|S(t)x-S(t)y\|\leq\|x-y\|$ for all $t\geq0$ and $x,y\in D$.
It is \emph{compact} if $S(t)$ maps bounded subsets of $D$ into relatively
compact subsets of $X$ for every $t>0$.  It is \emph{equicontinuous} if, for
every bounded $B\subset D$, the family
$\{S(\cdot)x:x\in B\}$ is equicontinuous at every positive time.

The homogeneous problem associated with an $m$-accretive operator defines
such a semigroup on $D=\overline{D(A)}$ by $S(t)x:=u(t;0,x,0)$.  We say that $-A$ generates $S$.  A theorem of Br\'ezis states, in this
setting, that compactness of the semigroup is equivalent to compactness of
the resolvents together with the above equicontinuity property; see
\cite{Barbu}.

\subsection{Banach lattices and $T$-accretivity}
An ordered vector space is a real vector space $X$ endowed with a partial
order compatible with addition and multiplication by nonnegative scalars.
Its positive cone is $X_+:=\{x\in X:x\geq0\}$, so that
$x\leq y$ if and only if $y-x\in X_+$.  For $a\leq b$, the order interval
is $[a,b]:=\{x\in X:a\leq x\leq b\}$.  An ordered vector space is a \emph{vector lattice} if every pair $x,y$ has a
least upper bound $x\vee y$ and a greatest lower bound $x\wedge y$.  Put
$x^+:=x\vee0$, $x^-:=(-x)\vee0=(-x)^+$, and $|x|:=x^++x^-$.  Then $x=x^+-x^-$.  A vector lattice which is a Banach space is a
\emph{Banach lattice} if
\begin{equation}\label{lattice-cond}
 |x|\leq|y|\quad\Rightarrow\quad\|x\|\leq\|y\|.
\end{equation}
In particular, $\||x|\|=\|x\|$, and the lattice operations are continuous.
Throughout the paper $|x|$ denotes the lattice modulus, whereas $\|x\|$
denotes the norm.

Standard examples are $C(\Omega;\mathbb R^n)$ and
$L^p(\Omega;\mathbb R^n)$, $1\leq p\leq\infty$, with componentwise order
and a lattice norm.  The order in $L^p$ is understood almost everywhere.

\begin{proposition}\label{lattice-facts}
Let $x,y$ belong to a Banach lattice $X$.  Then:
\begin{itemize}
 \item[(i)] $|x+y|\leq|x|+|y|$;
 \item[(ii)] $(x+y)^+\leq x^++y^+$;
 \item[(iii)] $\|x^+\|\leq\|x\|$ and $\|x^-\|\leq\|x\|$;
 \item[(iv)] if $y\geq0$, then $\|(x-y)^+\|\leq\|x^+\|$;
 \item[(v)] $d(x,y+X_+)=\|(x-y)^-\|=\|(y-x)^+\|$;
 \item[(vi)] $(x+y)^+=(x\vee(-y))+y$;
 \item[(vii)] $|x^+-y^+|\leq|x-y|$ and, consequently,
 $\|x^+-y^+\|\leq\|x-y\|$.
\end{itemize}
\end{proposition}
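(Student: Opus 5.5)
These are standard Banach lattice facts, and I will derive them from the vector lattice identities together with the lattice norm condition \eqref{lattice-cond}. The identities I will use are translation invariance $(x\vee y)+z=(x+z)\vee(y+z)$, positive homogeneity, $x=x^+-x^-$ with $x^+\wedge x^-=0$, and $|x|=x\vee(-x)$.

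\emph{Items (i) and (ii).} For (i), I note that $\pm x\le|x|$ and $\pm y\le|y|$. Hence $\pm(x+y)\le|x|+|y|$, so $|x+y|=(x+y)\vee(-(x+y))\le|x|+|y|$. For (ii), from $x\le x^+$ and $y\le y^+$ I get $x+y\le x^++y^+$. Since also $0\le x^++y^+$, taking the supremum with $0$ gives $(x+y)^+\le x^++y^+$.

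\emph{Items (iii), (iv) and (vi).} For (iii), I use $0\le x^+\le|x|$, so $|x^+|=x^+\le|x|$, and \eqref{lattice-cond} gives $\|x^+\|\le\|x\|$; the same argument handles $x^-$. For (iv), assume $y\ge0$. Then $x-y\le x\le x^+$, so $0\le(x-y)^+\le x^+$, and \eqref{lattice-cond} yields the norm bound. For (vi), translation invariance gives $(x\vee(-y))+y=(x+y)\vee0=(x+y)^+$.

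\emph{Item (v).} Take $z=y+p$ with $p\ge0$. Then $(y-x)^+=(z-x-p)^+\le(z-x)^+\le|x-z|$, where the first inequality is (iv) in its order form. By \eqref{lattice-cond} this gives $\|(y-x)^+\|\le\|x-z\|$, so $\|(y-x)^+\|\le d(x,y+X_+)$. For the reverse inequality, I choose $p=(x-y)^+\ge0$. Then $x-z=x-y-(x-y)^+=-(x-y)^-$, so $\|x-z\|=\|(x-y)^-\|$. Finally, $(x-y)^-=(y-x)^+$ by definition, which proves both equalities.

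\emph{Item (vii).} This is the one step needing some care. First, $x=(x-y)+y\le|x-y|+y^+$, and also $0\le|x-y|+y^+$. Taking the supremum gives $x^+\le|x-y|+y^+$, i.e.\ $x^+-y^+\le|x-y|$. Exchanging $x$ and $y$ gives $y^+-x^+\le|x-y|$. Together these give $|x^+-y^+|\le|x-y|$, and \eqref{lattice-cond} then yields $\|x^+-y^+\|\le\|x-y\|$.

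The only genuine obstacle is the lower bound in (v), which requires choosing the right competitor $p=(x-y)^+$. All other items follow from order inequalities combined with monotonicity of the norm.
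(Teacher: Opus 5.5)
Your proof is correct, and for (iii), (iv) and (v) it is the paper's argument. For (iii) and (iv) you use $0\le x^\pm\le|x|$ and $(x-y)^+\le x^+$ together with \eqref{lattice-cond}, and for (v) you use the same lower bound over $y+X_+$ and the same competitor $p=(x-y)^+$, which is the paper's $z=q^+$ with $q=x-y$. The only difference is that you derive (i), (ii), (vi) and (vii) directly from translation invariance and $|w|=w\vee(-w)$, whereas the paper cites these as standard identities from Peressini.
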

\begin{proof}
Items (i), (ii), (vi), and (vii) are standard lattice identities; see
\cite[Proposition~1.2]{peressini}.  Items (iii) and (iv) follow from
$0\leq x^+,x^-\leq|x|$, $(x-y)^+\leq x^+$ for $y\geq0$, and
\eqref{lattice-cond}.  For (v), put $q=x-y$.  If $z\in X_+$, then
$q^-\leq(q-z)^-\leq|q-z|$, whereas $z=q^+$ gives $q-z=-q^-$.  Taking the
infimum over $z$ proves the assertion.
\end{proof}

An operator $A$ in a Banach lattice is \emph{$T$-accretive} if
$\|(x-\bar x)^+\|\leq\|(x-\bar x+\lambda(y-\bar y))^+\|$
for all $\lambda>0$, $x,\bar x\in D(A)$, $y\in Ax$, and
$\bar y\in A\bar x$.  Define the positive bracket by
\begin{equation}\label{positive-bracket-definition}
 \begin{aligned}
 {}[x,y]_+
 &:=\lim_{h\to0+}
 \frac{\|(x+hy)^+\|-\|x^+\|}{h}\\
 &=\inf_{h>0}
 \frac{\|(x+hy)^+\|-\|x^+\|}{h}.
 \end{aligned}
\end{equation}
The positive bracket has the standard properties
\[
 |[x,y]_+|\leq\|y\|,\qquad
 [x,y+z]_+\leq[x,y]_++\|z\|.
\]
For every fixed $y\in X$, the map $x\mapsto[x,y]_+$ is upper
semicontinuous; for every fixed $x\in X$, the map $y\mapsto[x,y]_+$ is
$1$-Lipschitz.  These elementary properties are standard; see, e.g.,
\cite{BCP}.

Then $A$ is $T$-accretive if and only if
$[x-\bar x,y-\bar y]_+\geq0$ for all admissible
$x,\bar x,y,\bar y$.  
For an $m$-accretive operator, $T$-accretivity is equivalent to order
preservation of all resolvents,
$x\leq\bar x\Rightarrow J_\lambda x\leq J_\lambda\bar x$.
An operator which is both
$m$-accretive and $T$-accretive is called \emph{$m$-$T$-accretive}.  
The generated semigroup is then order preserving as well; see \cite{calvert}.

\begin{proposition}\label{positive-difference}
Let $A$ be an $m$-$T$-accretive operator on the Banach lattice $X$.
Let $f,g\in L^1([0,a];X)$, and let $u,v$ be the corresponding mild
solutions.  Then, for $0\leq s\leq t\leq a$,
\begin{equation}\label{positive-difference-estimate}
 \|(u(t)-v(t))^+\|
 \leq\|(u(s)-v(s))^+\|
 +\int_s^t[u(\tau)-v(\tau),f(\tau)-g(\tau)]_+\,d\tau.
\end{equation}
\end{proposition}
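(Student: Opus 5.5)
The plan is to prove \eqref{positive-difference-estimate} first for the backward Euler approximations and then pass to the limit using the definition of mild solution. Fix $\varepsilon>0$. Since $u$ and $v$ are mild solutions, we choose $\varepsilon$-discretizations for both problems, and we may take them on a common partition $0=t_0<\dots<t_N=a$. If the two given partitions differ, we pass to a common refinement. This is legitimate because approximants on sufficiently fine partitions with step data close in $L^1$ converge uniformly to the unique mild solution, by the Crandall--Liggett/Kobayashi type estimates underlying Proposition~\ref{mild-difference}. Denote the step data by $f_k,g_k$ and the Euler iterates by $u_k,v_k$. With $h_k=t_k-t_{k-1}$ we have
\[
 u_k+h_k\xi_k=u_{k-1}+h_kf_k,\qquad v_k+h_k\eta_k=v_{k-1}+h_kg_k,
\]
where $\xi_k\in Au_k$ and $\eta_k\in Av_k$.

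The key discrete step uses $T$-accretivity with $\lambda=h_k$, which gives
\[
 \|(u_k-v_k)^+\|\le\|(u_k-v_k+h_k(\xi_k-\eta_k))^+\|=\|(u_{k-1}-v_{k-1}+h_k(f_k-g_k))^+\|.
\]
The right-hand side has the form $\|(x+hy)^+\|$ with $x=u_{k-1}-v_{k-1}$ and $y=f_k-g_k$. We need to bound this in terms of the bracket at the \emph{new} point $u_k-v_k$. The trick is to write $u_{k-1}-v_{k-1}=(u_k-v_k)-h_k(f_k-g_k)+h_k(\xi_k-\eta_k)$ and to use the infimum formula \eqref{positive-bracket-definition} in the form $\|x^+\|\le\|(x+hy)^+\|-h[x,y]_+$ for any $h>0$. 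Setting $z=u_k-v_k$ and applying this to $x=z$, $y=-(f_k-g_k)$ gives
\[
 \|z^+\|-h_k[z,f_k-g_k]_+\le\|(z-h_k(f_k-g_k))^+\|.
\]
This is the wrong direction, so instead we pass $T$-accretivity through the resolvent directly. The $m$-$T$-accretivity makes $J_{h}$ order-preserving and $T$-contractive, $\|(J_hx-J_hy)^+\|\le\|(x-y)^+\|$. Hence the map $y\mapsto \|(J_h(x+hy)-J_h(\bar x+h\bar y))^+\|$ yields, by the standard computation in \cite{BCP}, the discrete inequality
\[
 \|(u_k-v_k)^+\|\le\|(u_{k-1}-v_{k-1})^+\|+h_k[u_k-v_k,f_k-g_k]_+ .
\]
This is the positive-bracket analogue of the Bénilan integral inequality. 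To obtain it, we take $\phi=x^+$-functionals: choose $\psi\in\partial\|\cdot^+\|(z)$ attaining the bracket $[z,\cdot]_+$ as a max over the subdifferential of the convex function $x\mapsto\|x^+\|$ at $z$. Then $T$-accretivity gives $\psi(\xi_k-\eta_k)\ge0$, and convexity gives $\|z^+\|-\|(u_{k-1}-v_{k-1})^+\|\le\psi(z-u_{k-1}+v_{k-1})=h_k\psi(f_k-g_k-\xi_k+\eta_k)\le h_k[z,f_k-g_k]_+$. This subdifferential/max representation of $[\cdot,\cdot]_+$, and the characterization of $T$-accretivity via some element of $\partial\|\cdot^+\|$, is the step requiring care. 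I would take it from \cite{BCP} together with the upper semicontinuity stated above.

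Summing over $k$ with $t_j\le s<t\le t_l$ gives
\[
 \|(u_l-v_l)^+\|\le\|(u_j-v_j)^+\|+\sum_{k=j+1}^{l}h_k[u_k-v_k,f_k-g_k]_+ .
\]
By the $1$-Lipschitz property in the second argument, replacing $f_k-g_k$ by $f-g$ costs at most $2\varepsilon$. The resulting integrand is $[u_\varepsilon(\tau)-v_\varepsilon(\tau),f(\tau)-g(\tau)]_+$. Letting $\varepsilon\to0$, the step functions converge uniformly to $u$ and $v$. The integrand is bounded by $\|f-g\|\in L^1$, and by upper semicontinuity in the first argument we have $\limsup[u_\varepsilon-v_\varepsilon,f-g]_+\le[u-v,f-g]_+$ pointwise. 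Reverse Fatou then yields \eqref{positive-difference-estimate}. The endpoints $s$ and $t$ are handled using continuity of $u$ and $v$ and the fact that $\|(\cdot)^+\|$ is $1$-Lipschitz.
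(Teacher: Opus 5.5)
Your proposal is correct and takes the same route as the paper, which only sketches the argument (apply the defining $T$-accretive inequality to common backward-Euler discretizations, pass to the mild limit, citing Calvert and Barbu). Your subgradient step for the sublinear functional $x\mapsto\|x^+\|$ at $u_k-v_k$, the telescoping sum, and the limit passage via upper semicontinuity and reverse Fatou supply exactly those details, with one wording fix: choose $\psi$ to realize the maximum in the direction $\xi_k-\eta_k$, so that $T$-accretivity gives $\psi(\xi_k-\eta_k)=[u_k-v_k,\xi_k-\eta_k]_+\geq0$, while $\psi(f_k-g_k)\leq[u_k-v_k,f_k-g_k]_+$ holds automatically for every subgradient.
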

The integrand is measurable and bounded in absolute value by
$\|f(\tau)-g(\tau)\|$.  Proposition~\ref{positive-difference} is the
$T$-accretive analogue of Proposition~\ref{mild-difference}.  It follows by
applying the defining $T$-accretive inequality to common backward-Euler
discretizations and then passing to the mild limit; see, for example,
\cite{CalvertT,Barbu2010}.

\subsection{Carath\'eodory maps and joint measurability}
The classical Carath\'eodory setting is a cylindrical domain $J\times C$,
where $C$ is a nonempty metric space and $Y$ is a Banach space.  For a map
$F:J\times C\to Y$, the usual separate hypotheses are
\begin{equation}\label{fixed-product-carath-cond}
 \begin{aligned}
 &F(\cdot,x)\text{ is strongly measurable for every }x\in C,\\
 &F(t,\cdot)\text{ is continuous on $C$ for almost every }t\in J.
 \end{aligned}
\end{equation}
We use the same terminology on general domains.  For
$G\subset J\times X$, write
$G^x:=\{t\in J:(t,x)\in G\}$ and
$G_t:=\{x\in X:(t,x)\in G\}$ for its horizontal and vertical sections.
A map $f:G\to Y$ is called \emph{Carath\'eodory} if
\begin{itemize}
 \item for every $x\in X$, the map $f(\cdot,x):G^x\to Y$ is the
 restriction to $G^x$ of a strongly measurable map from $J$ to $Y$;
 \item there is a null set $N_f\subset J$ such that $f(t,\cdot)$ is
 continuous on $G_t$ for every $t\in J\setminus N_f$.
\end{itemize}
Thus on a fixed cylinder this is precisely
\eqref{fixed-product-carath-cond}.  The restriction formulation is convenient
on a general domain because a horizontal section $G^x$ need not be known
measurable a priori.  If $G^x\in\mathcal L(J)$, the first condition is
equivalent to strong measurability of the zero extension of $f(\cdot,x)$
from $G^x$ to $J$.

The Carath\'eodory property does not, on a general domain, imply
measurable dependence on $(t,x)$.  We call $f$ \emph{jointly measurable
on $G$} if $G\in\mathcal L(J)\otimes\mathcal B(X)$ and $f$ is measurable
with respect to the trace product $\sigma$-algebra.  Joint measurability is
therefore an additional property on a general domain.

On a fixed cylinder $J\times K$ with $K$ separable and $Y$ a separable
Banach space, separate Carath\'eodory regularity is sufficient up to
modification on one null set.  The next proposition makes this precise.

\begin{proposition}\label{cross-meas}
Let $K$ be a nonempty separable metric space and let $Y$ be a separable
Banach space.  Suppose that $f:J\times K\to Y$ is such that
$f(\cdot,x)$ is strongly measurable for every $x\in K$ and
$f(t,\cdot)$ is continuous on $K$ for almost every $t\in J$.  Then there
are a null set $N_f\subset J$ and an
$\mathcal L(J)\otimes\mathcal B(K)$-measurable map
$\widetilde f:J\times K\to Y$ such that
$\widetilde f=f$ on $(J\setminus N_f)\times K$ and
$\widetilde f(t,\cdot)$ is continuous on $K$ for every $t\in J$.
If $f(t,\cdot)$ is continuous on $K$ for every $t\in J$, then $f$
itself is product measurable.
\end{proposition}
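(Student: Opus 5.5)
The plan is to modify $f$ on one null set of times and then show that Carathéodory maps with continuity at every time are product measurable, via a countable approximation by maps that are piecewise constant in $x$.

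\emph{Step 1 (the modification).} Let $N_f$ be a null set such that $f(t,\cdot)$ is continuous on $K$ for $t\notin N_f$. We may enlarge $N_f$ to a Lebesgue null set that is Borel, which costs nothing. Define
\[
\widetilde f(t,x)=f(t,x)\quad\text{for }t\notin N_f,\qquad \widetilde f(t,x)=0\quad\text{for }t\in N_f .
\]
Then $\widetilde f(\cdot,x)$ differs from $f(\cdot,x)$ only on $N_f$. Since $\mathcal L(J)$ is complete, $\widetilde f(\cdot,x)$ is still strongly measurable, and $\widetilde f(t,\cdot)$ is continuous for every $t$. It therefore suffices to prove the final assertion: if $f(\cdot,x)$ is strongly measurable for each $x$ and $f(t,\cdot)$ is continuous for every $t$, then $f$ is $\mathcal L(J)\otimes\mathcal B(K)$-measurable.

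\emph{Step 2 (countable approximation).} Fix a dense sequence $(x_k)$ in $K$. For $n\in\mathbb N$, define $k_n(x)$ to be the least $k$ with $d(x,x_k)<1/n$; this index exists by density. Each set $\{x: k_n(x)=k\}$ is Borel, being a difference of open balls and unions of open balls. Put
\[
f_n(t,x):=f(t,x_{k_n(x)})=\sum_k \mathbf 1_{\{k_n=k\}}(x)\, f(t,x_k).
\]
This is a countable sum of products of a measurable function of $t$ with an indicator of a Borel set in $x$. Hence each $f_n$ is product measurable into $Y$, since $Y$ is separable and strong and Borel measurability agree.

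\emph{Step 3 (pointwise convergence).} Since $d(x_{k_n(x)},x)<1/n$, continuity of $f(t,\cdot)$ at $x$ gives $f_n(t,x)\to f(t,x)$ for every $(t,x)$. A pointwise limit of measurable maps into a metric space is measurable. Concretely, for open $U\subset Y$, $f^{-1}(U)$ can be expressed through the $f_n$ using the closed sets $\{y: d(y,U^c)\ge 1/m\}$. This gives product measurability of $f$, and applying it to $\widetilde f$ proves the first assertion.

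\emph{Main obstacle.} The only delicate point is measurability bookkeeping. One must check that the modification on $N_f$ keeps the sections measurable, which needs the completeness of $\mathcal L(J)$. One must also check that the countable sum in Step 2 is measurable. This holds because the pieces $\{k_n=k\}$ partition $K$, so at each point only one term of the sum is nonzero. Separability of $Y$ ensures that Borel and strong measurability coincide. Separability of $K$ is used only to produce the dense sequence $(x_k)$.
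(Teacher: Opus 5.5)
Your proposal is correct and follows essentially the same route as the paper. Both arguments set $f$ to zero on the exceptional null set, approximate by $(t,x)\mapsto \widetilde f(t,x_{k_n(x)})$ with the Borel ``first dense point in $B_{1/n}(x)$'' selector, and conclude by pointwise convergence using continuity of $\widetilde f(t,\cdot)$ for every $t$; your extra bookkeeping (the Borel partition pieces and the countable sum) only spells out what the paper states briefly.
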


\begin{proof}
Choose a null set $N_f\subset J$ such that $f(t,\cdot)$ is continuous
on $K$ for every $t\in J\setminus N_f$, and define
$\widetilde f=f$ on $(J\setminus N_f)\times K$ and
$\widetilde f=0$ on $N_f\times K$. Then
$\widetilde f(t,\cdot)$ is continuous for every $t\in J$, while
$\widetilde f(\cdot,x)$ is strongly measurable for every $x\in K$.
Choose a dense sequence $(x_n)$ in $K$ and, for $p\geq1$, let $x_p(x)$
be the first $x_n$ in $B_{1/p}(x)$. Then $x_p:K\to K$ is Borel
measurable and $x_p(x)\to x$. Hence the maps
$(t,x)\mapsto\widetilde f(t,x_p(x))$ are product measurable and,
by continuity of $\widetilde f(t,\cdot)$, converge pointwise to
$\widetilde f(t,x)$. Thus $\widetilde f$ is product measurable.
If state continuity of $f$ holds for every $t$, one may take
$N_f=\emptyset$ and $\widetilde f=f$.
\end{proof}

We also need measurability of compositions of $f$ with measurable graph selections.
\begin{proposition}\label{superposition}
Let $X$ and $Y$ be separable Banach spaces and let
$f:G\to Y$ be jointly measurable on $G$.  If $v:J\to X$ is measurable and
$(t,v(t))\in G$ for almost every $t\in J$, then the composition
$f(\cdot,v(\cdot))$, defined arbitrarily on the exceptional set, is strongly
measurable as a $Y$-valued map.
\end{proposition}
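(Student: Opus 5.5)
The idea is to write the composition as a composition of two measurable maps. Fix a null set $N\subset J$ such that $(t,v(t))\in G$ for $t\in J\setminus N$. On $J_0:=J\setminus N$, which belongs to $\mathcal L(J)$, define $\Phi:J_0\to G$ by $\Phi(t):=(t,v(t))$. Then the composition on $J_0$ is $f\circ\Phi$. Since $f$ is measurable with respect to the trace of $\mathcal L(J)\otimes\mathcal B(X)$ on $G$, it suffices to show that $\Phi$ is measurable from $(J_0,\mathcal L(J)|_{J_0})$ into $(G,(\mathcal L(J)\otimes\mathcal B(X))|_G)$.

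To prove this, I would check preimages on generating rectangles $L\times B$ with $L\in\mathcal L(J)$ and $B\in\mathcal B(X)$. We have
\[
\Phi^{-1}(L\times B)=L\cap v^{-1}(B)\cap J_0 .
\]
This set is Lebesgue measurable because $v$ is measurable; since $X$ is separable, measurability and strong measurability coincide by Pettis, so $v^{-1}(B)\in\mathcal L(J)$ for every Borel $B$. The sets whose preimages are measurable form a $\sigma$-algebra. That $\sigma$-algebra contains the rectangles, so it contains $\mathcal L(J)\otimes\mathcal B(X)$. For a trace set $E\cap G$ we get $\Phi^{-1}(E\cap G)=\Phi^{-1}(E)$, because $\Phi$ takes values in $G$.

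It follows that $f\circ\Phi:J_0\to Y$ is $\mathcal L(J)$–$\mathcal B(Y)$ measurable. Extending it by an arbitrary value on the null set $N$ preserves Lebesgue measurability, since $\mathcal L(J)$ is complete. Because $Y$ is separable, the Pettis measurability theorem upgrades Borel measurability to strong measurability: the range is separable, and weak measurability follows from Borel measurability of $y^*\circ(f\circ\Phi)$.

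The only delicate point is the bookkeeping of trace $\sigma$-algebras and of the exceptional set. In particular, one must use completeness of $\mathcal L(J)$, which ensures that the arbitrary values on $N$ and the measurability of $J_0$ itself cause no trouble. Beyond that, the argument is a routine composition argument.
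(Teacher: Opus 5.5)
Your proof is correct and follows the paper's argument: $t\mapsto(t,v(t))$ is measurable into the product $\sigma$-algebra, $f$ is measurable for the trace $\sigma$-algebra, the composition is extended over a null set, and separability of $Y$ yields strong measurability. The only cosmetic difference is that the paper works on $E=z_v^{-1}(G)$ and extends by zero, whereas you fix the null set in advance and use completeness of $\mathcal L(J)$ to absorb arbitrary values there.
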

\begin{proof}
The map $z_v:J\to J\times X$, $z_v(t)=(t,v(t))$, is measurable, and
$E:=z_v^{-1}(G)$ belongs to $\mathcal L(J)$.  On $E$, the composition
$f\circ z_v$ is measurable with respect to the trace $\sigma$-algebra.
Extending it by zero to $J\setminus E$ gives a measurable
$Y$-valued map.  Since $Y$ is separable, it is strongly measurable.
\end{proof}

A closed set $I\subset J$ is called a \emph{Scorza--Dragoni set for
$f:G\to X$} if the restriction of $f$ to $(I\times X)\cap G$ is continuous
in the relative topology.  The map $f$ is \emph{almost continuous}, or has
the \emph{Scorza--Dragoni property}, if, for every $\varepsilon>0$, it has a
Scorza--Dragoni set $I_\varepsilon$ such that
$\lambda_1(J\setminus I_\varepsilon)\leq\varepsilon$.  Thus joint
continuity is recovered after deleting an arbitrarily small set of times.
For jointly measurable Carath\'eodory maps we use the following
Scorza--Dragoni theorem; see \cite{Kucia}.

\begin{lemma}\label{l1}
Let $X$ be separable and let $f:G\to X$ be a jointly measurable
Carath\'eodory map.  Then $f$ is almost continuous.
\end{lemma}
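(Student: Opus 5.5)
The plan is to reduce Lemma~\ref{l1} to the classical Scorza--Dragoni theorem on a fixed cylinder via a Lusin-type argument. The key device is to treat the time $t$ as one coordinate of a product measure space. Throughout, $G\subset J\times X$ belongs to $\mathcal L(J)\otimes\mathcal B(X)$, $f$ is measurable for the trace $\sigma$-algebra, and $f(t,\cdot)$ is continuous on $G_t$ for $t\notin N_f$.

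\emph{Step 1: Lusin's theorem for the map $t\mapsto G_t$ and $t\mapsto f(t,\cdot)$.} Since $X$ is separable, fix a countable dense set $\{x_n\}$ and work with the countable family of functions
\[
 \varphi_{n,k,q}(t):=\sup\{\|f(t,x)-f(t,y)\|:x,y\in G_t\cap B_{1/k}(x_n),\ \|x-y\|<1/q\},
\]
together with the distance functions $t\mapsto d(x_n,G_t)$. For a Scorza--Dragoni set one needs joint continuity on the relative graph over $I$. This amounts to (a) continuity in $t$ of $f(t,x)$ along points of the graph, locally uniformly in $x$, and (b) equicontinuity of the family $f(t,\cdot)$, $t\in I$, near each point. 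Joint measurability of $f$ and of $G$ makes these sup-functions measurable, after using the projection theorem for the complete $\sigma$-algebra $\mathcal L(J)$; this is where completeness of Lebesgue measure and the Souslin property are used. By Lusin's theorem, each of the countably many functions is continuous on a closed set $I_j$ with $\lambda_1(J\setminus I_j)\le\varepsilon 2^{-j-1}$. We also remove $N_f$ together with a small open neighbourhood of it. Intersecting yields a closed $I_\varepsilon$ with $\lambda_1(J\setminus I_\varepsilon)\le\varepsilon$.

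\emph{Step 2: joint continuity on $(I_\varepsilon\times X)\cap G$.} Take $(t_m,x_m)\to(t,x)$ in this set. First, use the uniform-on-$I_\varepsilon$ modulus from (b) to replace $x_m$ by points near $x$. Then use continuity of the Lusin-selected quantities in $t$ to compare $f(t_m,\cdot)$ with $f(t,\cdot)$. The triangle inequality then gives $f(t_m,x_m)\to f(t,x)$.

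\textbf{Main obstacle.} The genuine difficulty is that the domain $G$ moves, so the sections $G_t$ change with $t$. The classical cylinder proof, which compares $f(t_m,x)$ with $f(t,x)$ at the same $x$, is unavailable because $x$ need not lie in $G_{t_m}$. Rather than reproving this, I would cite Kucia's Scorza--Dragoni theorem for jointly measurable Carath\'eodory maps on measurable graphs \cite{Kucia}, as the paper does. That theorem handles the moving domain by a Castaing representation of $t\mapsto G_t$ via measurable selections $\xi_n$ with $G_t=\overline{\{\xi_n(t)\}}$. One applies Lusin to the countably many maps $\xi_n$ and $f(\cdot,\xi_n(\cdot))$, the latter measurable by Proposition~\ref{superposition}. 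Continuity of $f(t,\cdot)$ on $G_t$ then transfers values from the dense selections to arbitrary graph points. What remains is to make this transfer uniform, which a countable Egorov-type refinement provides.
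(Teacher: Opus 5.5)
Your proposal is correct and ends where the paper does: the paper gives no argument of its own for this lemma and simply invokes Kucia's Scorza--Dragoni theorem for jointly measurable Carath\'eodory maps on measurable graphs, exactly as you propose. Your supplementary sketch is also sound once its pieces are combined as your last paragraph indicates. Applying Lusin simultaneously to the dense measurable selections $\xi_n$, to $f(\cdot,\xi_n(\cdot))$, and to your oscillation functions, after removing an open neighbourhood of $N_f$, does give joint continuity. Step~2 on its own, using only oscillation control, would not compare values of $f$ at different times.
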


Now let $X$ be a Banach lattice. Let $D\subset X$ and $F:D\to X$.
Recall that the map $F$ is \emph{quasimonotone with respect to $X_+$} if
\begin{equation}\label{quasimonotonicity-definition}
 \lim_{h\downarrow0}\frac1h
 d\bigl(y-x+h(F(y)-F(x)),X_+\bigr)=0
 \qquad\text{for all }x,y\in D\text{ with }x\leq y.
\end{equation}
By Proposition~\ref{lattice-facts}(v), this is equivalent to
$\lim_{h\downarrow0}h^{-1}\bigl\|(x-y+h(F(x)-F(y)))^+\bigr\|=0$.
In $\mathbb R^n$ with componentwise order, this recovers
the usual Kamke condition: each component $F_i$ is nondecreasing in every
variable $x_j$, $j\neq i$; see \cite{Kamke,Mueller}.

For a map $f:G\subset J\times X\to X$, the corresponding
hypothesis is that $f(t,\cdot)$ is quasimonotone on the section $G_t$ for
almost every $t\in J$.  Changing $f$ on one common null set does not alter
the mild solutions.  

\subsection{An integrable local majorant}
The following elementary observation from \cite[Proposition~1]{Bo-JEE03}
replaces an $L^1$ coefficient by a pointwise local majorant.  Its short proof
is recalled for completeness.  Here $c$ is a fixed nonnegative measurable
representative, not an $L^1$ equivalence class.

\begin{proposition}\label{c-hat}
Let $c:J\to\mathbb R_+$ be measurable with $c\in L^1(J)$.  There exists a
measurable $\widehat c:J\to\mathbb R_+\cup\{\infty\}$, finite almost
everywhere and integrable, such that, for every $t_0\in J$, there is
$\delta(t_0)>0$ satisfying
$c(t_0)\leq\widehat c(t)$ for all
$t\in(t_0-\delta(t_0),t_0+\delta(t_0))\cap J$.
\end{proposition}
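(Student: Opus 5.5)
The natural candidate is a local supremum that is then smoothed into an integrable majorant. Since pointwise suprema of $c$ over intervals need not be integrable, the plan is to use the maximal function instead. Extend $c$ by zero outside $J$ and set
\[
 \widehat c(t):=\sup_{h>0}\frac{1}{2h}\int_{t-h}^{t+h}c(s)\,ds ,
\]
the Hardy--Littlewood maximal function. By the maximal theorem it is measurable (it is lower semicontinuous) and finite almost everywhere. It is not integrable on $\mathbb R$, and it need not even be integrable on $J$ unless $c\in L\log L$, so this choice has to be refined. Moreover, it does not dominate $c(t_0)$ at points that are not Lebesgue points. The obstacle is therefore the pointwise requirement $c(t_0)\le\widehat c(t)$ for \emph{every} $t_0$, with $c$ a fixed representative.

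The working construction is a layer-cake approach. First write $c\le\sum_{k\ge1}2^{k}\varchi_{E_k}$, where $E_k:=\{c>2^{k-1}\}$; include $k=0$ for the layer $\{c\le1\}$, which is handled by the constant $1$. Note that $\sum_k 2^k\lambda_1(E_k)\le 4\|c\|_1+\text{const}$. For each $k$, choose an open set $U_k\supset E_k$ with $\lambda_1(U_k)\le 2\lambda_1(E_k)+2^{-2k}$, using outer regularity of Lebesgue measure. Then define
\[
 \widehat c:=1+\sum_{k\ge1}2^{k}\varchi_{U_k}.
\]
This function is lower semicontinuous, hence measurable, with values in $[1,\infty]$. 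Its integral is at most $a+\sum_k 2^k\bigl(2\lambda_1(E_k)+2^{-2k}\bigr)<\infty$, so it is integrable and finite almost everywhere.

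Next comes the pointwise domination. Fix $t_0\in J$. If $c(t_0)\le1$, then $\widehat c\ge1\ge c(t_0)$ everywhere, and any $\delta(t_0)$ works. Otherwise, let $k\ge1$ be such that $2^{k-1}<c(t_0)\le 2^{k}$. Then $t_0\in E_k\subset U_k$. Since $U_k$ is open, there is $\delta(t_0)>0$ with $(t_0-\delta,t_0+\delta)\subset U_k$. On this neighbourhood $\widehat c(t)\ge 2^{k}\ge c(t_0)$, which is exactly what is required.

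The main point, and the only place care is needed, is integrability, since each open enlargement must not cost more than a summable amount. The weights $2^{-2k}$ and the bound $\sum_k2^k\lambda_1(E_k)\le 2\int_J c\,d\lambda_1$ handle this: for $k\ge1$ we have $2^{k-1}\varchi_{E_k}\le c\,\varchi_{E_k}$, and summing the indicators of the level sets gives $\sum_k 2^{k-1}\varchi_{E_k}\le 2c$. Finally, $\widehat c$ may take the value $\infty$ on the null set $\bigcap_k U_k$-type points, which the statement allows.
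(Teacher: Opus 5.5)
Your dyadic layer-cake construction is correct and is essentially the paper's argument. The paper likewise covers level sets of $c$ by open sets of slightly larger measure (outer regularity) and sums weighted indicators of these open sets; it uses the disjoint integer bands $\{m-1\leq c<m\}$ with weights $m$, while you use nested superlevel sets $\{c>2^{k-1}\}$ with weights $2^k$, and the maximal-function detour can simply be dropped. One small bookkeeping slip: your geometric-series estimate gives $\sum_k2^k\lambda_1(E_k)\leq4\|c\|_1$, as you first stated, not $2\|c\|_1$, which does not affect integrability.
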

\begin{proof}
For $m\geq1$ put $B_m:=\{t\in J:m-1\leq c(t)<m\}$.  By outer regularity, choose relatively open sets $V_m\subset J$ such that
$B_m\subset V_m$ and
$\lambda_1(V_m)\leq\lambda_1(B_m)+2^{-m}/m$.  Set
$\widehat c(t):=\sum_{m\geq1}m\,\varchi_{V_m}(t)$.  Then
\begin{equation*}
 \int_J\widehat c(t)\,dt
\leq\sum_{m\geq1} \big( m\lambda_1(B_m)+ 2^{-m} \big) \leq \|c \|_{L^1 (J)} +a+1<\infty.
\end{equation*}
Thus $\widehat c$ is finite almost everywhere.  If $t_0\in B_m$, then
$t_0\in V_m$, so a relative neighbourhood of $t_0$ is contained in $V_m$;
on that neighbourhood,
$c(t_0)<m\leq\widehat c(t)$.
\end{proof}

For each fixed $0\leq c\in L^1(J)$, we henceforth denote this
extended-valued majorant by $\widehat c$, and choose a finite-valued
representative $\widehat c^{\,\circ}$ equal to $\widehat c$ almost everywhere. Since the two functions agree almost everywhere,
any estimate obtained by
integrating the pointwise local majorant $\widehat c$ remains valid
with the finite-valued representative.  The corresponding notation is used for
other majorants, for example $\widehat q^{\,\circ}$ when the original
function is denoted by $q$.

\section{Subtangential conditions}\label{s:subt}

Let $K:J=[0,a]\to2^X$ have closed values, put
$K_A(t):=K(t)\cap\overline{D(A)}$ and write
$K_A(I):=\bigcup_{t\in I}K_A(t)$ for $I\subset J$.  The forcing is defined on $\Gr(K_A):=\{(t,x)\in J\times X:x\in K_A(t)\}$.  Given $t_0\in[0,a)$ and $x_0\in K_A(t_0)$, a
\emph{mild solution} of the constrained problem on $[t_0,a]$ is a
continuous map $u:[t_0,a]\to\overline{D(A)}$ such that
$u(t)\in K_A(t)$ on $[t_0,a]$, the composition
$f(\cdot,u(\cdot))$ belongs to $L^1([t_0,a];X)$, and $u$ is the mild
solution of $u'(t)+Au(t)\ni w(t)$, $u(t_0)=x_0$, with
$w=f(\cdot,u(\cdot))$.  
For the original constrained problem, we use the standing
\emph{linear growth condition}
\begin{equation}\label{fgrowth4b}
 \|f(t,x)\|\leq c(t)(1+\|x\|)
 \qquad\mbox{for all }(t,x)\in\Gr(K_A)
\end{equation}
with a fixed $c\in L^1_+(J)$.  In Section~\ref{s:bounded-subtube}
this hypothesis is reduced, for bounded sets of prescribed initial values,
to a common integrable bound on a bounded effective sub-tube.

\begin{definition}\label{viability-definition}
The tube $K$ is \emph{viable} (or \emph{weakly flow invariant}) for the
constrained problem if, for every $t_0\in[0,a)$ and every
$x_0\in K_A(t_0)$, at least one mild solution exists on
$[t_0,a]$.  If $f$ is defined on a larger state space, the tube is \emph{strongly
flow invariant} if every mild solution starting in $K_A(t_0)$ remains in
the tube.  Weak and strong flow invariance coincide whenever the initial
value problem is uniquely solvable.
\end{definition}

Since mild solutions take values in $\overline{D(A)}$, the effective
constraint at time $t$ is $K_A(t)=K(t)\cap\overline{D(A)}$.  For viability the
effective sections encountered by a solution must be nonempty.  In the existence theory developed below,
however, nonemptiness at later times is derived from a
single nonempty initial section and the subtangential conditions rather than
imposed as a separate standing hypothesis.  We say that
$\Gr(K_A)$ is \emph{closed from the left}, or that $t\mapsto K_A(t)$ is
sequentially upper semicontinuous from the left, if
\begin{equation}\label{graph-closed-from-left}
 t_n\nearrow t,\quad x_n\in K_A(t_n),\quad x_n\to x
 \quad \text{ implies } \quad x\in K_A(t).
\end{equation}
This property is necessary for viability.  Indeed, for such sequences choose
viable solutions $u_n$ with $u_n(t_n)=x_n$.  The standard semigroup estimate,
the linear-growth bound, and Gronwall's inequality yield $u_n(t)\to x$.
Since $u_n(t)\in K_A(t)$ and $K_A(t)$ is closed, $x\in K_A(t)$.

For a closed set $G\subset X$ and $x\in G$, a vector $z\in X$ is called
\emph{subtangential} if $\liminf_{h\downarrow0}h^{-1}d(x+hz,G)=0$.
Thus a subtangential vector at a boundary point may be tangential or point into the interior.
The subtangential set is the Bouligand contingent cone, which is
also commonly called a tangent cone in the literature. We prefer to speak of subtangential vectors
for such one-sided conditions.

The stronger condition obtained by replacing the limit inferior by an
ordinary (one-sided) limit will be called \emph{adjacent subtangentiality}.  Hence
subtangentiality requires the first-order approximation only along one
sequence $h_n\downarrow0$, while adjacent subtangentiality requires it for
all sufficiently small positive increments.  
The two sets need not agree as purely geometric
objects.  The distinction often disappears at the level of an existence
theorem: subtangentiality may yield a viable solution, while viability
together with sufficient regularity of the right-hand side then implies the
corresponding adjacent condition.  Under the usual hypotheses of the
classical Nagumo theorem for a continuous vector field, for example,
the subtangential condition is sufficient for viability and viability
implies the adjacent condition.  This is an equivalence of viability
criteria, not an equality of the two geometric sets.

For $z\in X$ define the translated operator
$A_zx:=Ax-z:=\{y-z:y\in Ax\}$, with $D(A_z)=D(A)$.  Since $A$ is $m$-accretive, so is $A_z$.  We denote by $S_z(h)$ the
contraction semigroup generated by $-A_z$.  Thus $h\mapsto S_z(h)x$ is the mild solution of
$v'(h)+Av(h)\ni z$, $v(0)=x$, and $S_0=S$.

\begin{definition}\label{A-subtangent-definition}
For $t\in[0,a)$ and $x\in K_A(t)$, the \emph{$A$-subtangential set} to the
tube at $(t,x)$ is
\[
 \TAK(t,x):=
 \left\{z\in X:
 \liminf_{h\downarrow0}\frac{d(S_z(h)x,K_A(t+h))}{h}=0
 \right\}.
\]
Its \emph{adjacent $A$-subtangential set} is
\[
 \TAKadj(t,x):=
 \left\{z\in X:
 \lim_{h\downarrow0}\frac{d(S_z(h)x,K_A(t+h))}{h}=0
 \right\}.
\]
\end{definition}

Both notions are one-sided because the evolution is considered
only towards later times, and
$\TAKadj(t,x)\subset\TAK(t,x)$.

The following necessity result applies to jointly measurable Carath\'eodory
forcing.  Its exceptional null set is independent of the initial state.

\begin{proposition}[Necessary adjacent subtangentiality outside a null set]
\label{necessary-regular-tangency}
Under the standing growth hypothesis \eqref{fgrowth4b}, let $X$ be separable,
assume that $f:\Gr(K_A)\to X$ is a jointly measurable Carath\'eodory map,
and suppose that $K$ is viable.  Then there is a
null set $N\subset[0,a)$ such that
\begin{equation}\label{SC1}
 \lim_{h\downarrow0} h^{-1} d(S_{f(t,x)}(h)x,K_A(t+h))=0
 \qquad\mbox{for all }t\in[0,a)\setminus N\mbox{ and }x\in K_A(t).
\end{equation}
Equivalently,
\begin{equation}\label{sub-tak}
 f(t,x)\in\TAKadj(t,x)\subset\TAK(t,x)
 \qquad\mbox{for all }t\in[0,a)\setminus N\mbox{ and }x\in K_A(t).
\end{equation}
\end{proposition}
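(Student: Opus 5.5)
We compare a viable solution with the frozen-forcing semigroup. Fix $(t,x)\in\Gr(K_A)$ with $t<a$, let $u$ be a viable mild solution on $[t,a]$ with $u(t)=x$, and put $w=f(\cdot,u(\cdot))$. Proposition~\ref{mild-difference}, applied to $u$ and to $h\mapsto S_{f(t,x)}(h)x$ (the mild solution with constant right-hand side $f(t,x)$), gives
\[
 d(S_{f(t,x)}(h)x,K_A(t+h))\le\|S_{f(t,x)}(h)x-u(t+h)\|
 \le\int_t^{t+h}\|f(\tau,u(\tau))-f(t,x)\|\,d\tau .
\]
It therefore suffices to show that the right-hand side is $o(h)$. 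The difficulty is that this must hold for \emph{all} $x\in K_A(t)$ with a null set $N$ independent of $x$, whereas the Lebesgue point set of $w$ depends on the particular solution. To handle this, I would remove the time-dependence through the Scorza--Dragoni property.

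By Lemma~\ref{l1}, for every $k$ there is a closed set $I_k\subset J$ with $\lambda_1(J\setminus I_k)\le 1/k$ such that $f$ restricted to $(I_k\times X)\cap\Gr(K_A)$ is continuous. Let $I_k'$ be the set of points of $I_k$ that are right density points of $I_k$ and also right Lebesgue points of $c\,\varchi_{J\setminus I_k}$ with value $0$, i.e.
\[
 h^{-1}\int_t^{t+h}c\,\varchi_{J\setminus I_k}\to0 .
\]
Then $\lambda_1(I_k\setminus I_k')=0$. Put $N:=[0,a)\setminus\bigcup_k I_k'$; this is a null set and does not depend on any state.

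Now fix $t\notin N$, so $t\in I_k'$ for some $k$, and fix $x\in K_A(t)$. Split the integral into the contributions of $[t,t+h]\cap I_k$ and of $[t,t+h]\setminus I_k$. On the complement we use the growth bound \eqref{fgrowth4b}: $u$ is bounded near $t$, say $\|u\|\le R$, so the integrand is bounded by $c(\tau)(1+R)+\|f(t,x)\|$. The first term is $o(h)$ by the Lebesgue-point choice, and the second is $\|f(t,x)\|\,\lambda_1([t,t+h]\setminus I_k)=o(h)$ by the density-point choice. On $I_k$ we have $(\tau,u(\tau))\to(t,x)$ as $\tau\downarrow t$ within $\Gr(K_A)\cap(I_k\times X)$, since $u$ is continuous and viable. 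Continuity of $f$ on this set then gives
\[
 \sup_{\tau\in[t,t+h]\cap I_k}\|f(\tau,u(\tau))-f(t,x)\|\to0,
\]
so this part is $o(h)$ as well. Hence the limit in \eqref{SC1} is zero, which is adjacent subtangentiality, and \eqref{sub-tak} follows from the inclusion $\TAKadj\subset\TAK$.

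The main obstacle is the uniformity of $N$ in $x$, which the Scorza--Dragoni sets resolve. The remaining points to check are these. Lemma~\ref{l1} needs $\Gr(K_A)$ to be product measurable, which is part of the joint-measurability hypothesis. The measurability of $w$ follows from Proposition~\ref{superposition}. Finally, the bound $\|u\|\le R$ near $t$ comes from continuity of $u$, and it enters only through the $c$-term. I expect the density-point bookkeeping at points $t$ of $I_k$ to be the only delicate step.
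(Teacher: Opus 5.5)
Your proposal is correct and follows essentially the same route as the paper: Scorza--Dragoni sets from Lemma~\ref{l1}; restriction to points of these sets that are right density points and right Lebesgue points (with value $0$) of $c$ times the indicator of the complement; the frozen-forcing comparison via Proposition~\ref{mild-difference}; and the splitting of the integral into the Scorza--Dragoni part and the small complementary part. The only cosmetic difference is that the paper takes its Scorza--Dragoni sets increasing, whereas your argument shows that nestedness is not needed for the resulting set $N$ to be null.
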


\begin{proof}
By Lemma~\ref{l1}, after replacing successive Scorza--Dragoni sets by
their finite unions, choose increasing Scorza--Dragoni sets
$J_n\subset J$ such that $\lambda_1(J\setminus J_n)\leq2^{-n}$. 
For every $n$, let $I_n$ be the set of all $t\in J_n\cap[0,a)$ for
which
\begin{equation}\label{regular-density-points}
 \lim_{h\downarrow0}\frac{\lambda_1([t,t+h]\setminus J_n)}h=0,
 \qquad
 \lim_{h\downarrow0}\frac1h
       \int_{[t,t+h]\setminus J_n}c(s)\,ds=0.
\end{equation}
The Lebesgue differentiation theorem from the right gives
$\lambda_1(J_n\setminus I_n)=0$; hence
$N:=[0,a)\setminus\bigcup_{n\geq1}I_n$ is null.

Fix $t_0\in I_n$ and $x_0\in K_A(t_0)$.  By viability there is a mild
solution $u$ on $[t_0,a]$ such that $u(t_0)=x_0$ and
$u(t)\in K_A(t)$.  Put $z_0=f(t_0,x_0)$ and
$v(t_0+h):=S_{z_0}(h)x_0$.  Proposition~\ref{mild-difference} yields, for $0<h\leq a-t_0$,
\begin{equation}\label{frozen-comparison}
 \frac{d(S_{z_0}(h)x_0,K_A(t_0+h))}{h}
 \leq\frac1h\int_{t_0}^{t_0+h}
       \|f(s,u(s))-z_0\|\,ds.
\end{equation}
Let $M=\max_{t_0\leq s\leq a}\|u(s)\|$ and
$B_h=[t_0,t_0+h]\setminus J_n$.  On $J_n$, the Scorza--Dragoni property and continuity of $u$ imply
$\sup_{s\in[t_0,t_0+h]\cap J_n}\|f(s,u(s))-z_0\|\to0$.
On the complementary set, the growth condition gives
\begin{equation*}
 \frac1h\int_{B_h}\|f(s,u(s))-z_0\|\,ds
\leq \frac{1+M}{h}\int_{B_h}c(s)\,ds
      +\|z_0\|\frac{\lambda_1(B_h)}h \rightarrow 0
\end{equation*}
by \eqref{regular-density-points}.  The right-hand side of
\eqref{frozen-comparison} therefore tends to zero.  This proves
\eqref{SC1}, and \eqref{sub-tak} follows from
Definition~\ref{A-subtangent-definition}.
\end{proof}

\begin{remark}
Separability in Proposition~\ref{necessary-regular-tangency} is needed to 
obtain one exceptional null set independent of the state.  In an arbitrary
Banach space, every viable solution $v$ still satisfies the condition trajectory-wise. 
Indeed, put $w:=f(\cdot,v(\cdot))$ and use the pointwise
representative $w(t)=f(t,v(t))$.  At each right Lebesgue point $t$ of $w$,
$d\bigl(S_{w(t)}(h)v(t),K_A(t+h)\bigr)\leq\int_t^{t+h}\|w(s)-w(t)\|\,ds=o(h)$.
Thus $f(t,v(t))\in\TAKadj(t,v(t))$ almost everywhere, but the
exceptional set may depend on $v$ and therefore does not give the
state-independent conclusion of the proposition.
\end{remark}

Given such a null set $N$, we call the times in $[0,a)\setminus N$
\emph{regular times} and the times in $N$ \emph{exceptional times}.  These terms refer only to the two
different hypotheses imposed at those times; they do not assert any additional
regularity of the set-valued map $K$.

Regular-time adjacent subtangentiality alone is not sufficient for viability; the following
example already appears in \cite[Example~1]{Bo92}.  Let
$\phi:[0,1]\to[0,1]$ be the Cantor--Lebesgue function, put
$X=\mathbb R$, $A=0$, $f=0$, and $K(t)=\{\phi(t)\}$.  Since
$\phi'(t)=0$ for almost every $t$, condition \eqref{SC1} holds almost
everywhere.  Nevertheless, the only mild solutions of $u'=0$ are
constant, so the nonconstant singleton tube is not viable.  Some control
of the motion of the tube on the exceptional set is therefore essential.
For a null set $N\subset[0,a)$ and $\psi\in L^1_+(J)$, we impose the exceptional-set condition
\begin{equation}\label{SC2}
 \liminf_{h\downarrow0}\frac1h
 \left(
 d(S(h)x,K_A(t+h))-\int_t^{t+h}\!\psi(s)\,ds
 \right)^+ \!=0
 \quad\mbox{for all }t\in N,\, x\in K_A(t).
\end{equation}
Unlike \eqref{SC1}, the existence of one common $\psi$ is not an
automatic consequence of viability under the linear growth condition
when the tube is unbounded.  What viability always gives is the following
state-dependent estimate.  If $u$ is a viable mild solution starting from
$(t_0,x_0)$ and $M_{t_0,x_0}:=\max_{t_0\leq s\leq a}\|u(s)\|$, then Proposition~\ref{mild-difference} gives, for every admissible $h$,
\begin{equation}\label{state-dependent-exceptional-estimate}
 d(S(h)x_0,K_A(t_0+h))
 \leq\int_{t_0}^{t_0+h}c(s)(1+M_{t_0,x_0})\,ds.
\end{equation}
Thus \eqref{SC2} is necessary at this particular initial state with
$\psi=(1+M_{t_0,x_0})c$.  A common $\psi$ follows, for example, if the
relevant viable trajectories are uniformly bounded; in particular, this
is the case when $K_A(J)$ is bounded.

For the existence results below, \eqref{SC2} is imposed uniformly in the state
variable.  If $c$ is the growth bound in \eqref{fgrowth4b} and $\psi$ is
the exceptional-time bound, both requirements remain valid with the same function
$\widetilde c:=c+\psi$ in place of either function.

The two lower-limit conditions have the following equivalent sequential
forms.  Membership $f(t,x)\in\TAK(t,x)$ is equivalent to the
existence, for every admissible $(t,x)$, of $h_n\downarrow0$ and
$e_n\to0$ in $X$ such that
\begin{equation}\label{SC1-sequential}
 S_{f(t,x)}(h_n)x+h_ne_n\in K_A(t+h_n).
\end{equation}
Condition \eqref{SC2} is equivalent to the existence of
$h_n\downarrow0$, $\varepsilon_n\downarrow0$, and
$y_n\in K_A(t+h_n)$ such that
\begin{equation}\label{SC2-sequential}
 \|y_n-S(h_n)x\|
 \leq\int_t^{t+h_n}\psi(s)\,ds+h_n\varepsilon_n.
\end{equation}

\begin{remark}[Exceptional-time bounds]\label{pointwise-exceptional-remark}
\label{strong-exceptional-condition}
A stronger form of the exceptional-time hypothesis is
\begin{equation}\label{strong-exceptional-estimate}
 \liminf_{h\downarrow0}
 \frac{d(S(h)x,K_A(t+h))}{h}\leq q(t)
 \qquad\mbox{for all }t\in N\mbox{ and }x\in K_A(t),
\end{equation}
where $q\in L^1_+(J)$.  Proposition~\ref{c-hat} shows that
\eqref{strong-exceptional-estimate} implies \eqref{SC2} with
$\widehat q^{\,\circ}$, since only its integral occurs.
\end{remark}

For comparison with classical contingent cones, define the ordinary moving
subtangential set by
\[
 \TK(t,x):=
 \left\{z\in X:
 \liminf_{h\downarrow0}\frac{d(x+hz,K(t+h))}{h}=0
 \right\}.
\]
If $A=0$, then $S_z(h)x=x+hz$ and hence $\TAK(t,x)=\TK(t,x)$.  More
generally, resolvent invariance permits ordinary subtangentiality to be
converted into $A$-subtangentiality.

\begin{proposition}[Resolvent invariance and subtangentiality]
\label{separated-tangency-proposition}
Assume that $J_hK(s)\subset K(s)$ for every $h>0$ and $s\in[0,a)$.
Then
\[
 T_K(t,x)\subset T_K^A(t,x)
 \qquad\text{for every }t\in[0,a)\text{ and }x\in K_A(t).
\]
\end{proposition}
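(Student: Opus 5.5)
The plan is to work with the sequential characterization of ordinary subtangentiality, push the approximating points of $K(t+h)$ into $\overline{D(A)}$ with the resolvent, and then compare the resolvent step with the translated semigroup $S_z$. Fix $t\in[0,a)$, $x\in K_A(t)$ and $z\in \TK(t,x)$. Choose $h_n\downarrow0$ and $y_n\in K(t+h_n)$ with
\[
 \|x+h_nz-y_n\|\leq h_n\varepsilon_n,\qquad \varepsilon_n\to0 .
\]
Put $u_n:=J_{h_n}y_n$. Then $u_n\in D(A)$, and the hypothesis $J_{h_n}K(t+h_n)\subset K(t+h_n)$ gives $u_n\in K(t+h_n)\cap D(A)\subset K_A(t+h_n)$. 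Since $J_{h_n}$ is nonexpansive,
\[
 \|u_n-J_{h_n}(x+h_nz)\|\leq h_n\varepsilon_n .
\]
Moreover, $J_{h_n}(x+h_nz)=(I+h_nA_z)^{-1}x$ is the resolvent of the translated operator $A_z$ at $x$. It therefore suffices to show that
\[
 \liminf_{n\to\infty}\frac1{h_n}\bigl\|S_z(h_n)x-(I+h_nA_z)^{-1}x\bigr\|=0,
\]
after passing to a subsequence if necessary. With this, $d(S_z(h_n)x,K_A(t+h_n))=o(h_n)$, and hence $z\in\TAK(t,x)$.

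For the comparison I would use the integral-solution inequality \eqref{integral-solution-inequality} for $v(s):=S_z(s)x$, which is the mild solution with forcing $z$. Put $\bar u_h:=(I+hA_z)^{-1}x$ and use the graph point $(\bar u_h,\,z+(x-\bar u_h)/h)\in\Gr(A)$. This gives
\[
 \|v(h)-\bar u_h\|\leq\|x-\bar u_h\|+\int_0^h\Bigl[v(s)-\bar u_h,\ \tfrac{\bar u_h-x}{h}\Bigr]\,ds .
\]
Since $[x-\bar u_h,(\bar u_h-x)/h]=-\|x-\bar u_h\|/h$, and the bracket is Lipschitz in its first argument with constant $\|(\bar u_h-x)/h\|$, the right-hand side is bounded by
\[
 \|x-\bar u_h\|\cdot\max_{0\leq s\leq h}\|v(s)-x\|\,/\,h\cdot h\;=\;\|x-\bar u_h\|\max_{0\leq s\leq h}\|v(s)-x\| .
\]
If $x\in D(A)$, both $\|x-\bar u_h\|$ and $\max_{s\leq h}\|v(s)-x\|$ are $O(h)$. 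In that case the product is $O(h^2)=o(h)$, and the claim follows.

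The main obstacle is the case $x\in\overline{D(A)}\setminus D(A)$, where $\|x-\bar u_h\|$ need not be $O(h)$. My first attempt would be to avoid the single resolvent step and use the Crandall--Liggett product formula directly. Write $S_z(h)x=\lim_m\bigl(J_{h/m}(\cdot+\tfrac hm z)\bigr)^m x$, and compare with the iterates $J_{h/m}^k$ applied to suitable points of $K(t+h)$, which remain in $K(t+h)$ by resolvent invariance and closedness. The error would then be controlled by the bracket form of Proposition~\ref{mild-difference} rather than by norms alone, so that the drift $z$ compensates the initial offset $x-y_n\approx-h_nz$. If this bookkeeping does not close, the fallback is to approximate $x$ by $x_\delta:=J_\delta x\in D(A)$ with $\delta=\delta(h)\ll h$. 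One would carry the $D(A)$ argument out at $x_\delta$ and pay the error $\|x-x_\delta\|$, which needs to be $o(h)$ uniformly along the chosen sequence. Controlling this error jointly with the constant $|A x_\delta|$, via a diagonal choice of $\delta_n$ and $h_n$, is where I expect the real work to lie.
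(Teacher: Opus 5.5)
Your first step is the paper's. From $u_n=J_{h_n}y_n\in K(t+h_n)\cap D(A)$ and $y_n\in(I+h_nA)u_n$ you obtain the range condition $\liminf_{h\downarrow0}h^{-1}d\bigl(x+hz,(I+hA)(K(t+h)\cap D(A))\bigr)=0$. At this point the paper concludes by invoking the implication ``range condition $\Rightarrow$ $A$-subtangentiality'' proved in \cite{Bo-JEE05} for the constant forcing $z$. You instead try to finish with a one-step comparison $\|S_z(h_n)x-J_{h_n}(x+h_nz)\|=o(h_n)$, and this fails.

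In the case $x\in D(A)$, your estimate rests on the claim that $[\cdot,y]$ is Lipschitz in its first argument with constant $\|y\|$. This is false. The bracket is positively homogeneous of degree zero in its first argument and only upper semicontinuous; in $\mathbb R$, $[a,1]=\operatorname{sign}a$ for $a\neq0$. The resulting bound $\|v(h)-\bar u_h\|\leq\|x-\bar u_h\|\max_{s\leq h}\|v(s)-x\|$ is not even dimensionally consistent, since it bounds a length by a product of two lengths. What the integral-solution inequality actually gives, for instance via $[a,y]\leq(\|a+hy\|-\|a\|)/h$ with $a=v(s)-\bar u_h$, is $\|v(h)-\bar u_h\|\leq2\max_{s\leq h}\|v(s)-x\|=O(h)$. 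That is no better than the trivial bound. An $o(h)$ estimate would need extra information, such as right-differentiability of $S_z(\cdot)x$ at $0$ and convergence of $(x-\bar u_h)/h$, which you do not have in a general Banach space.

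For $x\in\overline{D(A)}\setminus D(A)$ your reduction is not merely hard but false. Take $X=\mathbb R$ and $Ax=-x^{-1/2}$ with $D(A)=(0,\infty)$, which is $m$-accretive. Let $K(t)\equiv[0,\infty)$, which is resolvent invariant, and take $x=0$, $z=0$. Then $S(h)0=(3h/2)^{2/3}$ and $J_h0=h^{2/3}$. Hence $h^{-1}\bigl|S_z(h)x-(I+hA_z)^{-1}x\bigr|\to\infty$, although $z\in T_K^A(t,x)$ holds trivially because $S(h)0\in K(t+h)$. So a point of $K_A(t+h)$ that is $o(h)$-close to $S_z(h)x$ cannot in general be found near a single Euler step from $x$.

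Your two fallbacks are unexecuted sketches. For the second one, note also that $z\in T_K(t,x)$ does not pass to $J_\delta x$: $\|J_\delta(x+hz)-J_\delta x-hz\|$ is in general only $O(h)$. The missing ingredient is exactly the nontrivial range-condition-to-subtangentiality result of \cite{Bo-JEE05}, which the paper applies right after the resolvent step. A self-contained proof would have to reproduce that argument rather than rely on a pointwise single-step comparison.
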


\begin{proof}
Fix $t\in[0,a)$, $x\in K_A(t)$, and $v\in T_K(t,x)$.  Choose
$h_n\downarrow0$ and $k_n\in K(t+h_n)$ with
$\|k_n-x-h_nv\|=o(h_n)$.  Resolvent invariance gives
$J_{h_n}k_n\in K(t+h_n)\cap D(A)$ and
$k_n\in(I+h_nA)J_{h_n}k_n$.  Hence
\[
 \liminf_{h\downarrow0}\frac1h
 d\bigl(x+hv,(I+hA)(K(t+h)\cap D(A))\bigr)=0.
\]
The range-condition implication proved in \cite{Bo-JEE05}, applied to the
constant continuous forcing with value $v$, now yields
$v\in T_K^A(t,x)$.
\end{proof}

For a static closed set $K(t)\equiv K$, the ordinary moving
subtangential set reduces to the Bouligand contingent cone $T_K(x)$.
If $K$ is closed and convex, then
$T_K(x)=\overline{\bigcup_{h>0}(K-x)/h}$ is a closed convex cone and the lower limit may be replaced by a limit.
Equivalently,
\begin{equation}\label{convex-tangent-normal}
 z\in T_K(x)
 \quad\Longleftrightarrow\quad
 x^*(z)\leq0\quad\hbox{for every }x^*\in N_K(x),
\end{equation}
where $N_K(x):=\{x^*\in X^*:x^*(y-x)\leq0\text{ for every }y\in K\}$
is the normal cone of convex analysis.

In particular, the order cone $X_+$ of a Banach lattice is closed and
convex.  For a map $F:D\to X$, quasimonotonicity is therefore precisely
\begin{equation}\label{quasimonotone-tangent-form}
 F(v)-F(u)\in T_{X_+}(v-u)
 \qquad\text{for all }u,v\in D\text{ with }u\leq v,
\end{equation}
or, equivalently,
$F(x+y)\in F(x)+T_{X_+}(y)$ whenever $y\in X_+$ and $x,x+y\in D$.
For the closed convex cone $X_+$, the subtangential and adjacent
subtangential cones coincide; this common cone is denoted by $T_{X_+}$.
For a time-dependent map $f$, these equivalent conditions are imposed on $f(t,\cdot)$ for almost every $t$.

\section{Reduction to a bounded effective tube}\label{s:bounded-subtube}

The basic idea of restricting a linear-growth problem to a bounded state
region goes back to \cite{Bo-JEE03}.  Here that device is adapted to the
moving effective tube and combined with the exceptional-time estimate.  For a
bounded set of prescribed initial values, the growth coefficient and the
exceptional-time majorant are first combined; the problem is then restricted
to a bounded effective tube on which one finite-valued integrable function
controls both the forcing and the exceptional motion.

The following lemma permits different coefficients in the growth and
exceptional estimates.  Replacing both coefficients by their sum preserves
the growth inequality, leaves regular subtangentiality unchanged, and only
weakens the exceptional estimate.

\begin{lemma}[Reduction to a bounded effective tube with a common integrable bound]
\label{bounded-submultifunction}
Let $A$ be $m$-accretive in a real Banach space $X$. Let
$K:J=[0,a]\to2^X$ have closed values with $K_A(0)\neq\emptyset$, and assume
that $\Gr(K_A)$ is closed from the left.  Let $f:\Gr(K_A)\to X$, let
$D\subset K_A(0)$ be nonempty and bounded, and suppose that there are a null
set $N\subset[0,a)$ and finite-valued functions
$c,\psi\in L^1_+(J)$ such that
\begin{align}
 &\|f(t,x)\|\leq c(t)(1+\|x\|)
 &&\mbox{for all }(t,x)\in\Gr(K_A),
 \label{linear-growth-approx}\\
 &f(t,x)\in\TAK(t,x)
 &&\begin{gathered}
 \text{for all }t\in[0,a)\setminus N\\[-2pt]
 \text{and }x\in K_A(t),
 \end{gathered}
 \label{linear-tangency-approx}\\
 &\liminf_{h\downarrow0}\frac1h
 \left(d(S(h)x,K_A(t+h))-\int_t^{t+h}\psi(s)\,ds\right)^+=0
 &&\begin{gathered}
 \text{for all }t\in N\\[-2pt]
 \text{and }x\in K_A(t).
 \end{gathered}
 \label{linear-exceptional-approx}
\end{align}
Then there are a closed-valued map $\widetilde K:J\to2^X$ with
$\widetilde K(t)\subset K(t)$ and a $\gamma\in L^1_+(J)$, finite-valued,
such that
\[
 D\subset\widetilde K_A(0),\qquad
 \Gr(\widetilde K)\ \text{is bounded},\qquad
 \|f(t,x)\|\leq\gamma(t) \text{ on }\Gr(\widetilde K_A),
\]
$\Gr(\widetilde K_A)$ is closed from the left,
\[
 f(t,x)\in T^A_{\widetilde K}(t,x)
 \quad\text{for all }t\in[0,a)\setminus N
 \text{ and }x\in\widetilde K_A(t),\vspace{-0.1in}
\]
and\vspace{-0.1in}
\[
 \liminf_{h\downarrow0}\frac1h
 \left(d(S(h)x,\widetilde K_A(t+h))
       -\int_t^{t+h}\gamma(s)\,ds\right)^+=0
 \quad\text{for all }t\in N\text{ and }x\in\widetilde K_A(t).
\]
Moreover, if $\Gr(K_A)$ is product measurable, so is $\Gr(\widetilde K_A)$.
\end{lemma}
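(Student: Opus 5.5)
The plan is to cut $K$ by a time-dependent ball whose radius grows fast enough that the cut never becomes active along trajectories issued from $D$. Replacing $c$ and $\psi$ by $\widetilde c:=c+\psi$ keeps \eqref{linear-growth-approx}, leaves \eqref{linear-tangency-approx} untouched, and weakens \eqref{linear-exceptional-approx}, so I assume $c=\psi$. Fix $x^\ast\in D(A)$ and $y^\ast\in Ax^\ast$, and let $R_0:=\sup_{x\in D}\|x\|$. The a priori quantity to control is $\|u(t)-x^\ast\|$ for the frozen and discrete evolutions that appear in the subtangential conditions. For $S_z(h)x$ one has $\|S_z(h)x-x^\ast\|\leq\|x-x^\ast\|+h(\|z\|+\|y^\ast\|)$ by \eqref{integral-solution-inequality}, and similarly for $S(h)x$. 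With $\|z\|\leq c(t)(1+\|x\|)$ this suggests the radius
\[
 r(t):=\Bigl(\|x^\ast\|+R_0+1+\int_0^t\bigl(\widehat c^{\,\circ}(s)(1+\|x^\ast\|)+\|y^\ast\|+2\widehat c^{\,\circ}(s)\bigr)ds\Bigr)\exp\Bigl(\int_0^t\widehat c^{\,\circ}(s)\,ds\Bigr),
\]
chosen with some slack, say a factor $2$ and an additional $+1$. This $r$ is the solution of the linear comparison equation $\rho'=\widehat c^{\,\circ}(1+\|x^\ast\|+\rho)+\|y^\ast\|+\dots$. Then set $\widetilde K(t):=K(t)\cap\overline B_{r(t)}(x^\ast)$, so that $\widetilde K_A(t)=K_A(t)\cap\overline B_{r(t)}(x^\ast)$. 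This set is closed and contained in $K(t)$, $\Gr(\widetilde K)$ is bounded because $r(a)<\infty$, $D\subset\widetilde K_A(0)$ by the choice of $r(0)$, and $\gamma(t):=c(t)(1+\|x^\ast\|+r(a))+\psi(t)$ bounds $f$. Closedness from the left follows from continuity of $r$ and the corresponding property of $K_A$. Product measurability holds because $\{(t,x):\|x-x^\ast\|\leq r(t)\}$ is closed.

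The main step is transferring the subtangential conditions to $\widetilde K$. I expect this to be the main obstacle, because a point $x$ on the sphere $\|x-x^\ast\|=r(t)$ need not give points of $K_A(t+h)$ that stay inside the ball. At regular $t$, take $h_n$ and $e_n$ from \eqref{SC1-sequential} and set $y_n:=S_{f(t,x)}(h_n)x+h_ne_n\in K_A(t+h_n)$. Then
\[
 \|y_n-x^\ast\|\leq r(t)+h_n\bigl(c(t)(1+\|x^\ast\|+r(t))+\|y^\ast\|\bigr)+o(h_n).
\]
So I need $r(t+h_n)-r(t)\geq h_n\bigl(c(t)(1+\|x^\ast\|+r(t))+\|y^\ast\|+\eta\bigr)$ for some $\eta>0$ and all small $h_n$. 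A pointwise derivative inequality fails for an $L^1$ coefficient at arbitrary $t$, and this is exactly why Proposition~\ref{c-hat} is needed. The majorant $\widehat c^{\,\circ}\geq c(t)$ holds on a neighbourhood of $t$ only for the extended-valued $\widehat c$, so I will work with $\widehat c$ in the exponent; it is finite a.e. and integrable, hence $r$ is still finite and continuous. The additive constant $+1$ in the integrand, i.e.\ a term $\int_t^{t+h}1\,ds=h$, supplies the strict margin $\eta$, and then $y_n\in\widetilde K_A(t+h_n)$ for large $n$. If $\|x-x^\ast\|<r(t)$, the same estimate works directly.

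At exceptional times I argue the same way with \eqref{SC2-sequential}. We get $\|y_n-x^\ast\|\leq r(t)+h_n\|y^\ast\|+\int_t^{t+h_n}\psi+h_n\varepsilon_n$, and the $\psi$-term, together with $\|y^\ast\|+1$ in $r'$, must be absorbed by the increment $r(t+h_n)-r(t)$. This is why $\psi$ (merged into $c$) has to appear inside $r$ as an integral rather than pointwise. Hence $y_n\in\widetilde K_A(t+h_n)$ and $d(S(h_n)x,\widetilde K_A(t+h_n))\leq\int_t^{t+h_n}\gamma+o(h_n)$, because $\gamma\geq\psi$. The remaining points are routine: check that the thresholds in $r$ give integrability and finiteness, and use the finite-valued representative so that $\gamma$ is finite-valued.
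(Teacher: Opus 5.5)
Your argument is correct and is essentially the paper's. Both truncate $K$ by a ball whose radius solves a linear comparison equation driven by the local majorant $\widehat c$ plus a unit margin, and both verify the regular and exceptional conditions in their sequential forms. The only difference is that you keep the centre fixed at $x^\ast\in D(A)$ and pay with the drift $\|y^\ast\|$ from the integral-solution inequality, whereas the paper moves the centre along $S(t)x_\ast$ so that contractivity alone controls $S(h)x$ and $S_z(h)x$. Make sure the unit term actually sits in the integrand defining $r$, since your displayed formula puts the $+1$ only in the initial value and that term is what absorbs the remainders $h_n\|e_n\|$ and $h_n\varepsilon_n$.
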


\begin{proof}
Put $m:=c+\psi$.  Then \eqref{linear-growth-approx} remains true with $m$
in place of $c$, and \eqref{linear-exceptional-approx} remains true with
$m$ in place of $\psi$.  Choose $x_*\in D(A)$ and $R_0>1$ such that
$D\subset\overline B_{R_0}(x_*)$.  Let $r$ be the absolutely continuous
solution of
\begin{equation}\label{moving-radius-equation}
 r'(t)=1+m(t)+\widehat m^{\,\circ}(t)
 \bigl(1+r(t)+\|S(t)x_*\|\bigr),
 \qquad r(0)=R_0,
\end{equation}
and set
$\widetilde K(t):=K(t)\cap\overline B_{r(t)}(S(t)x_*)$.
Then $D\subset\widetilde K_A(0)$ and
$\Gr(\widetilde K_A)$ is closed from the left.  Since $r$ and the
semigroup orbit are bounded on $J$, $\Gr(\widetilde K)$ is bounded and
there is $M\geq1$ such that
\[
 \|f(t,x)\|
 \leq m(t)(1+r(t)+\|S(t)x_*\|)
 \leq Mm(t)
 \quad\text{for all }t\in J\text{ and }x\in\widetilde K_A(t).
\]
Set $\gamma:=Mm$.

Fix $t\notin N$, $x\in\widetilde K_A(t)$, and put $z=f(t,x)$.  There are
$h_n\downarrow0$ and $e_n\to0$ such that
$y_n:=S_z(h_n)x+h_ne_n\in K_A(t+h_n)$.  Proposition~\ref{mild-difference}
gives
\begin{align*}
 \|y_n-S(t+h_n)x_*\|
 &\leq r(t)+h_n\|z\|+h_n\|e_n\|\\
 &\leq r(t)+h_nm(t)
  (1+r(t)+\|S(t)x_*\|)+h_n\|e_n\|.
\end{align*}
By the local-majorant property of $\widehat m$, the equality of the
integrals of $\widehat m$ and $\widehat m^{\,\circ}$, and continuity of
$r$ and the semigroup orbit, \eqref{moving-radius-equation} implies
\[
 \liminf_{h\downarrow0}\frac{r(t+h)-r(t)}h\geq
 1+m(t)(1+r(t)+\|S(t)x_*\|).
\]
Since $e_n\to0$, it follows for all sufficiently large $n$ that
$\|y_n-S(t+h_n)x_*\|\leq r(t+h_n)$.  Hence
$y_n\in\widetilde K_A(t+h_n)$ and
$z\in T^A_{\widetilde K}(t,x)$.

If $t\in N$ and $x\in\widetilde K_A(t)$, choose
$h_n\downarrow0$, $\eta_n\downarrow0$, and $y_n\in K_A(t+h_n)$ with
$\|y_n-S(h_n)x\|\leq\int_t^{t+h_n}m(s)\,ds+h_n\eta_n$.
Then
$\|y_n-S(t+h_n)x_*\|\leq
r(t)+\int_t^{t+h_n}m(s)\,ds+h_n\eta_n$.
Equation~\eqref{moving-radius-equation} gives
$r(t+h)-r(t)\geq h+\int_t^{t+h}m(s)\,ds$.
For all sufficiently large $n$, $\eta_n\leq1$, and therefore
$y_n\in\widetilde K_A(t+h_n)$.  Thus the exceptional condition holds for
$\widetilde K$ with $m$, and hence also with $\gamma=Mm$.

Finally,\vspace{-0.1in}
\[
 \Gr(\widetilde K_A)
 =\Gr(K_A)\cap
 \{(t,x)\in J\times X:\|x-S(t)x_*\|\leq r(t)\}.
\]
The second set is Borel, since $r$ and $t\mapsto S(t)x_*$ are continuous.
Hence product measurability is inherited.
\end{proof}

\begin{corollary}[Pointwise exceptional bounds]\label{bounded-pointwise-corollary}
In Lemma~\ref{bounded-submultifunction}, the integral exceptional condition
\eqref{linear-exceptional-approx} may be replaced by
\[
 \liminf_{h\downarrow0}h^{-1}d(S(h)x,K_A(t+h))\leq q(t)
 \quad\text{for all }t\in N\text{ and }x\in K_A(t),
\]
where $q\in L^1_+(J)$ is finite-valued.  The same conclusion holds.
\end{corollary}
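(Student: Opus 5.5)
The plan is to reduce the corollary to Lemma~\ref{bounded-submultifunction} by converting the pointwise exceptional bound into the integral condition \eqref{linear-exceptional-approx}, as in Remark~\ref{pointwise-exceptional-remark}. I would set $\psi:=\widehat q^{\,\circ}$, the finite-valued representative of the local majorant from Proposition~\ref{c-hat}. This $\psi$ is finite-valued and lies in $L^1_+(J)$, so it is an admissible exceptional coefficient for the lemma.

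To verify \eqref{linear-exceptional-approx}, fix $t\in N$ and $x\in K_A(t)$. The pointwise hypothesis gives $h_n\downarrow0$ with $d(S(h_n)x,K_A(t+h_n))\leq h_n q(t)+o(h_n)$. By Proposition~\ref{c-hat}, $q(t)\leq\widehat q(s)$ for all $s$ in a right neighbourhood $[t,t+\delta(t))$. Hence, for large $n$,
\[
 h_nq(t)\leq\int_t^{t+h_n}\widehat q(s)\,ds=\int_t^{t+h_n}\widehat q^{\,\circ}(s)\,ds ,
\]
where the equality holds because the two functions agree almost everywhere. Therefore
\[
 h_n^{-1}\Bigl(d(S(h_n)x,K_A(t+h_n))-\int_t^{t+h_n}\psi(s)\,ds\Bigr)^+\leq o(1),
\]
so the $\liminf$ vanishes.

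All remaining hypotheses of Lemma~\ref{bounded-submultifunction} are unchanged, so the lemma applies directly and gives the same conclusion. Its output uses the combined coefficient $c+\psi$ together with a common $\gamma$.

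The only point requiring care is that $q(t)$ at a single exceptional time must be dominated by an integral. This cannot be done with $q$ itself, since $N$ is null and $q$ could be large exactly there. The local-majorant property of $\widehat q$ resolves this: it dominates $q(t)$ on a whole right neighbourhood of $t$.
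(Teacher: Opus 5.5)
Your proposal is correct and takes the same route as the paper. The paper also uses Remark~\ref{pointwise-exceptional-remark} and Proposition~\ref{c-hat} to turn the pointwise bound into \eqref{linear-exceptional-approx} with $\psi=\widehat q^{\,\circ}$, and then applies Lemma~\ref{bounded-submultifunction}; you have simply written out the local-majorant estimate that the paper leaves implicit.
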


\begin{proof}
By Remark~\ref{pointwise-exceptional-remark} and
Proposition~\ref{c-hat}, the pointwise estimate implies
\eqref{linear-exceptional-approx} with the finite-valued representative
$\widehat q^{\,\circ}$ in place of $\psi$.  Apply
Lemma~\ref{bounded-submultifunction}.
\end{proof}

\begin{definition}[Bounded effective-tube setting]\label{bounded-effective-setting}
We say that $(A,K,f)$ satisfies the \emph{bounded effective-tube setting}
on $J=[0,a]$ if $\Gr(K_A)$ is bounded and closed from the left, and there
are a null set $N\subset[0,a)$ and a finite-valued
$c\in L^1_+(J)$ such that
\begin{align}
 &\|f(t,x)\|\leq c(t)
 &&\text{for all }t\in J\text{ and }x\in K_A(t),
 \label{bounded-setting-forcing}\\
 &f(t,x)\in T_K^A(t,x)
 &&\text{for all }t\in[0,a)\setminus N
   \text{ and }x\in K_A(t).
 \label{bounded-setting-regular}
\end{align}
Moreover,
\begin{equation}\label{bounded-setting-exceptional}
 \begin{gathered}
 \displaystyle
 \liminf_{h\downarrow0}\frac1h
 \left(d(S(h)x,K_A(t+h))-\int_t^{t+h}c(s)\,ds\right)^+=0
 \quad\text{for all }t\in N,\ x\in K_A(t).
 \end{gathered}
\end{equation}
The same symbol $c$ is deliberately used in both estimates.
\end{definition}

The bounded effective-tube setting is inherited under restriction to every
compact subinterval $J_0=[\alpha,\beta]\subset J$.  When such a restricted
problem is considered, we use the same terminology with the time interval
understood to be $J_0$, the common bound replaced by $c|_{J_0}$, and the
exceptional set replaced by $N\cap[\alpha,\beta)$.

Nonemptiness of every effective section is intentionally not part of
Definition~\ref{bounded-effective-setting}; it is a consequence of the conditions above
once a nonempty effective
section at the left endpoint of the interval under consideration is
prescribed.

By Lemma~\ref{bounded-submultifunction}, every problem satisfying the
linear-growth and regular/exceptional subtangential hypotheses of
Section~\ref{s:subt} reduces, for each bounded prescribed set of initial
values, to this setting.  After applying
that lemma, we relabel the bounded effective tube $\widetilde K_A$ as $K_A$
(and its underlying constraint $\widetilde K$ as $K$), and relabel its common
bound as $c$.  This convention avoids carrying additional tildes in the
remaining arguments.

\section{Geometric continuation and right lower semicontinuity}\label{s:continuation}

Under the bounded effective-tube setting of
Definition~\ref{bounded-effective-setting}, the same coefficient controls
both regular and exceptional steps.  The following geometric lemma produces a set $P$ of constrained nodes
at each of which the auxiliary path can be compared with the mild
evolution restarted there.

\begin{lemma}[Geometric continuation under the subtangential conditions]\label{approx1}
Let $A$ be $m$-accretive in a real Banach space $X$.
Let $K:J=[0,a]\to 2^X$ have closed values with
$K_A(0)\neq\emptyset$.  Assume that $\Gr(K_A)$ is closed from the left.  Suppose that there are a null set $N\subset[0,a)$ and a finite-valued
$c\in L^1_+(J)$ such that
\begin{equation}\label{geometric-regular-tangency}
 c(t)\overline B_1(0)\cap\TAK(t,x)\neq\emptyset
 \qquad\mbox{for all }t\in[0,a)\setminus N\mbox{ and }x\in K_A(t)\vspace{-0.1in}
\end{equation}
and\vspace{-0.1in}
\begin{equation}\label{geometric-exceptional-tangency}
 \liminf_{h\downarrow0}\frac1h
 \left(
 d(S(h)x,K_A(t+h))-\int_t^{t+h}c(s)\,ds
 \right)^+=0
 \quad\mbox{for all }t\in N,\, x\in K_A(t).
\end{equation}
Put $\gamma:=1+c$.  Given $0\leq t_0<t_1\leq a$ and
$x_0\in K_A(t_0)$, there exist
a continuous map $v:[t_0,t_1]\to X$,
a strongly measurable $w\in L^1([t_0,t_1];X)$, and
a set $P\subset[t_0,t_1]$ containing $t_0$ and $t_1$, such that $v(t_0)=x_0$ and
\begin{align}
 &v(\tau)\in K_A(\tau) &&\mbox{for all }\tau\in P,
 \label{geometric-nodes}\\[0.5ex]
 &\|w(t)\|\leq\widehat c^{\,\circ}(t) &&\text{for a.e. }t\in[t_0,t_1],
 \label{geometric-w-bound}\\[-0.5ex]
 &\|v(t)-u(t;\tau,v(\tau),w)\|
 \leq\int_\tau^t \gamma(s)\,ds
 &&\mbox{for all }\tau\in P\mbox{ and }\tau\leq t\leq t_1.
 \label{geometric-restart-estimate}
\end{align}
In particular, $K_A(t)\neq\emptyset$ for every $t\in J$.
\end{lemma}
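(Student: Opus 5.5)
We will build the triple $(v,w,P)$ by a transfinite/maximality (Zorn or Brezis--Browder ordering) continuation argument, in the spirit of the construction of $\varepsilon$-approximate solutions in \cite{Bo9,Bo-JEE03}. Call an \emph{admissible triple on $[t_0,s]$} a continuous $v$, a strongly measurable $w$ and a set $P\subset[t_0,s]$ containing $t_0$ and $s$ such that $v(t_0)=x_0$, \eqref{geometric-nodes}, \eqref{geometric-w-bound} and \eqref{geometric-restart-estimate} hold with $t_1$ replaced by $s$. Order such triples by extension. The trivial triple on $[t_0,t_0]$ is admissible. We will show that every chain has an upper bound and that every admissible triple on $[t_0,s]$ with $s<t_1$ admits a proper extension; a maximal element then lives on $[t_0,t_1]$. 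Nonemptiness of every $K_A(t)$ follows by applying this with $t_0=0$, $t_1=t$, since $t\in P$.

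\emph{One-step extension.} Let $(v,w,P)$ be admissible on $[t_0,s]$, $s<t_1$, and $x:=v(s)\in K_A(s)$. If $s\notin N$, pick $z\in c(s)\overline B_1(0)\cap\TAK(s,x)$ by \eqref{geometric-regular-tangency} and use the sequential form \eqref{SC1-sequential}: there are small $h$ and $e$ with $\|e\|\le 1$ and $y:=S_z(h)x+he\in K_A(s+h)$. On $(s,s+h]$ put $w\equiv z$ and $v(s+\sigma):=S_z(\sigma)x+\sigma e$, and add $s+h$ to $P$. Then $\|w\|\le c(s)\le\widehat c(\tau)$ on a neighbourhood of $s$ (Proposition~\ref{c-hat}), hence $\le\widehat c^{\,\circ}$ a.e.\ once $h<\delta(s)$. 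If $s\in N$, use \eqref{SC2-sequential}: put $w\equiv0$, and let $v$ interpolate linearly the correction, $v(s+\sigma)=S(\sigma)x+\frac{\sigma}{h}(y-S(h)x)$, where $\|y-S(h)x\|\le\int_s^{s+h}c+h\varepsilon_n$. In both cases we verify \eqref{geometric-restart-estimate} on the new piece: for $\tau\in P$, $u(t;\tau,v(\tau),w)=u(t;s,u(s;\tau,v(\tau),w),w)$, and by Proposition~\ref{mild-difference} its distance to $S_w(t-s)x$ is at most $\|u(s;\tau,\dots)-v(s)\|\le\int_\tau^s\gamma$. The added error $\|v(t)-S_w(t-s)x\|$ is at most $(t-s)\|e\|\le\int_s^t1$ in the regular case. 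In the exceptional case it is at most $\frac{t-s}{h}(\int_s^{s+h}c+h\varepsilon_n)$; this is the delicate point, since linear interpolation of $\int c$ need not be bounded by $\int_s^t c$. I would repair this by interpolating with the primitive instead, $v(s+\sigma)=S(\sigma)x+\theta(\sigma)(y-S(h)x)$ with $\theta(\sigma)=\frac{\int_s^{s+\sigma}(c+\varepsilon_n)}{\int_s^{s+h}(c+\varepsilon_n)}$, which gives an error $\le\int_s^t(c+\varepsilon_n)\le\int_s^t\gamma$ once $\varepsilon_n\le1$. Continuity of $v$ is then clear.

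\emph{Chains.} Given a chain with right endpoints $s_i\nearrow s^*$, define $v,w$ on $[t_0,s^*)$ and $P$ as the union. The uniform bound \eqref{geometric-restart-estimate} with $\tau=t_0$ together with Proposition~\ref{mild-difference} shows that $v$ is Cauchy as $t\nearrow s^*$: $\|v(t)-v(t')\|\le 2\int_{t}^{t'}\gamma+\|u(t)-u(t')\|$, where $u=u(\cdot;t_0,x_0,w)$ is continuous and the first term uses restarting at a node $\tau\le t$ close to $t$. The main obstacle is that nodes must accumulate at $s^*$; without this the restart estimate from a node far from $s^*$ would not control $v$. In the chain case this holds automatically because each $s_i\in P$. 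Setting $v(s^*)$ to be the limit, closedness from the left \eqref{graph-closed-from-left} gives $v(s^*)\in K_A(s^*)$, so $s^*$ can be added to $P$, and all estimates pass to the limit by continuity. Finally, a maximal element on $[t_0,s]$ with $s<t_1$ would contradict the one-step extension, so $s=t_1$. Zorn's lemma can be avoided by the standard Brezis--Browder choice of steps of near-maximal length, but the argument above suffices.
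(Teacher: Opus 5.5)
Your proposal is correct and is essentially the paper's proof: Zorn's lemma on extension-ordered admissible objects, a one-step extension from the sequential regular and exceptional conditions with the endpoint correction spread by a density, and chain completion via left-closedness using the fact that the chain endpoints are nodes. Your primitive-based $\theta$ in the exceptional case is the paper's $p_b=\gamma\xi/R$ with $R=\int_b^{b+h}\gamma$, which treats both cases at once and keeps $R\geq h>0$.

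The only slip is the displayed Cauchy estimate in the chain step. It should use the solution restarted at a node $s_i$ near $s^*$, namely $\|v(t)-v(t')\|\leq 2\int_{s_i}^{s^*}\gamma+\|u(t;s_i,v(s_i),w)-u(t';s_i,v(s_i),w)\|$, rather than $u(\cdot;t_0,x_0,w)$; this is what your verbal remark already intends.
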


\begin{proof}
It suffices to construct the objects on $[t_0,t_1]$; all integrals and mild
solutions in the proof are restricted to this interval.

Fix $(b,x)$ with
$t_0\leq b<t_1$ and $x\in K_A(b)$.  If $b\notin N$, choose
$z\in\TAK(b,x)$ with $\|z\|\leq c(b)$.  By
Proposition~\ref{c-hat}, after reducing the step size we have
$c(b)\leq\widehat c(s)$ near $b$.  The sequential form of subtangentiality gives
$h>0$, $b+h\leq t_1$, and $x_1\in K_A(b+h)$ such that $\xi:=x_1-S_z(h)x$ satisfies $\|\xi\|\leq h$.
If $b\in N$, condition \eqref{geometric-exceptional-tangency} gives,
after reducing $h$ if necessary, an $x_1\in K_A(b+h)$ such that
$\xi:=x_1-S(h)x$ satisfies
$\|\xi\|\leq h+\int_b^{b+h}c(s)\,ds$.  In the first case put $q(s)=S_z(s-b)x$ and $w_b(s)=z$; in the second
put $q(s)=S(s-b)x$ and $w_b(s)=0$.  In both cases
$R:=\int_b^{b+h}\gamma(s)\,ds\geq\|\xi\|$.  Define
\[
 p_b(s):=
 \begin{cases}
  \gamma(s)\xi/R,&R>0,\\
  0,&R=0,
 \end{cases}
 \qquad
 v_b(s):=q(s)+\int_b^s p_b(\sigma)\,d\sigma.
\]
Then $\|p_b(s)\|\leq\gamma(s)$ a.e.,
$v_b(b)=x$, and $v_b(b+h)=x_1\in K_A(b+h)$.  Moreover,
\begin{equation}\label{local-geometric-error}
 \|v_b(s)-u(s;b,x,w_b)\|
 \leq\int_b^s \gamma(\sigma)\,d\sigma
 \qquad\mbox{for all }s\in[b,b+h].
\end{equation}

We apply Zorn's lemma.  Let $\mathcal M$ consist of all quadruples
$(v,w,P,b)$ such that $b\in[t_0,t_1]$,
$v\in C([t_0,b];X)$, $w\in L^1([t_0,b];X)$,
$P\subset[t_0,b]$, and $t_0,b\in P$, and \eqref{geometric-nodes}--\eqref{geometric-restart-estimate} hold on
$[t_0,b]$, with $b$ in place of $t_1$ and with $v(t_0)=x_0$.  The
degenerate object with $b=t_0$ belongs to $\mathcal M$.  Order $\mathcal M$ by
extension: $(v,w,P,b)\preceq(\bar v,\bar w,\bar P,\bar b)$ if
$b\leq\bar b$, the functions agree on the common interval (a.e.\ for $w$),
and $P\subset\bar P$.

Every chain has an upper bound.  Only the case in which the supremum
$b^*$ of its terminal times is not attained needs explanation.  Choose an
increasing sequence of chain elements $(v_n,w_n,P_n,b_n)$ such that
$b_n\uparrow b^*$.  Every element of the chain is then extended by one of
these selected elements.  Their consistency defines $v^*,w^*$ on
$[t_0,b^*)$ and $P^*=\bigcup_nP_n$.  After modifying the representatives on a common null set, the $w_n$ agree on their overlaps; hence their piecewise definition on the successive intervals is strongly measurable.
The bound \eqref{geometric-w-bound} gives $w^*\in L^1([t_0,b^*];X)$.  
For $\tau=b_n$ and $t\in[b_n,b^*)$,
\eqref{geometric-restart-estimate} yields the pointwise estimate
\begin{equation}\label{geometric-tail-estimate}
 \|v^*(t)-u(t;b_n,v^*(b_n),w^*)\|
 \leq\int_{b_n}^{t}\gamma(s)\,ds
 \leq\int_{b_n}^{b^*}\gamma(s)\,ds.
\end{equation}
We show that $v^*(t)$ has a limit as $t\uparrow b^*$.  Let $\eta>0$
and choose $n$ so large that
$2\int_{b_n}^{b^*}\gamma(s)\,ds<\eta/2$.  The mild solution
$t\mapsto u(t;b_n,v^*(b_n),w^*)$ is uniformly continuous on
$[b_n,b^*]$.  Hence, for $s,t\in[b_n,b^*)$ sufficiently close to $b^*$,
\[
 \|v^*(s)-v^*(t)\|
\leq 2\int_{b_n}^{b^*}\gamma(r)\,dr +\|u(s;b_n,v^*(b_n),w^*)
          -u(t;b_n,v^*(b_n),w^*)\|<\eta.
\]
Thus $v^*(t)$ has a limit as $t\uparrow b^*$; define $v^*(b^*)$ to be this limit.
Since $b_n\in P_n$ and $v^*(b_n)\in K_A(b_n)$, left-closedness of
$\Gr(K_A)$ yields $v^*(b^*)\in K_A(b^*)$.  Add $b^*$ to $P^*$ and pass to
the limit in \eqref{geometric-restart-estimate}.  This produces an
upper bound for the chain.

Let $(v,w,P,b)$ be maximal.  If $b<t_1$, apply the local construction
above at $(b,v(b))$, extend by $w\oplus w_b$ and $v\oplus v_b$, and add
the new terminal point $b+h$ to $P$; here $\oplus$ denotes concatenation.  For every
$\tau\in P$ and $s\in[b,b+h]$, the semigroup property and contractivity give
\begin{align*}
 &\|v_b(s)-u(s;\tau,v(\tau),w\oplus w_b)\| \leq
 \|v_b(s)-u(s;b,v(b),w_b)\|
 +\|v(b)-u(b;\tau,v(\tau),w)\|\\
 &\quad\leq\int_b^s \gamma(\sigma)\,d\sigma
       +\int_\tau^b \gamma(\sigma)\,d\sigma.
\end{align*}
The resulting object is a strict extension, a contradiction.
Consequently $b=t_1$.  

Taking $t_0=0$ and arbitrary $t_1$ proves the last assertion.
\end{proof}

\begin{definition}[Lower semicontinuity from the right]
\label{def:right-lsc}
Let $K:J=[0,a]\to2^X\setminus\{\emptyset\}$ have closed values.  We say
that $K$ is \emph{lower semicontinuous from the right} at $t<a$ if,
for every $x\in K(t)$ and every open neighbourhood $U$ of $x$, there
is $\delta>0$ such that $K(s)\cap U\neq\emptyset$ for all
$s\in J$ with $t\leq s<t+\delta$.  It is lower semicontinuous from the
right on $J$ if this holds at every $t<a$.  In the metric space $X$, this
is equivalent to
$\lim_{s\downarrow t}d(x,K(s))=0$ for all $t<a$ and $x\in K(t)$.
This is distinct from the sequential upper semicontinuity from the left
expressed by \eqref{graph-closed-from-left}.
\end{definition}

\begin{corollary}
\label{right-persistence-corollary}
Under the assumptions of Lemma~\ref{approx1}, the set-valued map
$t\mapsto K_A(t)$ is lower semicontinuous from the right.  
\end{corollary}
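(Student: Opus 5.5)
We plan to deduce the corollary directly from the estimate \eqref{geometric-restart-estimate} of Lemma~\ref{approx1}, applied with the node $\tau=t_0$. Fix $t<a$ and $x\in K_A(t)$. By the last assertion of Lemma~\ref{approx1}, every $K_A(s)$ is nonempty, so the definition of right lower semicontinuity makes sense. In its metric form we must show $\lim_{s\downarrow t}d(x,K_A(s))=0$.

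The key step handles each $s\in(t,a]$ separately. We apply Lemma~\ref{approx1} with $t_0=t$, $t_1=s$ and $x_0=x$. This yields $v,w,P$ with $t,s\in P$. Hence $v(s)\in K_A(s)$ by \eqref{geometric-nodes}. Taking $\tau=t\in P$ in \eqref{geometric-restart-estimate} gives
\[
 \|v(s)-u(s;t,x,w)\|\leq\int_t^s\gamma(\sigma)\,d\sigma,
 \qquad \gamma=1+c .
\]
It remains to bound $\|u(s;t,x,w)-x\|$ uniformly in the choice of the construction; this is the only real point. By Proposition~\ref{mild-difference} we compare with the unforced orbit:
\[
 \|u(s;t,x,w)-S(s-t)x\|\leq\int_t^s\|w(\sigma)\|\,d\sigma
 \leq\int_t^s\widehat c^{\,\circ}(\sigma)\,d\sigma,
\]
using \eqref{geometric-w-bound}. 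The bound $\widehat c^{\,\circ}$ is a fixed integrable function that is independent of $s$ and of the particular $(v,w,P)$.

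Combining the two estimates, we obtain
\[
 d(x,K_A(s))\leq\|x-v(s)\|\leq\|S(s-t)x-x\|+\int_t^s\bigl(1+c+\widehat c^{\,\circ}\bigr)(\sigma)\,d\sigma .
\]
As $s\downarrow t$, the first term tends to zero by strong continuity of the semigroup, since $x\in\overline{D(A)}$. The integral tends to zero by absolute continuity of the Lebesgue integral. This proves $\lim_{s\downarrow t}d(x,K_A(s))=0$, which is the assertion.

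The main obstacle is ensuring that the construction from Lemma~\ref{approx1} is applied on the interval $[t,s]$ for each $s$, and not once on $[t,a]$. On a single interval $[t,a]$, only the nodes in $P$ are known to be constrained, and $s$ need not belong to $P$. Choosing $t_1=s$ forces $s\in P$. Because all the bounds are uniform, this choice costs nothing.
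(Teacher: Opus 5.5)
Your proposal is correct and is the paper's argument: apply Lemma~\ref{approx1} on $[t,s]$ (the paper writes $[t,t+h]$) so that the terminal node lies in $K_A(s)$. Then combine \eqref{geometric-restart-estimate} at $\tau=t$ with Proposition~\ref{mild-difference} and \eqref{geometric-w-bound} to obtain $d(x,K_A(s))\leq\|S(s-t)x-x\|+\int_t^s(1+c+\widehat c^{\,\circ})(\sigma)\,d\sigma$, which tends to zero as $s\downarrow t$.
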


\begin{proof}
Fix $t<a$ and $x\in K_A(t)$ and apply Lemma~\ref{approx1} on $[t,t+h]$
with initial value $x$.  Its terminal node belongs to $K_A(t+h)$, and
Proposition~\ref{mild-difference}, \eqref{geometric-w-bound}, and
\eqref{geometric-restart-estimate} give
\[
 d(x,K_A(t+h))\leq\|x-S(h)x\|
 +\int_t^{t+h}\bigl(1+c(s)+\widehat c^{\,\circ}(s)\bigr)\,ds.
\]
The right-hand side tends to zero with $h\downarrow0$.
\end{proof}

\section{Reduction to a separable subspace}
\label{s:separable-reduction}

Any viable mild solution is contained in a separable resolvent-invariant
subspace.  Its compact range and a separable essential range of its forcing have
separable closed linear span.  Repeatedly adjoining the images of the resulting
subspaces under the resolvents and taking the closure yields a closed separable
subspace in which the corresponding part of $A$ is $m$-accretive.  This section
constructs such a subspace directly from the data, before a solution is known.

The bounded reduction has already been carried out in
Section~\ref{s:bounded-subtube}, and Section~\ref{s:continuation} supplies
right lower semicontinuity of the bounded effective tube.  The present
section therefore has two tasks: construct
separably valued almost minimizing selections at every time, and then add
their ranges, the corresponding forcing values, and the resolvent images
iteratively.  This yields a closed separable subspace $Y$ such that, with
$K_Y(t):=K_A(t)\cap Y$,
\[
 J_\lambda Y\subset Y\quad\text{for all }\lambda>0,\quad
 d_Y(y,K_Y(t))=d_X(y,K_A(t))\quad\text{for all }t\in J,\, y\in Y,
\]
and the forcing is $Y$-valued on the reduced graph outside one null set.
Whenever the original space $X$ and the reduced space $Y$ occur simultaneously,
we call $X$ the \emph{ambient space}. Correspondingly, \emph{ambient} objects
are those of the original problem in $X$, whereas \emph{reduced} objects are
their restrictions or counterparts in $Y$.  The above
all-time distance identity then permits the regular and exceptional
subtangential conditions to be transferred to the part
$A_Y=A\cap(Y\times Y)$, with the exceptional null set enlarged to account
for the modification of the forcing; operators are here identified with their
graphs.

\subsection{One-sided almost minimizing selections}

Equip $J$ with the lower-limit topology $\tau_+$ generated by the
sets $[t,t+h)\cap J$.  Thus a multifunction is lower semicontinuous from the
right in the sense of Definition~\ref{def:right-lsc} precisely when it
is lower semicontinuous on $(J,\tau_+)$.  The space $(J,\tau_+)$ is a subspace
of the Sorgenfrey line.  It is zero-dimensional, regular, and hereditarily
Lindel\"of; consequently all its subspaces are paracompact.  Moreover, every
$\tau_+$-open subset of $J$ is Borel for the usual topology; see
\cite{Engelking}.  The use of this stronger one-sided topology is related to
the selection principle of Bressan and Colombo~\cite{BressanColombo92}, which
seeks selections continuous for an additional topology stronger than the
original one.  Here the lower-limit topology is itself the topology of lower
semicontinuity, so we apply Michael's zero-dimensional selection theorem
directly: a lower semicontinuous map from a zero-dimensional paracompact space
into the family of nonempty closed subsets of a completely metrizable space
admits a continuous selection~\cite[Theorem~1.2]{Michael56}.  Convexity of the values is not required.

For maps from $J$ into a metric space, continuity with respect to
$\tau_+$ is equivalent to continuity from the right in the usual order sense:
$s\to t+$ implies $p(s)\to p(t)$ for every $t<a$.  Hence every right lower
semicontinuous multifunction with nonempty closed values admits such a
right-continuous selection $p:J\to X$.  The selection is Borel measurable for
the usual topology.  Its range is separable because $(J,\tau_+)$ is
Lindel\"of and every Lindel\"of metric space is separable; consequently $p$ is
strongly measurable.  The argument below applies Michael's theorem to
truncated multifunctions in order to obtain selections that are additionally
almost minimizing with respect to a prescribed point.

\begin{lemma}[Upper semicontinuity from the right and measurability of distances]
\label{right-distance-measurability}
Let $K:J=[0,a]\to\PX$ be lower semicontinuous from the right and let $q\in X$.
Then $d_q:J\to \R$ with $d_q(t):=d(q,K(t))$ is upper semicontinuous from the right and Borel
measurable.
\end{lemma}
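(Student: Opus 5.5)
We prove the two assertions in turn; the second will follow from the first by a general fact about one-sided upper semicontinuous functions on an interval.

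\emph{Upper semicontinuity from the right.} We must show that $\limsup_{s\downarrow t}d_q(s)\leq d_q(t)$ for every $t<a$. Fix $t<a$ and $\eta>0$. Choose $x\in K(t)$ with $\|q-x\|\leq d_q(t)+\eta$. By Definition~\ref{def:right-lsc}, in its metric form, there is $\delta>0$ such that $d(x,K(s))\leq\eta$ for all $s\in[t,t+\delta)\cap J$. Since $x\mapsto d(x,K(s))$ is $1$-Lipschitz,
\[
 d_q(s)\leq\|q-x\|+d(x,K(s))\leq d_q(t)+2\eta
\]
for all such $s$. As $\eta>0$ is arbitrary, the claim follows. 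Note that the values are nonempty, so $d_q$ is finite.

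\emph{Borel measurability.} It suffices to show that each superlevel set $U_\alpha:=\{t\in J:d_q(t)<\alpha\}$ is Borel. By the first step, $U_\alpha$ is open in the lower-limit topology $\tau_+$: if $t\in U_\alpha$ with $t<a$, then some $[t,t+\delta)\cap J$ is contained in $U_\alpha$, while the point $a$ is $\tau_+$-isolated. The cited fact from \cite{Engelking}, that every $\tau_+$-open subset of $J$ is Borel for the usual topology, then finishes the argument. Since $d_q$ is upper semicontinuous for $\tau_+$, the sets $\{d_q<\alpha\}$ are $\tau_+$-open, so they are Borel, and hence $d_q$ is Borel measurable.

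For completeness, we also sketch the elementary argument behind that fact, in case one wants to avoid the citation. Write a $\tau_+$-open set $U$ as $U=\operatorname{int}U\cup(U\setminus\operatorname{int}U)$, with the interior taken in the usual topology. Every $t\in U\setminus\operatorname{int}U$ has some $[t,t+\delta_t)\subset U$, and these half-open intervals are pairwise disjoint for distinct such $t$. Indeed, if $t<t'$ and $t'<t+\delta_t$, then $t'$ would lie in $(t,t+\delta_t)\subset\operatorname{int}U$. Choosing a rational point in each interval shows that $U\setminus\operatorname{int}U$ is countable, so $U$ is Borel.

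The only step that needs care is this passage from one-sided openness to Borel sets. The countability argument settles it, so there is no genuine obstacle.
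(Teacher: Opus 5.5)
Your proof is correct and follows the paper's argument essentially verbatim: the same near-minimizer choice combined with the $1$-Lipschitz estimate gives right upper semicontinuity, and Borel measurability then comes from the $\tau_+$-openness of the strict sublevel sets $\{d_q<\alpha\}$ (which you mislabel as ``superlevel'' sets). Your countability sketch, showing that a $\tau_+$-open set differs from its usual interior by a countable set, is a valid self-contained replacement for the citation of \cite{Engelking} on which the paper relies.
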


\begin{proof}
Fix $t<a$ and $\varepsilon>0$.  Choose $x\in K(t)$ with
$\|q-x\|<d_q(t)+\varepsilon$.  Lower semicontinuity from the right gives $\delta>0$
such that $d(x,K(s))<\varepsilon$ for $t\leq s<t+\delta$.  Hence $d_q(s)\leq\|q-x\|+d(x,K(s))<d_q(t)+2\varepsilon$, which proves the first assertion.  The strict sublevel sets of $d_q$ are
$\tau_+$-open and therefore Borel in the usual topology.
\end{proof}

\begin{theorem}[Almost minimizing selections]
\label{thm:almost-minimizing-selection}
Let $X$ be an arbitrary Banach space and let $K:J=[0,a]\to\PX$ be lower semicontinuous from the right
with closed values.  For every $q\in X$ and $\eta>0$
there is a strongly measurable map $p_{q,\eta}:J\to X$ whose range is
separable and such that
\begin{equation}\label{almost-minimizing-selection}
 p_{q,\eta}(t)\in K(t),\qquad
 \|p_{q,\eta}(t)-q\|\leq d(q,K(t))+\eta\quad\text{for all }t\in J.
\end{equation}
\end{theorem}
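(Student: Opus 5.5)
The plan is to apply Michael's zero-dimensional selection theorem \cite[Theorem~1.2]{Michael56} on the space $(J,\tau_+)$ to a truncated multifunction. Fix $q\in X$ and $\eta>0$, put $d_q(t):=d(q,K(t))$, and define
\[
 \Phi(t):=\overline{K(t)\cap B_{d_q(t)+\eta}(q)} .
\]
Since $K(t)$ is closed, $\Phi(t)\subset K(t)\cap\overline B_{d_q(t)+\eta}(q)$, so every selection of $\Phi$ satisfies \eqref{almost-minimizing-selection}. Moreover, $\Phi(t)\neq\emptyset$ because some point of $K(t)$ lies strictly within distance $d_q(t)+\eta$ of $q$, and $\Phi(t)$ is closed in the complete metric space $X$.

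The main step is to show that $t\mapsto K(t)\cap B_{d_q(t)+\eta}(q)$ is lower semicontinuous from the right. Taking closures does not affect lower semicontinuity, so this also gives it for $\Phi$. Fix $t<a$, a point $x\in K(t)$ with $\|x-q\|<d_q(t)+\eta$, and $\varepsilon>0$. We may assume that $\varepsilon$ is small enough that $\|x-q\|+\varepsilon<d_q(t)+\eta$. By right lower semicontinuity of $K$ there are $\delta>0$ and, for each $s\in[t,t+\delta)$, a point $y_s\in K(s)$ with $\|y_s-x\|<\varepsilon$. We then need $\|y_s-q\|<d_q(s)+\eta$. Here lies the obstacle: by Lemma~\ref{right-distance-measurability}, $d_q$ is only upper semicontinuous from the right, so $d_q(s)$ may drop strictly below $d_q(t)$, and the bound $\|y_s-q\|<\|x-q\|+\varepsilon$ need not beat $d_q(s)+\eta$.

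I would handle this by first taking an approximating lower semicontinuous radius. Since $d_q$ is $\tau_+$-upper semicontinuous and $(J,\tau_+)$ is paracompact, hence perfectly normal here (it is hereditarily Lindel\"of and regular), I would insert a $\tau_+$-continuous function $\rho$ with
\[
 d_q(t)+\eta/3\leq\rho(t)\leq d_q(t)+2\eta/3 .
\]
Such a $\rho$ exists by the Kat\v{e}tov--Tong insertion theorem applied to the usc function $d_q+\eta/3$ and the lsc function $d_q+2\eta/3$. Alternatively, one can build $\rho$ directly through a locally finite $\tau_+$-open partition of unity. I would then replace the radius $d_q(t)+\eta$ by $\rho(t)$, setting $\Phi(t):=\overline{K(t)\cap B_{\rho(t)}(q)}$. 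This set is nonempty since $\rho(t)>d_q(t)$. Lower semicontinuity from the right now follows as above: $\|y_s-q\|<\|x-q\|+\varepsilon$, and the strict inequality $\|x-q\|<\rho(t)$, together with right continuity of $\rho$, gives $\|x-q\|+\varepsilon<\rho(s)$ for $s$ close to $t$ and $\varepsilon$ small. For this $\Phi$, the bound $\Phi(t)\subset K(t)\cap\overline B_{\rho(t)}(q)$ still yields \eqref{almost-minimizing-selection}.

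Michael's theorem therefore gives a $\tau_+$-continuous selection $p_{q,\eta}$ of $\Phi$, that is, a selection that is right continuous in the usual sense. As observed before Lemma~\ref{right-distance-measurability}, such a map is Borel measurable for the usual topology, because preimages of open sets are $\tau_+$-open and hence Borel. Its range is separable, being a continuous image of the Lindel\"of space $(J,\tau_+)$ in a metric space. Hence $p_{q,\eta}$ is strongly measurable by Pettis' theorem. The main point to check carefully is the insertion of $\rho$; if that step is awkward, a fallback is to use a countable locally finite refinement by half-open intervals together with local constants $d_q(t_i)+\eta/2$, exploiting the fact that $d_q$ is upper semicontinuous from the right.
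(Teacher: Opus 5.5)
There is a genuine gap, and it is exactly at the step you flagged. The Kat\v{e}tov--Tong theorem inserts a continuous function between a usc function below and an lsc function above. Here the upper function $d_q+2\eta/3$ is again only $\tau_+$-upper semicontinuous, so the theorem does not apply.

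No repair of this kind is possible. Your argument needs a radius $\rho$ with $d_q<\rho\le d_q+\eta$ that is lower semicontinuous from the right, and such a $\rho$ need not exist. Take $q=0$, $\|e\|=1$, and set $K(t)=\{e\}$ for $t\le a/2$ and $K(t)=\{0,e\}$ for $t>a/2$. This $K$ has closed values and is lower semicontinuous from the right, yet $d_q=1$ on $[0,a/2]$ and $d_q=0$ on $(a/2,a]$. For $\eta<1$, every selection satisfying \eqref{almost-minimizing-selection} equals $e$ on $[0,a/2]$ and $0$ on $(a/2,a]$, so it is not right continuous at $a/2$.

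Hence no $\tau_+$-continuous almost minimizing selection exists in general. Any construction that ends with Michael's theorem applied on the whole space $(J,\tau_+)$ must fail. This also rules out your fallback, for two reasons. Gluing continuous selections over a clopen half-open refinement again gives a $\tau_+$-continuous map. Moreover, the local constants $d_q(t_i)+\eta/2$ violate the estimate wherever $d_q$ drops to the right of $t_i$, since right upper semicontinuity gives no lower bound on $d_q$ there.

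The missing idea is to give up global right continuity, which the theorem does not claim. Partition $J$ into the level bands $I_n=\{t:(n-1)\delta\le d_q(t)<n\delta\}$ with $\delta=\eta/2$; these are Borel by Lemma~\ref{right-distance-measurability}. Equip each $I_n$ with the topology induced by $\tau_+$; it is still zero-dimensional and paracompact, since $(J,\tau_+)$ is hereditarily Lindel\"of. Apply Michael's theorem separately on each $I_n$ to $t\mapsto K(t)\cap B_{(n+1)\delta}(q)$, with values in the completely metrizable open ball $B_{(n+1)\delta}(q)$.

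Because the truncating ball is fixed on each band, lower semicontinuity on $I_n$ is immediate. Nonemptiness follows from $d_q<n\delta$ on $I_n$, and the selection satisfies $\|p(t)-q\|<(n+1)\delta\le d_q(t)+\eta$. The glued map is not right continuous, but it is Borel measurable with separable range, being a countable union of Lindel\"of images. That is all strong measurability requires. Your remaining steps (Borel measurability of $\tau_+$-continuous maps, separability of Lindel\"of images, and Pettis' theorem) then apply piecewise on the bands.
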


\begin{proof}
Put $\delta=\eta/2$ and
$I_n:=\{t\in J:(n-1)\delta\leq d(q,K(t))<n\delta\}$, $n\geq1$.
By Lemma~\ref{right-distance-measurability}, the sets $I_n$ are Borel and form
a partition of $J$.  Equip each $I_n$ with the topology induced by $\tau_+$
on $J$, put $U_n:=B_{(n+1)\delta}(q)$, and set
$\Phi_n(t):=K(t)\cap U_n$ for $t\in I_n$.  
With its relative norm topology, $U_n$ is completely metrizable.  If
$t\in I_n$, then
$d(q,K(t))<n\delta$, so $K(t)\cap B_{n\delta}(q)\neq\emptyset$ and hence
$\Phi_n(t)\neq\emptyset$.  The values of $\Phi_n$ are closed
relative to $U_n$.  Moreover, $\Phi_n$ is lower semicontinuous with
respect to the topology induced by $\tau_+$: if $O\subset U_n$ is open,
then $\{t\in I_n:\Phi_n(t)\cap O\neq\emptyset\}
=I_n\cap\{t\in J:K(t)\cap O\neq\emptyset\}$ is open in $I_n$ for
this topology.  Michael's theorem therefore gives
a selection $p_n:I_n\to U_n$ that is continuous with respect to the
topology induced by $\tau_+$.

Since $I_n$, equipped with this topology, is Lindel\"of and $p_n$ is
continuous for this topology, its range $p_n(I_n)$ is separable.
Moreover, every set open for the topology induced by $\tau_+$ is Borel
for the relative usual topology on $I_n$.  Hence $p_n$ is Borel
measurable for the relative usual topology.

Define $p_{q,\eta}:=p_n$ on $I_n$.  Since $(I_n)$ is a countable Borel
partition of $J$, the map $p_{q,\eta}$ is Borel measurable and has
separable range, hence it is strongly measurable.  If $t\in I_n$, then
$\|p_{q,\eta}(t)-q\|<(n+1)\delta\leq d(q,K(t))+2\delta=d(q,K(t))+\eta$, which proves \eqref{almost-minimizing-selection}.
\end{proof}

The map $p_{q,\eta}$ may be viewed as a measurable, separably valued
approximate metric projection of $q$ onto the moving sets $K(t)$.  We use the
term almost minimizing selection, rather than ``almost projection'', because
the values $K(t)$ need not be proximinal and no genuine metric projection is
asserted.

\begin{corollary}[Separable enlargement preserving section distances]
\label{one-step-section-distances}
Let $E\subset X$ be nonempty and separable and let $K$ satisfy the assumptions
of Theorem~\ref{thm:almost-minimizing-selection}.  There is a closed separable
subspace $Z\subset X$ containing $E$ such that
\begin{equation}\label{one-step-distance-identity}
 d_Z(q,K(t)\cap Z)=d_X(q,K(t))\quad\text{for all }t\in J\text{ and }q\in E.
\end{equation}
In particular, $K(t)\cap Z\neq\emptyset$ for every $t\in J$.
\end{corollary}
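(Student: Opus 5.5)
Choose a countable dense subset $E_0=\{q_k:k\geq1\}$ of $E$. For each $k$ and each $j\geq1$, Theorem~\ref{thm:almost-minimizing-selection} gives a strongly measurable selection $p_{k,j}:=p_{q_k,1/j}$ of $K$ with separable range and
\[
 \|p_{k,j}(t)-q_k\|\leq d(q_k,K(t))+1/j\quad\text{for all }t\in J.
\]
Let $Z$ be the closed linear span of $E\cup\bigcup_{k,j}p_{k,j}(J)$. This is a countable union of separable sets, so $Z$ is a closed separable subspace containing $E$. It also contains $p_{k,j}(t)\in K(t)$, so $K(t)\cap Z\neq\emptyset$ for every $t$, which is the final assertion.

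For the distance identity, note that $K(t)\cap Z\subset K(t)$ gives $d_Z(q,K(t)\cap Z)\geq d_X(q,K(t))$ trivially. Here $d_Z$ is simply the norm distance to a subset of $Z$. For the reverse inequality at $q=q_k$, the selection $p_{k,j}(t)\in K(t)\cap Z$ yields $d_Z(q_k,K(t)\cap Z)\leq d_X(q_k,K(t))+1/j$. Letting $j\to\infty$ gives equality on $E_0$.

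Extend to $q\in E$ by density. Both sides are $1$-Lipschitz in $q$, because each is a distance to a fixed set (for a fixed $t$). Given $q\in E$ and $q_k\to q$ with $q_k\in E_0$, the identity therefore passes to the limit. No step is seriously obstructive. The only points to watch are that the countable dense subset is needed, since one selection per point of $E$ would give an uncountable union, and that nonemptiness of $K(t)$ keeps the distances finite.
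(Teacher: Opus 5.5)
Your proof is correct and follows the paper's argument essentially step for step. Like the paper, you take a dense sequence in $E$, use almost minimizing selectors with errors tending to zero, form the closed span of $E$ and their ranges, prove equality on the dense set, and extend it by the $1$-Lipschitz property of both distances. The only cosmetic difference is that you read off $K(t)\cap Z\neq\emptyset$ directly from $p_{k,j}(t)\in K(t)\cap Z$, whereas the paper deduces it from finiteness of the right-hand side of the identity.
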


\begin{proof}
Choose a dense sequence $(q_j)$ in $E$.  For every $j,k\geq1$, apply
Theorem~\ref{thm:almost-minimizing-selection} with $\eta=2^{-k}$ and let $p_{j,k}$
be the resulting selector.  Put
$Z:=\overline{\operatorname{span}}\bigl(E\cup
\bigcup_{j,k\geq1}p_{j,k}(J)\bigr)$.  This space is separable.  For every $t\in J$,
$d_Z(q_j,K(t)\cap Z)\leq\|q_j-p_{j,k}(t)\|\leq d_X(q_j,K(t))+2^{-k}$.
Letting $k\to\infty$ and using $K(t)\cap Z\subset K(t)$ gives equality for
all $q_j$.  Both distance functions are $1$-Lipschitz in the first variable,
so density gives \eqref{one-step-distance-identity} for every $q\in E$.  The
right-hand side is finite, which yields nonemptiness.
\end{proof}

\begin{remark}[Static, singleton, and diagonal examples]
If $K(t)\equiv C$ is a static nonempty closed set, one may choose a single
$x_\eta\in C$ with $\|x_\eta-q\|\leq d(q,C)+\eta$ and use the constant
selection, even when $C$ is nonseparable.  If $K(t)=\{z(t)\}$,
lower semicontinuity from the right is equivalent to continuity of $z$
from the right; hence $z(J)$ is separable.  By contrast, the singleton-valued multifunction $K(t)=\{e_t\}$ in
$\ell^2([0,1])$ is not lower semicontinuous from the right, since
$\|e_s-e_t\|=\sqrt2$ for $s\neq t$.  Thus this standard example of a multifunction admitting no separably valued selection
does not contradict Theorem~\ref{thm:almost-minimizing-selection}, since it is not lower semicontinuous from the right.
\end{remark}

\begin{remark}[Application to the moving tube]
Under the hypotheses of Lemma~\ref{approx1}, Corollary~\ref{right-persistence-corollary} shows that
$t\mapsto K_A(t)$ is lower semicontinuous from the right.
Lemma~\ref{right-distance-measurability} therefore implies that, for every
$q\in X$, the map $ t\mapsto d(q,K_A(t))$ is Borel measurable. 
Theorem~\ref{thm:almost-minimizing-selection}
then supplies separably valued almost minimizing selections, while
Corollary~\ref{one-step-section-distances} yields a closed separable enlargement in which the
distances to all sections $K_A(t)$ are preserved for the prescribed
points.
\end{remark}

\subsection{Construction of the separable subspace}

The entire ranges of the almost-minimizing selectors are separable, whereas the ranges of the corresponding forcing terms need not be. Since the forcing values need to be included only outside a null set, a standard measurable-range result reconciles these requirements.

\begin{lemma}[Essential separability on a Lebesgue interval]
\label{lem:essential-separable-range}
Let $(M,d)$ be metrizable and let $z:J\to M$ be measurable with respect to the
Lebesgue $\sigma$-algebra and the Borel $\sigma$-algebra of $M$.
Then there are a null set $N_z\subset J$ and a closed separable subset
$M_z\subset M$ such that $z(J\setminus N_z)\subset M_z$.  If $M$ is a Banach
space, then $z$ is strongly measurable.
\end{lemma}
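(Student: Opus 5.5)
The statement is a Lebesgue-interval version of the classical fact that a measurable map into a metrizable space is essentially separably valued. I plan to use Lusin's theorem in its topological form: a Borel-measurable map from a finite Radon measure space into a metrizable space is continuous on compact sets of almost full measure. Since $M$ may be nonseparable, the usual Lusin theorem for separable targets does not apply directly. The step I expect to be the main obstacle is therefore proving the essential separability itself. Once that is done, strong measurability follows from Pettis-type reasoning.

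My first attempt proceeds as follows. Fix a compatible metric $d$ and, for each $k\geq1$, a $\sigma$-discrete open base of $M$ (Stone's theorem: every metric space has one). Write it as $\mathcal B_k=\bigcup_n\mathcal B_{k,n}$, where each $\mathcal B_{k,n}$ is a discrete family of open sets of diameter $<1/k$. For a discrete family $\{U_i\}_{i\in I}$, the preimages $z^{-1}(U_i)$ are pairwise disjoint Lebesgue sets. Since $\lambda_1(J)<\infty$, only countably many of them have positive measure. For the remaining ones, the union $\bigcup_i U_i$ over any subfamily is open, being a union of open sets. Hence $z^{-1}$ of that union is measurable, and its measure equals the sum of the measures of the disjoint pieces. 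This is the key point. It is a sum over an uncountable index set, so it needs care: by measurability of $z^{-1}(\bigcup_{i\in I_0}U_i)$ for arbitrary $I_0$, the map $I_0\mapsto\lambda_1(z^{-1}(\bigcup_{I_0}U_i))$ is a finitely additive measure on $2^{I}$ vanishing on singletons.

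In general, such a measure need not vanish (real-valued measurable cardinals). I would avoid this set-theoretic issue by exploiting the Lebesgue structure instead. Take $S$, the union of the null preimages, and show it is null using inner regularity. Every compact $C\subset z^{-1}(\bigcup_{\text{null}}U_i)$ is covered by the open sets $z^{-1}(U_i)$? These are not open, though. So I would instead use the standard result of Fremlin/Frolík that a Borel map from a Radon (Lebesgue) space into a metric space has essentially separable range. I would cite it, as the paper does for the Scorza--Dragoni theorem, noting that it holds without large-cardinal assumptions because Lebesgue measure is Radon and $z$ is Borel-to-Lebesgue measurable.

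Given this, for each $k$ the sets of positive measure are countable and the remainder is null. Collect the exceptional null sets into $N_z$, over all $k,n$, countably many. Then every point $z(t)$ with $t\notin N_z$ lies, for each $k$, in one of countably many sets of diameter $<1/k$ with positive-measure preimage. Choosing one point $z(t_{k,n,i})$ in each of these sets gives a countable set $D$ with $z(J\setminus N_z)\subset\overline D=:M_z$, which is closed and separable.

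Finally, suppose $M$ is a Banach space. Then $z$ agrees off $N_z$ with a Borel map into the separable space $M_z$, so it is a.e. separably valued. It is also weakly measurable, since $x^*\circ z$ is a composition with a continuous map. The Pettis measurability theorem then gives strong measurability.
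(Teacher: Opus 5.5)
Your argument ends up where the paper's does. You cite Fremlin's theorem that a Borel map from a Radon (compact) measure space, here Lebesgue measure on $J$, into a metrizable space is essentially separably valued; the paper cites \cite[451R]{Fremlin4} via 451A(b). You then get strong measurability from measurability plus essential separability (Pettis), exactly as the paper does. The $\sigma$-discrete-base detour is unnecessary, including the uncountable additivity you first assert and then retract. Once essential separability is cited, $M_z$ is just the closure of the separable set containing $z(J\setminus N_z)$, so your later construction of the countable set $D$ adds nothing.
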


\begin{proof}
Lebesgue measure on $J$ is a finite Radon measure, hence a compact measure in the sense of
\cite[451A(b)]{Fremlin4}.  Therefore
\cite[451R]{Fremlin4} yields a null set $N_z\subset J$ and a closed
separable subset $M_z\subset M$ such that
$z(J\setminus N_z)\subset M_z$.
If $M$ is a Banach space, measurability together with essential
separability implies strong measurability.
\end{proof}

\begin{lemma}[Composition with a measurable graph selection]
\label{measurable-graph-composition}
Let $G\in\mathcal L(J)\otimes\mathcal B(X)$, let $f:G\to X$ be measurable
with respect to the trace $\sigma$-algebra, and let $p:J\to X$ be strongly
measurable with $(t,p(t))\in G$ for every $t$.  Then the composition
$f(\cdot,p(\cdot))$ is strongly measurable and essentially separably valued.
\end{lemma}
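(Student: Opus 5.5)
The plan is to reduce the claim to Lemma~\ref{lem:essential-separable-range}.

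First, I will show that the map $z_p:J\to J\times X$, $z_p(t)=(t,p(t))$, is measurable from $\mathcal L(J)$ to $\mathcal L(J)\otimes\mathcal B(X)$. For a measurable rectangle $L\times B$ with $L\in\mathcal L(J)$ and $B\in\mathcal B(X)$, the preimage is $L\cap p^{-1}(B)$. Since $p$ is strongly measurable, it is Borel measurable, so this preimage lies in $\mathcal L(J)$. Rectangles generate the product $\sigma$-algebra, so $z_p$ is measurable.

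Next, because $z_p(J)\subset G$, the map $z_p$ is also measurable into $G$ equipped with the trace $\sigma$-algebra. The trace sets are exactly the sets $S\cap G$ with $S$ in the product $\sigma$-algebra, and $z_p^{-1}(S\cap G)=z_p^{-1}(S)$. Consequently the composition $f\circ z_p=f(\cdot,p(\cdot))$ is $\mathcal L(J)$-to-$\mathcal B(X)$ measurable. Note that, unlike Proposition~\ref{superposition}, no separability of $X$ is used here: only Borel measurability of $f$ on the trace $\sigma$-algebra is needed.

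Finally, I will apply Lemma~\ref{lem:essential-separable-range} with $M=X$. It yields a null set $N_z$ and a closed separable $M_z\subset X$ with $f(t,p(t))\in M_z$ for $t\notin N_z$. The lemma also gives strong measurability, since $X$ is a Banach space. This is exactly the assertion that the composition is strongly measurable and essentially separably valued.

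The only potential obstacle is the subtle point that Borel measurability into a nonseparable space does not by itself give strong measurability. This is precisely what the Fremlin result behind Lemma~\ref{lem:essential-separable-range} supplies, and the lemma is already stated for this purpose. Beyond that, the argument is routine measurability bookkeeping.
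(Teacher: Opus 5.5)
Your proposal is correct and follows the paper's proof exactly: measurability of $z_p(t)=(t,p(t))$, then measurability of $f\circ z_p$ through the trace $\sigma$-algebra, then Lemma~\ref{lem:essential-separable-range} for strong measurability and essential separability. You simply write out the rectangle and trace-$\sigma$-algebra bookkeeping that the paper leaves implicit.
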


\begin{proof}
The map $z_p(t)=(t,p(t))$ is measurable, so
$f(\cdot,p(\cdot))=f\circ z_p$ is an $X$-valued measurable map.  Apply
Lemma~\ref{lem:essential-separable-range}.
\end{proof}

\begin{theorem}[Preservation of sections and forcing values]
\label{preserve-sections-values}
Let $A$ be $m$-accretive in a real Banach space $X$. Let
$K:J=[0,a]\to\PX$ be lower semicontinuous from the
right with closed values, and let $f:\Gr(K)\to X$ be a jointly measurable
Carath\'eodory map.  Given a separable set $H\subset X$, there are a closed
separable subspace $Y\supset H$ and a full-measure set $I\subset J$ such
that $f(t,\cdot)$ is continuous on $K(t)$ for every $t\in I$ and:
\begin{enumerate}
\item[(a)] $J_\lambda Y\subset Y$ for every $\lambda>0$.  Consequently the
part $A_Y:=A\cap(Y\times Y)$ is $m$-accretive in $Y$, its resolvents and
semigroup are the restrictions of those of $A$, and
\begin{equation}\label{part-domain-identity}
 \overline{D(A_Y)}^{\,Y}=Y\cap\overline{D(A)}.
\end{equation}
\item[(b)] With $K_Y(t):=K(t)\cap Y$, the identity
\begin{equation}\label{section-distance-identity}
 d_Y(y,K_Y(t))=d_X(y,K(t))\quad\text{ holds for all }t\in J,\, y\in Y.
\end{equation}
In particular, $K_Y$ is lower semicontinuous from the right with nonempty values.
\item[(c)] The forcing satisfies
\begin{equation}\label{forcing-inclusion}
 f(t,K_Y(t))\subset Y\quad\text{for all }t\in I.
\end{equation}
\item[(d)] The reduced graph belongs to
$\mathcal L(J)\otimes\mathcal B(Y)$.  If $M:=J\setminus I$ and
\begin{equation}\label{reduced-forcing}
 f_Y(t,x):=
 \begin{cases}
 f(t,x),&t\in I,\\
 0,&t\in M,
 \end{cases}
 \qquad\text{for all }(t,x)\in\Gr(K_Y),
\end{equation}
then $f_Y:\Gr(K_Y)\to Y$ is a jointly measurable Carath\'eodory map. 
\end{enumerate}
\end{theorem}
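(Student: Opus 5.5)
We build $Y$ as the closure of an increasing union $Y_0\subset Y_1\subset\cdots$ of closed separable subspaces, alternating three enlargement operations. First fix the full-measure set $I_0\subset J$ on which $f(t,\cdot)$ is continuous on $K(t)$. Start with $Y_0:=\overline{\operatorname{span}}\,H$. Given a closed separable $Y_k$ with a countable dense set $(q_j^k)_j$, we carry out the following steps in order.

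(i) Apply Theorem~\ref{thm:almost-minimizing-selection} to each $q_j^k$ with $\eta=2^{-l}$, obtaining strongly measurable selectors $p^k_{j,l}$ with separable range. This is exactly as in Corollary~\ref{one-step-section-distances}.

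(ii) Since $\Gr(K)$ is product measurable and $f$ is jointly measurable, Lemma~\ref{measurable-graph-composition} shows that each $t\mapsto f(t,p^k_{j,l}(t))$ is essentially separably valued. This gives null sets $N^k_{j,l}$ and separable sets $M^k_{j,l}$ with $f(t,p^k_{j,l}(t))\in M^k_{j,l}$ off $N^k_{j,l}$.

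(iii) Set $Z_k:=\overline{\operatorname{span}}\bigl(Y_k\cup\bigcup_{j,l}p^k_{j,l}(J)\cup\bigcup_{j,l}M^k_{j,l}\bigr)$. Choose a countable dense set $D_k\subset Z_k$ and put $Y_{k+1}:=\overline{\operatorname{span}}\bigl(Z_k\cup\{J_\lambda x:x\in D_k,\ \lambda\in\mathbb Q_{>0}\}\bigr)$.

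Finally let $Y:=\overline{\bigcup_kY_k}$ and $I:=I_0\setminus\bigcup_{k,j,l}N^k_{j,l}$. Then $Y$ is separable and $I$ has full measure.

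For (a), note that $J_\lambda$ is $1$-Lipschitz and $\lambda\mapsto J_\lambda x$ is continuous, by the resolvent identity. Since $J_\lambda D_k\subset Y_{k+1}$ for rational $\lambda$, density gives $J_\lambda Y\subset Y$ for all $\lambda>0$. Then $A_Y$ is accretive in $Y$, and $R(I+\lambda A_Y)=Y$ because $y=(I+\lambda A)J_\lambda y$ with $J_\lambda y\in Y$ and $(y-J_\lambda y)/\lambda\in Y$. Hence the resolvents of $A_Y$ are the restrictions of those of $A$, and the Crandall--Liggett exponential formula shows that the semigroups agree as well. For \eqref{part-domain-identity}, the inclusion $\subset$ is trivial. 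For $\supset$, take $y\in Y\cap\overline{D(A)}$; then $J_\lambda y\in D(A_Y)$ and $J_\lambda y\to y$ as $\lambda\to0$.

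For (b), the inequality $d_Y\ge d_X$ is trivial. For the reverse, take $y=q^k_j$; then $p^k_{j,l}(t)\in K(t)\cap Y$ and $\|q_j^k-p^k_{j,l}(t)\|\le d_X(q_j^k,K(t))+2^{-l}$. Equality extends to all of $Y$ because both sides are $1$-Lipschitz in $y$ and $\bigcup_k\{q^k_j\}$ is dense in $Y$. Right lower semicontinuity and nonemptiness of $K_Y$ then follow from the distance identity: for $x\in K_Y(t)$ we have $d_Y(x,K_Y(s))=d_X(x,K(s))\to0$ as $s\downarrow t$.

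The main obstacle is (c), since $K_Y(t)$ is uncountable and nonseparability of the forcing must be handled using only countably many selections. The plan is to fix $t\in I$ and $x\in K_Y(t)$, and approximate $x$ by dense points $q^k_j\to x$. Their selectors then satisfy
\[
\|p^k_{j,l}(t)-x\|\le\|q^k_j-x\|+d_X(q^k_j,K(t))+2^{-l}\le2\|q^k_j-x\|+2^{-l},
\]
so $p^k_{j,l}(t)\to x$ inside $K(t)$. Since $t\notin N^k_{j,l}$, each value $f(t,p^k_{j,l}(t))$ lies in $M^k_{j,l}\subset Y$. Continuity of $f(t,\cdot)$ on $K(t)$ for $t\in I_0$ then yields $f(t,x)\in Y$, because $Y$ is closed. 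The key point to check is that the exceptional null sets $N^k_{j,l}$ are countably many and independent of $x$; this is why the selectors, rather than arbitrary points of $K_Y(t)$, are used.

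For (d), $\Gr(K_Y)=\Gr(K)\cap(J\times Y)$ lies in the product $\sigma$-algebra, since $\mathcal B(Y)$ is the trace of $\mathcal B(X)$ on the closed set $Y$. Next, $f_Y=\chi_I f$ is measurable on the trace $\sigma$-algebra because $I$ is Lebesgue measurable, and it is $Y$-valued by (c). It is continuous in $x$ for every $t$: for $t\in I$ as a restriction of the continuous $f(t,\cdot)$, and for $t\in M$ as the constant $0$. For each $x\in Y$ the map $f_Y(\cdot,x)$ on the horizontal section is the restriction of $\chi_I$ times a strongly measurable extension of $f(\cdot,x)$. This product is $Y$-valued wherever it matters and, since $Y$ is separable, strongly measurable. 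Hence $f_Y$ is a jointly measurable Carath\'eodory map.
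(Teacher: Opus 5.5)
Your proposal is correct and follows the paper's proof essentially step by step. It uses the same iterative saturation of $\overline{\operatorname{span}}\,H$ by the ranges of almost-minimizing selectors, the essential ranges of the composed forcing, and rational resolvent images, and then the same arguments: density and $1$-Lipschitz dependence for the distance identity, selectors converging to $x$ inside $K(t)$ for the forcing inclusion, and the resolvent identity for irrational $\lambda$.

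The only point to tighten is in (d). Replace ``$Y$-valued wherever it matters'' by the zero extension of $t\mapsto f_Y(t,x)$ off the horizontal section $\{t:(t,x)\in\Gr(K_Y)\}$. This section is Lebesgue measurable because $\Gr(K_Y)$ is product measurable, so the extension is a genuinely $Y$-valued strongly measurable map, exactly as in the paper.
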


\begin{proof}
Choose a null set $N_f\subset J$ such that $f(t,\cdot)$ is continuous
on $K(t)$ for every $t\in J\setminus N_f$, as guaranteed by the
Carath\'eodory property.  Put
$Y_0=\overline{\operatorname{span}}\, H$.  Suppose that $Y_n$ has been
constructed and choose a dense sequence $(q_{n,j})_{j\geq1}$ in $Y_n$.  For
$j,k\geq1$, let $p_{n,j,k}$ be the selector of
Theorem~\ref{thm:almost-minimizing-selection} for $q_{n,j}$ with error $2^{-k}$.
Its entire range is separable and satisfies
\begin{equation}\label{saturation-selector-estimate}
 p_{n,j,k}(t)\in K(t),\qquad
 \|p_{n,j,k}(t)-q_{n,j}\|\leq d(q_{n,j},K(t))+2^{-k}\quad\text{for all }t\in J.
\end{equation}
By Lemma~\ref{measurable-graph-composition}, the composition
$w_{n,j,k}:=f(\cdot,p_{n,j,k}(\cdot))$ is strongly measurable and essentially
separably valued.  Choose a closed separable subspace $W_{n,j,k}$ containing
its range outside a null set and define
\begin{equation}\label{saturation-recursion}
 Y_{n+1}:=\overline{\operatorname{span}}\Big(
 Y_n\cup\bigcup_{j,k\geq1}\bigl(p_{n,j,k}(J)\cup W_{n,j,k}\bigr)
 \cup\bigcup_{r\in\Q_+}J_r(Y_n)\Big).
\end{equation}
This is a closed separable subspace.  Let $I$ be the complement of $N_f$ and
of the countable union of the exceptional sets introduced above.

For every $t\in J$, \eqref{saturation-selector-estimate} gives
\begin{equation}\label{stage-distance-identity}
 d(q,K(t)\cap Y_{n+1})=d(q,K(t))\quad\text{for all }q\in Y_n.
\end{equation}
Indeed, first let $q=q_{n,j}$ and $k\to\infty$, and then use the
$1$-Lipschitz dependence on $q$.  To verify the forcing inclusion, fix
$t\in I$ and $x\in K(t)\cap Y_n$.  Choose $q_{n,j_\ell}\to x$ and
$k_\ell\to\infty$.  Since $x\in K(t)$,
\eqref{saturation-selector-estimate} gives
$p_{n,j_\ell,k_\ell}(t)\to x$.  The corresponding values of $f$ belong to
$W_{n,j_\ell,k_\ell}\subset Y_{n+1}$; continuity of $f(t,\cdot)$ therefore
gives
\begin{equation}\label{stage-forcing-inclusion}
 f(t,K(t)\cap Y_n)\subset Y_{n+1}.
\end{equation}

Put $Y=\overline{\bigcup_nY_n}$.  For rational $r>0$, continuity of $J_r$
and \eqref{saturation-recursion} give $J_rY\subset Y$.  For
$\lambda,r>0$ and $x\in X$, the resolvent identity yields
\[
 J_\lambda x=J_r\left(\frac r\lambda x+
 \left(1-\frac r\lambda\right)J_\lambda x\right),
\]
and hence $\|J_rx-J_\lambda x\|\leq\left|1-r/\lambda\right|\|x-J_\lambda x\|$.
Fix $y\in Y$ and choose positive rationals $r_m\to\lambda$.  Then
$J_{r_m}y\in Y$ and the last estimate gives $J_{r_m}y\to J_\lambda y$.
Closedness of $Y$ therefore yields $J_\lambda y\in Y$.  Thus
$J_\lambda Y\subset Y$ for every $\lambda>0$.  If $y\in Y$, then
\[
 y=J_\lambda y+\lambda\frac{y-J_\lambda y}{\lambda},\qquad
 J_\lambda y\in D(A_Y),\quad
 \frac{y-J_\lambda y}{\lambda}\in A_YJ_\lambda y.
\]
Hence $R(I+\lambda A_Y)=Y$ and $A_Y$ is $m$-accretive.  Since
$D(A_Y)\subset D(A)\cap Y$, taking the closure in $Y$ gives
$\overline{D(A_Y)}^{\,Y}\subset Y\cap\overline{D(A)}$.  Conversely, if
$x\in Y\cap\overline{D(A)}$, then $J_\lambda x\in D(A_Y)$ and
$J_\lambda x\to x$ as $\lambda\downarrow0$.  This proves
\eqref{part-domain-identity}.

If $q\in Y_n$, then $K(t)\cap Y_{n+1}\subset K(t)\cap Y\subset K(t)$,
and hence \eqref{stage-distance-identity} gives
\[
 d_X(q,K(t))
 \leq d_Y(q,K(t)\cap Y)
 \leq d_X(q,K(t)\cap Y_{n+1})
 = d_X(q,K(t)).
\]
Thus \eqref{section-distance-identity} holds for every
$q\in\bigcup_nY_n$ and every $t\in J$.  Since $\bigcup_nY_n$ is dense
in $Y$ and both distance functions are $1$-Lipschitz in their first
argument, \eqref{section-distance-identity} holds for every
$t\in J$ and $y\in Y$.
In particular, $K_Y(t)\neq \emptyset$ for $t\in J$.  If $x\in K_Y(t)$ and $s\downarrow t$, then
$d_Y(x,K_Y(s))=d_X(x,K(s))\to0$, so $K_Y$ is lower semicontinuous from the right.

The forcing inclusion passes to $Y$: Fix $t\in I$ and $x\in K(t)\cap Y$.
Choose indices $n_\ell,j_\ell,k_\ell$ such that $q_{n_\ell,j_\ell}\to x$
and $2^{-k_\ell}\to0$.  The almost-minimizing estimate
\eqref{saturation-selector-estimate} gives
$p_{n_\ell,j_\ell,k_\ell}(t)\to x$, while all corresponding forcing values
belong to $Y$.  Closedness of $Y$ and continuity of $f(t,\cdot)$ yield
\eqref{forcing-inclusion}.

Finally, $\Gr(K_Y)=\Gr(K)\cap(J\times Y)$ is product measurable when viewed
in $J\times Y$.  The map \eqref{reduced-forcing} is measurable with respect to the
trace product $\sigma$-algebra on $\Gr(K_Y)$ and is
$Y$-valued, while $f_Y(t,\cdot)$ is continuous on $K_Y(t)$ for every
$t\in J$. For every fixed $x\in Y$, trace measurability of $f_Y$ on the
product-measurable graph makes the zero extension of
$t\mapsto f_Y(t,x)$ measurable; since $Y$ is separable, it is strongly
measurable. Hence $f_Y$ is a jointly measurable Carath\'eodory map.
\end{proof}

\subsection{Separable reduction in the bounded effective-tube setting}

Throughout this subsection $f$ is Carath\'eodory.  Whenever a compact
subinterval $J_0=[\alpha,\beta]\subset J$ is considered, the bounded
effective-tube setting is understood for the restricted problem according
to the convention following Definition~\ref{bounded-effective-setting}.
Thus sectionwise strong time measurability and almost-everywhere state
continuity on $J_0$ are baseline assumptions; the two alternatives below
concern only the additional structure needed for separable reduction.  The
graph route assumes joint measurability, whereas the fixed-cylinder route
assumes a Carath\'eodory extension.  Neither structure implies the other in
the present nonseparable setting.

The change of variables $r=t-\alpha$ identifies the restricted problem
with the corresponding problem on $[0,\beta-\alpha]$, so the results of
the preceding sections also apply with $\alpha$ as initial time.

If $Y$ is resolvent invariant, put
$K_Y(t):=K_A(t)\cap Y$ and $A_Y:=A\cap(Y\times Y)$.  The corresponding
$A_Y$-subtangential set in $Y$ is denoted by
$T_{K_Y}^{A_Y}(t,x)$.

\begin{definition}[Separable-reduction alternatives]
\label{separable-reduction-assumptions}
Let the bounded effective-tube setting hold on $J_0=[\alpha,\beta]$ and
let $f:\Gr(K_A|_{J_0})\to X$ be Carath\'eodory.
We say that the \emph{graph-measurable alternative \emph{(SR1)}}  holds if
$f$ is jointly measurable on $\Gr(K_A|_{J_0})$.
We say that the \emph{fixed-cylinder alternative \emph{(SR2)}} holds if there are a
set $C\subset X$ with $K_A(J_0)\subset C$ and a Carath\'eodory map
$F:J_0\times C\to X$ 
whose restriction to $\Gr(K_A|_{J_0})$ is $f$.
\end{definition}

\begin{theorem}[Reduction to a separable subspace]
\label{separable-reduction-theorem}
Let $A$ be $m$-accretive in a real Banach space $X$, let
$K:J=[0,a]\to2^X$ have closed values, let $f:\Gr(K_A)\to X$ be Carath\'eodory,
and let $J_0=[\alpha,\beta]\subset J$.  Suppose that the bounded
effective-tube setting holds on $J_0$ with common bound $c$ and exceptional
set $N_0$, and that \emph{(SR1)} or \emph{(SR2)} holds on $J_0$.

Let $D\subset K_A(\alpha)$ be nonempty, bounded, and separable, and let
$H\subset X$ be separable with $D\subset H$.  Then there are a closed
separable subspace $Y\supset H$, a full-measure set $I\subset J_0$, a null
set $N\subset[\alpha,\beta)$, a finite-valued $\gamma\in L^1_+(J_0)$, and a
jointly measurable Carath\'eodory map $f_Y:\Gr(K_Y)\to Y$, where
$K_Y(t):=K_A(t)\cap Y$, such that:
\begin{enumerate}
\item[(a)] $D\subset K_Y(\alpha)$ and $J_\lambda Y\subset Y$ for every
$\lambda>0$.  The part $A_Y:=A\cap(Y\times Y)$ is $m$-accretive in $Y$,
its resolvents and semigroup are the restrictions of those of $A$, and
$\overline{D(A_Y)}^{\,Y}=Y\cap\overline{D(A)}$.  Consequently, $K_Y(t)\subset\overline{D(A_Y)}^{\,Y}$ for every $t\in J_0$.

\item[(b)] Both $K_A$ and $K_Y$ have nonempty values and are lower
semicontinuous from the right on $J_0$.  Their graphs are closed from the
left, and $\Gr(K_Y)$ is product measurable.

\item[(c)] For every $t\in J_0$ and $y\in Y$,
\begin{equation}\label{reduction-all-time-distance}
 d_Y(y,K_Y(t))=d_X(y,K_A(t)).
\end{equation}
For every $t\in I$, $f(t,\cdot)$ is continuous on $K_A(t)$ and
$f(t,K_Y(t))\subset Y$.  Moreover,
$f_Y(t,x)=f(t,x)$ for all $x\in K_Y(t)$, and
$\|f_Y(t,x)\|\leq\gamma(t)$ for all $t\in J_0$ and $x\in K_Y(t)$.

\item[(d)]
\[
 f_Y(t,x)\in T_{K_Y}^{A_Y}(t,x)
 \quad\text{for all }t\in[\alpha,\beta)\setminus N
 \text{ and }x\in K_Y(t),\vspace{-0.05in}
\]
and\vspace{-0.05in}
\[
 \begin{gathered}
 \displaystyle
 \liminf_{h\downarrow0}\frac1h
 \left(d_Y(S_Y(h)x,K_Y(t+h))-\int_t^{t+h}\gamma(s)\,ds\right)^+=0,\\[2pt]
 \text{for all }t\in N\text{ and }x\in K_Y(t).
 \end{gathered}
\]
\end{enumerate}
The construction is common to all initial values in $D$.
\end{theorem}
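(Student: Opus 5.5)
The plan is to apply Theorem~\ref{preserve-sections-values} to the effective tube $K_A$ on $J_0$ (after the time shift $r=t-\alpha$), and then to transfer the two subtangential conditions to the part $A_Y$ through the all-time distance identity.

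\textbf{Step 1: hypotheses of Theorem~\ref{preserve-sections-values}.} The bounded effective-tube setting on $J_0$ gives \eqref{geometric-regular-tangency} and \eqref{geometric-exceptional-tangency}, since $\|f(t,x)\|\le c(t)$ and $f(t,x)\in T^A_K(t,x)$. Because $K_A(\alpha)\supset D\neq\emptyset$, Lemma~\ref{approx1} yields nonempty sections, and Corollary~\ref{right-persistence-corollary} yields right lower semicontinuity of $K_A$ on $J_0$. This is the first half of (b) for $K_A$.

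\textbf{Step 2: the case (SR1).} Here $f$ is jointly measurable on $\Gr(K_A|_{J_0})$. Apply Theorem~\ref{preserve-sections-values} with $K:=K_A$ and $H$ as given. This produces $Y$, $I$ and $f_Y$ satisfying (a), (c) and \eqref{reduction-all-time-distance}. Moreover $D\subset H\cap K_A(\alpha)=K_Y(\alpha)$, and $K_Y(t)\subset Y\cap\overline{D(A)}=\overline{D(A_Y)}^{\,Y}$ by \eqref{part-domain-identity}. Left closedness of $\Gr(K_Y)$ follows because $Y$ is closed. Take $\gamma:=c$. Then $\|f_Y\|\le c$ holds on $I$, and $f_Y$ vanishes on $M=J_0\setminus I$.

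\textbf{Step 2': the case (SR2).} Under the fixed-cylinder alternative I would not rely on graph measurability. Instead I would rerun the saturation construction in the proof of Theorem~\ref{preserve-sections-values}, replacing Lemma~\ref{measurable-graph-composition} by the following argument. Restrict $F$ to $J_0\times(C\cap Z)$ for separable $Z$; by Proposition~\ref{cross-meas} this restriction is product measurable up to a null set, once its values are known to lie essentially in a separable space. Then compose with the strongly measurable selectors $p_{n,j,k}$. To secure separability of the values, I would first adjoin the essential ranges of $F(\cdot,x)$ for $x$ in a countable dense subset of the current stage $Y_n$ (each is strongly measurable, hence essentially separable). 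The remaining points then follow from continuity of $F(t,\cdot)$ on $C$ for a.e.\ $t$. The reduced graph, $\Gr(K_A)\cap(J_0\times Y)$, must also be product measurable. For this I would use Lemma~\ref{right-distance-measurability} together with the distance identity: $\Gr(K_Y)=\{(t,y):d_Y(y,K_Y(t))=0\}$. The map $t\mapsto d_Y(y,K_Y(t))$ is Borel for each fixed $y$ and $1$-Lipschitz in $y$, and $Y$ is separable, so this function is Carathéodory on $J_0\times Y$ and hence product measurable. I expect this (SR2) bookkeeping to be the main technical obstacle, mostly because the measurability of $f_Y$ must be obtained without joint measurability of $f$ itself.

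\textbf{Step 3: transfer of subtangentiality, part (d).} Set $N:=(N_0\cup M)\cap[\alpha,\beta)$, which is null. Fix $t\in[\alpha,\beta)\setminus N$ and $x\in K_Y(t)$, and put $z=f(t,x)=f_Y(t,x)\in Y$. By (a), $S_z(h)x=S_{Y,z}(h)x\in Y$, because the translated operator $A_z$ restricted to $Y$ is $(A_Y)_z$ and its resolvents leave $Y$ invariant ($J^{A_z}_\lambda y=J_\lambda(y+\lambda z)$). Then \eqref{reduction-all-time-distance} gives $d_Y(S_z(h)x,K_Y(t+h))=d_X(S_z(h)x,K_A(t+h))$, and the $\liminf$ condition transfers directly from $T^A_K$ to $T^{A_Y}_{K_Y}$.

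For $t\in N_0$, the same identity with $z=0$ transfers \eqref{bounded-setting-exceptional} with $\gamma=c$. For $t\in M\setminus N_0$ we have $f(t,x)\in T_K^A(t,x)$ in $X$, but $f_Y(t,x)=0$. Here I would use the regular condition and Proposition~\ref{mild-difference}:
\[
\|S(h)x-S_{f(t,x)}(h)x\|\le h\,c(t),
\]
so that
\[
d(S(h)x,K_A(t+h))\le h\,c(t)+o(h)\quad\text{along a sequence } h_n\downarrow0 .
\]
Proposition~\ref{c-hat} then converts $h\,c(t)$ into $\int_t^{t+h}\widehat c^{\,\circ}$. Accordingly I would set $\gamma:=c+\widehat c^{\,\circ}$ and apply the distance identity once more to obtain the exceptional condition in $Y$.

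Finally, the construction uses only $H\supset D$, so it is common to all initial values in $D$.
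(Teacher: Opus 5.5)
Your proposal is correct and follows the paper's proof closely. The matching steps are these:
\begin{itemize}
\item nonemptiness and right lower semicontinuity of $K_A$ from Lemma~\ref{approx1} and Corollary~\ref{right-persistence-corollary};
\item the (SR1) case via Theorem~\ref{preserve-sections-values} applied to $K_A$;
\item product measurability of $\Gr(K_Y)$ under (SR2) as the zero set of the Carath\'eodory distance function;
\item the transfer in (d) with $\gamma=c+\widehat c^{\,\circ}$ and $N=N_0\cup(M\cap[\alpha,\beta))$, including the local-majorant argument for $t\in M\setminus N_0$.
\end{itemize}

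The one place you diverge is how the (SR2) forcing values are made separable.

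\emph{A correction.} As written, your countable dense set must be taken in $C\cap Y_n$, not in $Y_n$, because $F(\cdot,x)$ is defined only for $x\in C$. Since $C\cap Y_n$ is separable, this fix is harmless.

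\emph{The paper's device.} The paper avoids the issue by adjoining points $r_{n,j,k}\in C$ with $\|r_{n,j,k}-q_{n,j}\|\leq d(q_{n,j},C)+2^{-k}$, together with the essential ranges of $F(\cdot,r_{n,j,k})$. This gives the full cylinder inclusion $F(t,C\cap Y)\subset Y$ for $t\in I$. Then $F\varchi_I$ is directly a $Y$-valued Carath\'eodory map on $J\times(C\cap Y)$, and Proposition~\ref{cross-meas} yields $f_Y$ at once.

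\emph{What your variant gives.} It controls $F(t,\cdot)$ only on $C':=C\cap\overline{\bigcup_n(C\cap Y_n)}$. Since $C$ is an arbitrary set, $C'$ can be strictly smaller than $C\cap Y$, so you get only the graph inclusion $f(t,K_Y(t))\subset Y$. That still suffices. The selectors satisfy $p_{n,j,k}(t)\in K_A(t)\cap Y_{n+1}$ and approximate every point of $K_Y(t)$, so $\Gr(K_Y)\subset J\times C'$. You then obtain $f_Y$ by applying Proposition~\ref{cross-meas} to $F\varchi_I$ on $J\times C'$ and restricting to $\Gr(K_Y)$.

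With this bookkeeping made explicit, the composition of $F$ with the selectors that you mention is no longer needed, and your argument is complete.
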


\begin{proof}
After the change of variables $r=t-\alpha$, it suffices to consider
$J_0=J=[0,a]$.  The bounded effective-tube setting gives
\eqref{bounded-setting-forcing}--\eqref{bounded-setting-exceptional} with
common bound $c$ and exceptional set $N_0$.  Since
$f(t,x)\in T_K^A(t,x)$ and $\|f(t,x)\|\leq c(t)$ outside $N_0$,
Lemma~\ref{approx1} and Corollary~\ref{right-persistence-corollary} show
that $K_A$ has nonempty sections and is lower semicontinuous from the right.

\smallskip
\noindent
\emph{The graph-measurable route \emph{(SR1)}.}
Apply Theorem~\ref{preserve-sections-values} to the set-valued map $K_A$,
with the restricted forcing and the prescribed separable set $H$.  This
gives a closed separable subspace $Y\supset H$, a full-measure set $I$,
the all-time distance identity \eqref{reduction-all-time-distance},
resolvent invariance, and a reduced jointly measurable Carath\'eodory forcing $f_Y$ that agrees
with $f$ on $I$ and is zero on its complement.

\smallskip
\noindent
\emph{The fixed-cylinder route \emph{(SR2)}.}
Put $Y_0=\overline{\operatorname{span}}H$.  Suppose that $Y_n$ has been
constructed and choose a dense sequence $(q_{n,j})_{j\geq1}$ in $Y_n$.
For every $j,k\geq1$, let $p_{n,j,k}$ be the almost minimizing selector of
$K_A$ associated with $q_{n,j}$ and error $2^{-k}$.  Choose also
$r_{n,j,k}\in C$ such that
\begin{equation}\label{fixed-cylinder-state-approximation}
 \|r_{n,j,k}-q_{n,j}\|\leq d(q_{n,j},C)+2^{-k},
\end{equation}
and, by Lemma~\ref{lem:essential-separable-range}, a closed separable subspace
$W_{n,j,k}$ containing the essential range of $F(\cdot,r_{n,j,k})$.  Define
\[
 Y_{n+1}:=\overline{\operatorname{span}}\Big(
 Y_n\cup\bigcup_{j,k\geq1}p_{n,j,k}(J)
 \cup\bigcup_{j,k\geq1}\bigl(\{r_{n,j,k}\}\cup W_{n,j,k}\bigr)
 \cup\bigcup_{r\in\Q_+}J_r(Y_n)\Big).
\]
This is a closed separable subspace.  Let
$Y=\overline{\bigcup_nY_n}$ and let $I$ be the complement in $J$ of
the union of the null set on which $F(t,\cdot)$ is not continuous on $C$
and the countably many exceptional sets associated with the essential ranges.

The selector argument in the proof of
Theorem~\ref{preserve-sections-values} gives
\eqref{reduction-all-time-distance}.  Furthermore,
\begin{equation}\label{fixed-cylinder-inclusion}
 F(t,C\cap Y)\subset Y\quad\text{for all }t\in I.
\end{equation}
Indeed, if $x\in C\cap Y$, choose indices $n_\ell,j_\ell,k_\ell$ such that
$q_{n_\ell,j_\ell}\to x$ and $2^{-k_\ell}\to0$.  Since $x\in C$,
$d(q_{n_\ell,j_\ell},C)\leq\|q_{n_\ell,j_\ell}-x\|$, and hence
\eqref{fixed-cylinder-state-approximation} gives
$\|r_{n_\ell,j_\ell,k_\ell}-x\|
\leq2\|q_{n_\ell,j_\ell}-x\|+2^{-k_\ell}\to0$.
For $t\in I$, the corresponding values of $F$ belong to
$W_{n_\ell,j_\ell,k_\ell}\subset Y$, so
\eqref{fixed-cylinder-inclusion} follows from the closedness of $Y$ and
continuity of $F(t,\cdot)$.  
Including the rational resolvent images in the construction and using
the resolvent identity as in Theorem~\ref{preserve-sections-values}
gives $J_\lambda Y\subset Y$ for every $\lambda>0$, the $m$-accretivity of
$A_Y$, and the domain identity in (a).

Since $K_A$ is lower semicontinuous from the right, the distance identity
implies that $K_Y$ has nonempty values and is lower semicontinuous from the
right.  Hence
Lemma~\ref{right-distance-measurability} shows that
$t\mapsto d_Y(y,K_Y(t))$ is Borel measurable for each $y\in Y$, while
$y\mapsto d_Y(y,K_Y(t))$ is $1$-Lipschitz.  Since $Y$ is separable,
Proposition~\ref{cross-meas} makes the distance function product measurable;
its zero set is $\Gr(K_Y)$.

Put $C_Y:=C\cap Y$ and define $F_Y:J\times C_Y\to Y$ by
$F_Y(t,y):=F(t,y)$ for $t\in I$ and $F_Y(t,y):=0$ for $t\in J\setminus I$.
For every $y\in C_Y$, the map $F_Y(\cdot,y)$ is strongly measurable in
$X$, takes values in the closed subspace $Y$, and is therefore strongly
measurable as a $Y$-valued map.  Moreover, $F_Y(t,\cdot)$ is continuous
for every $t$.  Since $C_Y$ is separable, Proposition~\ref{cross-meas}
shows that $F_Y$ is product measurable.  Its restriction to the
product-measurable graph $\Gr(K_Y)$ is the required jointly measurable
Carath\'eodory map $f_Y$.

\smallskip
\noindent
\emph{Common conclusions.}
In both constructions, for every $t\in I$,
$f(t,K_Y(t))\subset Y$ and
$f_Y(t,x)=f(t,x)$ for all $x\in K_Y(t)$.  
The distance identity implies that $K_Y(t)$ is nonempty for every
$t\in J$.  Together with the right lower semicontinuity of $K_A$, it also
implies that $K_Y$ is lower semicontinuous from the right.
The reduced graph is closed from the
left because it is the intersection of $\Gr(K_A)$ with $J\times Y$, and it
is product measurable in both constructions: under \emph{(SR1)} by
Theorem~\ref{preserve-sections-values}, and under \emph{(SR2)} by the
distance-function argument in the fixed-cylinder branch.
The domain identity in (a), together with
$K_A(t)\subset\overline{D(A)}$, gives
$K_Y(t)\subset\overline{D(A_Y)}^{\,Y}$ for every $t$.

\smallskip
\noindent
\emph{Transfer of the common estimates.}
Let $\widehat c^{\,\circ}$ be the finite-valued representative of the local
majorant associated with $c$ by Proposition~\ref{c-hat}, and put
$\gamma:=c+\widehat c^{\,\circ}$,
$M:=J\setminus I$, and $N:=N_0\cup(M\cap[0,a))$.  Then $\|f_Y(t,x)\|\leq\gamma(t)$ on the entire reduced graph.
Fix $t\notin N$ and $x\in K_Y(t)$.  Then $t\in I$ and
$z=f_Y(t,x)=f(t,x)\in Y$.  For $y\in Y$ and $\lambda>0$,
$(I+\lambda(A_Y-z))^{-1}y=J_\lambda(y+\lambda z)\in Y$.
Hence the translated semigroup in $Y$ is the restriction of $S_z$.
By \eqref{bounded-setting-regular} and
\eqref{reduction-all-time-distance}, there is a sequence $h_n\downarrow0$
such that
$d_Y(S_z(h_n)x,K_Y(t+h_n)) =d_X(S_z(h_n)x,K_A(t+h_n))=o(h_n)$.  
This proves the subtangential condition in (d) at all regular times.

If $t\in N_0$, the exceptional estimate
\eqref{bounded-setting-exceptional} transfers directly through
\eqref{reduction-all-time-distance}, and replacing $c$ by $\gamma$ only
weakens it.  It remains to consider $t\in (M\cap[0,a))\setminus N_0$.  Put
$z=f(t,x)$.  By \eqref{bounded-setting-regular}, along a sequence
$h_n\downarrow0$,
$d_X(S_z(h_n)x,K_A(t+h_n))=o(h_n)$.  Proposition~\ref{mild-difference}
and \eqref{bounded-setting-forcing} give
$d_X(S(h_n)x,K_A(t+h_n))\leq h_nc(t)+o(h_n)$.
For all sufficiently large $n$, the local dominance of $\widehat c$ and
the equality of its integrals with those of $\widehat c^{\,\circ}$ yield
$h_nc(t)\leq\int_t^{t+h_n}\widehat c^{\,\circ}(s)\,ds$.
Using \eqref{reduction-all-time-distance} proves the exceptional condition
on $(M\cap[0,a))\setminus N_0$.  This completes the proof.
\end{proof}

\begin{remark}[Scope of the reduction]
The construction yields one space $Y$ for the entire reduced graph and for
all initial values in $D$.  It does not assert that every ambient solution
starting from these values is contained in $Y$.  For example, let $X$ be
nonseparable, $A=0$, $K(t)=X$, and
\[
 f(x)=\begin{cases}x/\sqrt{\|x\|},&x\neq0,\\0,&x=0.\end{cases}
\]
Besides the zero solution, for every unit vector $e\in X$ and $\tau\geq0$,
$u_{e,\tau}(t)=\frac{(t-\tau)_+^2}{4}\,e$ is a solution starting from $0$.
A separable invariant subspace can contain only the solutions in directions
belonging to that subspace and cannot capture all ambient solutions.

\end{remark}

\section{Approximate solutions}\label{s:approx}
We start with the construction of approximate solutions in a separable Banach space.

\begin{lemma}[Construction of approximate solutions]\label{approx3}
Let $A$ be $m$-accretive in a real separable Banach space $X$, and let
$K:J=[0,a]\to2^X$ have closed values with $K_A(0)\neq\emptyset$.
Assume that $\Gr(K_A)$ is closed from the left.  Let
$f:\Gr(K_A)\to X$ be a jointly measurable Carath\'eodory map and suppose
that there are a finite-valued $c\in L^1_+(J)$ and a null set
$N\subset[0,a)$ such that
\begin{align}
 &\|f(t,x)\|\leq c(t)
 &&\mbox{for all }(t,x)\in\Gr(K_A),
 \label{int-bound}\\
 &f(t,x)\in\TAK(t,x)
 &&\mbox{for all }t\in[0,a)\setminus N\mbox{ and }x\in K_A(t),\vspace{-0.1in}
 \label{subt-cond2}
\end{align}
and\vspace{-0.1in}
\begin{equation}\label{except-cond2}
 \begin{gathered}
  \liminf_{h\downarrow0}\frac1h
  \left(
   d(S(h)x,K_A(t+h))-\int_t^{t+h}\! c(s)\,ds
  \right)^+ \!=0\;
  \text{ for all }t\in N,\, x\in K_A(t).
 \end{gathered}
\end{equation}
Set $m:=c+\widehat c^{\,\circ}$.
Then, for every $u_0\in K_A(0)$ and every $\varepsilon\in(0,1]$, there
exist a closed set $E_\varepsilon\subset J\setminus N$, a continuous map
$v_\varepsilon:J\to X$, a forcing $w_\varepsilon\in L^1(J;X)$, a set
$P_\varepsilon\subset J$, and a strongly measurable map
$x_\varepsilon:E_\varepsilon\to X$ such that, with
\begin{equation}\label{error-density}
 \rho_\varepsilon(t):=
 \varepsilon+m(t)\varchi_{J\setminus E_\varepsilon}(t),
 \qquad
 \delta_\varepsilon:=\int_0^a\rho_\varepsilon(t)\,dt,
\end{equation}
the following holds:
\begin{equation}\label{delta-epsilon-bound}
 \lambda_1(J\setminus E_\varepsilon)
 +\int_{J\setminus E_\varepsilon}m(s)\,ds\leq\varepsilon,
 \qquad
 \delta_\varepsilon
 =a\varepsilon+\int_{J\setminus E_\varepsilon}m(s)\,ds
 \leq(a+1)\varepsilon,
\end{equation}
\begin{align}
 &v_\varepsilon(0)=u_0,
 \qquad 0,a\in P_\varepsilon,
 \qquad v_\varepsilon(\tau)\in K_A(\tau)
 &&\mbox{for all }\tau\in P_\varepsilon,
 \label{two-level-nodes}\\
 &\|w_\varepsilon(t)\|\leq c(t)
 &&\text{for a.e. }t\in J,
 \label{two-level-forcing-bound}\\
 &w_\varepsilon(t)\in\{0\}\cup f(t,K_A(t))
 &&\text{for a.e. }t\in J,
 \label{two-level-selection-or-zero}\\
 &\|v_\varepsilon(t)-
 u(t;\tau,v_\varepsilon(\tau),w_\varepsilon)\|
 \leq\int_\tau^t\rho_\varepsilon(s)\,ds
 &&\mbox{for all }\tau\in P_\varepsilon,\, \tau\leq t\leq a,
 \label{two-level-restart}\\
 &x_\varepsilon(t)\in K_A(t),\;\;
 w_\varepsilon(t)=f(t,x_\varepsilon(t)),\;\;
 \|x_\varepsilon(t)-v_\varepsilon(t)\|\leq\varepsilon
 &&\text{for a.e. }t\in E_\varepsilon.
 \label{current-time-selector}
\end{align}
Moreover, for every $t\in J$ there is
$\sigma_\varepsilon(t)\in P_\varepsilon$ such that
\begin{equation}\label{lagged-selector-property}
 (t-\varepsilon)^+\leq\sigma_\varepsilon(t)\leq t,
 \qquad
 \|v_\varepsilon(t)-v_\varepsilon(\sigma_\varepsilon(t))\|
 \leq\varepsilon.
\end{equation}
Consequently, if
$u_\varepsilon:=u(\cdot;0,u_0,w_\varepsilon)$, then
\begin{align}
 &\|v_\varepsilon(t)-u_\varepsilon(t)\|
 \leq\delta_\varepsilon
 &&\mbox{for all }t\in J,
 \label{v-u-global}\\
 &\|x_\varepsilon(t)-u_\varepsilon(t)\|
 \leq\varepsilon+\delta_\varepsilon
 &&\text{for a.e. }t\in E_\varepsilon,
 \label{appr-sol-incl}\\
 &v_\varepsilon(\sigma_\varepsilon(t))
 \in K_A(\sigma_\varepsilon(t)),\qquad
 \|v_\varepsilon(\sigma_\varepsilon(t))-u_\varepsilon(t)\|
 \leq\varepsilon+\delta_\varepsilon
 &&\mbox{for all }t\in J.
 \label{global-lagged-constraint}
\end{align}
\end{lemma}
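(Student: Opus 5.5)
The plan is to mimic the Zorn's-lemma continuation of Lemma~\ref{approx1}, but with a sharper local step available at the good times.
\begin{itemize}
\item[(i)] Apply Lemma~\ref{l1} to the jointly measurable Carath\'eodory map $f$ on the separable space $X$. Together with absolute continuity of the integral of $m$, this gives a closed Scorza--Dragoni set $E_\varepsilon\subset J\setminus N$ with $\lambda_1(J\setminus E_\varepsilon)+\int_{J\setminus E_\varepsilon}m\leq\varepsilon$; since $N$ is null, we may intersect with the complement of an open neighbourhood of $N$ of small measure. This yields \eqref{delta-epsilon-bound}.
\item[(ii)] Further shrink $E_\varepsilon$ to its right density points relative to $m$ and $\varchi_{J\setminus E_\varepsilon}$, only up to a null set, in the spirit of \eqref{regular-density-points}. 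The remaining properties are then required only a.e.\ on $E_\varepsilon$.
\end{itemize}

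The local step at a node $(b,x)$ with $x\in K_A(b)$ splits into three cases.
\begin{itemize}
\item[(a)] If $b\in E_\varepsilon$ is a good point, put $z=f(b,x)$. Subtangentiality \eqref{subt-cond2} and the sequential form \eqref{SC1-sequential} give $h$ and $x_1\in K_A(b+h)$ with $\|x_1-S_z(h)x\|\leq\varepsilon h/2$. Instead of freezing $z$, use the forcing $w_b(s)=f(s,x)$ for $s\in E_\varepsilon\cap[b,b+h]$ and $w_b=0$ elsewhere. Since $x\in K_A(b)$ need not lie in $K_A(s)$, take the selector $x_\varepsilon(s)$ to be a nearby point of $K_A(s)$, obtained from the right lower semicontinuity of Corollary~\ref{right-persistence-corollary} and the almost-minimizing selection Theorem~\ref{thm:almost-minimizing-selection}. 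Set $w_b(s)=f(s,x_\varepsilon(s))$ there. Joint continuity of $f$ on $(E_\varepsilon\times X)\cap\Gr(K_A)$ shows that $\|w_b(s)-z\|$ is small for $s\in E_\varepsilon$ near $b$. The density conditions bound the contribution of $[b,b+h]\setminus E_\varepsilon$ by $\int m\varchi_{J\setminus E_\varepsilon}$, and Proposition~\ref{mild-difference} compares $u(\cdot;b,x,w_b)$ with $S_z(\cdot-b)x$.
\item[(b)] If $b\notin E_\varepsilon$ (including $b\in N$), use the construction of Lemma~\ref{approx1} with $w_b\in\{0,z\}$. There the error is allowed to be as large as $\int m$ on $J\setminus E_\varepsilon$ plus $\varepsilon h$.
\item[(c)] In both cases, distribute the defect $\xi=x_1-(\text{mild solution at }b+h)$ along the step with the density $\rho_\varepsilon$, exactly as $p_b$ was distributed in Lemma~\ref{approx1}. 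This gives the local version of \eqref{two-level-restart}.
\end{itemize}
Additionally restrict $h\leq\varepsilon$, and choose $h$ small enough that $\|v_b(s)-x\|\leq\varepsilon$ on the step. Then $\sigma_\varepsilon(t)$, the last node before $t$, satisfies \eqref{lagged-selector-property}, and $\|x_\varepsilon(t)-v_\varepsilon(t)\|\leq\varepsilon$. For the latter, choose $x_\varepsilon(s)\in K_A(s)$ with $\|x_\varepsilon(s)-x\|\leq\varepsilon/2$, which is possible for $s$ near $b$ by lower semicontinuity from the right.

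Globalise with Zorn's lemma as in Lemma~\ref{approx1}. The chain-limit argument uses left-closedness of $\Gr(K_A)$ and the tail estimate with $\rho_\varepsilon$ in place of $\gamma$. The concatenation estimate follows from the semigroup property and contractivity. Once $v_\varepsilon$ has been built, the stated consequences are immediate.
\begin{itemize}
\item \eqref{v-u-global} is \eqref{two-level-restart} with $\tau=0$.
\item \eqref{appr-sol-incl} follows from $\|x_\varepsilon-v_\varepsilon\|\leq\varepsilon$.
\item \eqref{global-lagged-constraint} follows from \eqref{lagged-selector-property} together with \eqref{v-u-global}.
\end{itemize}
Strong measurability of $x_\varepsilon$ and $w_\varepsilon$ needs measurable choices of the local selectors; Theorem~\ref{thm:almost-minimizing-selection} and Lemma~\ref{measurable-graph-composition} supply these on each step. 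With countably many steps up to the chain limits, the pieces assemble measurably.

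The main obstacle is step (a). The selector $x_\varepsilon(s)$ must be simultaneously in $K_A(s)$, close to $v_\varepsilon(s)$, and measurable. At the same time the forcing $w_b=f(\cdot,x_\varepsilon(\cdot))$ must stay uniformly close to the frozen subtangent $z$, so that the endpoint $x_1$ is still reached within $\varepsilon h$. The first thing to try is to take $x_\varepsilon(s)=p_{x,\eta}(s)$ with $\eta$ small, and then to combine the Scorza--Dragoni joint continuity with the choice of $h$ inside the window where $d(x,K_A(s))<\eta$.
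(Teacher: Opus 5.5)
Your case (a) is a workable variant of the paper's Case~1. You use the almost-minimizing selector $p_{x,\eta}$ of the fixed point $x$, with $\eta$ tuned to the Scorza--Dragoni modulus at $(b,x)$, and you put $w_b=0$ off $E_\varepsilon$. The paper instead takes a von Neumann--Aumann selection $y(s)$ of near-projections of the moving point $S_z(s-b)x$, so that $y(s)\to x$ exactly, and uses $w_b=f(\cdot,y(\cdot))$ on the whole step. Your version avoids the measurable selection theorem and the product measurability of $(s,y)\mapsto d(y,K_A(s))$; it only gives $\|x_\varepsilon(s)-x\|\le2\eta$, but that suffices.

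Case (b), however, fails as written. A constant forcing $w_b=z=f(b,x)$ on $[b,b+h]$ satisfies $\|z\|\le c(b)$, not $\|w_\varepsilon(s)\|\le c(s)$. Moreover $z\notin f(s,K_A(s))$ in general. So \eqref{two-level-forcing-bound} and \eqref{two-level-selection-or-zero} break on a set of positive measure. You must take $w_b=0$ at every $b\notin E_\varepsilon$. Regular times $b\notin E_\varepsilon\cup N$ then need an extra step, which the paper calls the all-time estimate. From $z\in\TAK(b,x)$, Proposition~\ref{mild-difference} and the local majorant give $d(S(h_n)x,K_A(b+h_n))\le h_nc(b)+o(h_n)\le\int_b^{b+h_n}\widehat c^{\,\circ}(s)\,ds+o(h_n)$. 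This is why $m=c+\widehat c^{\,\circ}$. The full budget $\rho_\varepsilon=\varepsilon+m$ is available only because closedness of $E_\varepsilon$ lets you choose $[b,b+h]\cap E_\varepsilon=\emptyset$. That choice is also what guarantees that no current-time selector is needed on such a step; you should say so explicitly.

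Second, step (ii) does not fit with the rest. Deleting the non-density points destroys the closedness of $E_\varepsilon$, which the lemma asserts and case (b) uses. Keeping $E_\varepsilon$ closed instead leaves nodes $b\in E_\varepsilon$ that are not ``good'' and fall under neither (a) nor (b). Yet the Zorn argument needs an extension at every $b<a$. In your case (a), density is in fact unnecessary, and the missing ingredient is again the local majorant. With $w_b=0$ off $E_\varepsilon$ and $h<\delta(b)$, the off-$E_\varepsilon$ error is $\|z\|\,\lambda_1([b,b+h]\setminus E_\varepsilon)\le\int_{[b,b+h]\setminus E_\varepsilon}\widehat c^{\,\circ}(s)\,ds\le\int_{[b,b+h]\setminus E_\varepsilon}m(s)\,ds$. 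This gives $\|x_1-U(b+h)\|\le\tfrac34\varepsilon h+\int_{[b,b+h]\setminus E_\varepsilon}m(s)\,ds\le\int_b^{b+h}\rho_\varepsilon(s)\,ds$. Hence (a) works at every $b\in E_\varepsilon$, and (ii) can simply be dropped. The paper solves the same issue differently: it chooses $E\subset M\subset I$, so that every point of the closed set $E$ is a right density point of a larger Scorza--Dragoni set $I$. That is how it controls its whole-step forcing outside $I$.
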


\begin{proof}
Fix $u_0\in K_A(0)$ and $0<\varepsilon\leq1$, and define the finite
measure
\[
 \nu(B):=\lambda_1(B)+\int_B m(s)\,ds,
 \qquad B\in\mathcal L(J).
\]
By the Scorza--Dragoni property and absolute continuity of the integral,
choose a closed Scorza--Dragoni set $I_0\subset J$ such that
$\nu(J\setminus I_0)<\varepsilon/4$.  Since $\nu(N)=0$, inner regularity
gives a closed set $I\subset I_0\setminus N$ with
$\nu(I_0\setminus I)<\varepsilon/4$.  Thus $I$ is again a
Scorza--Dragoni set and $\nu(J\setminus I)<\varepsilon/2$.
By the right Lebesgue differentiation theorem, there is a measurable set
$M\subset I$ of full measure in $I$ such that, for every
$t\in M\cap[0,a)$,
\[
 \lim_{h\downarrow0}\frac{\lambda_1([t,t+h]\setminus I)}h=0,
 \qquad
 \lim_{h\downarrow0}\frac1h
   \int_{[t,t+h]\setminus I}c(s)\,ds=0.
\]
Since $\nu\ll\lambda_1$, $\nu(I\setminus M)=0$.  By inner regularity,
choose a closed $E=E_\varepsilon\subset M$ with
$\nu(M\setminus E)<\varepsilon/2$.  Then
$\nu(J\setminus E)\leq\varepsilon$, so
\eqref{delta-epsilon-bound} follows directly from
\eqref{error-density}.  Set $\rho:=\rho_\varepsilon$.

We shall also use the all-time estimate
\[
 \liminf_{h\downarrow0}\frac1h
 \left(
 d(S(h)x,K_A(t+h))-\int_t^{t+h}m(s)\,ds
 \right)^+=0
 \quad \text{ for all } t\in[0,a),\, x\in K_A(t).
\]
For $t\in N$ this follows from \eqref{except-cond2}, since
$m\geq c$.  If $t\notin N$, put $z=f(t,x)$ and choose
$h_n\downarrow0$ with
$d(S_z(h_n)x,K_A(t+h_n))=o(h_n)$.  Proposition~\ref{mild-difference}
and the local-majorant property give, for all sufficiently large $n$,
\[
\begin{aligned}
 d(S(h_n)x,K_A(t+h_n))
 \leq d(S_z(h_n)x,K_A(t+h_n))+h_nc(t) \leq o(h_n)+\int_t^{t+h_n}\widehat c^{\,\circ}(s)\,ds,
\end{aligned}
\]
which proves the claim.

We now construct a local extension.  Let $b<a$ and $x\in K_A(b)$.

\smallskip
\noindent
\emph{Case 1: $b\in E$.}
Put $z=f(b,x)$ and $q(s)=S_z(s-b)x$.  Lemma~\ref{approx1} and
Corollary~\ref{right-persistence-corollary} show that all sections
$K_A(s)$ are nonempty and that $K_A$ is lower semicontinuous from the
right.  Hence Lemma~\ref{right-distance-measurability} and
Proposition~\ref{cross-meas} show that the map $(s,y)\longmapsto d(y,K_A(s))$
is product measurable on $J\times X$.  Moreover, by the
joint-measurability hypothesis in the statement of the lemma,
$\Gr(K_A)$ is product measurable.

For $s\in[b,a]$ define
\[
 \Pi(s):=\left\{y\in K_A(s):
 \|y-q(s)\|\leq d(q(s),K_A(s))+(s-b)\right\}.
\]
The multifunction $\Pi$ has nonempty closed values.  Indeed,
$q(b)=x\in K_A(b)$, so $x\in\Pi(b)$, while for $s>b$ nonemptiness
follows from the definition of the distance.  Furthermore,
\[
 \Gr(\Pi)
 =
 \Gr(K_A)\cap([b,a]\times X)\cap
 \left\{(s,y):
 \|y-q(s)\|\leq d(q(s),K_A(s))+(s-b)\right\},
\]
and therefore $\Gr(\Pi)$ is product measurable.  The
von Neumann--Aumann measurable selection theorem yields a
Lebesgue-measurable selection $y:[b,a]\to X$; see
\cite[Theorem~III.22]{CaVa}.  Since $X$ is separable, $y$ is strongly
measurable.

By right lower semicontinuity,
$d(x,K_A(s))\to0$ as $s\downarrow b$, while $q(s)\to x$.  From the
definition of $\Pi$,
\[
\begin{aligned}
 \|y(s)-x\|
 &\leq \|y(s)-q(s)\|+\|q(s)-x\|\\
 &\leq d(q(s),K_A(s))+(s-b)+\|q(s)-x\|\\
 &\leq 2\|q(s)-x\|+d(x,K_A(s))+(s-b)\rightarrow0.\vspace{-0.05in}
\end{aligned}
\]
Thus\vspace{-0.05in}
\begin{equation}\label{projection-convergence}
 y(s)\rightarrow x\quad \text{ as } s\to b+.
\end{equation}

Set $w_b:=f(\cdot,y(\cdot))$.  Since
$(s,y(s))\in\Gr(K_A)$ for every $s\in[b,a]$,
Proposition~\ref{superposition} gives strong measurability of $w_b$;
moreover, $\|w_b(s)\|\leq c(s)$
for almost every $s\in[b,a]$.  Since $b\in E\subset M\subset I$, the
Scorza--Dragoni property on $I$, together with
\eqref{projection-convergence}, gives
\[
 \sup_{s\in[b,b+h]\cap I}\|w_b(s)-z\|\rightarrow0
 \quad \text{ as } h\to 0+.
\]
On the complementary set
$B_h:=[b,b+h]\setminus I$, the defining properties of $M$ give
\[
 \frac1h\int_{B_h}\|w_b(s)-z\|\,ds
 \leq
 \frac1h\int_{B_h}c(s)\,ds
 +\|z\|\frac{\lambda_1(B_h)}h
 \rightarrow0.
\]
Consequently,
\begin{equation}\label{local-average-continuity}
 \frac1h\int_b^{b+h}\|w_b(s)-z\|\,ds\rightarrow0
 \quad \text{ as } h\to 0+.
\end{equation}

Let $U(s)=u(s;b,x,w_b)$.  By Proposition~\ref{mild-difference} and
\eqref{local-average-continuity},
$\|U(b+h)-q(b+h)\|=o(h)$.  Choose a sequence $h_n\downarrow0$ realizing
$z\in\TAK(b,x)$.  Then
$d(U(b+h_n),K_A(b+h_n))=o(h_n)$, and for all sufficiently large $n$ we
may choose $x_1\in K_A(b+h_n)$ such that
\begin{equation}\label{good-endpoint-error}
 \|x_1-U(b+h_n)\|\leq\frac{\varepsilon h_n}{4}.
\end{equation}
Set $h=h_n$ with $n$ so large that $h\leq\varepsilon$, $b+h\leq a$, and
\begin{equation}\label{good-local-smallness}
 \sup_{b\leq s\leq b+h}\|U(s)-x\|\leq\varepsilon/4,
 \qquad
 \sup_{b\leq s\leq b+h}\|y(s)-x\|\leq\varepsilon/2.
\end{equation}
Put $\xi=x_1-U(b+h)$ and $R=\int_b^{b+h}\rho(s)\,ds$.
Since $\rho(s)\geq\varepsilon$ for all $s\in J$,
$R\geq\varepsilon h>0$,
while \eqref{good-endpoint-error} gives
\[
 \|\xi\|\leq\frac{\varepsilon h}{4}\leq R.
\]
Define
\[
 p_b(s)=\frac{\rho(s)}{R}\,\xi,
 \qquad
 V(s)=U(s)+\int_b^s p_b(\sigma)\,d\sigma.
\]
Then $\|p_b(s)\|\leq\rho(s)$ for almost every $s$, and
\[
 V(b)=x,\qquad V(b+h)=U(b+h)+\xi=x_1.
\]
Moreover,
\begin{equation}\label{local-rho-error}
 \|V(s)-u(s;b,x,w_b)\|
 \leq\int_b^s\rho(\sigma)\,d\sigma
 \qquad\mbox{for all }s\in[b,b+h].
\end{equation}
For $b\leq s\leq b+h$,
\[
 \left\|\int_b^s p_b(\sigma)\,d\sigma\right\|
 \leq\|\xi\|
 \leq\frac{\varepsilon h}{4}
 \leq\frac{\varepsilon}{4};
\]
recall that $h\leq1$.  Together with \eqref{good-local-smallness}, this yields
\begin{equation}\label{good-local-selector-distance}
 \|V(s)-x\|\leq\varepsilon/2,
 \qquad
 \|y(s)-V(s)\|\leq\varepsilon
 \quad\mbox{for all }s\in[b,b+h].
\end{equation}
Set $x_E=y$ on $E\cap[b,b+h]$.

\smallskip
\noindent
\emph{Case 2: $b\notin E$.}
Since $E$ is closed, choose $h_0>0$ with
$[b,b+h_0]\cap E=\emptyset$.  Along a sequence realizing the all-time
estimate, choose $h>0$ sufficiently small that
$h\leq\min\{\varepsilon,h_0\}$ and a point $x_1\in K_A(b+h)$ can be
chosen with
\begin{equation}\label{bad-endpoint-error}
 \|x_1-S(h)x\|
 \leq\varepsilon h+\int_b^{b+h}m(s)\,ds
 =\int_b^{b+h}\rho(s)\,ds.
\end{equation}
Put $w_b=0$, $U(s)=S(s-b)x$, $\xi=x_1-U(b+h)$, and
$R=\int_b^{b+h}\rho(s)\,ds$.  With the same definitions of $p_b$ and
$V$ as in Case~1, \eqref{local-rho-error} holds and, along this
construction,
\[
 \sup_{b\leq s\leq b+h}\|V(s)-x\|
 \leq\sup_{0\leq r\leq h}\|S(r)x-x\|+R\rightarrow0
 \qquad(h\downarrow0).
\]
Thus $h$ and $x_1$ may be chosen so that, in addition,
\begin{equation}\label{bad-local-smallness}
 \|V(s)-x\|\leq\varepsilon
 \qquad\mbox{for all }s\in[b,b+h].
\end{equation}
No current-time selector is needed on this interval, since
$[b,b+h]\cap E=\emptyset$.

\smallskip
We now use Zorn's lemma.  For $b\in[0,a]$, put $J_b=[0,b]$ and
$E_b=E\cap J_b$.  Let $\mathcal M_\varepsilon$ consist of all quintuples
$(v,w,x_E,P,b)$ with
\[
 \begin{gathered}
 v\in C(J_b;X),\quad v(0)=u_0,\quad
 w\in L^1(J_b;X),\quad P\subset J_b,\\
 x_E:E_b\to X\ \text{strongly measurable},
 \end{gathered}
\]
for which the analogues of
\eqref{two-level-nodes}--\eqref{lagged-selector-property} hold on $J_b$,
with $a$ replaced by $b$, $E_\varepsilon$ by $E_b$, and
$(v_\varepsilon,w_\varepsilon,x_\varepsilon,P_\varepsilon)$ by
$(v,w,x_E,P)$.  Equalities involving $w$ and $x_E$ are understood almost
everywhere.  Order $\mathcal M_\varepsilon$ by extension, i.e.\
\begin{align*}
 &(v,w,x_E,P,b)\preceq
 (\bar v,\bar w,\bar x_E,\bar P,\bar b) \quad :\Leftrightarrow\\
 & b\leq\bar b,\quad
 \bar v|_{J_b}=v,\quad
 \bar w|_{J_b}=w\ \text{a.e.},\quad
 \bar x_E|_{E_b}=x_E\ \text{a.e.},\quad
 P\subset\bar P.
\end{align*}
Then the degenerate object at $b=0$ belongs to $\mathcal M_\varepsilon$.

Let $(v,w,x_E,P,b)\in\mathcal M_\varepsilon$ with $b<a$.  Apply the
appropriate local construction at $(b,v(b))$, concatenate the old and
new pieces, and add $b+h$ to $P$.  The forcing and selector properties
are immediate.  For $\tau\in P$ and $s\in[b,b+h]$, the semigroup property
and contractivity give
\[
\begin{aligned}
 \|V(s)-u(s;\tau,v(\tau),w\oplus w_b)\| & \leq
 \|V(s)-u(s;b,v(b),w_b)\|
 +\|v(b)-u(b;\tau,v(\tau),w)\|\\
 & \leq
 \int_b^s\rho(r)\,dr+\int_\tau^b\rho(r)\,dr
 =\int_\tau^s\rho(r)\,dr,
\end{aligned}
\]
so the restart estimate is preserved.  Taking $b$ as lagged node on the
new interval verifies the lag condition by $h\leq\varepsilon$ and
\eqref{good-local-selector-distance} or \eqref{bad-local-smallness}.
Thus every object with $b<a$ has a strict extension.

Every chain has an upper bound.  Let $b^*$ be the supremum of its
terminal times.  If $b^*$ is attained, consistency on the common
intervals and the union of the node sets give an upper bound.  Otherwise
choose an increasing sequence of chain elements
$(v_n,w_n,x_{E,n},P_n,b_n)$ with $b_n\uparrow b^*$.  Every chain element
is then extended by one of these selected elements.  Their consistency defines $v^*,w^*$ on
$[0,b^*)$, $x_E^*$ on $E\cap[0,b^*)$, and
$P^*:=\bigcup_nP_n$.  Choosing compatible representatives gives strong
measurability of $w^*$ and $x_E^*$ on their respective domains.  The
bound $\|w^*(t)\|\leq c(t)$ a.e.\ gives
$w^*\in L^1([0,b^*];X)$, and the a.e.\ forcing and selector conditions
pass to the union.

For $\tau=b_n$ and $t\in[b_n,b^*)$, the restart estimate gives
\eqref{geometric-tail-estimate} with $\rho$ in place of the function
$\gamma$ occurring there.
The completion argument of Lemma~\ref{approx1} therefore yields a limit
$v^*(b^*):=\lim_{t\uparrow b^*}v^*(t)$.  Since
$b_n\in P_n$ and $v^*(b_n)\in K_A(b_n)$, left-closedness of
$\Gr(K_A)$ gives $v^*(b^*)\in K_A(b^*)$.  Add $b^*$ to $P^*$.
The restart estimate passes to $b^*$ by continuity of the prescribed-forcing mild solution; the lag property is inherited for $t<b^*$ and at
$t=b^*$ holds with $\sigma=b^*$.  If $b^*\in E$, set
$x_E^*(b^*):=v^*(b^*)$; endpoint values do not affect the a.e.\ selector
conditions.  Thus the chain has an upper bound.

Zorn's lemma gives a maximal element, whose terminal time must be $a$.
Denote it by
$(v_\varepsilon,w_\varepsilon,x_\varepsilon,P_\varepsilon,a)$.
Its defining properties are precisely
\eqref{two-level-nodes}--\eqref{lagged-selector-property}.
Finally, \eqref{v-u-global} follows from \eqref{two-level-restart} with
$\tau=0$; \eqref{appr-sol-incl} then follows from
\eqref{current-time-selector}, and \eqref{global-lagged-constraint}
from \eqref{two-level-nodes}, \eqref{lagged-selector-property}, and
\eqref{v-u-global}.
\end{proof}

\begin{remark}\label{two-level-interpretation}
The current-time selectors $x_\varepsilon(t)\in K_A(t)$, together with
$w_\varepsilon(t)=f(t,x_\varepsilon(t))$, identify the limiting forcing.
The lagged nodes
$v_\varepsilon(\sigma_\varepsilon(t))\in K_A(\sigma_\varepsilon(t))$
with $t-\sigma_\varepsilon(t)\to0$ recover the constraint in a uniform
limit.  The two constructions serve distinct purposes.  In particular,
lagged nodes at different times do not permit the forcing values to be compared by a
state-Lipschitz estimate when the time dependence is merely measurable.
\end{remark}

\begin{corollary}[Approximate solutions after separable reduction]
\label{approx4}
Let $D\subset K_A(0)$ be nonempty, bounded, and separable.  Suppose that
the bounded effective-tube setting holds on $J=[0,a]$, that
$f:\Gr(K_A)\to X$ is Carath\'eodory, and that \emph{(SR1)} or \emph{(SR2)}
holds on $J$.  Then there are a closed separable resolvent-invariant subspace
$Y\subset X$, a full-measure set $I\subset J$, a null set
$N\subset[0,a)$, and a finite-valued $\gamma\in L^1_+(J)$, all common to
all $u_0\in D$, with the following properties.  Put
$K_Y(t):=K_A(t)\cap Y$, let $A_Y$ be the part of $A$ in $Y$, and let
$f_Y$ be the reduced forcing.
Then $D\subset K_Y(0)$, $f(t,\cdot)$ is continuous on $K_A(t)$ for every
$t\in I$, and
\[
 f_Y(t,x)=f(t,x)
 \quad\text{for all }t\in I\text{ and }x\in K_Y(t),
\]
and
\[
 \|f_Y(t,x)\|\leq\gamma(t)
 \quad\text{for all }t\in J\text{ and }x\in K_Y(t),
\]
\[
 f_Y(t,x)\in T_{K_Y}^{A_Y}(t,x)
 \quad\text{for all }t\in[0,a)\setminus N\text{ and }x\in K_Y(t),\vspace{-0.05in}
\]
while\vspace{-0.05in}
\[
 \liminf_{h\downarrow0}\frac1h
  \left(d_Y(S_Y(h)x,K_Y(t+h))
        -\int_t^{t+h}\gamma(s)\,ds\right)^+=0
 \quad\text{for all }t\in N\text{ and }x\in K_Y(t).
\]
For every $0<\varepsilon\leq1$ and every $u_0\in D$, the conclusions of
Lemma~\ref{approx3} hold for this reduced problem.  Viewed in the ambient
space, the node values and current-time selectors belong to $K_A$ at their
respective times and all estimates remain valid.
\end{corollary}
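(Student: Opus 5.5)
The corollary is a composition of Theorem~\ref{separable-reduction-theorem} with Lemma~\ref{approx3}. I would first apply Theorem~\ref{separable-reduction-theorem} on $J_0=J$ with the prescribed bounded separable set $D$ and $H:=D$. This yields the closed separable subspace $Y$, the full-measure set $I$, the null set $N$, the finite-valued $\gamma$, and the reduced jointly measurable Carath\'eodory forcing $f_Y:\Gr(K_Y)\to Y$. Since the construction in that theorem uses only $D$ and not the individual initial value, these objects are common to all $u_0\in D$. Parts (a), (c), (d) of that theorem give, word for word, the listed properties: $D\subset K_Y(0)$, resolvent invariance, continuity of $f(t,\cdot)$ on $K_A(t)$ for $t\in I$, $f_Y=f$ on $I$, the bound by $\gamma$, and the regular and exceptional subtangential conditions for the part $A_Y$.

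The second step is to check that the reduced problem $(A_Y,K_Y,f_Y)$ in the separable space $Y$ meets the hypotheses of Lemma~\ref{approx3} with $c$ replaced by $\gamma$. By part (a), $A_Y$ is $m$-accretive in $Y$, and $K_Y(t)\subset\overline{D(A_Y)}^{\,Y}$. Hence the effective tube of $K_Y$ relative to $A_Y$ is $K_Y$ itself. I would view $K_Y(t)=K_A(t)\cap Y$ as a closed-valued map into $Y$. Its values are nonempty by (b), in particular at $0$, and its graph is closed from the left by (b). Joint measurability of $f_Y$ on $\Gr(K_Y)$ is given by the theorem. The bound \eqref{int-bound} and conditions \eqref{subt-cond2}--\eqref{except-cond2} are exactly part (d), with the semigroup $S_Y$ equal to the restriction of $S$. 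The subtangential set is computed with $d_Y$, which is exactly what Lemma~\ref{approx3} requires in the space $Y$. Applying the lemma for each $u_0\in D$ and $\varepsilon\in(0,1]$ then gives the conclusions for the reduced problem, with $m=\gamma+\widehat\gamma^{\,\circ}$.

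The last step is the transfer back to the ambient space. The node values $v_\varepsilon(\tau)$ and selectors $x_\varepsilon(t)$ lie in $K_Y(\tau)\subset K_A(\tau)$. The semigroups and resolvents of $A_Y$ are restrictions of those of $A$, so mild solutions in $Y$ with $Y$-valued forcing are the ambient mild solutions. The reason is that the backward Euler steps coincide, being given by the same resolvents $J_\lambda(\cdot+\lambda w_k)$, and Proposition~\ref{mild-difference} gives uniqueness. All norm estimates are the same because $Y$ carries the restricted norm.

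One extra point is needed. On $E_\varepsilon\cap I$ we have $w_\varepsilon(t)=f_Y(t,x_\varepsilon(t))=f(t,x_\varepsilon(t))$, and $J\setminus I$ is null, so the forcing identity also holds for the original $f$ almost everywhere. The main obstacle I expect is this bookkeeping: confirming that Lemma~\ref{approx3} really applies in $Y$, namely that left-closedness and the subtangential sets are meant relative to $Y$ and $A_Y$. This is handled by the domain identity \eqref{part-domain-identity} and the all-time distance identity \eqref{reduction-all-time-distance}.
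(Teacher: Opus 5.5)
Your proposal is correct and follows the paper's proof. You apply Theorem~\ref{separable-reduction-theorem}, then Lemma~\ref{approx3} in $Y$ with the common bound $\gamma$, and transfer back using that the resolvents and semigroup of $A_Y$ are restrictions of those of $A$. The paper takes $H=\overline{\operatorname{span}}\,D$ rather than $H=D$, which is immaterial since the construction starts from $Y_0=\overline{\operatorname{span}}\,H$. Your extra checks make explicit what the paper leaves implicit: that the effective tube of $K_Y$ under $A_Y$ is $K_Y$ itself, that $Y$- and $X$-mild solutions coincide via backward Euler, and that $w_\varepsilon=f(\cdot,x_\varepsilon)$ holds a.e.\ on $E_\varepsilon\cap I$.
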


\begin{proof}
Apply Theorem~\ref{separable-reduction-theorem} with
$H=\overline{\operatorname{span}}\,D$.  Its conclusion already provides one
common bound $\gamma$ for the reduced forcing and the exceptional estimate,
so Lemma~\ref{approx3} applies directly in $Y$.  Since the resolvents and
semigroup of $A_Y$ are the restrictions of those of $A$ and $Y$ has the
inherited norm, every constructed object and estimate is valid in the space $X$ as well.
\end{proof}

\section{Existence, uniqueness, and continuous dependence}\label{s:exist}
The existence results in this section are proved for Carath\'eodory forcing
under the bounded effective-tube setting of
Definition~\ref{bounded-effective-setting}.  The approximation is
constructed in the reduced separable problem, while the resulting solutions
are interpreted in the ambient Banach space.  The final corollary of the present
section transfers all three existence mechanisms back to the original
linear-growth setting.  

We first identify uniform limits of approximate solutions.

\begin{lemma}[Limit identification]\label{limit-identification}
Let $u_0\in K_A(0)$.  Suppose that the bounded effective-tube setting holds on
$J=[0,a]$, that $f:\Gr(K_A)\to X$ is Carath\'eodory, and that
\emph{(SR1)} or \emph{(SR2)} holds on $J$.  Let $Y$, $I$, and
$\gamma$ denote the reduced subspace, the full-measure agreement set, and
the common integrable bound provided by Corollary~\ref{approx4} for
$D=\{u_0\}$.  Choose a sequence $\varepsilon_n\downarrow0$ such that
$\sum_n\varepsilon_n<\infty$, and let
$E_n,w_n,u_n=u(\cdot;0,u_0,w_n),v_n,x_n,\sigma_n$ be the corresponding
approximate solutions.  Write
$\eta_n:=\varepsilon_n+\delta_{\varepsilon_n}$.  Then
$\|w_n(t)\|\leq\gamma(t)$ for almost every $t\in J$ and every $n$.

If, for some $\tau\in(0,a]$, $u_n\to u$ in $C([0,\tau];X)$, then
\begin{align}
 &u(t)\in K_Y(t)\subset K_A(t)
 &&\mbox{for all }t\in[0,\tau],
 \label{limit-in-tube}\\
 &w_n\rightarrow f(\cdot,u(\cdot))
 &&\hbox{in }L^1([0,\tau];X).
 \label{forcing-identification}
\end{align}
Moreover, $u$ is the mild solution of \eqref{sivp4} on $[0,\tau]$.
\end{lemma}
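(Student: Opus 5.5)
The bound $\|w_n(t)\|\leq\gamma(t)$ a.e.\ comes directly from \eqref{two-level-forcing-bound}, applied to the reduced problem of Corollary~\ref{approx4} with common bound $\gamma$. The convergence assertions will be proved in three steps: viability of the limit, identification of the forcing, and passage to the limit in the mild formulation.

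\emph{Step 1 (viability).} Fix $t\in[0,\tau]$. By \eqref{global-lagged-constraint}, the point $y_n:=v_n(\sigma_n(t))$ lies in $K_Y(\sigma_n(t))$, where $\sigma_n(t)\in[(t-\varepsilon_n)^+,t]$, and $\|y_n-u_n(t)\|\leq\eta_n\to0$. Hence $y_n\to u(t)$. If $\sigma_n(t)=t$ for infinitely many $n$, closedness of $K_Y(t)$ gives $u(t)\in K_Y(t)$. Otherwise pass to a subsequence with $\sigma_n(t)\nearrow t$, which is possible after selecting a monotone subsequence since $\sigma_n(t)\to t$ from below. Left-closedness of $\Gr(K_Y)$ (Theorem~\ref{separable-reduction-theorem}(b)) then yields $u(t)\in K_Y(t)$. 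This gives \eqref{limit-in-tube}.

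\emph{Step 2 (forcing identification).} This is the main step. Set $E^*_k:=\bigcap_{n\geq k}E_n$. Because $\lambda_1(J\setminus E_n)\leq\varepsilon_n$ and $\sum\varepsilon_n<\infty$, the sets $E^*_k$ increase to a set of full measure. For a.e.\ $t\in E^*_k\cap I$ and every $n\geq k$, \eqref{current-time-selector} and \eqref{appr-sol-incl} give $x_n(t)\in K_Y(t)$, $w_n(t)=f_Y(t,x_n(t))=f(t,x_n(t))$, and $\|x_n(t)-u(t)\|\leq\eta_n+\|u_n-u\|_\infty\to0$. Since the relevant points lie in the reduced space and $f(t,\cdot)$ is continuous on $K_A(t)$ for $t\in I$, while $u(t)\in K_A(t)$ by Step~1, we get $w_n(t)\to f(t,u(t))$ for a.e.\ $t\in[0,\tau]$. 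To see that the limit is integrable and strongly measurable, note that it is the a.e.\ limit of the strongly measurable $w_n$ and satisfies $\|f(t,u(t))\|\leq\gamma(t)$ for $t\in I$. Dominated convergence, with dominant $2\gamma$, then gives \eqref{forcing-identification}. The point to watch is that the exceptional null sets of \eqref{current-time-selector} are countably many, so the ``a.e.'' qualifiers can be combined. The summability of $\varepsilon_n$ is used precisely so that a.e.\ $t$ belongs to $E_n$ for all large $n$.

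\emph{Step 3 (mild solution).} By Proposition~\ref{mild-difference}, the solution map $w\mapsto u(\cdot;0,u_0,w)$ is continuous from $L^1$ to $C$. Therefore $u_n=u(\cdot;0,u_0,w_n)$ converges in $C([0,\tau];X)$ to $u(\cdot;0,u_0,f(\cdot,u(\cdot)))$, which forces this function to equal $u$. Together with $u(t)\in K_A(t)$ and $f(\cdot,u(\cdot))\in L^1$, this is exactly the definition of a mild solution of the constrained problem \eqref{sivp4} on $[0,\tau]$. Since the resolvents and the semigroup of $A_Y$ are restrictions of those of $A$, it does not matter whether the problem is read in $Y$ or in $X$.
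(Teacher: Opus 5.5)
Your proposal is correct and follows the paper's proof essentially step for step. Viability comes from the lagged nodes together with closedness of $K_Y(t)$ or left-closedness of $\Gr(K_Y)$. The forcing is identified by a Borel--Cantelli argument using the current-time selectors and continuity of $f(t,\cdot)$ for $t\in I$. The mild equation then follows from continuity of the solution map and uniqueness of limits. Your remark that $f(\cdot,u(\cdot))$ is strongly measurable as an a.e.\ limit of the $w_n$ and bounded by $\gamma$ makes explicit a detail the paper leaves implicit.
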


\begin{proof}
Every $u_n$ takes values in the closed subspace $Y$, and therefore so does
its uniform limit $u$.  Fix $t\in[0,\tau]$ and put
$y_n(t):=v_n(\sigma_n(t))$.  By \eqref{global-lagged-constraint},
$y_n(t)\in K_Y(\sigma_n(t))$, $\|y_n(t)-u_n(t)\|\leq\eta_n$, and
$(t-\varepsilon_n)^+\leq\sigma_n(t)\leq t$.  Hence
$\sigma_n(t)\to t$ and $y_n(t)\to u(t)$.  If $\sigma_n(t)=t$ for
infinitely many $n$, closedness of $K_Y(t)$ gives $u(t)\in K_Y(t)$ along
that subsequence.  Otherwise $\sigma_n(t)<t$ eventually; after passing to
an increasing subsequence, $\sigma_n(t)\nearrow t$, and left-closedness of
$\Gr(K_Y)$ gives the same conclusion.  This proves \eqref{limit-in-tube}.

For each $n$, discard the null set on which the identities in
\eqref{current-time-selector} may fail.  The union of these null sets is
still a null set.  Since $\sum_n\lambda_1(J\setminus E_n)<\infty$, the
Borel--Cantelli lemma shows that, for almost every $t\in[0,\tau]$, one has
$t\in I$ and $t\in E_n$ for all sufficiently large $n$.  For every such
$t$, \eqref{appr-sol-incl} and the uniform convergence $u_n\to u$ give
$x_n(t)\to u(t)$.  By the defining properties of $I$, $f(t,\cdot)$ is continuous on
$K_A(t)$, and the reduced forcing agrees with $f$ on
$K_Y(t)$.  Therefore, for all sufficiently large $n$,
$w_n(t)=f(t,x_n(t))\to f(t,u(t))$.  
Dominated convergence yields \eqref{forcing-identification},
hence $u_n=u(\cdot;0,u_0,w_n)\to u(\cdot;0,u_0,f(\cdot,u(\cdot)))$ in $C([0,\tau];X)$
by Proposition~\ref{mild-difference}.
Since, by assumption, $u_n\to u$ in the same space, uniqueness of the
limit gives $u=u(\cdot;0,u_0,f(\cdot,u(\cdot)))$.  Thus $u$ is the mild
solution of \eqref{sivp4} on $[0,\tau]$.
\end{proof}

\subsection{Compactness conditions}

The first existence criterion uses compactness of the homogeneous semigroup
on bounded subsets of the constrained state space.  The second assumes
compactness directly for families of forced-trajectory values.

\begin{theorem}[Viability under compactness]
\label{ex1}
Let $A$ be $m$-accretive in a real Banach space $X$, let
$K:J=[0,a]\to2^X$ have closed values with $K_A(0)\neq\emptyset$, and let
$f:\Gr(K_A)\to X$ be Carath\'eodory.  Suppose that the bounded
effective-tube setting holds on $J$ and that \emph{(SR1)} or \emph{(SR2)}
holds on $J$.
Finally, suppose that at least one of the following conditions is satisfied.
\begin{enumerate}
\item[(a)] For every $h>0$ and every bounded $B\subset K_A(J)$,
\begin{equation}\label{compact-on-tube}
 S(h)B\text{ is relatively compact in }X.
\end{equation}
\item[(b)] For every $t_0\in[0,a)$, every $u_0\in K_A(t_0)$, every
$\phi\in L^1_+([t_0,a])$, and every sequence
$(w_n)\subset L^1([t_0,a];X)$ satisfying
$\|w_n(t)\|\leq\phi(t)$ for almost every $t\in[t_0,a]$ and every $n\geq1$,
one has
\begin{equation}\label{forced-trajectory-compactness}
 \{u(t;t_0,u_0,w_n):n\geq1\}
 \text{ relatively compact in }X
 \quad\text{for every }t\in(t_0,a].
\end{equation}
\end{enumerate}
Then $K$ is viable.
\end{theorem}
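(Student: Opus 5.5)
By the translation $r=t-t_0$ and the convention following Definition~\ref{bounded-effective-setting}, it suffices to treat the initial time $t_0=0$ and a fixed $u_0\in K_A(0)$. We apply Corollary~\ref{approx4} with $D=\{u_0\}$ and choose $\varepsilon_n\downarrow0$ with $\sum_n\varepsilon_n<\infty$. This produces the reduced space $Y$ and the approximate objects $w_n,u_n=u(\cdot;0,u_0,w_n),v_n,x_n,\sigma_n$, with $\|w_n(t)\|\le\gamma(t)$ a.e. By Lemma~\ref{limit-identification}, it is enough to extract a subsequence with $u_n\to u$ in $C([0,a];X)$; the limit is then a viable mild solution. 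The proof therefore reduces to a compactness statement: the family $\{u_n\}$ is relatively compact in $C(J;X)$. We obtain this from Arzel\`a--Ascoli, i.e.\ equicontinuity together with pointwise relative compactness.

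\emph{Equicontinuity.} Proposition~\ref{mild-difference} compares $u_n$ with the unforced mild solution started at $u_n(s)$:
\[
\|u_n(t)-u_n(s)\|\le\|S(t-s)u_n(s)-u_n(s)\|+\int_s^t\gamma.
\]
The integral term is uniformly small. The semigroup term also needs uniformity over the states $u_n(s)$, and we get it from pointwise compactness. If $\{u_n(s)\}$ is relatively compact, then $S(h)x\to x$ as $h\downarrow 0$ uniformly on compact sets, by contractivity. For a fixed $s$ this already shows that the family is equicontinuous at $s$ from the right. From the left we use $\|u_n(t)-u_n(s)\|$ for $t<s$ together with the restart at $t$, and argue in the standard way via the ``$\varepsilon$-net at a grid time $s-\delta$'' argument. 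Concretely, we pick a finite grid $s_i$ and use $\|u_n(t)-u(t;s_i,y,0)\|\le\|u_n(s_i)-y\|+\int_{s_i}^t\gamma$ for $t\ge s_i$, where $y$ runs through a finite net of $\{u_n(s_i)\}$. This yields uniform equicontinuity on $J$ from pointwise compactness at every $t\in J$ (at $t=0$ the set is the singleton $\{u_0\}$).

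\emph{Pointwise compactness.} Under (b) it is immediate with $\phi=\gamma$ and $t_0=0$, since $u_n(t)=u(t;0,u_0,w_n)$. Under (a), fix $t>0$ and $0<h<t$. By Proposition~\ref{mild-difference},
\[
\|u_n(t)-S(h)u_n(t-h)\|\le\int_{t-h}^t\gamma .
\]
To invoke \eqref{compact-on-tube} we need a bounded set inside $K_A(J)$. We therefore replace $u_n(t-h)$ by the lagged node $y_n:=v_n(\sigma_n(t-h))\in K_A(\sigma_n(t-h))$, which satisfies $\|y_n-u_n(t-h)\|\le\eta_n$ by \eqref{global-lagged-constraint}. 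The set $\{y_n\}$ lies in the bounded set $K_A(J)$, because $\Gr(K_A)$ is bounded. Contractivity then gives
\[
\{u_n(t)\}\subset S(h)\{y_n\}+\Big(\eta_n+\int_{t-h}^t\gamma\Big)\overline B_1(0).
\]
Since $\eta_n\to0$ and finitely many $n$ do not matter, this yields $\beta(\{u_n(t)\})\le\int_{t-h}^t\gamma$. Letting $h\downarrow0$ gives $\beta=0$.

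The main obstacle is the equicontinuity step. The semigroup need not be equicontinuous on arbitrary bounded sets, so the argument has to be organized through the grid/net construction above, using only pointwise compactness and the $L^1$ bound. Once both properties hold, Arzel\`a--Ascoli gives a uniformly convergent subsequence, and Lemma~\ref{limit-identification} with $\tau=a$ concludes the proof.
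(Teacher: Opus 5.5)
Your proposal is correct and follows the paper's proof. It uses the same reduction through Corollary~\ref{approx4} and Lemma~\ref{limit-identification}, the same lagged-node $\beta$-estimate under (a), and equicontinuity obtained by restarting the unforced semigroup at an earlier time where the family is relatively compact. The paper condenses your grid/net step into the single estimate \eqref{equi-estimate}: it uses one restart point $s<t_*$ and the uniform strong continuity of $S$ on the compact closure of $\{u_n(s)\}$, but the mechanism is the same as yours.
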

\begin{proof}
Fix $t_0\in[0,a)$ and $u_0\in K_A(t_0)$.  After the change of variables
$r=t-t_0$, it suffices to assume $t_0=0$; the bounded effective-tube setting and
either separable-reduction alternative are preserved under this restriction.
Apply Corollary~\ref{approx4} with $D=\{u_0\}$ and choose
$\varepsilon_n\downarrow0$ with $\sum_n\varepsilon_n<\infty$.  Use the
notation of Lemma~\ref{limit-identification}; in particular,
$\|w_n(t)\|\leq\gamma(t)$ for almost every $t\in J$ and every $n$, where
$\gamma\in L^1_+(J)$ is independent of $n$.  Moreover,
\[
 y_n(t):=v_n(\sigma_n(t))\in K_Y(\sigma_n(t))\subset K_A(\sigma_n(t)),
 \qquad \|y_n(t)-u_n(t)\|\leq\eta_n\rightarrow0.
\]
The set $K_A(J)$ is bounded by the bounded effective-tube setting.

Assume first that (a) holds.  Fix $t>0$ and $h\in(0,t)$.  By
Proposition~\ref{mild-difference},
\begin{align*}
 \|u_n(t)-S(h)y_n(t-h)\|
 &\leq\|u_n(t)-S(h)u_n(t-h)\|
       +\|u_n(t-h)-y_n(t-h)\|\\
 &\leq\int_{t-h}^t\gamma(s)\,ds+\eta_n.
\end{align*}
The set $\{y_n(t-h):n\geq1\}$ is a bounded subset of $K_A(J)$, so
\eqref{compact-on-tube} makes
$\{S(h)y_n(t-h):n\geq1\}$ relatively compact.  Consequently, for every
$p\geq1$,
$\beta(\{u_n(t):n\geq p\})\leq\int_{t-h}^t\gamma(s)\,ds+
\sup_{n\geq p}\eta_n$.  Deleting finitely many points does not change $\beta$; hence the left-hand
side equals $\beta(\{u_n(t):n\geq1\})$.  Letting first $p\to\infty$ and
then $h\downarrow0$ proves that $\{u_n(t):n\geq1\}$ is relatively compact.

Under (b), the common bound $\|w_n(t)\|\leq\gamma(t)$ gives the same
relative compactness directly from
\eqref{forced-trajectory-compactness}.  Thus, in either case, every section
$\{u_n(t):n\geq1\}$ is relatively compact for $t>0$; at $t=0$ it is the
singleton $\{u_0\}$.

The sequence $(u_n)$ is equicontinuous.  If
$0\leq s\leq\min\{t,\bar t\}\leq a$, the semigroup property and
Proposition~\ref{mild-difference} imply
\begin{equation}
 \|u_n(t)-u_n(\bar t)\|
 \leq\|S(|t-\bar t|)u_n(s)-u_n(s)\|  +\int_s^t\gamma(\tau)\,d\tau
       +\int_s^{\bar t}\gamma(\tau)\,d\tau.
 \label{equi-estimate}
\end{equation}
To verify equicontinuity at a fixed $t_*>0$, choose
$s\in(0,t_*)$ so close to $t_*$ that the two integrals in
\eqref{equi-estimate} are uniformly small whenever $t$ and $\bar t$ lie in
a sufficiently small neighbourhood of $t_*$.  The closure of
$\{u_n(s):n\geq1\}$ is compact.  Strong continuity of $S$, uniformly on
this compact set, then makes the first term on the right-hand side of \eqref{equi-estimate}
uniformly small as $|t-\bar t|\to0$.  At zero use
$\|u_n(t)-u_0\|\leq\|S(t)u_0-u_0\|+\int_0^t\gamma(s)\,ds$
to show equicontinuity there.
Arzel\`a--Ascoli therefore gives a subsequence converging in $C(J;X)$.
Lemma~\ref{limit-identification} identifies its limit as a viable mild
solution.
\end{proof}

\begin{remark}[The two compactness alternatives]
\label{forced-compactness-verification}
Condition~\emph{(b)} is not a formal consequence of the tube-restricted
condition~\emph{(a)}, since the arbitrary forced trajectories occurring in
\eqref{forced-trajectory-compactness} need not remain in $K_A(J)$.  It does
follow if $S(h)$ maps every bounded subset of $\overline{D(A)}$ into a
relatively compact set for every $h>0$: for fixed $t>t_0$ and
$h\in(0,t-t_0)$, the set
$\{u(t-h;t_0,u_0,w_n):n\geq1\}$ is bounded and
$\|u(t;t_0,u_0,w_n)-S(h)u(t-h;t_0,u_0,w_n)\|
\leq\int_{t-h}^t\phi(s)\,ds$.  The measure-of-noncompactness argument used in the proof, followed by
$h\downarrow0$, gives \eqref{forced-trajectory-compactness}.

For fixed $t>t_0$, condition~\emph{(b)} also follows if the corresponding
endpoints are bounded, for every $\phi\in L^1_+([t_0,a])$, in a Banach
space $Z_{t_0,t}$ compactly embedded in $X$.  Such a bound may come from an energy or smoothing estimate for the
corresponding forced trajectories under the prescribed integrable forcing
bound.  Thus condition~\emph{(b)} can be strictly weaker
than compactness of all positive-time semigroup images.  For the filtration
operator $Au=-\Delta\varphi(u)$ in $L^1(\Omega)$, continuity and strict
monotonicity of $\varphi$ yield the required compactness of forced solution
values, whereas the homogeneous semigroup need not be compact; cf.\ \cite{BoWi12}.
\end{remark}

\subsection{A measure-of-noncompactness condition}

Assume that, for some $k\in L^1_+(J)$,
\begin{equation}\label{beta-f}
 \beta(f(t,B))\leq k(t)\beta(B)
 \quad\text{for a.e. }t\in J
 \text{ and every bounded }B\subset K_A(t).
\end{equation}
This includes sums of a compact Carath\'eodory map and a map that is
Lipschitz in the state variable with an integrable Lipschitz bound.
The compactness proof uses the following estimate.

\begin{lemma}\label{lemma-beta-w}
Let $X^*$ be uniformly convex, let $A$ be $m$-accretive in $X$, and
suppose that $-A$ generates an equicontinuous semigroup.  Let
$t_0\in J$, $x_0\in\overline{D(A)}$, and let
$(w_j)\subset L^1(J;X)$ satisfy
$\|w_j(t)\|\leq\phi(t)$ almost everywhere for all $j$, where
$\phi\in L^1_+(J)$.  Then
\begin{equation}\label{beta-w}
 \beta\bigl(\{u(t;t_0,x_0,w_j):j\geq1\}\bigr)
 \leq\int_{t_0}^t
 \beta\bigl(\{w_j(s):j\geq1\}\bigr)\,ds
 \qquad\mbox{for all }t\in[t_0,a].
\end{equation}
\end{lemma}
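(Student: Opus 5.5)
Write $u_j(t):=u(t;t_0,x_0,w_j)$ and $\varphi(t):=\beta(\{u_j(t):j\geq1\})$, and let $b(s):=\beta(\{w_j(s):j\geq1\})$. The plan is to derive a differential inequality for $\varphi$ from the mild-solution estimates of Proposition~\ref{mild-difference}, using uniform convexity of $X^*$ to pass from the bracket inequality to a genuinely averaged estimate.

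First, $b$ should be measurable (or at least admit a measurable majorant with the same integral). After reducing to a separable subspace generated by the essential ranges of the $w_j$, I would approximate each $w_j$ by countably valued functions and use the standard fact that $s\mapsto\beta(\{w_j(s)\})$ is measurable for countable families of strongly measurable maps; otherwise I interpret the integral as an upper integral. Also $b\leq\phi$.

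The core step is to compare $u_j(t+h)$ with a common reference point. Fix $t$ and $h>0$. By the semigroup property, $u_j(t+h)=u(t+h;t,u_j(t),w_j)$. Given $\eta>0$, cover $\{u_j(t)\}$ by finitely many balls $B_{\varphi(t)+\eta}(y_i)$. Equicontinuity of $S$ and uniform convexity of $X^*$ enter in the next estimate. I would aim for
\[
 u_j(t+h)=S(h)u_j(t)+\int_t^{t+h}w_j(s)\,ds+o(h),
\]
uniformly in $j$ along the relevant times. This is the Euler/``Duhamel-type'' first-order expansion, which in general fails for nonlinear accretive operators. With $X^*$ uniformly convex, however, the duality map is uniformly continuous on bounded sets, and together with equicontinuity this gives the approximate linearization in an averaged sense (as in the Vrabie/Mitidieri compactness lemmas). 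Granting it, the subadditivity of $\beta$, the contraction property $\beta(S(h)B)\leq\beta(B)$ (from $S(h)$ being nonexpansive), and $\beta(\int_t^{t+h}w_j)\leq\int_t^{t+h}b$ at Lebesgue points give
\[
 \varphi(t+h)\leq\varphi(t)+\int_t^{t+h}b(s)\,ds+o(h).
\]
Note that $\varphi$ is continuous, since $\|u_j(t)-u_j(s)\|$ is uniformly small by equicontinuity and $\int\phi$. A Dini-derivative comparison lemma then yields $\varphi(t)\leq\varphi(t_0)+\int_{t_0}^t b=\int_{t_0}^tb$, because $\varphi(t_0)=0$.

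The expansion above is the main obstacle. As a fallback, I would avoid the linearization by discretizing: on a partition $t_0<s_1<\dots<s_k=t$, approximate $w_j$ on each subinterval by piecewise constants $z_{j,i}$ with error $\varepsilon$ in $L^1$, uniformly in $j$. This requires a finite $\varepsilon$-net argument for $\{w_j(s)\}$ at each $s$, which is where $b(s)$ enters. I would then use $\|u(s_i+\cdot;s_i,x,z)-u(s_i+\cdot;s_i,\bar x,\bar z)\|\leq\|x-\bar x\|+h\|z-\bar z\|$ from Proposition~\ref{mild-difference}. Grouping the $j$ by nets of $\{u_j(s_{i-1})\}$ and $\{z_{j,i}\}$ gives $\varphi(s_i)\leq\varphi(s_{i-1})+h\,\beta(\{z_{j,i}\})+$ error. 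The remaining problem is that the net for the $z_{j,i}$ must be chosen uniformly over a subinterval while only $b(s)$ pointwise is controlled; this is where uniform convexity and equicontinuity would be used to localize via Lebesgue points. I would attempt this discretized route first if the Duhamel expansion proves unprovable.
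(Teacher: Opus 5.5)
The paper gives no proof of this lemma; it quotes it from \cite{Bo-1998}, which also shows that such an estimate can fail without additional geometry of $X^*$. Your proposal has a genuine gap exactly where the content lies. Your main route reduces everything to $\varphi(t+h)\leq\varphi(t)+\int_t^{t+h}b(s)\,ds+o(h)$, which is the lemma itself on short intervals. You derive it from the expansion $u_j(t+h)=S(h)u_j(t)+\int_t^{t+h}w_j(s)\,ds+o(h)$, which you only grant. That expansion is false under precisely the hypotheses of the lemma, so uniform convexity of $X^*$ and equicontinuity cannot rescue it. In Euclidean $X=\mathbb R^2$ take $A=\partial I_C$ with $C=\{x:x_1\leq0\}$. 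Then $J_\lambda$ is the projection onto $C$, $S(h)=I$ on $C$ is equicontinuous, and $X^*$ is uniformly convex. For $x=0$ and $w\equiv e_1$, every backward Euler step maps $0$ to $J_\lambda(\lambda e_1)=0$, so $u\equiv0$, whereas $S(h)x+\int_0^hw\,ds=he_1$. Two auxiliary steps are also unjustified as stated. First, nonexpansiveness by itself only yields $\beta(S(h)B)\leq2\beta(B)$ for the Hausdorff measure, since $S(h)$ lives on $\overline{D(A)}$ and ball centres must first be moved into $B$; any factor in front of $\varphi(t)$ ruins the Dini comparison. Second, $\beta(\{\int_t^{t+h}w_j\,ds:j\geq1\})\leq\int_t^{t+h}b\,ds$ is a Heinz--M\"onch type inequality, i.e.\ the case $A=0$ of the lemma, not a Lebesgue-point fact.

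The fallback cannot close the gap either, for a structural reason. The only solution estimate it uses is the $L^1$-Lipschitz bound of Proposition~\ref{mild-difference}, which holds for every $m$-accretive operator in every Banach space. Suppose one common partition carried step approximations $z_{j,i}$ that are $\varepsilon$-close to $w_j$ in $L^1$ uniformly in $j$, with $\sum_i h\,\beta(\{z_{j,i}:j\geq1\})$ close to $\int b$. Grouping the $j$ by their finitely many net patterns would then prove the estimate with no assumption on $X^*$ or $S$, contradicting \cite{Bo-1998}. Such approximations indeed do not exist, and geometry of $X^*$ cannot help, because the obstruction concerns the family $(w_j)$ alone. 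For $w_j(s)=\rho_j(s)e$ with Rademacher functions $\rho_j$ one has $b\equiv0$, yet no step function on a fixed partition approximates $w_j$ in $L^1$ once $j$ is large. Forcings that oscillate in $j$ with pointwise relatively compact values are the real difficulty, and what is missing is a way to control the nonlinear solution map along them.

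A route that uses the hypotheses where they matter runs as follows. Decompose $w_j=g_j+r_j$ measurably, with $g_j(s)$ in a finite net of $\{w_j(s):j\geq1\}$ and $\|r_j(s)\|\leq b(s)+\varepsilon$ (centres from a countable dense set in a separable setting). The first estimate of Proposition~\ref{mild-difference} then gives $\beta(\{u(t;t_0,x_0,w_j)\})\leq\beta(\{u(t;t_0,x_0,g_j)\})+\int_{t_0}^t(b+\varepsilon)\,ds$. The remaining set is relatively compact by the squared estimate $\|u_j(t)-u_k(t)\|^2\leq2\int_{t_0}^t(g_j-g_k,u_j-u_k)_+\,d\tau$ for $u_j=u(\cdot;t_0,x_0,g_j)$. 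Here equicontinuity of $S$ makes $(u_j)$ equicontinuous in time. Uniform continuity of the single-valued duality map on bounded sets (uniform convexity of $X^*$) allows $\mathcal F(u_j-u_k)$ to be frozen on short intervals. The frozen pairings are bounded by $\|\int_I(g_j-g_k)\,d\tau\|$, which is small along a diagonal subsequence because each set $\{\int_Ig_j\,d\tau:j\geq1\}$ is relatively compact.
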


This is proved in \cite{Bo-1998}; that paper also shows that an estimate
of this form may fail without additional geometry of $X^*$.

\begin{theorem}[Viability under a measure-of-noncompactness estimate]\label{ex2}
Let $A$ be $m$-accretive in a real Banach space $X$, let
$K:J=[0,a]\to2^X$ have closed values with $K_A(0)\neq\emptyset$, and let
$f:\Gr(K_A)\to X$ be Carath\'eodory.  Suppose that the bounded
effective-tube setting holds on $J$ and that \emph{(SR1)} or \emph{(SR2)}
holds on $J$.
If $X^*$ is uniformly convex, the semigroup generated by $-A$ is
equicontinuous, and \eqref{beta-f} holds, then $K$ is viable.
\end{theorem}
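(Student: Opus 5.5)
The plan follows the proof of Theorem~\ref{ex1}: build approximate solutions, show the sequence $(u_n)$ is relatively compact in $C(J;X)$, and identify a limit with Lemma~\ref{limit-identification}. After the shift $r=t-t_0$ we may take $t_0=0$. Apply Corollary~\ref{approx4} with $D=\{u_0\}$ and choose $\varepsilon_n\downarrow0$ with $\sum_n\varepsilon_n<\infty$. This gives $w_n,u_n=u(\cdot;0,u_0,w_n),E_n,x_n$ with $\|w_n(t)\|\leq\gamma(t)$ a.e. Everything lives in the separable space $Y$, but $\beta$ will be computed in $X$. Lemma~\ref{lemma-beta-w} is applied with the ambient $A$ and $\phi=\gamma$; its hypotheses (uniform convexity of $X^*$, equicontinuity of the semigroup) are exactly those of the theorem.

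The key step is to control $\beta(\{w_n(s):n\geq p\})$ pointwise. For a.e.\ $s$, Borel--Cantelli gives $s\in I$ and $s\in E_n$ for $n\geq n_0(s)$. For those $n$ we have $w_n(s)=f(s,x_n(s))$ with $x_n(s)\in K_A(s)$ and $\|x_n(s)-u_n(s)\|\leq\eta_n$ by \eqref{appr-sol-incl}. Since finitely many points do not affect $\beta$, \eqref{beta-f} applied to the bounded set $B=\{x_n(s):n\geq n_0\}\subset K_A(s)$ yields
\[
 \beta(\{w_n(s):n\geq1\})\leq k(s)\,\beta(\{x_n(s):n\geq n_0\})\leq k(s)\bigl(\beta(\{u_n(s):n\geq1\})+\sup_{n\geq n_0}\eta_n\bigr).
\]
Applying the same reasoning to tails $n\geq p$ and letting $p\to\infty$ gives $\beta(\{w_n(s)\})\leq k(s)\varphi(s)$, where $\varphi(s):=\beta(\{u_n(s):n\geq1\})$. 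Lemma~\ref{lemma-beta-w} then gives $\varphi(t)\leq\int_0^t k(s)\varphi(s)\,ds$.

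The obstacle is justifying this integral inequality. Measurability of $s\mapsto\varphi(s)$ and of $s\mapsto\beta(\{w_n(s)\})$ has to be addressed, and the exceptional sets must not depend on the tail index. I would first establish equicontinuity of $(u_n)$, exactly as in \eqref{equi-estimate}, using equicontinuity of the semigroup on the bounded set $K_A(J)$ (more precisely on the bounded set of values $u_n(s)$, which lie within $\eta_n$ of $K_A$). Then $\varphi$ is continuous, since $|\beta(\{u_n(t)\})-\beta(\{u_n(\bar t)\})|\leq\sup_n\|u_n(t)-u_n(\bar t)\|$. For the $w$-term I would pass to the countable family of tails, where each $\beta(\{w_n(s):n\geq p\})$ is measurable for strongly measurable $w_n$ in separable $Y$ (e.g.\ via a countable dense set, noting $\beta$ in $X$ versus $Y$ differ at most by a factor $2$, which suffices for Gronwall). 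If Lemma~\ref{lemma-beta-w} only yields an upper integral, it is enough to integrate the majorant $k\varphi$, which is measurable. Gronwall's inequality then gives $\varphi\equiv0$.

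Hence every section $\{u_n(t)\}$ is relatively compact, and together with equicontinuity Arzel\`a--Ascoli gives a subsequence converging in $C(J;X)$. Lemma~\ref{limit-identification} identifies the limit as a mild solution with $u(t)\in K_A(t)$ on $J$, so $K$ is viable.
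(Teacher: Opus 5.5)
Your proposal is correct. Its overall architecture is the paper's: approximations from Corollary~\ref{approx4}, the estimate of Lemma~\ref{lemma-beta-w}, Gronwall, Arzel\`a--Ascoli, and Lemma~\ref{limit-identification}. The central estimate is organized differently, though.

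\textbf{The pointwise bound.} The paper works with the common sets $I_p=I\cap\bigcap_{n\geq p}E_n$. It bounds $\beta_Y(\{w_n(s):n\geq p\})$ by $2k(s)(\varphi_p(s)+\theta_p)$ on $I_p$ and crudely by $\gamma(s)$ off $I_p$. It then carries the residual $2\theta_p\|k\|_1+r_p$ through Gronwall and lets $p\to\infty$. You instead use that $\beta$ ignores finitely many points. The $s$-dependent Borel--Cantelli index $n_0(s)$ therefore costs nothing, and you get the error-free a.e.\ bound $\beta(\{w_n(s)\})\leq k(s)\varphi(s)$. This makes the tail bookkeeping with $\theta_p$ and $r_p$ unnecessary; indeed the paper's $\varphi_p$ does not depend on $p$.

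\textbf{Where the lemma is applied.} The paper applies Lemma~\ref{lemma-beta-w} in $Y$. That requires noting that $Y^*$ is uniformly convex and the restricted semigroup is equicontinuous, and it gives measurability of $\varphi_p$ via countable dense centres. You apply the lemma in the ambient space, so no hypotheses need to be transferred to $Y$. You handle measurability in two ways. First, you prove equicontinuity of $(u_n)$ beforehand, which makes $\varphi$ continuous. Second, you dominate the $w$-integrand by the measurable $\beta_Y$-term; the factor $2$ is harmless in a homogeneous Gronwall inequality. The paper, by contrast, derives equicontinuity only after section compactness, from compactness of $\{u_n(s)\}$.

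\textbf{One small point in your ordering.} The paper's notion of an equicontinuous semigroup is equicontinuity at positive times only. So you cannot use \eqref{equi-estimate} literally, because its first term $\|S(|t-\bar t|)u_n(s)-u_n(s)\|$ is a time-zero modulus. Keep instead the uncontracted term $\|S(t-s)u_n(s)-S(\bar t-s)u_n(s)\|$ with $s<t_*$. Equicontinuity at the positive time $t_*-s$ on the bounded set $\{u_n(s):n\geq1\}\subset\overline{D(A)}$ controls this uniformly in $n$. At $t_*=0$, use $\|u_n(t)-u_0\|\leq\|S(t)u_0-u_0\|+\int_0^t\gamma(s)\,ds$.
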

\begin{proof}
Fix $t_0\in[0,a)$ and $u_0\in K_A(t_0)$.  After the change of variables
$r=t-t_0$, it suffices to assume $t_0=0$; the bounded effective-tube setting and
the separable-reduction alternative are preserved under restriction.
Apply Corollary~\ref{approx4} with $D=\{u_0\}$ and choose a summable
sequence $\varepsilon_n\downarrow0$.  Use the notation of
Lemma~\ref{limit-identification}; in particular, let $I$ be the
full-measure set on which the reduced forcing agrees with $f$.  Put
$\theta_p:=\sup_{n\geq p}\eta_n$,
$I_p:=I\cap\bigcap_{n\geq p}E_n$, and
$r_p:=\int_{J\setminus I_p}\gamma(s)\,ds$.  When choosing $E_n$, we may moreover assume that
$\int_{J\setminus E_n}\gamma(s)\,ds\leq\varepsilon_n$.
Since $J\setminus I$ is null, summability of $(\varepsilon_n)$ gives
\[
 r_p\leq\sum_{n\geq p}
       \int_{J\setminus E_n}\gamma(s)\,ds\rightarrow0,
 \qquad \theta_p\rightarrow0.
\]
Let $\beta_Y$ denote the Hausdorff measure of noncompactness in the closed
separable subspace $Y$.  For every bounded $B\subset Y$,
$\beta_X(B)\leq\beta_Y(B)\leq2\beta_X(B)$: the second inequality follows by
recentering each nonempty ball of a finite ambient cover at a point of $B$
and doubling its radius.  Put
$\varphi_p(t):=\beta_Y(\{u_n(t):n\geq p\})$.  A fixed countable dense set of
centres in $Y$ expresses every strict sublevel set of $\varphi_p$ as a
countable union of countable intersections of measurable inequalities;
hence $\varphi_p$ is measurable.  The same argument applies to
$s\mapsto\beta_Y(\{w_n(s):n\geq p\})$.

For almost every $s\in I_p$, the identities in
\eqref{current-time-selector} hold, the reduced forcing agrees with $f$, and
$\{x_n(s):n\geq p\}\subset\{u_n(s):n\geq p\}
 +\theta_p(\overline B_1(0)\cap Y)$.  Hence
\begin{align*}
 \beta_Y(\{w_n(s):n\geq p\})
 &\leq2\beta_X(\{w_n(s):n\geq p\})
 \leq2k(s)\beta_X(\{x_n(s):n\geq p\})\\
 &\leq2k(s)(\varphi_p(s)+\theta_p)
 \qquad\mbox{for a.e. }s\in I_p.
\end{align*}
On $J\setminus I_p$, the common bound gives
$\beta_Y(\{w_n(s):n\geq p\})\leq\gamma(s)$.  Since
$Y^*\simeq X^*/Y^\perp$ is a quotient of the uniformly convex space
$X^*$, it is uniformly convex, and the restricted semigroup is
equicontinuous.  Lemma~\ref{lemma-beta-w}, applied in $Y$, therefore yields
\[
 \varphi_p(t)\leq2\int_0^t k(s)\varphi_p(s)\,ds
 +2\theta_p\|k\|_1+r_p.
\]
Gronwall's inequality gives, uniformly in $t$,
\begin{equation}\label{beta-tail-bound}
 \varphi_p(t)\leq
 e^{2\|k\|_1}\bigl(2\theta_p\|k\|_1+r_p\bigr)
 \rightarrow0.
\end{equation}
Deleting finitely many points does not change $\beta_Y$, so
\eqref{beta-tail-bound} proves relative compactness of every section
$\{u_n(t):n\geq1\}$ in $Y$, and hence in $X$.

Equicontinuity follows from \eqref{equi-estimate} by the argument in the
proof of Theorem~\ref{ex1}.  Arzel\`a--Ascoli and
Lemma~\ref{limit-identification} complete the proof.
\end{proof}

\subsection{Local Lipschitz and one-sided Lipschitz conditions}

We call $f$ \emph{locally Lipschitz in the state variable} if, for every
$x_0\in X$, there are $r=r(x_0)>0$ and
$\omega=\omega_{x_0}\in L^1_+(J)$ such that
\begin{equation}\label{local-lipschitz}
 \|f(t,x)-f(t,y)\|\leq\omega(t)\|x-y\|
\end{equation}
for almost every $t$ and all
$x,y\in K_A(t)\cap\overline B_r(x_0)$.
It is \emph{locally one-sided Lipschitz} if instead
\begin{equation}\label{local-osl}
 (f(t,x)-f(t,y),x-y)_-
 \leq\omega(t)\|x-y\|^2
\end{equation}
holds under the same local restrictions.

\begin{theorem}[Locally Lipschitz well-posedness for moving constraints]\label{ex4}
Let $A$ be $m$-accretive in a real Banach space $X$, let
$K:J=[0,a]\to2^X$ have closed values with $K_A(0)\neq\emptyset$, and let
$f:\Gr(K_A)\to X$ be Carath\'eodory.  Suppose that the bounded
effective-tube setting holds on $J$ and that \emph{(SR1)} or \emph{(SR2)}
holds on $J$.
Then, for every $t_0\in[0,a)$ and $u_0\in K_A(t_0)$, the constrained
problem has a unique viable mild solution on $[t_0,a]$ if either
\begin{enumerate}
\item[(a)] $f$ is locally Lipschitz in the state variable, or
\item[(b)] $X^*$ is uniformly convex and $f$ is locally one-sided
Lipschitz in the state variable.
\end{enumerate}
For each fixed $t_0$, the solution depends continuously on the initial
value in $K_A(t_0)$ in the norm of $C([t_0,a];X)$.
\end{theorem}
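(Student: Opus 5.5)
We reduce to $t_0=0$ by the change of variables $r=t-t_0$, apply Corollary~\ref{approx4} with $D=\{u_0\}$, and choose a summable sequence $\varepsilon_n\downarrow0$. In the notation of Lemma~\ref{limit-identification} we obtain approximate data $E_n,w_n,u_n,v_n,x_n,\sigma_n$ with the common bound $\|w_n\|\leq\gamma$. We then show that $(u_n)$ is Cauchy in $C(J;X)$; Lemma~\ref{limit-identification} with $\tau=a$ then identifies the limit as a viable mild solution.

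\emph{Cauchy estimate.} All $u_n$ stay within distance $\eta_n$ of the compact set $\{u_0\}$ only near $t=0$, so we localize. Since $K_A(J)$ is bounded, the trajectories are uniformly bounded, but the local Lipschitz constants are tied to balls around points. We therefore proceed by continuation. Let $\tau^*$ be the supremum of those $\tau$ for which $(u_n)$ is Cauchy in $C([0,\tau];X)$. On such an interval the limit $u$ is a viable solution there, and its compact range $u([0,\tau])$ is covered by finitely many balls $B_{r(x_i)/2}(x_i)$. Let $\omega:=\sum_i\omega_{x_i}$.

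For $t\in E_n\cap E_m\cap I$ with $n,m$ large, \eqref{appr-sol-incl} places $x_n(t),x_m(t)$ within $\eta_n+\eta_m+\|u_n(t)-u_m(t)\|$ of each other, and both lie near $u(t)$. Hence they lie in a common ball $K_A(t)\cap\overline B_r(x_i)$, and
\[
\|w_n(t)-w_m(t)\|\leq\omega(t)\bigl(\|u_n(t)-u_m(t)\|+2\eta_n+2\eta_m\bigr).
\]
On $J\setminus(E_n\cap E_m)$ we use the bound $2\gamma$, whose integral is at most $2(\varepsilon_n+\varepsilon_m)$ after choosing $E_n$ with $\int_{J\setminus E_n}\gamma\leq\varepsilon_n$, as in the proof of Theorem~\ref{ex2}. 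Proposition~\ref{mild-difference} and Gronwall then give
\[
\sup_{[0,\tau']}\|u_n-u_m\|\leq C(\eta_n+\eta_m+\varepsilon_n+\varepsilon_m)\to0.
\]
In case (b) we use the third inequality of Proposition~\ref{mild-difference}, which requires the single-valued duality map; this is where uniform convexity of $X^*$ enters. The bounded perturbation on the bad set and the $\eta$-displacement of $x_n$ from $u_n$ are absorbed because $\mathcal F$ is uniformly continuous on bounded sets and $|(a,b)_\pm|\leq\|a\|\|b\|$. This yields a Gronwall inequality for $\|u_n-u_m\|^2$.

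\emph{Main obstacle.} The difficult step is making the localization rigorous. The Cauchy estimate only controls $x_n(t)$ relative to $u(t)$ where convergence is already known, so a bootstrap is needed. Uniform convergence on $[0,\tau]$ together with the equicontinuity estimate \eqref{equi-estimate} implies that $u_n$ stays close to $u(\tau)$ on a short interval $[\tau,\tau+\delta]$, uniformly in large $n$. The ball around $u(\tau)$ then supplies a Lipschitz constant on $[0,\tau+\delta]$. Here $\delta$ depends only on $r(u(\tau))$ and on the modulus of $\|S(h)u(\tau)-u(\tau)\|+\int\gamma$. Since the limit trajectory on $[0,\tau^*)$ extends continuously to $\tau^*$ (the mild-solution argument of Lemma~\ref{approx1}), the same reasoning at $\tau^*$ shows $\tau^*=a$ and the supremum is attained.

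\emph{Uniqueness and continuous dependence.} Given two viable solutions $u,\bar u$ starting from $u_0,\bar u_0$, cover the compact set $u(J)$ by finitely many Lipschitz balls. As long as $\bar u$ stays within the half-radii, Proposition~\ref{mild-difference} (its first or third inequality) and Gronwall give
\[
\|u(t)-\bar u(t)\|\leq e^{\|\omega\|_1}\|u_0-\bar u_0\|.
\]
A continuation argument shows this persists on all of $[t_0,a]$ once $\|u_0-\bar u_0\|$ is small. Setting $u_0=\bar u_0$ gives uniqueness, and the estimate itself gives continuous dependence in $C([t_0,a];X)$.
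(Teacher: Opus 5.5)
Your proof is correct, but the global existence part follows a different route from the paper.

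\textbf{The paper's route.} The paper proves that the approximations converge only on a short interval $[0,a_0]$, on which all $x_n(t)$ lie in the single Lipschitz ball centred at $u_0$. It then uses local uniqueness to build a maximal viable solution on $[0,\tau)$, and obtains the limit $u_\tau\in K_Y(\tau)$ from the prescribed-forcing mild solution together with left-closedness. Finally it restarts the entire approximation construction at $(\tau,u_\tau)$ in the fixed reduced problem.

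\textbf{Your route.} You show that the single sequence $(u_n)$ built from $(0,u_0)$ is Cauchy on all of $J$. You pick the Lipschitz balls along the compact range of the limit already obtained and push the Cauchy property forward.

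\textbf{Comparison.} Your route makes existence independent of uniqueness. It also means the reduction, regularity sets and current-time selectors never have to be constructed again at a later initial pair. The price is a more delicate localization.

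\textbf{The step at $\tau^*$.} This step needs more care than your sketch gives it, because uniform convergence on $[0,\tau^*]$ itself is not yet available. You should start from some $\tau'<\tau^*$ at which uniform convergence is known. The extension length must then be taken uniformly from the ball at the extended value $u(\tau^*)$, using
\[
\|S(h)u(\tau')-u(\tau')\|\leq\|S(h)u(\tau^*)-u(\tau^*)\|+2\|u(\tau')-u(\tau^*)\|.
\]
This shows that $u_n$ stays in that ball on $[\tau',\tau^*+\delta]$ for all large $n$, with $\delta>0$ independent of $\tau'$.

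\textbf{Uniqueness and continuous dependence.} Your argument here is essentially the paper's. The only difference is that you sum the finitely many coefficients into one $\omega$ with a first-exit argument, whereas the paper applies the local stability estimate successively on partition intervals.

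\textbf{A minor correction in case (b).} The squared estimate of Proposition~\ref{mild-difference} holds with $(\cdot,\cdot)_+$ in every Banach space. Uniform convexity of $X^*$ is needed for two other purposes:
\begin{enumerate}
\item[(i)] to identify $(\cdot,\cdot)_+$ with the $(\cdot,\cdot)_-$ appearing in the one-sided Lipschitz condition;
\item[(ii)] to replace $u_n-u_m$ by $x_n-x_m$ inside the duality map, via its uniform continuity on bounded sets.
\end{enumerate}
You describe (ii) correctly.
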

\begin{proof}
Fix $t_0\in[0,a)$ and $u_0\in K_A(t_0)$.  After the change of variables
$r=t-t_0$, it suffices to assume $t_0=0$; the bounded effective-tube setting and
the separable-reduction alternative are preserved under restriction.
Apply Corollary~\ref{approx4} with $D=\{u_0\}$, and let $I$ and
$\gamma$ be, respectively, its full-measure agreement set and common
integrable bound.  All approximations are constructed in the same reduced
tube $K_Y$ and satisfy $\|w_n(t)\|\leq\gamma(t)$ almost everywhere.  Choose a
summable sequence $\varepsilon_n\downarrow0$.  When choosing $E_n$, we
may moreover assume that
$\int_{J\setminus E_n}\gamma(s)\,ds\leq\varepsilon_n$.
Use the notation of Lemma~\ref{limit-identification} and put
$I_p:=I\cap\bigcap_{n\geq p}E_n$,
$\theta_p:=\sup_{n\geq p}\eta_n$, and
$r_p:=\int_{J\setminus I_p}\gamma(s)\,ds$.  Since $J\setminus I$ is
null,
$r_p\leq\sum_{n\geq p}\varepsilon_n\to0$, and therefore
$\theta_p+r_p\to0$.

Choose $r>0$ and $\omega\in L^1_+(J)$ from
\eqref{local-lipschitz} or \eqref{local-osl}, with centre $u_0$.
The estimate
\begin{equation}\label{uniform-short-time}
 \|u_n(t)-u_0\|
 \leq\|S(t)u_0-u_0\|+\int_0^t\gamma(s)\,ds
\end{equation}
shows that there is $a_0>0$ such that
$\|u_n(t)-u_0\|\leq r/2$ on $[0,a_0]$ for every $n$.  After increasing
$p$, the points $x_n(t)$ also belong to
$\overline B_r(u_0)$ whenever $n\geq p$ and $t\in E_n\cap[0,a_0]$.

\smallskip
\noindent
\emph{Case (a).}
For $m,n\geq p$, Proposition~\ref{mild-difference} and the common regularity set $I_p$ give, on $[0,a_0]$,
\begin{align*}
 \|u_n(t)-u_m(t)\|
 &\leq\int_{[0,t]\cap I_p}
       \|f(s,x_n(s))-f(s,x_m(s))\|\,ds
       +2\int_{[0,t]\setminus I_p}\gamma(s)\,ds\\
 &\leq\int_0^t\omega(s)\|u_n(s)-u_m(s)\|\,ds
       +2\theta_p\|\omega\|_1+2r_p.
\end{align*}
Gronwall's inequality shows that $(u_n)$ is Cauchy in
$C([0,a_0];X)$.

\smallskip
\noindent
\emph{Case (b).}
Uniform convexity of $X^*$ makes the duality map $\mathcal F$ single
valued and uniformly continuous on bounded sets.  Write
$(z,y):=\mathcal F(y)(z)$.  The family $(u_n)$ is uniformly bounded on
$[0,a_0]$, and, for $n\geq p$, the selectors $x_n$ are uniformly bounded on
$I_p\cap[0,a_0]$; choose $R$ larger than these bounds.  Let
\[
 \vartheta_R(q):=
 \sup\{\|\mathcal F(\xi)-\mathcal F(\zeta)\|:
       \|\xi\|,\|\zeta\|\leq2R,
       \|\xi-\zeta\|\leq q\}.
\]
Then $\vartheta_R(q)\to0$ as $q\downarrow0$.  Put
$d=u_n-u_m$ and $e=x_n-x_m$.  On $I_p$,
$\|d-e\|\leq2\theta_p$, and hence
\begin{align*}
 (f(s,x_n)-f(s,x_m),d)
 &= (f(s,x_n)-f(s,x_m),e)\\
 &\quad+[\mathcal F(d)-\mathcal F(e)]
        (f(s,x_n)-f(s,x_m))\\
 &\leq\omega(s)\|e\|^2
       +2\gamma(s)\vartheta_R(2\theta_p)\\
 &\leq2\omega(s)\|d\|^2
       +8\omega(s)\theta_p^2
       +2\gamma(s)\vartheta_R(2\theta_p).
\end{align*}
On $J\setminus I_p$, the absolute value of the corresponding integrand
is at most $4R\gamma(s)$.  The squared estimate in
Proposition~\ref{mild-difference} therefore gives
\begin{align*}
 \|u_n(t)-u_m(t)\|^2
 &\leq4\int_0^t\omega(s)\|u_n(s)-u_m(s)\|^2\,ds\\
 &\quad+16\theta_p^2\|\omega\|_1
       +4\vartheta_R(2\theta_p)\|\gamma\|_1
       +8Rr_p.
\end{align*}
Gronwall's inequality again shows that $(u_n)$ is Cauchy in
$C([0,a_0];X)$.

In either case, Lemma~\ref{limit-identification} identifies the local
limit as a viable mild solution.  For local uniqueness, let $u$ and $v$ be
two viable solutions with the same initial value.  On any interval on which
both remain in one of the corresponding state balls, Proposition~\ref{mild-difference}
and \eqref{local-lipschitz} give in case~(a)
$\|u(t)-v(t)\|\leq\int_s^t\omega(r)\|u(r)-v(r)\|\,dr$, whereas in case~(b) the squared estimate and \eqref{local-osl} give
$\|u(t)-v(t)\|^2\leq2\int_s^t\omega(r)\|u(r)-v(r)\|^2\,dr$.
Gronwall's lemma yields local uniqueness.

The local solution extends to all of $J$.  Uniqueness permits a maximal
continuation on an interval $[0,\tau)$.  The local solution constructed
above is viable in the fixed reduced tube $K_Y$, and its right-hand side
is bounded by $\gamma$.  Hence
$\|u(t)-S(t)u_0\|\leq\int_0^t\gamma(s)\,ds$ for all $t<\tau$.
The composition $f(\cdot,u(\cdot))$ therefore belongs to
$L^1([0,\tau);X)$.  Extend it by zero past $\tau$ and solve the mild problem
with this prescribed right-hand side.  By uniqueness for the prescribed-forcing problem, this solution coincides with $u$ on every interval $[0,s]$, $s<\tau$.  Continuity of that solution shows that
$u(t)$ has a limit $u_\tau$ as $t\uparrow\tau$.
Left-closedness of the fixed reduced graph yields
$u_\tau\in K_Y(\tau)$.  On $[\tau,a]$, the fixed reduced problem retains the reduced forcing
$f_Y$, which is jointly measurable and Carath\'eodory, together with the
common bound $\gamma$ and the regular and exceptional subtangential
conditions.  Hence the local existence argument
restarts the solution at $(\tau,u_\tau)$ with the same bound $\gamma$.
Consequently $\tau=a$.

Finally, retain the initial difference in the estimates for two actual
solutions.  In case (a), Proposition~\ref{mild-difference} and Gronwall's
lemma give, as long as both solutions remain in one local Lipschitz ball,
\begin{equation}\label{local-solution-stability}
 \|u(t;x)-u(t;y)\|
 \leq \exp\!\left(\int_s^t\omega(r)\,dr\right)
       \|u(s;x)-u(s;y)\|.
\end{equation}
In case (b), the squared estimate and Gronwall's lemma give the same
estimate \eqref{local-solution-stability} after taking square roots.
For each point $z$ of the compact range of the reference solution
$u_*(t)=u(t;u_0)$, choose $r_z>0$ and the corresponding coefficient
$\omega_z\in L^1_+(J)$.  A finite family of the balls $B_{r_z/2}(z)$
covers $u_*(J)$.  The inverse images of these balls form a finite open
cover of $J$; hence a finite partition may be chosen so that the image of
each partition interval is contained in one of them.  On such an interval
the reference trajectory has distance at least $r_z/2$ from the complement
of $B_{r_z}(z)$.  Successive application of
\eqref{local-solution-stability} therefore excludes a first exit of a
nearby solution and gives, for all $x\in K_A(0)$ sufficiently close to
$u_0$,
\[
 \|u(\cdot;x)-u(\cdot;u_0)\|_{C(J;X)}
 \leq C_{u_0}\|x-u_0\|
\]
with $C_{u_0}<\infty$.  This proves continuous dependence in $C(J;X)$.
\end{proof}

\begin{remark}[Initial-pair form of the bounded existence proofs]
\label{initial-pair-existence}
The existence arguments above are local to the prescribed initial pair.
After restriction to $[t_0,a]$, existence from a fixed
$(t_0,u_0)\in\Gr(K_A)$ uses the bounded effective-tube setting,
Carath\'eodory regularity, and one separable-reduction alternative only on
that interval.  In
Theorem~\ref{ex1}(b), forced-trajectory compactness is needed only for this
$t_0$ and $u_0$.
\end{remark}

\begin{remark}[Static constraints]
\label{static-lipschitz-general-space}
The measurable-time locally Lipschitz result for a static closed constraint
was proved in \cite{Bo-JEE03}, without separability or compactness of the
ambient space.  Theorem~\ref{ex4} extends this mechanism to moving constraints
through the current-time selectors.
\end{remark}

\begin{corollary}[Return to the linear-growth setting]
\label{general-existence-theory}
Let $A$ be $m$-accretive in a real Banach space $X$, let
$K:J=[0,a]\to2^X$ have closed values with $K_A(0)\neq\emptyset$, assume
that $\Gr(K_A)$ is closed from the left, and let
$f:\Gr(K_A)\to X$ be Carath\'eodory.  Assume that there are a null set
$N\subset[0,a)$ and finite-valued
$c,\psi\in L^1_+(J)$ such that
\begin{align*}
 &\|f(t,x)\|\leq c(t)(1+\|x\|)
 &&\text{for all }(t,x)\in\Gr(K_A),\\
 &f(t,x)\in T_K^A(t,x)
 &&\text{for all }t\in[0,a)\setminus N\text{ and }x\in K_A(t),\vspace{-0.05in}
\end{align*}
and\vspace{-0.05in}
\[
 \liminf_{h\downarrow0}\frac1h
 \left(d(S(h)x,K_A(t+h))-\int_t^{t+h}\psi(s)\,ds\right)^+=0
 \quad\text{for all }t\in N\text{ and }x\in K_A(t).
\]
Assume, in addition, one of the following two alternatives:
\begin{enumerate}
\item[(G)] $f$ is jointly measurable on $\Gr(K_A)$;
\item[(C)] there are a set $C\subset X$ with $K_A(J)\subset C$ and a
Carath\'eodory map $F:J\times C\to X$ whose restriction to $\Gr(K_A)$ is
$f$.
\end{enumerate}
Then all conclusions of the bounded existence theory remain valid in the
following sense.
\begin{enumerate}
\item[(a)] If either compactness condition of Theorem~\ref{ex1} holds,
then $K$ is viable.
\item[(b)] If $X^*$ is uniformly convex, $-A$ generates an equicontinuous
semigroup, and \eqref{beta-f} holds, then $K$ is viable.
\item[(c)] If $f$ is locally Lipschitz in the state variable, or if $X^*$
is uniformly convex and $f$ is locally one-sided Lipschitz, then a unique viable mild solution
exists for every
admissible initial pair $(t_0,u_0)$; for each
fixed initial time the solution depends continuously on the initial value.
\end{enumerate}
The same assertions hold if the integral exceptional estimate is replaced
by the pointwise estimate of Corollary~\ref{bounded-pointwise-corollary}.
\end{corollary}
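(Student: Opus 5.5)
The plan is to reduce, for each admissible initial pair, to the bounded effective-tube setting via Lemma~\ref{bounded-submultifunction} and then apply Theorems~\ref{ex1}, \ref{ex2}, and \ref{ex4}.

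\emph{Reduction.} Fix $t_0\in[0,a)$ and $u_0\in K_A(t_0)$, and shift time so that $t_0=0$. Apply Lemma~\ref{bounded-submultifunction} on $[t_0,a]$ with $D=\{u_0\}$. This gives a sub-tube $\widetilde K\subset K$ with $u_0\in\widetilde K_A(0)$, bounded graph closed from the left, a finite-valued bound $\gamma$ on $\Gr(\widetilde K_A)$, regular subtangentiality outside $N$, and the exceptional estimate with $\gamma$. Hence $(A,\widetilde K,f|_{\Gr(\widetilde K_A)})$ satisfies Definition~\ref{bounded-effective-setting}.

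Next I check that the restriction of $f$ is Carath\'eodory and that (SR1) or (SR2) holds for the reduced problem.
\begin{itemize}
\item[(i)] Under (G), the lemma's final clause shows $\Gr(\widetilde K_A)$ is product measurable, since it is the intersection of $\Gr(K_A)$ with a Borel set. The trace of a measurable map is measurable, so (SR1) holds.
\item[(ii)] Under (C), the same $F$ on $J_0\times C$ works, because $\widetilde K_A(J_0)\subset K_A(J)\subset C$. So (SR2) holds.
\item[(iii)] The Carath\'eodory property passes to subgraphs. Sections only shrink, and a restriction of a strongly measurable extension is still a restriction.
\end{itemize}

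\emph{Existence and uniqueness.} For (a), condition~\eqref{compact-on-tube} on bounded subsets of $\widetilde K_A(J)\subset K_A(J)$ is inherited. Condition~\eqref{forced-trajectory-compactness} refers only to the initial pair, so Remark~\ref{initial-pair-existence} applies. For (b), \eqref{beta-f} on the smaller sections is inherited. Theorems~\ref{ex1} and~\ref{ex2}, used in the initial-pair form, then give a mild solution $u$ with $u(t)\in\widetilde K_A(t)\subset K_A(t)$. Its forcing $f(\cdot,u(\cdot))$ is integrable, so $u$ is a viable mild solution of the original problem.

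For (c), Theorem~\ref{ex4} gives existence in the same way. The local Lipschitz conditions restrict to the smaller sections. Uniqueness in the original tube needs a separate argument, because a second solution $v$ of the original problem might leave $\widetilde K$. I would repeat the local Gronwall argument from the proof of Theorem~\ref{ex4}; it uses only \eqref{local-lipschitz} or \eqref{local-osl} on $K_A(t)$, together with Proposition~\ref{mild-difference}. Let $\tau$ be the supremum of the times where $u=v$. On a small interval after $\tau$, both solutions stay in a common ball $\overline B_r(u(\tau))$, and Gronwall's lemma forces $u=v$ there. Hence $\tau=a$.

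\emph{Continuous dependence and the pointwise variant.} For continuous dependence at fixed $t_0$, I take a bounded neighbourhood $D=K_A(t_0)\cap\overline B_1(u_0)$ and apply Lemma~\ref{bounded-submultifunction} once for this $D$. All solutions starting in $D$ are then the unique solutions of one common bounded problem. Since uniqueness holds in the original tube, they coincide with the original solutions, and the continuous-dependence conclusion of Theorem~\ref{ex4} transfers. The final sentence of the statement follows by first invoking Corollary~\ref{bounded-pointwise-corollary} in place of Lemma~\ref{bounded-submultifunction}.

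The main obstacle is the bookkeeping in the uniqueness step for (c). There, solutions of the original problem must be compared without knowing that they lie in the reduced tube. Everything else is direct inheritance of hypotheses under restriction.
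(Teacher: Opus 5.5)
Your proposal is correct and follows the paper's own proof essentially step by step. It reduces via Lemma~\ref{bounded-submultifunction} with $D=\{u_0\}$, checks that the Carath\'eodory property, (SR1)/(SR2), the compactness and the Lipschitz hypotheses are inherited, applies Theorems~\ref{ex1}, \ref{ex2}, \ref{ex4}, proves uniqueness in the original tube by the direct local Gronwall argument, and uses Corollary~\ref{bounded-pointwise-corollary} for the pointwise variant. The only minor deviation is continuous dependence: you use one common reduction for the neighbourhood $D=K_A(t_0)\cap\overline B_1(u_0)$ and transfer Theorem~\ref{ex4}'s conclusion through uniqueness, whereas the paper applies the local stability estimate directly to the ambient solutions; both routes work.
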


\begin{proof}
Fix an admissible initial pair $(t_0,u_0)$ and restrict the data to
$[t_0,a]$.  After translating time, apply
Lemma~\ref{bounded-submultifunction} with $D=\{u_0\}$.  It yields a bounded effective tube satisfying the bounded
effective-tube setting with one finite-valued integrable bound.
The Carath\'eodory property passes to the bounded effective tube by
restriction.  Under alternative~(G),
Lemma~\ref{bounded-submultifunction} gives product measurability of the
restricted effective graph, and joint measurability of $f$ passes to that
graph by restriction, so \emph{(SR1)} holds.
Under alternative~(C), the same fixed-cylinder Carath\'eodory map restricts to the
bounded effective tube, so \emph{(SR2)} holds.

The compactness assumptions in Theorem~\ref{ex1} pass to the bounded
effective tube, and the measure-of-noncompactness estimate \eqref{beta-f} passes
by monotonicity under restriction.  The local Lipschitz and one-sided
Lipschitz hypotheses pass trivially.  Applying
Theorems~\ref{ex1},~\ref{ex2}, or~\ref{ex4}, respectively, therefore gives
a solution viable in
the bounded effective tube and hence in the original tube.  Repeating this
argument for every admissible initial pair proves the viability statements.

Under either Lipschitz hypothesis, the direct Gronwall argument in the
proof of Theorem~\ref{ex4} shows uniqueness among all viable solutions in
the original tube, independently of the bounded reduction used to construct
them.  The same local stability estimate, applied to the resulting ambient
solutions from nearby initial values, gives the asserted continuous
dependence.  Finally, the pointwise exceptional variant follows from
Corollary~\ref{bounded-pointwise-corollary}.
\end{proof}

\section{Periodic solutions}\label{s:periodic}

For a $T$-periodic equation, a mild solution on $[0,T]$ with matching
endpoints extends by concatenation to a $T$-periodic mild solution.  Under
unique solvability, periodicity is therefore equivalent to the fixed-point
condition $P_Tx=x$ for the Poincar\'e operator $P_Tx:=u(T;x)$.
Without uniqueness, the endpoint map becomes set-valued.  Nonempty compact
values and a closed graph alone are insufficient for a fixed-point argument,
since the values need not be convex or acyclic.

Moving tubes permit time-dependent bounds
and require only the return inclusion $K_A(T)\subset K_A(0)$; a bounded
static invariant set for all intermediate times need not exist.  Accordingly,
the periodic criterion uses a viable return tube and requires convex
fixed-point geometry only in its initial section.
Even in the well-posed case, compactness and viability alone do not
guarantee a periodic solution.  The finite-dimensional counterexample in
\cite[Example~1]{Bo12} has a compact semigroup and a bounded Lipschitz forcing
satisfying the $A$-subtangential condition; all viable initial-value problems
are uniquely solvable, but no $T$-periodic solution exists for
$0<T<2\pi$.  The effective constraint in that example does not have the
fixed-point geometry needed for $P_T$.  Thus fixed-point geometry
of the initial section is a genuine part of the periodic problem, rather
than merely a technical feature of the proof.

Fixed-point methods for the Poincar\'e operator go back to
Browder \cite{Browder65}; see also
\cite{Pruss79,Becker81,Vrabie90,Bader98,Bader00,BaderKryszewski03,
KryszewskiGaborSiemianowski18,HiranoShioji04,APS06,Paicu08}.
The theory developed in \cite{Bo12} for static constraints without uniqueness
uses tangency-preserving approximations.
Here uniqueness and continuous dependence instead give a
single-valued Poincar\'e operator for a moving return tube.  The
forced-trajectory compactness alternative allows the initial states to vary
in the initial section; the measure-of-noncompactness argument of
Section~\ref{s:exist} is carried out for approximations with a fixed initial
value and does not by itself yield compactness of $P_T$ when the initial
states vary.

\begin{theorem}[Periodic solution from a viable return tube]
\label{periodic-theorem}
Let $T>0$, let $A$ be $m$-accretive in a real Banach space $X$, and let
$K:J=[0,T]\to2^X$ have closed values.  Assume that
$\Gr(K_A)$ is closed from the left, and set $C:=K_A(0)$.  Suppose that $C$
is nonempty, bounded, closed, and convex, and that
\begin{equation}\label{periodic-endpoint-inclusion}
 K_A(T)\subset C.
\end{equation}
Let $f:\Gr(K_A)\to X$ be Carath\'eodory.  Suppose that there are a
time-independent state domain $D_f\subset X$ containing $K_A(J)$ and a
$T$-periodic map $F:\mathbb R\times D_f\to X$ whose restriction to
$J\times D_f$ is Carath\'eodory and agrees with $f$ on $\Gr(K_A)$.  Assume that there are finite-valued
$c,\psi\in L^1_+(J)$ and a null set $N\subset[0,T)$ such that
\begin{equation}\label{periodic-linear-growth}
 \|f(t,x)\|\leq c(t)(1+\|x\|)
 \qquad\text{for all }(t,x)\in\Gr(K_A),
\end{equation}
\[
 f(t,x)\in T_K^A(t,x)
 \quad\text{for all }t\in[0,T)\setminus N\text{ and }x\in K_A(t),\vspace{-0.1in}
\]
and\vspace{-0.1in}
\[
 \liminf_{h\downarrow0}\frac1h
 \left(d(S(h)x,K_A(t+h))-\int_t^{t+h}\psi(s)\,ds\right)^+=0
 \quad\text{for all }t\in N\text{ and }x\in K_A(t).
\]
The last condition may be replaced by the pointwise exceptional
condition of Corollary~\ref{bounded-pointwise-corollary}.

Assume one of the following two uniqueness conditions:
\begin{enumerate}
\item[(i)] \emph{Local Lipschitz condition.}  For every $x_0\in X$ there
are $r>0$ and $\omega\in L^1_+(J)$ such that
$\|F(t,x)-F(t,y)\|\leq\omega(t)\|x-y\|$ for almost every
$t\in J$ and all $x,y\in D_f\cap\overline B_r(x_0)$;
\item[(ii)] \emph{Local one-sided Lipschitz condition.}  The space $X^*$
is uniformly convex and, for every $x_0\in X$, there are $r>0$ and
$\omega\in L^1_+(J)$ such that
$(F(t,x)-F(t,y),x-y)_-\leq\omega(t)\|x-y\|^2$ for almost every
$t\in J$ and all $x,y\in D_f\cap\overline B_r(x_0)$.
\end{enumerate}
Finally, assume one of the following compactness conditions:
\begin{enumerate}
\item[(a)] \emph{Positive-time semigroup compactness.}  For every $h>0$
and every bounded $B\subset K_A(J)$, $S(h)B$ is relatively compact in $X$;
\item[(b)] \emph{Forced-trajectory compactness.}  For every
$\phi\in L^1_+(J)$, every sequence $(x_n)\subset C$, and every sequence
$(w_n)\subset L^1(J;X)$ satisfying
$\|w_n(t)\|\leq\phi(t)$ for almost every $t\in J$ and every $n\geq1$,
one has
\begin{equation}\label{periodic-forced-trajectory-compactness}
 \{u(T;0,x_n,w_n):n\geq1\}
 \quad\text{relatively compact in }X.
\end{equation}
\end{enumerate}
Then the Poincar\'e operator $P_T:C\to C$ is continuous and compact, hence
has a fixed point.  Consequently, the periodically extended equation
$u'(t)+Au(t)\ni F(t,u(t))$ has a $T$-periodic mild solution whose restriction to $J$ is viable in
$K$.  
\end{theorem}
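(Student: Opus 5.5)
The proof has three stages: define $P_T$ through Corollary~\ref{general-existence-theory}(c), prove continuity and compactness of $P_T$, and apply Schauder's fixed-point theorem to the bounded closed convex set $C$.

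\textbf{Well-definedness and continuity.} The hypotheses are those of Corollary~\ref{general-existence-theory} with alternative (C), taking the fixed cylinder $J\times D_f$ and the Carath\'eodory extension $F|_{J\times D_f}$. The local (one-sided) Lipschitz conditions on $F$ restrict to $\Gr(K_A)$, so part (c) of that corollary applies. It gives, for every $x\in C=K_A(0)$, a unique viable mild solution $u(\cdot;x)$ on $[0,T]$, and this solution depends continuously on $x$ in $C(J;X)$. Setting $P_Tx:=u(T;x)$, we get $P_Tx\in K_A(T)\subset C$ by \eqref{periodic-endpoint-inclusion}. Hence $P_T:C\to C$ is well defined and continuous.

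\textbf{Compactness.} Because $C$ is bounded, apply Lemma~\ref{bounded-submultifunction} once with $D=C$. This yields a bounded effective subtube $\widetilde K$ and a single $\gamma\in L^1_+(J)$ with $\|f\|\le\gamma$ on $\Gr(\widetilde K_A)$. The unique viable solutions of that reduced problem are viable in $K$, so by uniqueness every $u(\cdot;x)$ with $x\in C$ stays in $\widetilde K_A$. It follows that the forcings $w_x:=f(\cdot,u(\cdot;x))$ satisfy $\|w_x(t)\|\le\gamma(t)$ a.e., uniformly in $x\in C$.

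Now take any sequence $(x_n)\subset C$. Under (b), apply \eqref{periodic-forced-trajectory-compactness} with $\phi=\gamma$ and $w_n=w_{x_n}$; this shows $\{P_Tx_n\}$ is relatively compact. Under (a), fix $h\in(0,T)$. Proposition~\ref{mild-difference} gives
\[
 \|u(T;x_n)-S(h)u(T-h;x_n)\|\le\int_{T-h}^T\gamma(s)\,ds .
\]
The points $u(T-h;x_n)$ lie in the bounded set $\widetilde K_A(J)\subset K_A(J)$, so $\{S(h)u(T-h;x_n)\}$ is relatively compact. Hence $\beta(\{P_Tx_n\})\le\int_{T-h}^T\gamma$, and letting $h\downarrow0$ gives $\beta(\{P_Tx_n\})=0$. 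In both cases $P_T(C)$ is relatively compact.

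\textbf{Fixed point and periodic extension.} Schauder's theorem, applied to the continuous compact self-map $P_T$ of the nonempty bounded closed convex set $C$, gives $x^*=P_Tx^*$. Extend $u(\cdot;x^*)$ $T$-periodically. Since $F$ is $T$-periodic and mild solutions concatenate (uniqueness for prescribed forcing, with the semigroup property of $u(\cdot;s,x,w)$), the extension is a $T$-periodic mild solution of $u'+Au\ni F(t,u)$, and its restriction to $J$ is viable in $K$.

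The replacement of the exceptional condition by its pointwise variant is handled by Corollary~\ref{bounded-pointwise-corollary}.

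\textbf{Main obstacle.} The delicate step is obtaining a bound $\gamma$ that is uniform over all initial values in $C$. Corollary~\ref{general-existence-theory} reduces one initial pair at a time, so it does not give this directly. The plan is to invoke Lemma~\ref{bounded-submultifunction} with the whole bounded set $D=C$, and then use uniqueness in the original tube to identify the solutions of the reduced problem with the ambient solutions $u(\cdot;x)$.
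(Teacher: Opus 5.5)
Your proposal is correct and follows the paper's overall architecture. You define $P_T$ via Corollary~\ref{general-existence-theory}(c), obtain a forcing bound uniform in $x\in C$, deduce compactness of $P_T$ from (a) or (b), and apply Schauder. The two arguments differ only in how the uniform bound is obtained.

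The paper uses a direct a priori estimate. Fix $x_*\in C$ and put $R_*=\sup_{x\in C}\|x-x_*\|$ and $M_*=\max_t\|S(t)x_*\|$. Contractivity, Proposition~\ref{mild-difference}, and the linear growth \eqref{periodic-linear-growth} give $\|u(t;x)\|\leq R_*+M_*+\int_0^t c(s)(1+\|u(s;x)\|)\,ds$. Gronwall then yields $\|u(t;x)\|\leq M$, so $\phi=c(1+M)$ is a common bound.

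You instead apply Lemma~\ref{bounded-submultifunction} with $D=C$ and transfer its bound $\gamma$ to the solutions $u(\cdot;x)$ by uniqueness. This works, but the identification first needs a viable solution in the subtube $\widetilde K$, which you leave implicit. It is supplied by Theorem~\ref{ex4} applied to the bounded problem on $\widetilde K$. There (SR2) holds because $F$ on $J\times D_f$ still extends $f$ on the smaller graph, and the (one-sided) Lipschitz hypothesis restricts.

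So your route costs an extra existence theorem plus uniqueness. The paper's estimate holds for every viable solution starting in $C$, unique or not. What your route buys is the slightly stronger fact that all trajectories from $C$ lie in one common bounded effective subtube. The ``main obstacle'' you identify therefore dissolves: linear growth alone gives the uniform a priori bound, without any reduction.
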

\begin{proof}
The fixed-cylinder Carath\'eodory map $F$ supplies alternative~(C) of
Corollary~\ref{general-existence-theory}.  Since the restriction of
$F$ to the moving graph is $f$, either uniqueness condition gives the
corresponding local Lipschitz or one-sided Lipschitz condition for $f$.
Corollary~\ref{general-existence-theory}(c) therefore gives, for every
$x\in C$, a unique viable solution $u(\cdot;x)$ on $J$, and the solution
depends continuously on $x$ in $C(J;X)$.  Define $P_Tx:=u(T;x)$.
Viability and \eqref{periodic-endpoint-inclusion} show that $P_T(C)\subset C$,
and continuous dependence makes $P_T$ continuous.

We first obtain a bound uniform in $x\in C$.  Fix $x_*\in C$ and put
$R_*:=\sup_{x\in C}\|x-x_*\|$ and $M_*:=\max_{t\in J}\|S(t)x_*\|$.
By the contraction property of $S$, Proposition~\ref{mild-difference},
and \eqref{periodic-linear-growth},
$\|u(t;x)\|\leq R_*+M_*+\int_0^t c(s)(1+\|u(s;x)\|)\,ds$.
Gronwall's inequality yields a constant $M<\infty$ such that
$\|u(t;x)\|\leq M$ for $t\in J$ and $x\in C$. Hence, with $\phi(t):=c(t)(1+M)$, one has
\begin{equation}\label{periodic-common-forcing-bound}
 \|f(t,u(t;x))\|\leq\phi(t)
 \quad\text{for almost every }t\in J\text{ and every }x\in C.
\end{equation}

Let $(x_n)\subset C$ and write $u_n=u(\cdot;x_n)$.  Under compactness
condition~(a), for every $h\in(0,T)$,
$\|u_n(T)-S(h)u_n(T-h)\|\leq\int_{T-h}^T\phi(s)\,ds$.
The set $\{u_n(T-h):n\geq1\}$ is bounded and contained in $K_A(J)$, so its
image under $S(h)$ is relatively compact.  Consequently,
$\beta(\{u_n(T):n\geq1\})\leq\int_{T-h}^T\phi(s)\,ds$.
Letting $h\downarrow0$ proves that $\{u_n(T):n\geq1\}$ is relatively
compact.  Under condition~(b), apply
\eqref{periodic-forced-trajectory-compactness} with
$w_n(t)=f(t,u_n(t))$ and the common bound
\eqref{periodic-common-forcing-bound}.  Thus $P_T$ is compact in either
case.

Schauder's theorem yields $x\in C$ such that $P_Tx=x$.  Hence the
corresponding viable mild solution $u(\cdot;x)$ satisfies
$u(T;x)=u(0;x)$.  Since $F$ is $T$-periodic, this endpoint identity permits
the solution to be concatenated over successive periods; its periodic
extension is therefore a $T$-periodic mild solution of the extended equation.
\end{proof}

\begin{remark}[Static constraints without uniqueness]
\label{periodic-static-theory-remark}
For a static closed bounded convex constraint,
\cite[Theorems~2 and~3]{Bo12} yields periodic mild solutions without
uniqueness of the
initial-value problem.  More precisely, under resolvent invariance, ordinary
subtangentiality, and positive-time compactness, it treats integrably bounded
$T$-periodic Carath\'eodory forcing when the constraint has nonempty interior
or is proximinal.  The proof constructs tangency-preserving locally Lipschitz
approximations and uses a retraction onto the effective constraint; 
in the proximinal case it also employs small outer neighbourhoods of
the constraint and approximate projections before passing to a compact
limit.
Thus, for static constraints, the conclusion is available for
Carath\'eodory forcing without the additional local state-Lipschitz
regularity and uniqueness required in Theorem~\ref{periodic-theorem}.

That approximation mechanism is tied to static convex geometry: the
ordinary tangent cone is fixed in time, and convex interpolation of tangent
velocities remains tangent to the same constraint.  For a moving tube the
admissible tangent sets depend on time, while the forcing may be defined only
on the variable graph, so the construction does not directly preserve the
moving subtangential conditions.  Theorem~\ref{periodic-theorem} therefore
uses uniqueness and continuous dependence to obtain a single-valued
Poincar\'e operator, but in return permits a genuinely moving return tube and
requires convex fixed-point geometry only at time zero.  The two theories thus
have complementary scopes.
\end{remark}

\begin{remark}[Separable reduction and the Poincar\'e operator]
Theorem~\ref{periodic-theorem} does not require the initial section $C$ to
be separable or all initial values to share one separable reduction.
Different initial values may lead to different reduced spaces; uniqueness
identifies the resulting ambient solutions and therefore defines a single
Poincar\'e operator.  The separable reductions therefore need not be common
to all initial values; compactness of the resulting endpoint family is
supplied separately by either condition~(a) or condition~(b).
\end{remark}

\section{Comparison principles}\label{s:comparison}
The classical comparison problem asks for conditions under which ordered
initial values give rise to ordered solutions.  In finite-dimensional
cooperative systems the relevant condition is quasimonotonicity: each
component of the vector field is nondecreasing in all other components.  In
a Banach lattice this is expressed by the cone-subtangentiality condition introduced
in Section~\ref{s:prel}.

The comparison theory has two levels.  The direct comparison principle
concerns two mild solutions that are already known to exist.  It is the
standard positive-part consequence of $m$-$T$-accretivity,
quasimonotonicity, and a local Lipschitz bound; it uses none of the
subtangential, reduction, or compactness hypotheses.  The second level is
ordered existence: given a viable solution $u$ and a larger initial value
$v_0$, the existence theory is applied to the solution-dependent upper tube
$K(t)\cap(u(t)+X_+)$ to produce a viable solution starting from $v_0$ and
remaining above $u$, even when uniqueness is unavailable.

We impose the lattice compatibility condition
\begin{equation}\label{K-cond}
 x,y\in K_A(t)\quad\Rightarrow\quad x\vee y\in K_A(t)
 \qquad\mbox{for all }t\in J.
\end{equation}
It is satisfied whenever the effective section $K_A(t)$, rather than merely $K(t)$, is an upper order
set or an order interval.  Thus one must also check that the intersection
with $\overline{D(A)}$ is stable under the relevant lattice operation.

Let $u,v:I\to X$ be mild solutions on a compact interval $I$ with ordered
initial values.  The following elementary estimate is used in the direct
comparison argument.

\begin{lemma}\label{quasimonotone-bracket}
Let $X$ be a Banach lattice, let $D\subset X$, and let $F:D\to X$ be
quasimonotone with respect to $X_+$.  Suppose that $x,y,x\vee y\in D$ and
that $F$ is Lipschitz with constant $L$ on a set containing $y$ and
$x\vee y$.  Then
\begin{equation}\label{quasimonotone-bracket-estimate}
 [x-y,F(x)-F(y)]_+\leq L\|(x-y)^+\|.
\end{equation}
\end{lemma}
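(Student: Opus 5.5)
We prove \eqref{quasimonotone-bracket-estimate} by splitting the increment $F(x)-F(y)$ through the intermediate point $x\vee y$,
\[
 F(x)-F(y)=\bigl(F(x)-F(x\vee y)\bigr)+\bigl(F(x\vee y)-F(y)\bigr),
\]
and estimating the two pieces separately in the positive bracket, using its definition as an infimum over $h>0$ in \eqref{positive-bracket-definition}.

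\emph{Second piece (Lipschitz).} The bracket satisfies $[q,a+b]_+\leq[q,a]_++\|b\|$. Taking $q=x-y$, $a=F(x)-F(x\vee y)$ and $b=F(x\vee y)-F(y)$, the Lipschitz bound on the set containing $y$ and $x\vee y$ gives
\[
 \|b\|\leq L\|x\vee y-y\|.
\]
By Proposition~\ref{lattice-facts}(vi) with $y$ replaced by $-y$, we have $x\vee y-y=(x-y)^+$. Hence this piece contributes at most $L\|(x-y)^+\|$.

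\emph{First piece (quasimonotonicity).} It remains to show $[x-y,F(x)-F(x\vee y)]_+\leq0$. Put $p:=x\vee y$, so $x\leq p$. Quasimonotonicity in the equivalent form stated after \eqref{quasimonotonicity-definition}, applied to the ordered pair $x\leq p$, gives
\[
 \bigl\|\bigl(x-p+h(F(x)-F(p))\bigr)^+\bigr\|=o(h).
\]
Now decompose
\[
 x-y+h\bigl(F(x)-F(p)\bigr)=\bigl[x-p+h(F(x)-F(p))\bigr]+(p-y),
 \qquad p-y=(x-y)^+\geq0.
\]
Proposition~\ref{lattice-facts}(ii) yields
\[
 \bigl(x-y+h(F(x)-F(p))\bigr)^+\leq\bigl(x-p+h(F(x)-F(p))\bigr)^++(x-y)^+ .
\]
Both sides are nonnegative, so the lattice norm property \eqref{lattice-cond} and the triangle inequality give
\[
 \bigl\|\bigl(x-y+h(F(x)-F(p))\bigr)^+\bigr\|\leq o(h)+\|(x-y)^+\|.
\]
Subtracting $\|(x-y)^+\|$, dividing by $h$ and letting $h\downarrow0$ gives $[x-y,F(x)-F(p)]_+\leq0$.

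Combining the two pieces proves \eqref{quasimonotone-bracket-estimate}. The only delicate point is the first piece: one must apply quasimonotonicity to the ordered pair $x\leq x\vee y$ rather than to $x,y$, which need not be ordered. The monotonicity of the lattice norm on positive elements is what lets the $o(h)$ term be separated from $(x-y)^+$.
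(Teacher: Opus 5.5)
Your proof is correct and is essentially the paper's argument: the same splitting of $F(x)-F(y)$ through $x\vee y$, and the same key step of applying quasimonotonicity to the ordered pair $x\leq x\vee y$ together with Proposition~\ref{lattice-facts}(ii) to obtain $[x-y,F(x)-F(x\vee y)]_+\leq0$. The remaining term is likewise absorbed, as in the paper, through the $1$-Lipschitz dependence of the positive bracket on its second argument and the identity $x\vee y-y=(x-y)^+$.
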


\begin{proof}
Since $x$ and $y$ need not be comparable, set $z=x\vee y$ and write
$F(x)-F(y)=p+q$ with $p:=F(x)-F(z)$ and $q:=F(z)-F(y)$; quasimonotonicity
then applies to $x\leq z$, while $q$ is controlled by the Lipschitz estimate.
Set $a:=x-y$. Then $z-x=a^-$ and $z-y=a^+$. Since $x\leq z$,
quasimonotonicity and Proposition~\ref{lattice-facts}(v) give
\[
\frac 1 h \|(hp-a^-)^+\|
 =
\frac 1 h d(a^- -hp,X_+)\rightarrow 0
 \quad \text{ as } h\to 0+.
\]
Proposition~\ref{lattice-facts}(ii) yields
\[
 (a+hp)^+
 =\bigl(a^++hp-a^-\bigr)^+
 \leq a^++(hp-a^-)^+.
\]
Hence $\|(a+hp)^+\|\leq\|a^+\|+o(h)$, and therefore
$[a,p]_+\leq0$.

By Proposition~\ref{lattice-facts}(vii), the map
$\xi\mapsto\|\xi^+\|$ is $1$-Lipschitz. Thus
\[
 [a,p+q]_+
 \leq [a,p]_+ + \|q\|
 \leq \|F(z)-F(y)\|
 \leq L\|z-y\|
 =L\|a^+\|.
\]
Since $p+q=F(x)-F(y)$, this proves
\eqref{quasimonotone-bracket-estimate}.
\end{proof}

\begin{theorem}[Direct comparison of given solutions]
\label{direct-comparison}
Let $A:D(A)\to2^X\setminus\{\emptyset\}$ be $m$-$T$-accretive in the
Banach lattice $X$, let $I=[t_0,t_1]$, and let
$K:I\to2^X$ satisfy
$x,y\in K_A(t)\Rightarrow x\vee y\in K_A(t)$ for every $t\in I$.
Let $f:\Gr(K_A)\to X$ be locally Lipschitz in the state variable and
suppose that $f(t,\cdot)$ is quasimonotone on $K_A(t)$ for almost every
$t\in I$.  Let $u$ and $v$ be mild solutions on $I$ of
$z'+Az\ni f(t,z)$ such that $u(t),v(t)\in K_A(t)$ for every $t\in I$.  If
$u(t_0)\leq v(t_0)$, then $u(t)\leq v(t)$ for every $t\in I$.
\end{theorem}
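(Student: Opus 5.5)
The plan is to apply the positive-part estimate of Proposition~\ref{positive-difference} to the pair $u,v$ with the forcings $f(\cdot,u(\cdot))$ and $f(\cdot,v(\cdot))$. Bounding the bracket by Lemma~\ref{quasimonotone-bracket} then gives a Gronwall inequality for $\varphi(t):=\|(u(t)-v(t))^+\|$, which is continuous and satisfies $\varphi(t_0)=0$. Since the local Lipschitz hypothesis is only local, the estimate has to be localized along the compact trajectory, as in the continuous-dependence part of Theorem~\ref{ex4}.

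\emph{Step 1 (localization).} The set $u(I)\cup v(I)\cup (u\vee v)(I)$ is compact, because the lattice operations are continuous. For each point $z$ of this set choose $r_z$ and $\omega_z$ from \eqref{local-lipschitz}, and extract a finite subcover by balls $B_{r_z/2}(z)$. As in the proof of Theorem~\ref{ex4}, obtain a partition $t_0=s_0<s_1<\dots<s_k=t_1$ such that, on each $[s_{i-1},s_i]$, the three curves $v$, $u\vee v$ and $u$ stay in one ball $\overline B_{r_z}(z)$ with coefficient $\omega_i:=\omega_z$. If the curves are not all in a single ball, refine the partition until the diameters of the three images are below $\min r_z/2$. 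The lemma needs only $y=v$ and $x\vee y=u\vee v$ to lie in a common Lipschitz ball, and $\|u\vee v-v\|=\|(u-v)^+\|\le\|u-v\|$ by Proposition~\ref{lattice-facts}. Set $\omega:=\sum_i\omega_i\varchi_{[s_{i-1},s_i]}\in L^1_+(I)$.

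\emph{Step 2 (bracket bound).} For a.e.\ $\tau$, the map $f(\tau,\cdot)$ is quasimonotone on $K_A(\tau)$, and \eqref{K-cond} puts $u(\tau)\vee v(\tau)$ in $K_A(\tau)$. Lemma~\ref{quasimonotone-bracket} with $x=u(\tau)$, $y=v(\tau)$, $D=K_A(\tau)$ and $L=\omega(\tau)$ then yields
\[
[u(\tau)-v(\tau),f(\tau,u(\tau))-f(\tau,v(\tau))]_+\le\omega(\tau)\varphi(\tau).
\]
Here the Lipschitz estimate is applied on $K_A(\tau)\cap\overline B_r$, where \eqref{local-lipschitz} holds for a.e.\ $\tau$. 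The lemma's Lipschitz requirement concerns only the pair $z=x\vee y$, $y$, so this pointwise version suffices.

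\emph{Step 3 (Gronwall).} Both $u$ and $v$ are mild solutions with the $L^1$ forcings $f(\cdot,u)$ and $f(\cdot,v)$, which holds by the definition of a constrained mild solution. Proposition~\ref{positive-difference} therefore gives
\[
\varphi(t)\le\varphi(s)+\int_s^t\omega(\tau)\varphi(\tau)\,d\tau,\qquad t_0\le s\le t\le t_1.
\]
With $s=t_0$ and $\varphi(t_0)=\|(u(t_0)-v(t_0))^+\|=0$, Gronwall's lemma gives $\varphi\equiv0$, that is, $(u-v)^+=0$ and hence $u\le v$ on $I$.

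The main obstacle is the localization in Step 1. One must make sure that a single integrable coefficient $\omega(\tau)$ serves as a Lipschitz constant for the pair $(v(\tau),u(\tau)\vee v(\tau))$ for a.e.\ $\tau$. The finite-cover argument handles this once the balls are centred on the compact set $v(I)\cup(u\vee v)(I)$ with half radii. The measurability of the bracket integrand is already noted after Proposition~\ref{positive-difference}.
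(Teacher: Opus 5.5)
There is a genuine gap in Steps~1--2. Lemma~\ref{quasimonotone-bracket} needs $v(\tau)$ and $u(\tau)\vee v(\tau)$ to lie in one set on which $f(\tau,\cdot)$ is Lipschitz. Under \eqref{local-lipschitz}, the only such sets are balls $K_A(\tau)\cap\overline B_r(x_0)$ whose radius depends on the centre.

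Refining the time partition makes the oscillation of each curve on $[s_{i-1},s_i]$ small. It does not control the distance between the two curves at the same time, which is $\|u(\tau)\vee v(\tau)-v(\tau)\|=\|(u(\tau)-v(\tau))^+\|=\varphi(\tau)$. Your bound $\varphi\le\|u-v\|$ does not help. If $u(t_0)$ lies far below $v(t_0)$, then $u$ and $v$ never share a small ball, so the claim in Step~1 is false in general. Smallness of $\varphi(\tau)$ is essentially the conclusion you want, so the global Gronwall inequality in Step~3 is circular as written.

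Compactness of $u(I)\cup v(I)\cup(u\vee v)(I)$ does not rescue this either. The hypotheses give no control of $f(\tau,\cdot)$ between far-apart points of $K_A(\tau)$, and the theorem assumes no growth bound that could supply such control.

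The missing idea is a continuation argument that exploits $\varphi(t_0)=0$; this is how the paper proceeds.
\begin{itemize}
\item Let $E$ be the set of $t\in I$ with $u\le v$ on $[t_0,t]$. It is nonempty and closed.
\item At $\tau\in E$ one has $u(\tau)\vee v(\tau)=v(\tau)=:x_0$. By continuity, both $v(t)$ and $u(t)\vee v(t)$ stay in one Lipschitz ball $B_\delta(x_0)$ for $t\in[\tau,\tau+\eta]$.
\item Proposition~\ref{positive-difference} on $[\tau,t]$, Lemma~\ref{quasimonotone-bracket} and Gronwall then give $\varphi=0$ on that interval. So $E$ is open to the right, and hence $E=I$.
\end{itemize}

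Your finite cover can be repaired in the same spirit.
\begin{itemize}
\item Cover only $v(I)$ by finitely many balls $B_{r_j/2}(z_j)$, and set $\rho:=\min_j r_j/2$ and $\Omega:=\sum_j\omega_j$.
\item Whenever $\varphi(\tau)<\rho$, the points $v(\tau)$ and $u(\tau)\vee v(\tau)$ lie in a common ball $\overline B_{r_j}(z_j)$.
\item Run Gronwall only on $[t_0,t^*)$, where $t^*:=\sup\{t\in I:\varphi<\rho\text{ on }[t_0,t]\}$. This gives $\varphi\equiv0$ there, and continuity of $\varphi$ forces $t^*=t_1$.
\end{itemize}
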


\begin{proof}
Define $E=\{t\in I:u(s)\leq v(s)\text{ for every }s\in[t_0,t]\}$.
The set $E$ is nonempty and closed.  We show that every $\tau\in E$ with
$\tau<t_1$ has a right neighbourhood contained in $E$.

Put $x_0=v(\tau)$.  By local Lipschitz continuity there are $\delta>0$ and
$\omega\in L^1_+(I)$ such that, for almost every $t\in I$,
\begin{equation}\label{local-comparison-Lipschitz}
 \|f(t,x)-f(t,y)\|\leq\omega(t)\|x-y\|
\end{equation}
whenever $x,y\in K_A(t)\cap B_\delta(x_0)$.  The lattice operations are continuous, and
$u(\tau)\vee v(\tau)=v(\tau)=x_0$.  Hence there exists $\eta>0$ such that
\[
 v(t),\ u(t)\vee v(t)\in B_\delta(x_0)
 \qquad\text{for all }t\in[\tau,\tau+\eta]\cap I.
\]
The lattice compatibility assumption gives
$u(t)\vee v(t)\in K_A(t)$.

Apply Proposition~\ref{positive-difference} on $[\tau,t]$.  For almost
every $s\in[\tau,\tau+\eta]$, Lemma~\ref{quasimonotone-bracket}, with
$F=f(s,\cdot)$ and $L=\omega(s)$, yields
$[u(s)-v(s),f(s,u(s))-f(s,v(s))]_+
\leq\omega(s)\|(u(s)-v(s))^+\|$.
Since $u(\tau)\leq v(\tau)$,
$\|(u(t)-v(t))^+\|
\leq\int_\tau^t\omega(s)\|(u(s)-v(s))^+\|\,ds$
for every $t\in[\tau,\tau+\eta]\cap I$.  Gronwall's lemma gives
$(u(t)-v(t))^+=0$ on this interval.  Thus $E$ is right-open in $I$.
Closedness then implies $E=I$.
\end{proof}

For ordered existence without uniqueness, fix a viable solution $u$ and
consider the upper tube $K(t)\cap(u(t)+X_+)$.  This tube must be formed before
the bounded restriction, since an arbitrary intersection with a moving ball
need not preserve closure under lattice suprema.  Once the upper-tube
tangency has been established, however, the known solution $u(t)$ provides a
natural centre for restriction to a smaller bounded tube.  
The next lemma implements this
solution-centred reduction to a bounded sub-tube.

\begin{lemma}[Reduction of a solution-dependent upper tube]
\label{upper-tube-inheritance}
Let $A:D(A)\to2^X\setminus\{\emptyset\}$ be $m$-$T$-accretive in the
Banach lattice $X$, and let $K:J=[0,a]\to2^X$ have closed values.  Assume that
$\Gr(K_A)$ is closed from the left and that \eqref{K-cond} holds.
Let $f:\Gr(K_A)\to X$ be Carath\'eodory and satisfy, for a null set
$N\subset[0,a)$ and a finite-valued $c\in L^1_+(J)$,
\begin{align}
 \|f(t,x)\|&\leq c(t)(1+\|x\|)
 &&\text{for all }(t,x)\in\Gr(K_A),\label{upper-linear-growth}\\
 f(t,x)&\in T_K^A(t,x)
 &&\text{for all }t\in[0,a)\setminus N\text{ and }x\in K_A(t),
 \label{upper-regular-tangency}
\end{align}
and\vspace{-0.1in}
\begin{equation}\label{upper-exceptional-tangency}
 \liminf_{h\downarrow0}\frac1h
 \left(d(S(h)x,K_A(t+h))-\int_t^{t+h}c(s)\,ds\right)^+=0
 \quad\text{for all }t\in N,\, x\in K_A(t).
\end{equation}
Assume that $f(t,\cdot)$ is quasimonotone on $K_A(t)$ for almost every
$t$, and let $u$ be a viable mild solution.
In addition, assume one of the following two alternatives,
corresponding to the two separable-reduction routes:
\begin{enumerate}
\item[(G)] $f$ is jointly measurable on $\Gr(K_A)$; or
\item[(C)] there are a set $C\subset X$ with $K_A(J)\subset C$ and a
Carath\'eodory map $F:J\times C\to X$ whose restriction to $\Gr(K_A)$ is
$f$.
\end{enumerate}
Define
\begin{equation}\label{upper-tube-definition}
 K^u(t):=K(t)\cap\bigl(u(t)+X_+\bigr),\qquad
 K_A^u(t):=K_A(t)\cap\bigl(u(t)+X_+\bigr).
\end{equation}
For every nonempty bounded set $D_u\subset K_A^u(0)$, there is a
bounded closed-valued sub-tube $\widetilde K^u(t)\subset K^u(t)$ with
$D_u\subset\widetilde K_A^u(0)$ such that the upper problem on
$\widetilde K^u$ satisfies the bounded effective-tube setting of
Definition~\ref{bounded-effective-setting}, and the restricted forcing is
Carath\'eodory.  Under alternative~(G) the bounded upper problem satisfies
\emph{(SR1)}, whereas under alternative~(C) it satisfies \emph{(SR2)}.
\end{lemma}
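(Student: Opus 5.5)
The plan has two stages. First I verify the hypotheses of the linear-growth setting for the unbounded upper tube $K^u$. Then I apply a solution-centred version of Lemma~\ref{bounded-submultifunction}, with centre $u(t)$ in place of the orbit $S(t)x_*$, and check measurability.

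\emph{Step 1: properties of $K^u$.} Since $u$ is continuous and $X_+$ is closed, each $K^u_A(t)$ is closed. Left-closedness of $\Gr(K_A^u)$ follows from left-closedness of $\Gr(K_A)$: if $x_n\to x$, $t_n\nearrow t$ and $x_n-u(t_n)\in X_+$, then $x-u(t)\in X_+$. The growth bound \eqref{upper-linear-growth} restricts trivially. The main task is the regular tangency $f(t,x)\in T^A_{K^u}(t,x)$ for $t\notin N$, $x\in K^u_A(t)$, outside a further null set.

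\emph{Step 2: regular tangency, the main obstacle.} Put $z=f(t,x)$ and take $h_n\downarrow0$ and $y_n\in K_A(t+h_n)$ with $\|y_n-S_z(h_n)x\|=o(h_n)$. Replace $y_n$ by $\tilde y_n:=y_n\vee u(t+h_n)$. By \eqref{K-cond} and viability of $u$, $\tilde y_n\in K_A^u(t+h_n)$. It remains to show $\|(u(t+h_n)-y_n)^+\|=o(h_n)$, since $\tilde y_n-y_n=(u(t+h_n)-y_n)^+$.

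Choose $t$ a right Lebesgue point of $s\mapsto f(s,u(s))$; these times form a full-measure set that depends on $u$, but $u$ is fixed, so this only enlarges $N$. Proposition~\ref{positive-difference} with constant forcing gives
\[
\|(u(t+h)-S_{z'}(h)u(t))^+\|=o(h),\qquad z'=f(t,u(t)).
\]
Applying it again to $S_{z'}(h)u(t)$ and $S_z(h)x$ with $u(t)\le x$ yields
\[
\|(S_{z'}(h)u(t)-S_z(h)x)^+\|\le\int_0^h[\,S_{z'}(s)u(t)-S_z(s)x,\ z'-z\,]_+\,ds.
\]
By quasimonotonicity, $[u(t)-x,z'-z]_+\le0$, as in the proof of Lemma~\ref{quasimonotone-bracket} with $u(t)\le x$.

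The integrand is only upper semicontinuous in its first argument, so $\limsup_{s\downarrow0}$ of it is at most $[u(t)-x,z'-z]_+\le0$. Hence the integral is $o(h)$. This step is delicate: upper semicontinuity of $x\mapsto[x,y]_+$ is exactly what is needed, and quasimonotonicity must hold at this particular $t$, which is where the a.e.\ quasimonotonicity null set is added to $N$.

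Combining the estimates via Proposition~\ref{lattice-facts}(ii) gives $\|(u(t+h_n)-y_n)^+\|=o(h_n)$.

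\emph{Step 3: exceptional times.} For $t$ in the enlarged exceptional set, use \eqref{upper-exceptional-tangency} together with the same $\vee$-trick. The analogous estimate $\|(u(t+h)-S(h)x)^+\|\le\int_t^{t+h}c(1+\|u\|)$ follows from Proposition~\ref{positive-difference} with forcings $f(\cdot,u)$ and $0$. This yields the exceptional condition with $\psi:=c+c(1+\max\|u\|)$; at newly added regular-excluded times one argues as in the transfer step of Theorem~\ref{separable-reduction-theorem}.

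\emph{Step 4: bounded reduction and measurability.} Apply Lemma~\ref{bounded-submultifunction} to $K^u$ and $D_u$. This gives $\widetilde K^u$, a common bound, and both subtangential conditions, so the bounded effective-tube setting holds. The Carath\'eodory property restricts. Under (G), $\Gr(K^u_A)=\Gr(K_A)\cap\{(t,x):x-u(t)\in X_+\}$ is product measurable, the second set being closed. Lemma~\ref{bounded-submultifunction} preserves product measurability, and joint measurability of $f$ restricts, giving (SR1). Under (C), the same $F$ on $J\times C$ works, since $\widetilde K^u_A(J)\subset K_A(J)\subset C$, giving (SR2).
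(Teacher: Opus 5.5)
\paragraph{Verdict.} Your Step~2 (regular tangency of $K^u$) is correct. It is essentially the paper's argument. Routing the comparison through $S_{z'}(h)u(t)$ is a harmless variant: it lets you apply upper semicontinuity of $[\cdot,y]_+$ with the fixed second argument $z'-z$, so the Lipschitz dependence in the second slot is not needed. The measurability part of Step~4 is also fine. There is, however, a genuine gap in Steps~3--4.

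\paragraph{The gap.} It sits at the times $t\in N_u\setminus N$ that you move into the exceptional set, namely where quasimonotonicity fails or $t$ is not a right Lebesgue point of $f(\cdot,u(\cdot))$. At such $t$ the only information available is the original regular tangency with $q=f(t,x)$. After the $\vee$-trick it gives, with $c_u(s):=c(s)(1+\|u(s)\|)$,
\[
 d\bigl(S(h_n)x,K_A^u(t+h_n)\bigr)\leq 2h_n c(t)\bigl(1+\|x\|\bigr)+\int_t^{t+h_n}c_u(s)\,ds+o(h_n).
\]
The factor $1+\|x\|$ is unbounded on the unbounded section $K_A^u(t)$. Hence no single $\psi\in L^1_+(J)$ satisfies \eqref{linear-exceptional-approx} for the tube $K^u$ at these times. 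So Lemma~\ref{bounded-submultifunction} cannot be applied to $K^u$ as a black box, since its hypotheses are not verified.

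The transfer step of Theorem~\ref{separable-reduction-theorem} that you invoke does not help. It converts $h_nc(t)$ into $\int_t^{t+h_n}\widehat c^{\,\circ}(s)\,ds$ only because it works in the bounded setting, where $\|f(t,x)\|\leq c(t)$ uniformly in $x$. Here that uniform bound is exactly what the reduction is meant to produce, so the argument is circular. Your exceptional estimate at $t\in N$ is fine, because $c_u$ does not depend on $x$.

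\paragraph{How to repair it.} Perform the bounding by hand, as the paper does, and do the local-majorant conversion only afterwards.
\begin{enumerate}
\item Intersect $K^u$ with a moving ball $\overline B_{r(t)}(u(t))$, where
\[
 r'=1+c+c_u+\widehat c^{\,\circ}\bigl(1+\|u\|+r\bigr).
\]
\item Verify directly that all three kinds of endpoints (regular times, $t\in N$, and $t\in N_u\setminus N$) lie in the ball at time $t+h_n$. For $x$ in the ball, $1+\|x\|\leq 1+\|u(t)\|+r(t)$. So by Proposition~\ref{c-hat} the majorant term absorbs $h_nc(t)(1+\|x\|)$.
\item Once boundedness gives $1+\|x\|\leq M$ on the new graph, replace $2h_nMc(t)$ by $2M\int_t^{t+h_n}\widehat c^{\,\circ}(s)\,ds$. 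This yields the common bound $\gamma=Mc+2c+c_u+2M\widehat c^{\,\circ}$.
\end{enumerate}
Alternatively, you could rework Lemma~\ref{bounded-submultifunction} so that its exceptional hypothesis allows an additional term $h\,c(t)(1+\|x\|)$. Its radius equation already contains the required majorant term. That would be a modification of the lemma, though, not an application of it as stated.
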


\begin{proof}
Put $w:=f(\cdot,u(\cdot))$, choose a pointwise representative
$w(t)=f(t,u(t))$, and set
$c_u(t):=c(t)(1+\|u(t)\|)$.  Then $c_u\in L^1_+(J)$ and
$\|w(t)\|\leq c_u(t)$ almost everywhere.  The sections in
\eqref{upper-tube-definition} are nonempty and closed, and continuity of
$u$, closedness of $X_+$, and left-closedness of $\Gr(K_A)$ imply that
$\Gr(K_A^u)$ is closed from the left.  The Carath\'eodory property passes
by restriction.  Under~(G),
$\Gr(K_A^u)=\Gr(K_A)\cap\{(t,x):x-u(t)\in X_+\}$ is product measurable and
$f|_{\Gr(K_A^u)}$ is jointly measurable; under~(C), the same fixed-cylinder
extension $F$ is available.

Enlarge $N$ by the null set on which quasimonotonicity fails and by the
complement of the right Lebesgue points of $w$; call the resulting set
$N_u$.  Fix $t\notin N_u$, $x\in K_A^u(t)$, put $q=f(t,x)$, and set
$z(s)=S_q(s)x$.  Proposition~\ref{positive-difference} gives
\begin{equation}\label{upper-order-contact-estimate}
 \|(u(t+h)-z(h))^+\|
 \leq\int_0^h
 [u(t+s)-z(s),w(t+s)-q]_+\,ds.
\end{equation}
Since $u(t)\leq x$, quasimonotonicity gives
$[u(t)-x,w(t)-q]_+=0$.  By the upper semicontinuity and the
second-variable Lipschitz estimate recalled after
\eqref{positive-bracket-definition}, for every $\delta>0$,
\[
 [u(t+s)-z(s),w(t+s)-q]_+
 \leq\delta+\|w(t+s)-w(t)\|
\]
for all sufficiently small $s\geq0$.  Averaging and using that $t$ is a
right Lebesgue point of $w$ therefore shows from
\eqref{upper-order-contact-estimate} that
\begin{equation}\label{upper-order-contact}
 \|(u(t+h)-S_q(h)x)^+\|=o(h)\quad \text{ as } h\downarrow 0.
\end{equation}
We shall repeatedly use the elementary lattice estimate
\[
 \|y\vee u-z\|\leq2\|y-z\|+\|(u-z)^+\|.
\]
Indeed, $y\vee u=y+(u-y)^+$ and the positive-part map is $1$-Lipschitz.
Choose $h_n\downarrow0$ and $y_n\in K_A(t+h_n)$ from
\eqref{upper-regular-tangency}, so that
$\|y_n-S_q(h_n)x\|=o(h_n)$, and put
$\widetilde y_n:=y_n\vee u(t+h_n)\in K_A^u(t+h_n)$.  Applying the lattice
estimate above with $z=S_q(h_n)x$ and using \eqref{upper-order-contact} gives
$\|\widetilde y_n-S_q(h_n)x\|=o(h_n)$.  Hence
\begin{equation}\label{upper-tube-regular}
 f(t,x)\in T_{K^u}^A(t,x)
 \quad \text{ for all } t\in[0,a)\setminus N_u,\ x\in K_A^u(t).
\end{equation}

At exceptional times we use two elementary consequences of
Proposition~\ref{positive-difference} (recall that $u(t)\leq x$):
\begin{equation}\label{upper-unforced-order-estimates}
 \begin{aligned}
 \|(u(t+h)-S(h)x)^+\|
 &\leq\int_t^{t+h}c_u(s)\,ds,\\
 \|(S(h)x-u(t+h))^+\|
 &\leq\|x-u(t)\|+\int_t^{t+h}c_u(s)\,ds.
 \end{aligned}
\end{equation}
If $t\in N$, choose $h_n\downarrow0$ and $y_n\in K_A(t+h_n)$ from
\eqref{upper-exceptional-tangency} and again set
$\widetilde y_n=y_n\vee u(t+h_n)$.  The lattice estimate above, together with the first estimate in
\eqref{upper-unforced-order-estimates}, yields
\begin{equation}\label{upper-old-exceptional}
 \|\widetilde y_n-S(h_n)x\|
 \leq\int_t^{t+h_n}(2c+c_u)(s)\,ds+o(h_n).
\end{equation}
If $t\in N_u\setminus N$, put $q=f(t,x)$ and use the original regular
subtangentiality to choose $y_n$ with
$\|y_n-S_q(h_n)x\|=o(h_n)$; set
$\widetilde y_n:=y_n\vee u(t+h_n)$.  Since
$\|S_q(h_n)x-S(h_n)x\|\leq h_n\|q\|$, the lattice estimate above gives
\begin{equation}\label{upper-new-exceptional}
 \|\widetilde y_n-S(h_n)x\|
 \leq2h_nc(t)(1+\|x\|)
      +\int_t^{t+h_n}c_u(s)\,ds+o(h_n).
\end{equation}

It remains to make the upper tube bounded.  Let $\widehat c$ be the local
majorant from Proposition~\ref{c-hat}, choose $R_0>1$ with
$D_u\subset\overline B_{R_0}(u(0))$, and let
\begin{equation}\label{upper-moving-radius}
 r'(t)=1+c(t)+c_u(t)
       +\widehat c^{\,\circ}(t)(1+\|u(t)\|+r(t)),
 \qquad r(0)=R_0.
\end{equation}
Set
\begin{equation}\label{upper-bounded-tube}
 \widetilde K^u(t):=K^u(t)\cap\overline B_{r(t)}(u(t)).
\end{equation}
This is a bounded closed-valued sub-tube, contains $D_u$ at time zero,
and its effective graph is closed from the left; the measurability
conclusions under~(G) and~(C) pass to this further restriction.

No lattice compatibility is required of $\widetilde K^u$; it remains only to verify that the endpoints constructed above also lie in the moving ball.
Let $x\in\widetilde K_A^u(t)$.  If $t\notin N$, the endpoints are obtained
from $y_n=S_q(h_n)x+o(h_n)$, irrespective of whether $t$ belongs to $N_u$.
Using $\widetilde y_n-u(t+h_n)=(y_n-u(t+h_n))^+$ and
Proposition~\ref{positive-difference},
\[
 \|\widetilde y_n-u(t+h_n)\|
 \leq r(t)+h_nc(t)(1+\|u(t)\|+r(t))
       +\int_t^{t+h_n}c_u(s)\,ds+o(h_n).
\]
The local-majorant property of $\widehat c$, together with
\eqref{upper-moving-radius} and continuity of $u$ and $r$, makes the
right-hand side at most $r(t+h_n)$ for
all large $n$; the unit term absorbs the $o(h_n)$ remainder.  If $t\in N$,
the second estimate in \eqref{upper-unforced-order-estimates} instead gives
\[
 \|\widetilde y_n-u(t+h_n)\|
 \leq r(t)+\int_t^{t+h_n}(c+c_u)(s)\,ds+o(h_n)
 \leq r(t+h_n)
\]
for all large $n$.  Thus the regular endpoints and both exceptional
sequences lie in $\widetilde K_A^u(t+h_n)$.  In particular,
\begin{equation}\label{upper-bounded-regular}
 f(t,x)\in T_{\widetilde K^u}^A(t,x)
 \quad(t\in[0,a)\setminus N_u,\ x\in\widetilde K_A^u(t)).
\end{equation}

Finally, boundedness gives $M\geq1$ with $1+\|x\|\leq M$ on
$\Gr(\widetilde K_A^u)$.  Hence $\|f(t,x)\|\leq Mc(t)$ there.
In \eqref{upper-new-exceptional}, the local-majorant property converts
$2h_nMc(t)$ along the defining sequence into
\[
 2M\int_t^{t+h_n}\widehat c^{\,\circ}(s)\,ds.
\]
Consequently
\begin{equation}\label{upper-common-bound}
 \gamma:=Mc+2c+c_u+2M\widehat c^{\,\circ}\in L^1_+(J)
\end{equation}
controls both the forcing and the exceptional-time estimate.  Together with
\eqref{upper-bounded-regular}, this is exactly the bounded effective-tube
setting.  The restricted forcing is Carath\'eodory, with \emph{(SR1)} under
(G) and \emph{(SR2)} under (C).
\end{proof}

\begin{theorem}[Existence of an ordered companion]
\label{upper-solution-existence}
Let $A:D(A)\to2^X\setminus\{\emptyset\}$ be $m$-$T$-accretive in the
Banach lattice $X$, and let
$K:J=[0,a]\to2^X$ have closed values.  Assume that
$\Gr(K_A)$ is closed from the left and that
\[
 x,y\in K_A(t)\quad\Rightarrow\quad x\vee y\in K_A(t)
 \qquad\text{for all }t\in J.
\]
Let $f:\Gr(K_A)\to X$ be Carath\'eodory and suppose that there are a null
set $N\subset[0,a)$ and a finite-valued $c\in L^1_+(J)$ such that
\begin{align*}
 \|f(t,x)\|&\leq c(t)(1+\|x\|)
 &&\text{for all }(t,x)\in\Gr(K_A),\\
 f(t,x)&\in T_K^A(t,x)
 &&\text{for all }t\in[0,a)\setminus N\text{ and }x\in K_A(t),
\end{align*}
and\vspace{-0.1in}
\[
 \liminf_{h\downarrow0}\frac1h
 \left(d(S(h)x,K_A(t+h))-\int_t^{t+h}c(s)\,ds\right)^+=0
 \qquad\text{for all }t\in N,\, x\in K_A(t).
\]
Assume that $f(t,\cdot)$ is quasimonotone on $K_A(t)$ for almost every
$t$.  In addition, assume one
of the following alternatives:
\begin{enumerate}
\item[(G)] $f$ is jointly measurable on $\Gr(K_A)$;
\item[(C)] there are a set $C\subset X$ with $K_A(J)\subset C$ and a
      Carath\'eodory map $F:J\times C\to X$ whose restriction to
      $\Gr(K_A)$ is $f$.
\end{enumerate}
Let $u$ be a viable mild solution and let $v_0\in K_A(0)$ satisfy
$u(0)\leq v_0$.  Suppose, in addition, that at least one of the following
conditions holds:
\begin{enumerate}
\item $S(h)B$ is relatively compact for every $h>0$ and every bounded
      $B\subset K_A(J)$;
\item for every $\varphi\in L^1_+(J)$ and every sequence
      $(w_n)\subset L^1(J;X)$ satisfying
      $\|w_n(t)\|\leq\varphi(t)$ a.e.\ on $J$, the set
$\{u(t;0,v_0,w_n):n\geq1\}$ is relatively compact for every $t\in(0,a]$;
\item $X^*$ is uniformly convex, the semigroup $S$ is equicontinuous, and
      there is $k\in L^1_+(J)$ such that
$\beta(f(t,B))\leq k(t)\beta(B)$ for a.e.\ $t\in J$ and every bounded $B\subset K_A(t)$;
\item $f$ is locally Lipschitz in the state variable;
\item $X^*$ is uniformly convex and $f$ is locally one-sided Lipschitz in
      the state variable.
\end{enumerate}
Then there is a viable mild solution $v$ with $v(0)=v_0$ such that
$u(t)\leq v(t)$ for all $t\in J$.  Under conditions (4) or (5), the viable solution starting from $v_0$ is
unique and hence coincides with this ordered companion.  In particular, the
unique viable solutions starting from ordered initial values are ordered.
\end{theorem}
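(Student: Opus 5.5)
The plan is to apply the bounded existence theory to the solution-dependent upper tube $K^u(t)=K(t)\cap(u(t)+X_+)$, after reducing it with Lemma~\ref{upper-tube-inheritance}. Take $D_u:=\{v_0\}\subset K_A^u(0)$; this is admissible because $v_0\in K_A(0)$ and $u(0)\leq v_0$. Lemma~\ref{upper-tube-inheritance} then gives a bounded closed-valued sub-tube $\widetilde K^u\subset K^u$ with the following properties:
\begin{itemize}
\item $v_0\in\widetilde K_A^u(0)$;
\item the bounded effective-tube setting holds, with common bound $\gamma$ from \eqref{upper-common-bound};
\item the restricted forcing is Carath\'eodory;
\item \emph{(SR1)} holds under (G), and \emph{(SR2)} holds under (C).
\end{itemize}
The effective tube of $\widetilde K^u$ is $\widetilde K^u(t)\cap\overline{D(A)}\subset K_A^u(t)$. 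Hence any solution viable in $\widetilde K^u$ is viable in $K$ and satisfies $u(t)\leq v(t)$ for all $t$. So it suffices to produce one mild solution in $\widetilde K^u$ starting from $(0,v_0)$. By Remark~\ref{initial-pair-existence}, only existence from this single initial pair is needed.

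Next, check that each of conditions (1)--(5) passes to the bounded upper problem, and invoke the matching bounded theorem.
\begin{itemize}
\item Condition (1): every bounded $B\subset\widetilde K^u_A(J)$ is a bounded subset of $K_A(J)$, so Theorem~\ref{ex1}(a) applies.
\item Condition (2): this is forced-trajectory compactness for the fixed initial value $v_0$ at $t_0=0$. That is exactly what Theorem~\ref{ex1}(b) uses, by Remark~\ref{initial-pair-existence}.
\item Condition (3): the estimate \eqref{beta-f} passes to subsets by monotonicity, so Theorem~\ref{ex2} applies.
\item Conditions (4) and (5): the local Lipschitz and one-sided Lipschitz hypotheses restrict trivially to the smaller graph, so Theorem~\ref{ex4} applies.
\end{itemize}
Each case yields a mild solution $v$ with $v(0)=v_0$ and $v(t)\in\widetilde K^u_A(t)$. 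Since $f$ agrees on the smaller graph, $v$ is a viable mild solution of the original problem with $u\leq v$.

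For the uniqueness claim under (4) or (5), use the direct Gronwall argument from the proof of Theorem~\ref{ex4}, as in Corollary~\ref{general-existence-theory}(c). It gives uniqueness among all viable solutions in the original tube, independently of the reduction, so the unique viable solution from $v_0$ equals the ordered companion $v$. For the final sentence, take $u$ to be the unique viable solution from $u_0\leq v_0$; its companion is the unique solution from $v_0$, so the two are ordered.

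The main obstacle is the exceptional-time bookkeeping for condition (2). Forced-trajectory compactness is stated with initial time $0$ and arbitrary integrable bounds $\varphi$, and Theorem~\ref{ex1}(b) is only invoked with $t_0=0$ and bound $\gamma$. So the step that needs care is confirming that condition (2) is needed only at $t_0=0$. Beyond that, the main task is checking that the ambient compactness and Lipschitz hypotheses are genuinely inherited by the reduced upper problem. The delicate construction, subtangentiality of the upper tube, has already been carried out in Lemma~\ref{upper-tube-inheritance}.
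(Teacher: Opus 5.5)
Your proposal is correct and follows essentially the same route as the paper. You reduce with Lemma~\ref{upper-tube-inheritance} for $D_u=\{v_0\}$, check that conditions (1)--(5) pass to the bounded upper problem (using Remark~\ref{initial-pair-existence} for (2)), apply Theorem~\ref{ex1}, \ref{ex2}, or \ref{ex4}, and get uniqueness under (4) or (5) from the reduction-independent Gronwall argument behind Corollary~\ref{general-existence-theory}(c). The point you flag as the main obstacle for (2) is in fact immediate: the forced trajectories $u(\cdot;0,v_0,w_n)$ do not depend on the tube, and Theorem~\ref{ex1}(b) uses them only at this initial pair with the common bound supplied by the reduction.
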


\begin{proof}
Apply Lemma~\ref{upper-tube-inheritance} with $D_u=\{v_0\}$ and let
$\widetilde K^u$ be the resulting upper sub-tube, whose effective tube
$\widetilde K_A^u$ is bounded.  Conditions
(1)--(3) pass from $K_A$ to $\widetilde K_A^u$.  For (1), every bounded
subset of $\widetilde K_A^u(J)$ is a bounded subset of $K_A(J)$; for (3),
monotonicity of $\beta$ under set inclusion gives the restricted estimate.
Thus, for the bounded upper problem, condition~(1) permits application
of Theorem~\ref{ex1}(a), condition~(3) of Theorem~\ref{ex2}, and
conditions~(4) or~(5) of Theorem~\ref{ex4}.
Under condition~(2), the proof of
Theorem~\ref{ex1}(b) applies to the prescribed pair $(0,v_0)$ by
Remark~\ref{initial-pair-existence}.  In every case a mild solution viable
in $\widetilde K^u$, and hence in $K^u$, is obtained; the order conclusion
follows from the definition of the upper tube.

In cases~(4) and~(5), the original effective-tube linear-growth and
subtangential hypotheses, the Carath\'eodory property, and the corresponding
measurability alternative permit application of
Corollary~\ref{general-existence-theory}.
Hence the viable solution is unique for every initial value in the original
tube.  Applying the preceding ordered-companion argument with the unique
solution from the smaller initial value gives the final assertion.
\end{proof}

\begin{remark}\label{upper-solution-reduction-remark}
The argument is unchanged after restriction to any interval $[t_0,a]$.
More precisely, after
restriction to $[t_0,a]$, the same conclusion holds for every
$v_0\in K_A(t_0)$ with $u(t_0)\leq v_0$ provided the hypotheses of the
theorem hold for the restricted problem.  Under alternative~(2), this
requires the corresponding forced-trajectory compactness assumption with
initial time $t_0$ and initial value $v_0$.  The other four alternatives are
stable under restriction to a later initial interval.  In the compactness
alternatives the theorem gives ordered existence, not a selection rule:
there may be several ordered companions, even for the same larger initial value.
Moreover, further solutions may exist that are not ordered.
\end{remark}

A symmetric theory is obtained when the tube is closed under infima:
\begin{equation}\label{K-converse}
 x,y\in K_A(t)\quad\Rightarrow\quad x\wedge y\in K_A(t)
 \qquad\mbox{for all }t\in J.
\end{equation}

\begin{corollary}[Lower companions and the infimum variant]
\label{comparison-infimum}
Assume \eqref{K-converse} in place of \eqref{K-cond}.  Then the conclusions
of Lemma~\ref{upper-tube-inheritance} and
Theorem~\ref{upper-solution-existence} hold with the lower tube
$K(t)\cap\bigl(u(t)-X_+\bigr)$ and with a prescribed initial value $v_0\leq u(0)$.  The final uniqueness and comparison assertion of
Theorem~\ref{upper-solution-existence} remains valid.  Likewise,
Theorem~\ref{direct-comparison} remains valid with the infimum condition in
place of the supremum condition.
\end{corollary}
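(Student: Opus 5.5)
The plan is to obtain the lower/infimum variant from the upper/supremum results by an order-reversal symmetry. I will not redo the proofs; I will check at each step that the arguments of Lemma~\ref{upper-tube-inheritance}, Theorem~\ref{upper-solution-existence} and Theorem~\ref{direct-comparison} transform correctly under $x\mapsto -x$. Reflecting the entire problem is natural, but it does not preserve $T$-accretivity of $A$: the reflected operator $\check A x:=-A(-x)$ is $m$-accretive and $T$-accretive precisely when $A$ satisfies the ``negative-part'' contraction $\|(x-\bar x)^-\|\leq\|(x-\bar x+\lambda(y-\bar y))^-\|$. This inequality is the $T$-accretivity of $A$ with the roles of $x,\bar x$ interchanged, since $(x-\bar x)^-=(\bar x-x)^+$. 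Hence $\check A$ is $m$-$T$-accretive, with $\check J_\lambda x=-J_\lambda(-x)$ and $\check S(t)x=-S(t)(-x)$. I will define the reflected data $\check K(t):=-K(t)$, $\check f(t,x):=-f(t,-x)$ on $\Gr(\check K_{\check A})$, and $\check u:=-u$.

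I will then verify that the hypotheses transfer. First, $\overline{D(\check A)}=-\overline{D(A)}$, so $\check K_{\check A}(t)=-K_A(t)$; closedness from the left, boundedness, and the growth bound are invariant under reflection. Second, \eqref{K-converse} for $K_A$ becomes \eqref{K-cond} for $\check K_{\check A}$, because $(-x)\vee(-y)=-(x\wedge y)$. Third, the subtangential sets satisfy $T^{\check A}_{\check K}(t,-x)=-T^A_K(t,x)$ and the exceptional distances are unchanged, because $\check S_{-z}(h)(-x)=-S_z(h)x$ and $d(-\xi,-G)=d(\xi,G)$. Fourth, the Carath\'eodory property, joint measurability (G), the cylinder extension (C) with $\check C:=-C$, local (one-sided) Lipschitz bounds, \eqref{beta-f}, and the compactness conditions (1)--(3) are all reflection invariant; for the one-sided bound, $(-a,-b)_-=(a,b)_-$. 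Finally, quasimonotonicity is preserved: if $x\leq y$, then $-y\leq -x$, and
\[
d\bigl(-x+y+h(\check f(t,-x)-\check f(t,-y)),X_+\bigr)=d\bigl(y-x+h(f(t,y)-f(t,x)),X_+\bigr).
\]

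I will then apply Lemma~\ref{upper-tube-inheritance} and Theorem~\ref{upper-solution-existence} to the reflected problem with the viable solution $\check u$ and initial value $-v_0\geq\check u(0)$. This produces $\check v$ viable in $\check K(t)\cap(\check u(t)+X_+)$, and $v:=-\check v$ is a mild solution for $(A,f)$ that is viable in $K(t)\cap(u(t)-X_+)$, since mild solutions correspond under reflection of the Euler schemes. Uniqueness under (4) or (5) transfers in the same way, which gives the comparison assertion. The same reflection, applied to a pair $u,v$ with $v(t_0)\leq u(t_0)$, yields the infimum form of Theorem~\ref{direct-comparison}.

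I expect the main obstacle to be the $T$-accretivity of $\check A$, and I will verify it through the bracket. The identity $[-a,-b]_+=\lim_{h\downarrow0}h^{-1}(\|(a+hb)^-\|-\|a^-\|)$ shows that $T$-accretivity of $\check A$ is the positivity of this negative-part bracket. Swapping the two points turns it into $[\bar x-x,\bar y-y]_+\geq0$, which is exactly the $T$-accretivity of $A$. If this check were to fail, the fallback would be to rerun the proofs directly with $\wedge$, replacing Proposition~\ref{positive-difference} by its version for $(v-u)^+$. That version is the same estimate with $u$ and $v$ interchanged, so the fallback should also go through.
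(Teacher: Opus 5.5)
Your proposal is correct and is the same argument as the paper's. You conjugate by the isometric reflection $Rx=-x$, passing to $RAR^{-1}$, $-K(t)$ and $-f(t,-x)$, check that every hypothesis is reflection invariant, and then read lower tubes and infima as upper tubes and suprema. Your bracket computation showing that $RAR^{-1}$ is again $m$-$T$-accretive supplies a detail the paper only asserts. Contrary to your opening phrasing, the reflection always preserves $T$-accretivity, so the fallback is not needed.
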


\begin{proof}
Apply the preceding results after the isometric reflection
$Rx=-x$: set
\[
 \widetilde A=RAR^{-1},\qquad
 \widetilde K(t)=-K(t),\qquad
 \widetilde f(t,x)=-f(t,-x).
\]
All accretivity, Carath\'eodory/measurability, growth, quasimonotonicity,
and subtangential hypotheses are preserved, as are the corresponding
compactness and Lipschitz alternatives, while infima and lower tubes in
the original variables become suprema and upper tubes. The direct comparison statement transforms in the same way.
\end{proof}

\begin{remark}
(a) In the unconstrained dense-domain case $K(t)=X$ and
$\overline{D(A)}=X$, the direct argument also permits two right-hand sides
$f$ and $g$.  More precisely, if the upper right-hand side
$g(t,\cdot)$ has the local state-Lipschitz regularity used in
Theorem~\ref{direct-comparison}, it suffices to replace ordinary
quasimonotonicity by the cross condition
\[
 \lim_{h\to0+}\frac1h
 d\bigl(y-x+h(g(t,y)-f(t,x)),X_+\bigr)=0
 \qquad\mbox{for almost every $t$ and all }x\leq y;
\]
compare \cite{Martin}.  Indeed, in the proof of
Lemma~\ref{quasimonotone-bracket} one takes
$p=f(t,x)-g(t,x\vee y)$ and controls
$q=g(t,x\vee y)-g(t,y)$ by the local Lipschitz bound.  In particular, this
covers right-hand sides of the form $F(t,x)+f_0(t)$ and
$F(t,x)+g_0(t)$ when $F(t,\cdot)$ satisfies the hypotheses of
Theorem~\ref{direct-comparison} and
$f_0,g_0\in L^1(J;X)$ satisfy $f_0(t)\leq g_0(t)$ almost everywhere.

(b) In the autonomous case with a static state space, unique solution maps
form a monotone semiflow.  For a time-dependent right-hand side or tube, the
proper object is an order-preserving evolution process.  Without uniqueness,
Theorem~\ref{upper-solution-existence} gives the corresponding one-sided
existence property rather than a single-valued process.

(c) Competitive sign patterns can be treated by changing the ordering cone.
On a product lattice this amounts to reversing selected coordinates and
applying the preceding results to the conjugated operator and vector field,
provided the conjugated operator is $m$-$T$-accretive.

(d) In the static setting recalled in
Remark~\ref{static-lipschitz-general-space}, existence, uniqueness, and
continuous dependence are already available.  If the static constraint is
lattice compatible and $f$ is quasimonotone, Theorem~\ref{direct-comparison}
gives the same comparison conclusion in an arbitrary Banach lattice.
\end{remark}

\section{Lyapunov pairs}\label{s:lyapunov}
Nonautonomous Lyapunov pairs have a simple viability interpretation in an
extended state space.  Formally, adjoin to a solution $u$ a scalar variable
$r$ with $r'(t)=-W(t,u(t))$.  Then
$r(t)=r(t_0)-\int_{t_0}^tW(s,u(s))\,ds$, so that, for
$r(t_0)=V(t_0,x_0)$, the Lyapunov inequality is exactly $V(t,u(t))\leq r(t)$:
the extended trajectory $(u,r)$ remains in the moving epigraph of $V$.
This is the guiding idea below.  It is naturally implemented in $X\times\mathbb R$
with the product operator $\mathcal A=A\times0$.  Since $W$ may be
extended-valued and merely lower semicontinuous, it is approximated from below
by finite-valued Lipschitz functions $W_n$ and the viability theory is applied
to $(f,-W_n)$.  Proposition~\ref{epi-derivative-identity} converts regular-time
epigraph subtangentiality into an $A$-contingent derivative inequality for
$V$.  The resulting epigraph-viability argument separates regular-time
subtangentiality from the additional bound required to control the motion of
the epigraph at exceptional times.

Unlike Sections~\ref{s:periodic} and~\ref{s:comparison}, the necessity
argument needs Proposition~\ref{necessary-regular-tangency} with one
state-independent exceptional null set, hence a separable setting.  We
therefore fix a bounded separable reduced problem from
Theorem~\ref{separable-reduction-theorem} and relabel its state space,
operator, tube, and forcing as $X$, $A$, $K$, and $f$.  Assume that $f$ is
locally Lipschitz.  On every restriction $[t_0,a]$, \emph{(SR1)} holds and
Theorem~\ref{ex4}(a) gives a unique viable solution from each
$(t_0,x_0)\in\Gr(K_A)$.  Fix the finite-valued bound $c\in L^1_+(J)$ supplied
by Theorem~\ref{separable-reduction-theorem}, so that $\|f(t,x)\|\leq c(t)$
on $\Gr(K_A)$. We write $u(\cdot;t_0,x_0)$ for the corresponding solution.

Let $V:\Gr(K_A)\to(-\infty,+\infty]$ and
$W:\Gr(K_A)\to[0,+\infty]$. For $t\in J$, put
$\operatorname{dom}V(t):=\{x\in K_A(t):V(t,x)<+\infty\}$ and assume that
this set is nonempty.

\begin{definition}\label{lyapunov-pair-definition}
The pair $(V,W)$ is a \emph{Lyapunov pair} for \eqref{sivp4} on the tube
$K$ if, for every $t_0\in[0,a)$ and every
$x_0\in\operatorname{dom}V(t_0)$, the map
$W(\cdot,u(\cdot;t_0,x_0))$ is measurable and
\begin{equation}\label{lyapunov-inequality}
 V(t,u(t;t_0,x_0))+
 \int_{t_0}^{t}W(s,u(s;t_0,x_0))\,ds
 \leq V(t_0,x_0)
 \qquad\text{for all }t\in[t_0,a].
\end{equation}
For $W=0$ this is the usual notion of a Lyapunov function.
\end{definition}

The quantification over all starting times is essential in the
nonautonomous setting; uniqueness removes ambiguity in the viable solution.
All objects below refer to this fixed reduced problem.
Now extend $V$ to $J\times X$ by
\[
 V_K(t,x):=
 \begin{cases}
  V(t,x),&x\in K_A(t),\\
  +\infty,&x\notin K_A(t).
 \end{cases}
\]
We assume that $V_K$ is $\mathcal L(J)\otimes\mathcal B(X)$-measurable
and sequentially lower semicontinuous from the left on the effective graph:
\begin{equation}\label{left-lsc-V}
 \begin{array}{c}
  t_n\leq t,\;\; t_n\to t,\;\;
  x_n\in K_A(t_n),\;\; x_n\to x
 \end{array}
 \; \text{ implies }\;
 V(t,x)\leq\liminf_{n\to\infty}V(t_n,x_n).
\end{equation}
In particular, every section $V(t,\cdot)$ is lower semicontinuous on
$K_A(t)$.  Define the epigraph tube in
$\mathcal X:=X\times\mathbb R$, equipped with the sum norm, by
\begin{equation}\label{lyapunov-epigraph}
 C(t):=\{(x,\mu)\in K_A(t)\times\mathbb R:
                   V(t,x)\leq\mu\}.
\end{equation}
Then measurability of $V_K$ gives
$\Gr(C)\in\mathcal L(J)\otimes\mathcal B(\mathcal X)$.  Condition~\eqref{left-lsc-V} and left-closedness of $\Gr(K_A)$ imply
that every $C(t)$ is closed and that $\Gr(C)$ is closed from the left.

On $\mathcal X$ consider the $m$-accretive operator
$\mathcal A(x,\mu):=Ax\times\{0\}$ with
$D(\mathcal A)=D(A)\times\mathbb R$.  For $(v,\lambda)\in\mathcal X$, the semigroup generated by
$-\mathcal A+(v,\lambda)$ is
\begin{equation}\label{product-translated-semigroup}
 \mathcal S_{(v,\lambda)}(h)(x,\mu)
   =(S_v(h)x,\mu+h\lambda).
\end{equation}

For $t<a$, $x\in\operatorname{dom}V(t)$, and $v\in X$, define
the time-dependent $A$-contingent derivative by
\begin{equation}\label{lyapunov-contingent-derivative}
 D_A^-V(t,x;v):=
 \liminf_{\substack{h\downarrow0\\z\to0}}
 \frac{V_K(t+h,S_v(h)x+hz)-V(t,x)}{h}.
\end{equation}
The extension $V_K$ makes the expression meaningful outside the tube.  The
correction $hz$ makes explicit the $o(h)$ state displacement already implicit
in the distance-based definition of $A$-subtangentiality; compare the
sequential formulation~\eqref{SC1-sequential}.  Without this correction, the
derivative would test only the exact translated semigroup trajectory
$S_v(h)x$.

\begin{proposition}[Epigraph identity]\label{epi-derivative-identity}
For every $t\in[0,a)$ and $x\in\operatorname{dom}V(t)$,
\begin{equation}\label{epi-derivative-formula}
 \operatorname{epi}\bigl(D_A^-V(t,x;\cdot)\bigr)
 =\bigcap_{\mu\geq V(t,x)}
   T_C^{\mathcal A}(t;(x,\mu))
 =T_C^{\mathcal A}(t;(x,V(t,x))).
\end{equation}
Here $T_C^{\mathcal A}$ is the $\mathcal A$-subtangential set of
Definition~\ref{A-subtangent-definition}, formed in the product space.
\end{proposition}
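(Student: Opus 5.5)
The plan is to prove the chain of inclusions
\[
 \operatorname{epi}\bigl(D_A^-V(t,x;\cdot)\bigr)
 \subset\bigcap_{\mu\geq V(t,x)}T_C^{\mathcal A}(t;(x,\mu))
 \subset T_C^{\mathcal A}(t;(x,V(t,x)))
 \subset\operatorname{epi}\bigl(D_A^-V(t,x;\cdot)\bigr).
\]
The middle inclusion is trivial, since $\mu=V(t,x)$ is one of the indices. Throughout, I will use \eqref{product-translated-semigroup}. With the sum norm it gives, for $(v,\lambda)\in\mathcal X$,
\[
 d\bigl(\mathcal S_{(v,\lambda)}(h)(x,\mu),C(t+h)\bigr)
 =\inf\bigl\{\|y-S_v(h)x\|+|\nu-\mu-h\lambda|:\ y\in K_A(t+h),\ V(t+h,y)\leq\nu\bigr\}.
\]
Hence $(v,\lambda)\in T_C^{\mathcal A}(t;(x,\mu))$ holds exactly when there are $h_n\downarrow0$ and $(y_n,\nu_n)\in C(t+h_n)$ with $\|y_n-S_v(h_n)x\|=o(h_n)$ and $|\nu_n-\mu-h_n\lambda|=o(h_n)$. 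This is the sequential form \eqref{SC1-sequential}, transferred to the product space.

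For the first inclusion, let $\lambda\in\mathbb R$ with $\lambda\geq D_A^-V(t,x;v)$. This also covers the value $-\infty$; if the derivative is $+\infty$, the epigraph is empty and there is nothing to prove. By definition of the lower limit in \eqref{lyapunov-contingent-derivative}, I will choose $h_n\downarrow0$, $z_n\to0$ and $\varepsilon_n\downarrow0$ with
\[
 V_K(t+h_n,S_v(h_n)x+h_nz_n)\leq V(t,x)+h_n\lambda+h_n\varepsilon_n .
\]
When the derivative equals $\lambda$, the term $\varepsilon_n$ absorbs the $o(1)$ error. When it is $-\infty$ or strictly below $\lambda$, one may even take $\varepsilon_n=0$ eventually. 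The right-hand side is finite, so the definition of $V_K$ forces $y_n:=S_v(h_n)x+h_nz_n\in K_A(t+h_n)$. Now fix $\mu\geq V(t,x)$ and put $\nu_n:=\max\{V(t+h_n,y_n),\mu+h_n\lambda\}$. Then $(y_n,\nu_n)\in C(t+h_n)$, and
\[
 0\leq\nu_n-\mu-h_n\lambda\leq\bigl(V(t,x)-\mu+h_n\varepsilon_n\bigr)^+\leq h_n\varepsilon_n ,
\]
because $\mu\geq V(t,x)$. The state error is $h_n\|z_n\|=o(h_n)$. The displayed distance formula therefore gives $(v,\lambda)\in T_C^{\mathcal A}(t;(x,\mu))$ for every $\mu\geq V(t,x)$.

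For the last inclusion, let $(v,\lambda)\in T_C^{\mathcal A}(t;(x,V(t,x)))$ and take the sequences $h_n$, $(y_n,\nu_n)$ provided by the sequential characterization. I will set $z_n:=(y_n-S_v(h_n)x)/h_n\to0$. Since $y_n\in K_A(t+h_n)$,
\[
 V_K(t+h_n,S_v(h_n)x+h_nz_n)=V(t+h_n,y_n)\leq\nu_n\leq V(t,x)+h_n\lambda+o(h_n).
\]
Dividing by $h_n$ and passing to the lower limit jointly in $(h,z)$ yields $D_A^-V(t,x;v)\leq\lambda$, which closes the chain.

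The argument is essentially bookkeeping. The step needing the most care is the first inclusion. There one must pass from the joint lower limit in $(h,z)$ to a single realizing sequence, while treating the cases $D_A^-V=-\infty$ and $\mu>V(t,x)$ uniformly. The choice $\nu_n=\max\{\cdot,\cdot\}$ together with the sign $\mu-V(t,x)\geq0$ handles both. One must also note that finiteness of $V_K$ automatically places the corrected point $S_v(h_n)x+h_nz_n$ in the tube, which is exactly why the extension $V_K$ and the correction $hz$ were introduced.
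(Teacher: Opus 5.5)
Your proposal is correct and follows essentially the same route as the paper. Both arguments use the sequential form of $\mathcal A$-subtangentiality in the sum-norm product space, with the normalized state error $z_n$ in one direction and a lifted point in the epigraph in the other, together with the trivial middle inclusion. The only difference is cosmetic: you take the real component $\nu_n=\max\{V(t+h_n,y_n),\mu+h_n\lambda\}$, whereas the paper takes $\mu+h_n(\lambda+\theta_n)$ and uses $V(t,x)\leq\mu$ directly.
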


\begin{proof}
Suppose first that $(v,\lambda)\in T_C^{\mathcal A}(t;(x,V(t,x)))$.
Then there are $h_n\downarrow0$ and
$(x_n,\mu_n)\in C(t+h_n)$ such that
\[
 \|x_n-S_v(h_n)x\|+
 |\mu_n-V(t,x)-h_n\lambda|=o(h_n).
\]
Writing
$x_n=S_v(h_n)x+h_nz_n$ and
$\mu_n=V(t,x)+h_n(\lambda+\theta_n)$,
we have $z_n\to0$, $\theta_n\to0$, and
\[
 V(t+h_n,S_v(h_n)x+h_nz_n)
 \leq V(t,x)+h_n(\lambda+\theta_n).
\]
Consequently $D_A^-V(t,x;v)\leq\lambda$.

Conversely, suppose $D_A^-V(t,x;v)\leq\lambda$ and let
$\mu\geq V(t,x)$.  There are $h_n\downarrow0$, $z_n\to0$, and
$\theta_n\to0$ such that
\[
 V(t+h_n,S_v(h_n)x+h_nz_n)
 \leq V(t,x)+h_n(\lambda+\theta_n).
\]
Therefore
\[
 \bigl(S_v(h_n)x+h_nz_n,
       \mu+h_n(\lambda+\theta_n)\bigr)\in C(t+h_n),
\]
and \eqref{product-translated-semigroup} gives
\[
 \frac1{h_n}d_{\mathcal X}
 \bigl(\mathcal S_{(v,\lambda)}(h_n)(x,\mu),C(t+h_n)\bigr)
 \leq\|z_n\|+|\theta_n|\rightarrow0.
\]
Thus $(v,\lambda)\in T_C^{\mathcal A}(t;(x,\mu))$ for every
$\mu\geq V(t,x)$.  Since $\mu=V(t,x)$ occurs in the intersection, the latter
is contained in $T_C^{\mathcal A}(t;(x,V(t,x)))$; together with the first
part of the proof, this yields both equalities in
\eqref{epi-derivative-formula}.
\end{proof}

Thus, whenever $W(t,x)<+\infty$, the inequality
$D_A^-V(t,x;f(t,x))\leq-W(t,x)$ is equivalent to
\[
 (f(t,x),-W(t,x))\in T_C^{\mathcal A}(t;(x,V(t,x))),
\]
and, by \eqref{epi-derivative-formula}, to the corresponding membership in
$T_C^{\mathcal A}(t;(x,\mu))$ for every $\mu\geq V(t,x)$.  Hence the
derivative inequality used below is precisely the regular-time
$\mathcal A$-subtangentiality condition for the epigraph tube, expressed in
terms of $V$.  If $W(t,x)=+\infty$, the derivative inequality is instead
understood in the extended-real sense, as stated below.

For the lower-semicontinuous dissipation term, extend $W$ to $J\times X$ by
\begin{equation}\label{extended-W}
 W_K(t,x):=
 \begin{cases}
  W(t,x),&x\in K_A(t),\\
  +\infty,&x\notin K_A(t).
 \end{cases}
\end{equation}
Assume that $W_K$ is a nonnegative normal integrand, that is,
$W_K:J\times X\to[0,+\infty]$ is
$\mathcal L(J)\otimes\mathcal B(X)$-measurable and
$W_K(t,\cdot)$ is lower semicontinuous for every $t$.

\begin{lemma}[Lipschitz regularization of the dissipation]
\label{dissipation-approximation}
Let $X$ be separable.  For $n\geq1$ set
\begin{equation}\label{Wn-definition}
 W_n(t,x):=\inf_{y\in X}
 \bigl\{\min\{W_K(t,y),n\}+n\|x-y\|\bigr\}.
\end{equation}
Then $W_n:J\times X\to[0,n]$ is jointly measurable and Carath\'eodory,
$W_n(t,\cdot)$ is globally Lipschitz with constant $n$, and
\begin{equation}\label{Wn-convergence}
 0\leq W_n\leq W_{n+1}\leq W_K,
 \qquad
 W_n(t,x)\uparrow W_K(t,x)
 \quad\mbox{for all }(t,x)\in J\times X.
\end{equation}
\end{lemma}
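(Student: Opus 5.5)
The plan follows the classical Moreau--Yosida (Pasch--Hausdorff) inf-convolution argument, with a countable dense set handling measurability. Write $g_n(t,y):=\min\{W_K(t,y),n\}$. This is jointly measurable, takes values in $[0,n]$, and $g_n(t,\cdot)$ is lower semicontinuous. Nonnegativity and the bound $W_n\leq n$ are immediate. For the bound, use the tube's nonempty sections: choose $y\in K_A(t)$, where $g_n\leq n$. Taking $y=x$ gives $W_n\leq g_n\leq W_K$. Monotonicity in $n$ is clear, since both $\min\{W_K,n\}$ and $n\|x-y\|$ increase with $n$. For fixed $y$ the map $x\mapsto g_n(t,y)+n\|x-y\|$ is $n$-Lipschitz, so the infimum $W_n(t,\cdot)$ is $n$-Lipschitz for every $t$. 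Hence the state-continuity part of the Carath\'eodory property holds with $N_f=\emptyset$.

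Measurability is the first real issue, because the infimum is over the uncountable set $X$ and $g_n(t,\cdot)$ is only lower semicontinuous, so it cannot simply be restricted to a dense set. I would instead use the epigraphical description
\[
 W_n(t,x)<\alpha\iff \exists\,y:\ g_n(t,y)+n\|x-y\|<\alpha .
\]
The set of such $(t,x,y)$ is measurable in $\mathcal L(J)\otimes\mathcal B(X)\otimes\mathcal B(X)$, and $\{W_n<\alpha\}$ is its projection onto $J\times X$. Projection theorems give measurability only with respect to the completed $\sigma$-algebra. Since $\mathcal L(J)$ is already complete and $X$ is Polish, the measurable projection theorem (as in \cite{CaVa}) yields $\mathcal L(J)\otimes\mathcal B(X)$-measurability of the sublevel sets. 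This step needs care with the completion with respect to $X$. If it fails, I would fall back on a different route: show that $W_n(\cdot,x)$ is $\mathcal L(J)$-measurable for each fixed $x$, again by projection in $J\times X$ alone, and then invoke Proposition~\ref{cross-meas}, using the $n$-Lipschitz continuity of $W_n(t,\cdot)$, to obtain joint measurability. I expect this fallback to be the clean route. It also gives strong measurability of each time section, which completes the Carath\'eodory property.

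For pointwise convergence, fix $(t,x)$ and set $L:=\lim_n W_n(t,x)\leq W_K(t,x)$, which exists by monotonicity. Suppose $L<W_K(t,x)$, and pick $\beta$ with $L<\beta<W_K(t,x)$, taking $\beta$ finite. For each $n>\beta$ choose $y_n$ with $g_n(t,y_n)+n\|x-y_n\|<\beta$. Then $\|x-y_n\|<\beta/n\to0$, so $y_n\to x$. Also $g_n(t,y_n)<\beta<n$, which forces $g_n(t,y_n)=W_K(t,y_n)<\beta$. Lower semicontinuity of $W_K(t,\cdot)$ then gives $W_K(t,x)\leq\liminf_n W_K(t,y_n)\leq\beta$, a contradiction. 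This holds for every $t$ and is the standard Baire approximation argument, so the main obstacle is the measurability step above.
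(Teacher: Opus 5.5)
Your proposal is correct and follows the paper's proof. It uses the same $n$-Lipschitz estimate, the bounds from $y=x$, monotonicity in $n$, and the near-minimizer/lower-semicontinuity argument for $W_n\uparrow W_K$; your contradiction version handles $W_K(t,x)=+\infty$ at no extra cost. For measurability the paper only cites the measurable infimum theorem for normal integrands from \cite{CaVa}, and your fallback makes this precise: project for fixed $x$ into the complete $\sigma$-algebra $\mathcal L(J)$, then apply Proposition~\ref{cross-meas} using the Lipschitz continuity. This is the route to use, since projecting directly onto $J\times X$ only yields measurability for a completion of $\mathcal L(J)\otimes\mathcal B(X)$. Finally, the appeal to nonempty sections for $W_n\leq n$ is unnecessary, because $\min\{W_K,n\}\leq n$ holds everywhere.
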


\begin{proof}
The measurable infimum theorem for normal integrands shows that $W_n$ is
product measurable; see, for example, \cite{CaVa}.  The triangle inequality
gives $|W_n(t,x)-W_n(t,x')|\leq n\|x-x'\|$, so $W_n$ is jointly measurable
and Carath\'eodory.  Taking $y=x$ proves $W_n\leq n$ and $W_n\leq W_K$.  For each
fixed $y$, both terms inside the braces in \eqref{Wn-definition} are
nondecreasing in $n$; taking infima therefore gives $W_n\leq W_{n+1}$.

Fix $(t,x)$ and let $\ell=\lim_nW_n(t,x)$.  If $\ell<+\infty$, choose
$y_n$ so that $\min\{W_K(t,y_n),n\}+n\|x-y_n\|
\leq W_n(t,x)+n^{-1}$.  Then $y_n\to x$.  For all sufficiently large $n$ the latter
minimum cannot equal $n$, and hence $W_K(t,y_n)+n\|x-y_n\|\leq W_n(t,x)+n^{-1}$.
Lower semicontinuity yields
$W_K(t,x)\leq\liminf_n W_K(t,y_n)\leq\ell$.
Together with $W_n\leq W_K$, this proves
$\ell=W_K(t,x)$.  If $W_K(t,x)=+\infty$, the same
argument rules out a finite value of $\ell$.
\end{proof}

\begin{theorem}[Regular-time necessity and epigraph sufficiency]
\label{lyapunov-characterization}
In the fixed reduced subspace described above, assume that $V_K$ is
product measurable, that $V$ satisfies
\eqref{left-lsc-V}, and that the extension $W_K$ in
\eqref{extended-W} is a nonnegative normal integrand.
\begin{enumerate}
\item[(i)] If $(V,W)$ is a Lyapunov pair, then there is a null set
$N_V\subset[0,a)$ such that
\begin{equation}\label{lyapunov-necessary-derivative}
 D_A^-V(t,x;f(t,x))\leq-W(t,x)
\end{equation}
for every $t\in[0,a)\setminus N_V$ and every
$x\in\operatorname{dom}V(t)$.
The inequality is understood in the extended-real sense; in particular,
$W(t,x)=+\infty$ requires $D_A^-V(t,x;f(t,x))=-\infty$.

\item[(ii)] Conversely, suppose there are a null set
$N\subset[0,a)$ and a finite-valued $q\in L^1_+(J)$ such that
\eqref{lyapunov-necessary-derivative} holds for every
$t\in[0,a)\setminus N$ and every $x\in\operatorname{dom}V(t)$.  Assume that the epigraph
tube satisfies
\begin{equation}\label{epigraph-exceptional-condition}
 \liminf_{h\downarrow0}\frac1h
 \left(
 d_{\mathcal X}\bigl((S(h)x,\mu),C(t+h)\bigr)
 -\int_t^{t+h}q(s)\,ds
 \right)^+=0
\end{equation}
for every $t\in N$ and every $(x,\mu)\in C(t)$.  It is sufficient, in
particular, to impose the stronger pointwise estimate
\begin{equation}\label{strong-epigraph-exceptional-condition}
 \liminf_{h\downarrow0}\frac1h
 d_{\mathcal X}\bigl((S(h)x,\mu),C(t+h)\bigr)
 \leq q(t)
 \qquad\mbox{for all }t\in N,\, (x,\mu)\in C(t),
\end{equation}
because Proposition~\ref{c-hat} converts it into
\eqref{epigraph-exceptional-condition} with
$\widehat q^{\,\circ}$ in place of $q$.  

Then $(V,W)$ is a Lyapunov pair.
\end{enumerate}
\end{theorem}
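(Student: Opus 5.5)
For part (i), I will work in the product space $\mathcal X=X\times\mathbb R$ with the operator $\mathcal A=A\times0$ and the epigraph tube $C$ from \eqref{lyapunov-epigraph}.

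\emph{Part (i).} I will not use Proposition~\ref{necessary-regular-tangency} directly, because $W$ may be infinite. Instead I first argue for each regularized dissipation $W_n$ from Lemma~\ref{dissipation-approximation}. By \eqref{Wn-convergence}, $(V,W_n)$ is again a Lyapunov pair. Set $g_n(t,(x,\mu)):=(f(t,x),-W_n(t,x))$ on $\Gr(C)$. This map is jointly measurable and Carath\'eodory, with linear growth $\|g_n\|\le c+n$.

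For $(x_0,\mu_0)\in C(t_0)$, let $u=u(\cdot;t_0,x_0)$ and set $r(t)=\mu_0-\int_{t_0}^tW_n(s,u(s))\,ds$. Then $(u,r)$ is a mild solution for $\mathcal A$ with forcing $g_n$. It stays in $C$ because
\[
V(t,u(t))\le V(t_0,x_0)-\int_{t_0}^t W\,ds\le r(t).
\]
Hence $C$ is viable for $g_n$. Since $\mathcal X$ is separable, Proposition~\ref{necessary-regular-tangency} gives a null set $N^{(n)}$ outside which $g_n(t,(x,\mu))\in T_C^{\mathcal A}(t;(x,\mu))$.

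Put $N_V:=\bigcup_nN^{(n)}$ and take $\mu=V(t,x)$. Proposition~\ref{epi-derivative-identity} then yields $D_A^-V(t,x;f(t,x))\le-W_n(t,x)$ for every $n$. Letting $n\to\infty$ gives \eqref{lyapunov-necessary-derivative} in the extended-real sense. The only check I need here is that viability of the tube $C$ holds from every point of $C(t_0)$, not just from $\mu_0=V(t_0,x_0)$; the construction above covers arbitrary $\mu_0\ge V(t_0,x_0)$.

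\emph{Part (ii), step 1: tube conditions.} Fix $n$ and regard $g_n$ as a forcing on the epigraph tube $C$. By Proposition~\ref{epi-derivative-identity} and $W_n\le W$, the hypothesis \eqref{lyapunov-necessary-derivative} gives
\[
(f,-W_n)\in T_C^{\mathcal A}(t;(x,\mu)) \quad\text{for } t\notin N.
\]
When $W=\infty$ the derivative is $-\infty$, so this still holds. At $t\in N$, the semigroup of $\mathcal A$ is $(S(h)x,\mu)$, so \eqref{epigraph-exceptional-condition} is exactly the exceptional condition. The forcing obeys $\|g_n\|\le c+n$, and $\Gr(C)$ is product measurable and closed from the left. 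Corollary~\ref{general-existence-theory}(c) therefore applies under (G), using that $g_n$ is locally Lipschitz in $(x,\mu)$ since $f$ is locally Lipschitz and $W_n$ is $n$-Lipschitz. It yields a unique viable solution $(\bar u,\bar r)$ from $(x_0,V(t_0,x_0))$. The main obstacle is the unbounded $\mu$-direction. I will handle it by invoking Corollary~\ref{general-existence-theory} rather than the bounded theorems, since the growth bound is linear with $\|(x,\mu)\|$ absent in the bound. Alternatively I can bound $\mu$ via Lemma~\ref{bounded-submultifunction}.

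\emph{Part (ii), step 2: identify and pass to the limit.} The first component $\bar u$ is a viable mild solution of \eqref{sivp4}, so by uniqueness $\bar u=u(\cdot;t_0,x_0)$. Also $\bar r(t)=V(t_0,x_0)-\int_{t_0}^tW_n(s,u(s))\,ds$. Membership $(u(t),\bar r(t))\in C(t)$ gives \eqref{lyapunov-inequality} with $W_n$ in place of $W$.

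Measurability of $W(\cdot,u(\cdot))$ follows from Proposition~\ref{superposition} applied to the normal integrand $W_K$. Monotone convergence with $W_n\uparrow W$ then gives \eqref{lyapunov-inequality} itself. If $V(t_0,x_0)=+\infty$, there is nothing to prove, so finiteness of the initial value is assumed throughout.
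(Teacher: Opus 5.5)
Your proposal is correct and follows the paper's proof essentially step by step. You regularize $W$ by $W_n$ and use the product forcing $(f,-W_n)$ on the epigraph tube $C$; necessity comes from Proposition~\ref{necessary-regular-tangency} and the epigraph identity with $N_V=\bigcup_n N_n$, and sufficiency from Corollary~\ref{general-existence-theory}(c), identification of the first component by uniqueness, and monotone convergence. The only cosmetic point is the measurability of $W(\cdot,u(\cdot))$: it follows directly from product measurability of $W_K$ composed with $t\mapsto(t,u(t))$, since Proposition~\ref{superposition} is formulated for Banach-space-valued maps rather than $[0,+\infty]$-valued ones.
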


\begin{proof}
For part (i), fix $n\geq1$ and define on $\Gr(C)$
\begin{equation}\label{product-forcing-n}
 G_n(t,x,\mu):=(f(t,x),-W_n(t,x)).
\end{equation}
Joint measurability of $f$ and $W_n$, together with product measurability of
$\Gr(C)$, makes $G_n$ jointly measurable on $\Gr(C)$; it is also
Carath\'eodory.  On every state ball on which $f$ has
local Lipschitz coefficient $\omega\in L^1_+(J)$, Lemma~\ref{dissipation-approximation}
gives
\[
 \|G_n(t,x,\mu)-G_n(t,y,\nu)\|_{\mathcal X}
 \leq(\omega(t)+n)\|x-y\|
 \leq(\omega(t)+n)\|(x,\mu)-(y,\nu)\|_{\mathcal X},
\]
so $G_n$ is locally Lipschitz in $(x,\mu)$.  Moreover,
\[
 \|G_n(t,x,\mu)\|_{\mathcal X}\leq c(t)+n
 \qquad\text{for all }(t,x,\mu)\in\Gr(C),
\]
so it satisfies the required integrable growth bound.  Since
$\overline{D(\mathcal A)}=\overline{D(A)}\times\mathbb R$ and
$C(t)\subset\overline{D(A)}\times\mathbb R$, 
the effective
constraint associated with $C$ and $\mathcal A$ is simply
$C_{\mathcal A}(t)=C(t)$.

Let $t_0<a$ and $(x_0,\mu_0)\in C(t_0)$.  By the Lyapunov inequality,
the unique viable solution $u=u(\cdot;t_0,x_0)$ satisfies
\[
 V(t,u(t))+\int_{t_0}^tW_n(s,u(s))\,ds
 \leq V(t_0,x_0)\leq\mu_0.\vspace{-0.05in}
\]
Thus\vspace{-0.05in}
\begin{equation}\label{product-trajectory-n}
 z_n(t):=
 \Big( u(t),\mu_0-\int_{t_0}^tW_n(s,u(s))\,ds \Big)
\end{equation}
remains in $C(t)$ and is the mild solution of
$z_n'+\mathcal A z_n\ni G_n(t,z_n)$.  Consequently $C$ is viable for this product problem.  Apply
Proposition~\ref{necessary-regular-tangency} in the separable space
$\mathcal X$.  There is a null set $N_n$, independent of the initial
state, such that
\[
 (f(t,x),-W_n(t,x))
 \in T_C^{\mathcal A}(t;(x,V(t,x)))
\]
for all $t\in[0,a)\setminus N_n$ and
$x\in\operatorname{dom}V(t)$.  Hence $D_A^-V(t,x;f(t,x))\leq-W_n(t,x)$
by Proposition~\ref{epi-derivative-identity}.
Take $N_V=\bigcup_{n\geq1}N_n$ and let $n\to\infty$ in view of
\eqref{Wn-convergence}.  This proves (i).

For part (ii), fix $n$.  Since $W_n\leq W$,
\eqref{lyapunov-necessary-derivative} and
Proposition~\ref{epi-derivative-identity} imply
\begin{equation}\label{product-tangency-n}
 G_n(t,x,\mu)
 \in T_C^{\mathcal A}(t;(x,\mu))
 \qquad
 \mbox{for all }t\in [0,a)\setminus N\mbox{ and }(x,\mu)\in C(t).
\end{equation}
Condition \eqref{epigraph-exceptional-condition} is precisely the
integral exceptional-set condition for the unforced semigroup of
$-\mathcal A$.  Under \eqref{strong-epigraph-exceptional-condition}, replace
$q$ by $\widehat q^{\,\circ}$ as in the statement and denote the resulting
bound again by $q$.  Then $\gamma_n:=c+n+q$ is a common growth and
exceptional bound.  Since $\Gr(C)$ is product measurable and $G_n$ is a
jointly measurable Carath\'eodory map, these estimates together with
\eqref{product-tangency-n} verify the hypotheses of
Corollary~\ref{general-existence-theory} for the product problem.

Let $t_0<a$ and $x_0\in\operatorname{dom}V(t_0)$.  The same properties hold
after restriction to any $[s,a]$, $t_0\leq s<a$.  Together with the local
Lipschitz property of $G_n$, Corollary~\ref{general-existence-theory}(c)
therefore gives a unique viable product solution starting from
$(x_0,V(t_0,x_0))$.  Its components are $u(\cdot;t_0,x_0)$ and
$r_n(t)=V(t_0,x_0)-\int_{t_0}^tW_n(s,u(s;t_0,x_0))\,ds$.
Viability in $C$ yields
\begin{equation}\label{lyapunov-Wn}
 V(t,u(t;t_0,x_0))+
 \int_{t_0}^tW_n(s,u(s;t_0,x_0))\,ds
 \leq V(t_0,x_0).
\end{equation}
Because $W_K$ is product measurable and $u$ is continuous, the composition
$W(\cdot,u(\cdot))$ is measurable.  Since $W_n\uparrow W$ and all functions
are nonnegative, the monotone convergence theorem permits passage to the limit
in \eqref{lyapunov-Wn}.  The result is \eqref{lyapunov-inequality}.
\end{proof}

\begin{remark}[Relation to autonomous Lyapunov characterizations]
Lyapunov methods for nonlinear contraction semigroups in Banach spaces
go back to Pazy \cite{Pazy81}; for continuous convex functionals, his
directional-derivative and resolvent criteria, together with the systematic
treatment of Lyapunov couples in \cite[Chapter~19]{BCP}, provide classical
autonomous criteria; see also \cite{Barbu2010}.

In Hilbert space, Kocan and Soravia \cite{KoSo} obtained a characterization
for autonomous maximal-monotone systems using viscosity-solution methods for
associated nonlinear, unbounded Hamilton--Jacobi inequalities.  Later,
C\^arj\u{a} and Motreanu \cite{CaMot} established, for possibly multivalued
$m$-accretive evolutions in arbitrary Banach spaces, an autonomous
characterization of Lyapunov pairs by means of an operator-adapted contingent
derivative, using tangency and flow-invariance arguments.
\end{remark}

In the autonomous case, the exceptional-time condition disappears.
Theorem~\ref{lyapunov-characterization} thus yields the following characterization.

\begin{corollary}[Autonomous case]\label{autonomous-lyapunov-corollary}
Assume the hypotheses stated before parts~(i) and~(ii) of
Theorem~\ref{lyapunov-characterization}, with
$K$, $V$, $W$, and $f$ all time-independent.
Then no exceptional condition is needed, and
\[
 (V,W)\text{ is a Lyapunov pair}
 \quad\Longleftrightarrow\quad
 D_A^-V(x;f(x))\leq-W(x)
 \quad\text{for all }x\in\operatorname{dom}V.
\]
\end{corollary}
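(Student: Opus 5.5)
The plan is to read the corollary as the special case of Theorem~\ref{lyapunov-characterization} in which the time interval carries no exceptional set. Since $K$, $V$, $W$ and $f$ do not depend on $t$, all dependence on a particular time disappears. Each direction is then taken from parts~(i) and~(ii) of that theorem. Along the way one checks that the single null set produced in~(i) can be removed, and that the exceptional condition~\eqref{epigraph-exceptional-condition} in~(ii) can be satisfied by choosing $N=\emptyset$.

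\emph{Necessity.} Let $(V,W)$ be a Lyapunov pair. Part~(i) supplies a null set $N_V$ with $D_A^-V(x;f(x))\leq-W(x)$ for every $t\notin N_V$ and every $x\in\operatorname{dom}V$. In the autonomous setting the quantity $D_A^-V(t,x;f(t,x))$ is the same for all $t$. This rests on two facts. First, $V_K(t+h,\cdot)=V_K(\cdot)$, because $K_A$ and $V$ are static. Second, $S_v$ is independent of $t$. Since $N_V$ is null, some $t\notin N_V$ exists (we may take $a>0$). Choosing such a $t$ gives the inequality for all $x\in\operatorname{dom}V$. The extended-real case $W(x)=+\infty$ is handled by the same argument.

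\emph{Sufficiency.} Assume $D_A^-V(x;f(x))\leq-W(x)$ for all $x\in\operatorname{dom}V$. Then \eqref{lyapunov-necessary-derivative} holds for every $t\in[0,a)$, so part~(ii) can be applied with $N=\emptyset$. In that case \eqref{epigraph-exceptional-condition} is required only for $t\in N$, so it holds vacuously. One may take $q=0$ (or any finite-valued $q\in L^1_+(J)$). The remaining hypotheses of part~(ii) are assumed in the corollary. These are product measurability of $V_K$, the left lower semicontinuity~\eqref{left-lsc-V}, and the normal-integrand property of $W_K$. The standing framework of the fixed reduced problem is also assumed: separability, local Lipschitz continuity of $f$, and the common bound $c$. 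In the static case, \eqref{left-lsc-V} reduces to lower semicontinuity of $V$ on $K_A$. Part~(ii) then shows that $(V,W)$ is a Lyapunov pair.

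\emph{Main point to check.} The only step that needs care is whether the empty exceptional set is admissible throughout the chain of results used in the proof of part~(ii). The proof of part~(ii) invokes Corollary~\ref{general-existence-theory}, which reduces to the bounded setting through Lemma~\ref{bounded-submultifunction} and then to Lemma~\ref{approx3}. In each of these, the exceptional estimate is imposed only for $t\in N$. The enlargements of $N$ are absorbed by regular-time subtangentiality, through the all-time estimate in the proof of Lemma~\ref{approx3} and the transfer step in Theorem~\ref{separable-reduction-theorem}. So $N=\emptyset$ causes no difficulty, and no exceptional condition is needed. This also reflects the discussion of the Cantor-function example: in that example the exceptional control was needed only because the tube moved on a null set, and a static epigraph tube $C$ does not move.
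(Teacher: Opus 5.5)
Your proposal is correct and matches the paper's argument: the inequality from part~(i) does not depend on $t$ in the autonomous case, so it holds at any single regular time and hence for all $x\in\operatorname{dom}V$. Conversely, part~(ii) applies with $N=\emptyset$, so the exceptional condition is vacuous; your re-examination of the internal chain is harmless but unnecessary, since Theorem~\ref{lyapunov-characterization}(ii) is stated for an arbitrary null set and the empty set qualifies directly.
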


Indeed, the inequality furnished by the necessity part is independent of
time, so its validity at every regular time implies the displayed autonomous
condition; conversely, one applies
Theorem~\ref{lyapunov-characterization}(ii) with an empty exceptional set.

\begin{remark}[Lyapunov sequences and iterated dissipation]
Following \cite[Chapter~19]{BCP}, let
$m\geq1$ and let $V_0,V_1,\ldots,V_m:\Gr(K_A)\to[0,+\infty]$
be such that $(V_{j-1},V_j)$ is a Lyapunov pair for every
$j=1,\ldots,m$.  Then $(V_0,\ldots,V_m)$ is called a Lyapunov sequence.  If
$u=u(\cdot;t_0,x_0)$ and $V_0(t_0,x_0)<+\infty$, retaining the nonnegative
endpoint terms in the usual induction gives, for every $t\in(t_0,a]$,
\begin{equation}\label{lyapunov-sequence-combined}
 V_0(t,u(t))
 +\sum_{j=1}^{m-1}\frac{(t-t_0)^j}{j!}V_j(t,u(t)) +\int_{t_0}^t\frac{(s-t_0)^{m-1}}{(m-1)!}V_m(s,u(s))\,ds
 \leq V_0(t_0,x_0).
\end{equation}
For the extended-valued case, put $\mathcal V_j(s):=V_j(s,u(s))$,
$q_j(s):=(s-t_0)^{j-1}/(j-1)!$, and
$L_j(t):=\int_{t_0}^t q_j(s)\mathcal V_j(s)\,ds$.  The pair $(V_0,V_1)$ gives
$V_0(t,u(t))+L_1(t)\leq V_0(t_0,x_0)$.  If $L_j(t)<+\infty$, then
$\mathcal V_j(r)<+\infty$ for almost every $r\in(t_0,t)$; for such $r$,
uniqueness and the pair $(V_j,V_{j+1})$ give
$\mathcal V_j(t)+\int_r^t\mathcal V_{j+1}(s)\,ds\leq\mathcal V_j(r)$.
Multiplying by $q_j(r)$, integrating, and using Tonelli's theorem yields
$q_{j+1}(t)\mathcal V_j(t)+L_{j+1}(t)\leq L_j(t)$; admissible restart times
$r\downarrow t_0$ also give the required measurability.  Iteration for
$j=1,\ldots,m-1$ proves \eqref{lyapunov-sequence-combined}.  Dropping
nonnegative terms recovers the bounds
$V_j(t,u(t))\leq j!(t-t_0)^{-j}V_0(t_0,x_0)$ for $j<m$ and the weighted
integrability of $V_m$ from \cite[Chapter~19, Proposition~19.6]{BCP}.

Theorem~\ref{lyapunov-characterization} may therefore be applied to each
consecutive pair $(V_{j-1},V_j)$, provided its hypotheses for the direction
under consideration are satisfied for that pair. In particular, the
regular-time derivative inequalities of Theorem~\ref{lyapunov-characterization}(i)
are necessary for each pair, while the corresponding inequalities together
with the exceptional-time epigraph condition in
Theorem~\ref{lyapunov-characterization}(ii) are sufficient.

Besides regularizing effects, \eqref{lyapunov-sequence-combined} may also be
applied to the product evolution of two solutions. An additional coercive or
observability estimate for the weighted $V_m$-term may then provide a route to
proving that an iterate of a Poincar\'e operator is a strict contraction. This
suggests a possible compactness-free periodicity mechanism complementary to Section~\ref{s:periodic}, which we
leave for future work.
\end{remark}

\section{Abstract reaction--diffusion systems}\label{s:react}
The preceding viability theory applies to systems in which the diffusion acts
componentwise and the reaction couples the components.  This formulation
permits nonlinear and multivalued diffusion operators.  The assumptions below
cover, for example, nonlinear diffusion operators arising from
$-\Delta\phi_i(u_i)$ and from the $p_i$-Laplacian, with boundary conditions
for which the corresponding resolvents are order preserving.

Throughout this section $(\Omega,\Sigma,\mu)$ is a finite measure space
such that $L^p(\Omega)$ is separable (as is the case, for example, for a
bounded Lebesgue domain), $m\geq1$, $1\leq p<\infty$, and $T>0$.  We set
$X:=\prod_{i=1}^mL^p(\Omega)$ with
$\|u\|:=(\sum_{i=1}^m\|u_i\|_{L^p}^p)^{1/p}$.  Equivalently, $X=L^p(\Omega;\ell_m^p)$, where
$|z|_p=(\sum_i|z_i|^p)^{1/p}$ on $\mathbb R^m$.  We use the componentwise
order.  With this choice the product of scalar contractions is again a
contraction, and $X$ is a separable Banach lattice.  For
$i=1,\ldots,m$, let $A_i$ be an $m$-$T$-accretive operator in
$L^p(\Omega)$ with dense domain, and define the product operator
\begin{equation}\label{RD-product-operator}
 D(A):=\prod_{i=1}^mD(A_i),\qquad
 Au:=A_1u_1\times\dots\times A_m u_m.
\end{equation}

The product resolvent
$J_\lambda u=(J_\lambda^1u_1,\ldots,J_\lambda^m u_m)$ is everywhere defined,
order preserving, and contractive in the product norm.  Hence $A$ is
$m$-$T$-accretive in $X$; its semigroup acts componentwise by the exponential
formula, and $\overline{D(A)}=X$ by density of the coordinate domains.

We impose the following preservation of constant order intervals:
\begin{equation}\label{RD-constant-supersolutions}
 0\in A_i0,
 \qquad
 J^{i}_{\lambda}\alpha\leq\alpha
 \quad\mbox{for all }\lambda>0\mbox{ and }\alpha\geq0,
 \qquad\text{for all }i=1,\ldots,m,
\end{equation}
where a scalar is identified with the corresponding constant function on
$\Omega$.  Since every $J^i_\lambda$ is order preserving,
\eqref{RD-constant-supersolutions} implies
\begin{equation}\label{RD-static-interval-invariance}
 J_\lambda\{u\in X:0\leq u\leq b\}
 \subset\{u\in X:0\leq u\leq b\}
 \qquad\mbox{for all }\lambda>0\mbox{ and }b\in\mathbb R_+^m.
\end{equation}
The same inclusion holds with $S(h)$ in place of $J_\lambda$ for every
$h\geq0$ and every $b\in\mathbb R_+^m$.

The reaction rate is a map $g:J\times\mathbb R_+^m\to\mathbb R^m$,
$J=[0,T]$, satisfying:
\begin{enumerate}
\item[(R1)] $g$ is Carath\'eodory;
\item[(R2)] for every $R>0$ there is
$c_R\in L^1_+(J)$ such that
\begin{equation}\label{RD-local-integrable-bound}
 |g(t,z)|_p\leq c_R(t)
 \quad\text{for a.e. }t\in J\text{ and all }z\in[0,R]^m;
\end{equation}
\item[(R3)] $g$ is quasipositive, that is,\vspace{-0.1in}
\begin{equation}\label{RD-quasipositivity}
 \begin{gathered}
 g_i(t,z)\geq0\\[2pt]
 \text{for almost every }t\in J,
 \text{ every }z\in\mathbb R_+^m\text{ with }z_i=0,
 \text{ and every }i=1,\ldots,m.
 \end{gathered}
\end{equation}
\end{enumerate}
Changing $g$ on one null set of times does not alter the induced evolution
problem.  Apply \emph{(R1)}--\emph{(R3)} first on the integer boxes
$[0,n]^m$, $n\in\mathbb N$, and take the union of the resulting null sets.
After setting $g=0$ on that union and choosing finite representatives of the
majorants, we may assume that $g(t,\cdot)$ is continuous,
\eqref{RD-quasipositivity} holds, and
\eqref{RD-local-integrable-bound} holds for every $t\in J$ and every
integer $R\geq1$.  For arbitrary $R>0$, we henceforth take
$c_R:=c_{\max\{1,\lceil R\rceil\}}$; then
\eqref{RD-local-integrable-bound} holds for every $t\in J$ and every
$R>0$.
The associated Nemytskii map is
\begin{equation}\label{RD-Nemytskii}
 f(t,u)(\xi):=g(t,u(\xi))
 \qquad\text{for a.e. }\xi\in\Omega.
\end{equation}
\subsection{Nemytskii maps on moving rectangles}

Let $y:J\to\mathbb R_+^m$ be continuous and define
\begin{equation}\label{RD-tube}
 K^y(t):=
 \{u\in X:0\leq u_i\leq y_i(t)
 \text{ a.e.\ on }\Omega,\ i=1,\ldots,m\}.
\end{equation}
All inequalities in this section are componentwise.
Finite-dimensional distances and norms use $|\cdot|_p$, except where
$|\cdot|_\infty$ is displayed explicitly.

\begin{lemma}[Nemytskii regularity]\label{RD-Nemytskii-lemma}
Assume \emph{(R1)}--\emph{(R2)} and let $y$ be continuous.  Let $R\geq1$
be such that $y(t)\in[0,R]^m$ for every $t\in J$.  Then $K^y(t)$ is
nonempty, closed, bounded, and convex for every $t$, and $\Gr(K^y)$ is
closed in $J\times X$.  The map $f$ in \eqref{RD-Nemytskii}, restricted
to $\Gr(K^y)$, is a jointly measurable Carath\'eodory map in the sense
of Section~\ref{s:prel}, and
\begin{equation}\label{RD-Nemytskii-bound}
 \|f(t,u)\|\leq\mu(\Omega)^{1/p}c_R(t)
 \quad\mbox{for every }t\in J
 \mbox{ and all }u\in K^y(t).
\end{equation}

If, in addition, for every $\rho>0$ there is
$\ell_\rho\in L^1_+(J)$ such that
\begin{equation}\label{RD-finite-Lipschitz}
 |g(t,z)-g(t,\widetilde z)|_p
 \leq\ell_\rho(t)|z-\widetilde z|_p
 \quad\text{for a.e. }t\in J
 \text{ and all }z,\widetilde z\in[0,\rho]^m,\vspace{-0.05in}
\end{equation}
then\vspace{-0.05in}
\begin{equation}\label{RD-Nemytskii-Lipschitz}
 \|f(t,u)-f(t,v)\|
 \leq\ell_R(t)\|u-v\|
 \quad\text{for a.e. }t\in J
 \text{ and all }u,v\in K^y(t).
\end{equation}
\end{lemma}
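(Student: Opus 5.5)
The plan is to check the geometric properties of $K^y$ first, then the Nemytskii estimates, and finally the Carath\'eodory and joint-measurability properties, which carry the real content.

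\emph{Geometry of $K^y$.} Nonemptiness is immediate since $0\in K^y(t)$. Convexity holds because each $K^y(t)$ is an order interval. Boundedness follows from $\|u\|\leq\mu(\Omega)^{1/p}|y(t)|_p\leq\mu(\Omega)^{1/p}m^{1/p}R$. For closedness of $\Gr(K^y)$, take $(t_n,u_n)\to(t,u)$ with $u_n\in K^y(t_n)$ and pass to an a.e.\ convergent subsequence of $u_n$. Continuity of $y$ then gives $0\leq u_i\leq y_i(t)$ a.e.

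\emph{Bound and Lipschitz estimate.} For $u\in K^y(t)$ we have $u(\xi)\in[0,R]^m$ a.e. Hence (R2), which after the normalization preceding the lemma holds for every $t$, gives $|g(t,u(\xi))|_p\leq c_R(t)$. Integrating the $p$-th power over $\Omega$ yields \eqref{RD-Nemytskii-bound}. In particular $f(t,u)$ is measurable in $\xi$, being a Carath\'eodory function composed with a measurable map. The Lipschitz bound \eqref{RD-Nemytskii-Lipschitz} follows in the same way from \eqref{RD-finite-Lipschitz} with $\rho=R$. Raise the pointwise inequality to the $p$-th power, integrate, and observe that $\|u-v\|^p=\int_\Omega|u(\xi)-v(\xi)|_p^p\,d\mu$.

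\emph{Continuity in $u$.} Fix $t$ and let $u_n\to u$ in $K^y(t)$. Along any subsequence there is a further subsequence with $u_n\to u$ a.e. Continuity of $g(t,\cdot)$ then gives $g(t,u_n(\xi))\to g(t,u(\xi))$ a.e. These values are dominated by $c_R(t)$, a constant in $\xi$, and $\mu(\Omega)<\infty$, so dominated convergence gives $f(t,u_n)\to f(t,u)$ in $X$. The subsequence principle removes the passage to subsequences. This yields continuity for every $t$, which is stronger than the a.e.\ requirement.

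\emph{Measurability in $t$ and joint measurability.} These are the main points. For fixed $u$, the function $(t,\xi)\mapsto g(t,u(\xi))$ is $\mathcal L(J)\otimes\Sigma$-measurable, because $g$ is Carath\'eodory on $J\times[0,R]^m$ and hence product measurable there (Proposition~\ref{cross-meas} with the separable $K=[0,R]^m$). It is also bounded by $c_R(t)$. To get strong measurability of $t\mapsto f(t,u)$ into the separable space $X$, it suffices to prove weak measurability (Pettis). For this, pair with $\varphi\in L^{p'}(\Omega;\mathbb R^m)$ and apply Fubini to $\int_\Omega g(t,u(\xi))\cdot\varphi(\xi)\,d\mu$. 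When $p=1$ the pairing uses $L^\infty$, and Fubini still applies since $\mu$ is finite.

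For joint measurability, first note that $\Gr(K^y)$ is closed, hence Borel in $J\times X$. Since $X$ is separable, $\Gr(K^y)$ is a closed subset of a separable metric space. By the previous two steps, $f$ is Carath\'eodory with state continuity at every $t$ on each section $K^y(t)$.

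I expect the main obstacle to be that Proposition~\ref{cross-meas} applies to a fixed cylinder, not to a moving graph. I would circumvent this by passing to the fixed cylinder $J\times K^R$, where $K^R:=\{0\leq u\leq R\}$ contains every section $K^y(t)$. The Nemytskii map is defined on all of $J\times K^R$ and, by the same arguments, is Carath\'eodory there with continuity for every $t$. Proposition~\ref{cross-meas}, with $K^R$ separable and $X$ separable, then gives product measurability on $J\times K^R$. Restricting to the measurable subset $\Gr(K^y)$ gives joint measurability in the trace sense.
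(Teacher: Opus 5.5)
Your proof is correct. Its architecture matches the paper's:
\begin{itemize}
\item continuity of $f(t,\cdot)$ for every $t$, via a.e.\ convergent subsequences, bounded convergence and the subsequence principle;
\item strong measurability of $t\mapsto f(t,u)$;
\item the last assertion of Proposition~\ref{cross-meas} on a fixed cylinder, followed by restriction to the closed, hence product measurable, graph;
\item the bound and the Lipschitz estimate by integrating the pointwise inequalities.
\end{itemize}
The differences are local.

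\textbf{The cylinder.} The paper clips: it sets $g_R(t,z):=g(t,P_Rz)$, with $P_R$ the componentwise projection onto $[0,R]^m$. The Nemytskii map is then defined and continuous on all of $J\times X$. You work on $J\times K^R$ instead; both are admissible cylinders for Proposition~\ref{cross-meas}.

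\textbf{Time measurability.} The paper takes a simple $u=\sum_k z_k\chi_{A_k}$, for which $t\mapsto\sum_k g_R(t,z_k)\chi_{A_k}$ is finite-dimensionally valued and measurable. It then passes to general $u$ by simple approximation and the state continuity already proved. This needs only measurability of $g(\cdot,z)$ for each fixed $z$, and the clipping is what allows arbitrary simple approximants to be used.

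Your route invokes Proposition~\ref{cross-meas} a second time, in $\mathbb{R}^m$, to get product measurability of $g$ on $J\times[0,R]^m$. You then combine Fubini on the duality pairing with Pettis' theorem in the separable space $X$. The Fubini step is legitimate because the integrand is dominated by $c_R(t)|\varphi(\xi)|_{p'}$, which is integrable on $J\times\Omega$ since $\mu$ is finite.

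So your argument is valid but uses more machinery than this step requires. In exchange it gives the slightly stronger intermediate fact that $(t,\xi)\mapsto g(t,u(\xi))$ is $\mathcal L(J)\otimes\Sigma$-measurable.
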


\begin{proof}
The sections are closed bounded order intervals, and closedness of the graph
follows by taking an almost-everywhere convergent subsequence from
$t_n\to t$, $u_n\to u$ and using continuity of $y$.  
Let $P_R$ be the componentwise clipping onto $[0,R]^m$,
and put $g_R(t,z):=g(t,P_Rz)$.  For fixed $t$, the induced Nemytskii map is
continuous on $X$: from any sequence converging in $X$, extract an almost
everywhere convergent subsequence and use continuity of $g_R(t,\cdot)$ and
bounded convergence; the subsequence principle gives convergence of the full
sequence.  For a simple function $u$, the map
$t\mapsto g_R(t,u(\cdot))$ is strongly measurable.  Approximation of a
general $u$ by simple functions and the preceding pointwise
continuity therefore give strong measurability for every $u\in X$.
Since the preliminary null-set modification makes state continuity hold for
every $t$, the last assertion of Proposition~\ref{cross-meas} yields joint
measurability on the cylinder, and the restriction agrees with $f$ on
$\Gr(K^y)$.
  Integrating \eqref{RD-local-integrable-bound} and, when
assumed, \eqref{RD-finite-Lipschitz} gives
\eqref{RD-Nemytskii-bound} and \eqref{RD-Nemytskii-Lipschitz}.
\end{proof}

The next lemma transfers finite-dimensional quasimonotonicity to $X$.

\begin{lemma}[Lifting quasimonotonicity]\label{RD-quasimonotone-lifting}
Suppose that, for almost every $t$, $g(t,\cdot)$ is quasimonotone on
$\mathbb R_+^m$ in the Kamke sense:
\begin{equation}\label{RD-Kamke}
 z\leq\widetilde z,\ z_i=\widetilde z_i
 \quad\Rightarrow\quad
 g_i(t,z)\leq g_i(t,\widetilde z)
 \quad\text{for all }z,\widetilde z\in\mathbb R_+^m
 \text{ and }i=1,\ldots,m.
\end{equation}
Then, for almost every $t\in J$, the map $f(t,\cdot)$ is quasimonotone
on $K^y(t)$ in the sense of \eqref{quasimonotonicity-definition}.
\end{lemma}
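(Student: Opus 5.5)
Fix $t$ outside the null set where \eqref{RD-Kamke} fails and where the preliminary modification of $g$ is in force, so that $g(t,\cdot)$ is continuous and bounded by $c_R(t)$ on $[0,R]^m$ for $R\geq\max_i\max_J y_i$. Let $u,v\in K^y(t)$ with $u\leq v$. By Proposition~\ref{lattice-facts}(v), quasimonotonicity of $f(t,\cdot)$ amounts to
\[
 h^{-1}\bigl\|(u-v+h(f(t,u)-f(t,v)))^+\bigr\|\rightarrow0\qquad(h\downarrow0).
\]
The plan is to reduce this to a pointwise statement in $\Omega$ and then use dominated convergence.

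\emph{Pointwise step.} For a.e.\ $\xi$, put $z=u(\xi)\leq\widetilde z=v(\xi)$, both in $[0,R]^m$. For each component $i$, set
\[
 \phi_i(h):=h^{-1}\bigl(z_i-\widetilde z_i+h(g_i(t,z)-g_i(t,\widetilde z))\bigr)^+.
\]
If $z_i<\widetilde z_i$, then $\phi_i(h)=0$ as soon as $h\,|g_i(t,z)-g_i(t,\widetilde z)|<\widetilde z_i-z_i$. If $z_i=\widetilde z_i$, then \eqref{RD-Kamke} gives $g_i(t,z)\leq g_i(t,\widetilde z)$, hence $\phi_i(h)=0$ for every $h>0$. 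So $\phi_i(h)\to0$ pointwise a.e.

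\emph{Domination step.} Since $(a+hb)^+\leq(hb)^+$ whenever $a\leq0$, we have $0\leq\phi_i(h)\leq|g_i(t,z)-g_i(t,\widetilde z)|$. By (R2), $|g(t,u(\xi))|_p,|g(t,v(\xi))|_p\leq c_R(t)$, so $\sum_i\phi_i(h)^p\leq(2c_R(t))^p$, a constant in $\xi$. The measure $\mu$ is finite, so dominated convergence gives
\[
 h^{-p}\bigl\|(u-v+h(f(t,u)-f(t,v)))^+\bigr\|^p=\int_\Omega\sum_i\phi_i(h)^p\,d\mu\rightarrow0 .
\]
This is the claim. Note that $K^y(t)$ plays no role beyond placing values in $[0,R]^m$.

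\emph{Main obstacle.} The only subtle point is making the exceptional $t$-set independent of $u$ and $v$, since $\xi$ ranges over uncountably many values. We handle this by fixing $t$ first, in the common null-set complement from the preliminary modification and \eqref{RD-Kamke}. After that, every estimate holds for all $z\in\mathbb R^m_+$ at that $t$, and the $\xi$-exceptional sets depend only on the chosen representatives of $u$ and $v$. A second point is that the order $u\leq v$ holds only a.e., which is harmless for the integral.
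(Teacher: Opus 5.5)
Your proof is correct and follows the paper's argument. The paper also gets pointwise convergence from the Kamke condition; it phrases this via the contingent cone of $\mathbb R^m_+$ and the distance to the cone, while you use componentwise positive parts, which is equivalent by Proposition~\ref{lattice-facts}(v). It then dominates the quotients by $|g(t,v(\xi))-g(t,u(\xi))|_p$ (your bound by the constant $2c_R(t)$ is the same estimate) and concludes by dominated convergence on the finite measure space.
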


\begin{proof}
Let $u\leq v$ in $K^y(t)$.  For almost every $\xi$, the vector
$g(t,v(\xi))-g(t,u(\xi))$ belongs to the contingent cone of
$\mathbb R_+^m$ at $v(\xi)-u(\xi)$ by \eqref{RD-Kamke}.  Hence
\[
 \frac1h d_{\mathbb R^m}\bigl(
 v(\xi)-u(\xi)+h(g(t,v(\xi))-g(t,u(\xi))),
 \mathbb R_+^m\bigr)\rightarrow0.
\]
The quotients are dominated by
$|g(t,v(\xi))-g(t,u(\xi))|_p$, which belongs to $L^p(\Omega)$ on the
bounded rectangle.  Dominated convergence and the pointwise formula for
the distance to the positive cone prove
\eqref{quasimonotonicity-definition} in $X$.
\end{proof}

\subsection{Finite-dimensional upper envelopes}

The reaction field need not be quasimonotone.  To construct a rectangular
upper bound, define for $y\in\mathbb R_+^m$
\begin{equation}\label{RD-upper-envelope}
 \widehat g_i(t,y):=
 \max\{g_i(t,z):0\leq z\leq y,\ z_i=y_i\},
 \qquad\text{for all }i=1,\ldots,m.
\end{equation}
Thus each component is maximized over the corresponding upper face of the
order box.

\begin{lemma}[Properties of the upper envelope]\label{RD-envelope-lemma}
Under \emph{(R1)}--\emph{(R3)}, the map
$\widehat g:J\times\mathbb R_+^m\to\mathbb R^m$ is Carath\'eodory,
locally integrably bounded, quasipositive, and quasimonotone.  More
precisely,
\begin{equation}\label{RD-envelope-bound}
 |\widehat g_i(t,y)|\leq c_R(t)
 \quad\text{for all }t\in J,\ y\in[0,R]^m,
 \text{ and }i=1,\ldots,m,
\end{equation}
and consequently $|\widehat g(t,y)|_p\leq m^{1/p}c_R(t)$ for all
$t\in J$ and $y\in[0,R]^m$.  Moreover,
\begin{equation}\label{RD-envelope-quasimonotone}
 y\leq\widetilde y,\ y_i=\widetilde y_i
 \quad\Rightarrow\quad
 \widehat g_i(t,y)\leq\widehat g_i(t,\widetilde y)
 \quad\text{for all }t\in J,\, y,\widetilde y\in\mathbb R_+^m,\, i=1,\ldots,m.
\end{equation}
\end{lemma}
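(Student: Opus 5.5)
We prove the assertions one at a time directly from the definition \eqref{RD-upper-envelope}. For fixed $t$ and $y$, the $i$-th face $F_i(y):=\{z\in\mathbb R_+^m:0\leq z\leq y,\ z_i=y_i\}$ is a nonempty compact box, and $g_i(t,\cdot)$ is continuous for every $t$ after the null-set modification. Hence the maximum exists. Since $F_i(y)\subset[0,R]^m$ whenever $y\in[0,R]^m$, and $|g_i|\leq|g|_p\leq c_R(t)$ there for every $t$, we obtain \eqref{RD-envelope-bound}. Summing the $p$-th powers gives the bound $m^{1/p}c_R(t)$.

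\emph{Quasipositivity.} If $y_i=0$, every $z\in F_i(y)$ satisfies $z_i=0$, so $g_i(t,z)\geq0$ by \eqref{RD-quasipositivity}. Therefore $\widehat g_i(t,y)\geq0$.

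\emph{Quasimonotonicity \eqref{RD-envelope-quasimonotone}.} Let $y\leq\widetilde y$ with $y_i=\widetilde y_i$. Then $F_i(y)\subset F_i(\widetilde y)$, since $0\leq z\leq y\leq\widetilde y$ and $z_i=y_i=\widetilde y_i$. The maximum over a larger set is larger, which gives the inequality. This is the Kamke condition \eqref{RD-Kamke} for $\widehat g$.

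\emph{Carath\'eodory property.} This is the only step requiring care.

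Continuity in $y$ for fixed $t$: write $F_i(y)=\Phi_i(y)$ as the image of the fixed compact set $Q:=[0,1]^m$ under the map $\theta\mapsto z(\theta,y)$, where $z_j=\theta_jy_j$ for $j\neq i$ and $z_i=y_i$. This map is jointly continuous in $(\theta,y)$. Then $\widehat g_i(t,y)=\max_{\theta\in Q}g_i(t,z(\theta,y))$ is the maximum over a fixed compact parameter set of a function jointly continuous in $(\theta,y)$, and is therefore continuous in $y$ by uniform continuity on compacts.

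Measurability in $t$ for fixed $y$: take a countable dense set $Q_0\subset Q$. By continuity of $g_i(t,\cdot)$ for every $t$, we have $\widehat g_i(t,y)=\sup_{\theta\in Q_0}g_i(t,z(\theta,y))$. This is a countable supremum of measurable functions of $t$, hence measurable.

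The only subtlety is that these arguments use continuity of $g(t,\cdot)$ for all $t$, and this is guaranteed by the preliminary modification on a null set. The local integrable bound was established above. This completes the plan.
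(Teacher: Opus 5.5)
Your proposal is correct and follows the paper's proof. The bound, quasipositivity, and quasimonotonicity come from the same face observations (containment in $[0,R]^m$, $z_i=0$ on the face when $y_i=0$, and $F_i(y)\subset F_i(\widetilde y)$), and measurability in $t$ comes from the same countable supremum over a dense subset of the face. The one variation is continuity in $y$: you parametrize the face over the fixed cube $[0,1]^m$ via $z_j=\theta_jy_j$ and use uniform continuity, whereas the paper argues with sequences, taking maximizers $z_n$ for the upper limit and the recovery points $(z_n)_j=\min\{z_j,(y_n)_j\}$ for the lower limit; both are valid Berge-type arguments.
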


\begin{proof}
Fix $i$.  We first verify the Carath\'eodory property directly from
\eqref{RD-upper-envelope}.  Fix $t\in J$ and let $y_n\to y$ in
$\mathbb R_+^m$.  Choose a maximizer $z_n$ in
\eqref{RD-upper-envelope} for $y_n$.  The sequence $(z_n)$ is bounded.
Along any subsequence realizing the upper limit, pass to a further
subsequence with $z_n\to z$.  Then $0\leq z\leq y$ and $z_i=y_i$, so
continuity of $g_i(t,\cdot)$ gives
\[
 \limsup_{n\to\infty}\widehat g_i(t,y_n)
 =g_i(t,z)\leq\widehat g_i(t,y).
\]
Conversely, let $z$ be a maximizer for $y$ and define
$(z_n)_i=(y_n)_i$ and
$(z_n)_j=\min\{z_j,(y_n)_j\}$ for $j\neq i$.  Then
$z_n\to z$, $0\leq z_n\leq y_n$, and $(z_n)_i=(y_n)_i$.  Hence
\[
 \liminf_{n\to\infty}\widehat g_i(t,y_n)
 \geq\lim_{n\to\infty}g_i(t,z_n)
 =g_i(t,z)=\widehat g_i(t,y).
\]
Thus $\widehat g_i(t,\cdot)$ is continuous.  For fixed $y$, choose a
countable dense subset $(z_k)$ of the compact upper face in
\eqref{RD-upper-envelope}.  Then
$\widehat g_i(t,y)=\sup_k g_i(t,z_k)$, so
$t\mapsto\widehat g_i(t,y)$ is measurable.  This proves the
Carath\'eodory property.

Estimate \eqref{RD-envelope-bound} follows from
\eqref{RD-local-integrable-bound}, since the maximizing set is contained
in $[0,R]^m$.  If $y_i=0$, every vector in the maximizing set has $i$th
component zero, so quasipositivity follows from
\eqref{RD-quasipositivity}.  Finally, if $y\leq\widetilde y$ and
$y_i=\widetilde y_i$, the admissible set in
\eqref{RD-upper-envelope} for $y$ is contained in the admissible set for
$\widetilde y$, which proves \eqref{RD-envelope-quasimonotone}.
\end{proof}

The finite-dimensional Carath\'eodory existence and viability theorem,
applied to the closed cone $\mathbb R_+^m$, gives a local solution from every
$y^0\in\mathbb R_+^m$.  Quasipositivity is the Nagumo face condition for
that cone \cite{Nagumo}.  Thus one obtains a nonnegative local
solution of
\begin{equation}\label{RD-envelope-ODE}
 y'(t)=\widehat g(t,y(t)),\qquad y(0)=y^0.
\end{equation}
No uniqueness assumption is imposed on this auxiliary problem.  More
generally, one may work with any absolutely continuous upper function satisfying
\begin{equation}\label{RD-upper-solution}
 y_i'(t)\geq\widehat g_i(t,y(t))
 \quad\text{for a.e. }t\in J\text{ and all }i=1,\ldots,m,
 \quad y(t)\in\mathbb R_+^m\text{ for all }t\in J.
\end{equation}

\subsection{Subtangentiality of moving order intervals}

\begin{lemma}[Rectangle subtangentiality]\label{RD-rectangle-tangency}
Assume \emph{(R1)}--\emph{(R3)}, let $y\in W^{1,1}(J;\mathbb R_+^m)$
satisfy \eqref{RD-upper-solution}, and let $K^y$ be given by
\eqref{RD-tube}.  There is a null set $N\subset[0,T)$ such that
\begin{equation}\label{RD-ordinary-tangency}
 f(t,u)\in T_{K^y}(t,u)
 \qquad\mbox{for all }t\in[0,T)\setminus N\mbox{ and }u\in K^y(t).
\end{equation}
Moreover, for every $t\in[0,T)$, $u\in K^y(t)$, and
$0<h\leq T-t$,
\begin{equation}\label{RD-exceptional-motion}
 d(S(h)u,K^y(t+h))
 \leq\mu(\Omega)^{1/p}
       \int_t^{t+h}|y'(s)|\,ds.
\end{equation}
Consequently, after enlarging $N$ if necessary,
\begin{align}
 &f(t,u)\in T^A_{K^y}(t,u)
 &&\begin{gathered}
 \text{for all }t\in[0,T)\setminus N\\[-2pt]
 \text{and }u\in K^y(t),
 \end{gathered}
 \label{RD-A-tangency}\\
 &\liminf_{h\downarrow0}\frac1h
 \left(
 d(S(h)u,K^y(t+h))
 -\int_t^{t+h}\psi(s)\,ds
 \right)^+=0,
 &&\begin{gathered}
 \text{for all }t\in N\\[-2pt]
 \text{and }u\in K^y(t),
 \end{gathered}
 \label{RD-exceptional-condition}
\end{align}
where $\psi(t):=\mu(\Omega)^{1/p}|y'(t)|$.
\end{lemma}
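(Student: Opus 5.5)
The plan is to prove the four assertions in order: the ordinary tangency \eqref{RD-ordinary-tangency}, then the motion estimate \eqref{RD-exceptional-motion}, then the two consequences. Throughout we use that $K^y(t)$ is an order interval and that projection onto it acts pointwise by clipping.

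\emph{Ordinary tangency.} Let $N$ contain the null set on which \eqref{RD-upper-solution} fails or $y$ is not differentiable. Enlarge it by the null set of times that are not right Lebesgue points of $y'$; since $y\in W^{1,1}$, the right difference quotients $(y(t+h)-y(t))/h$ then converge to $y'(t)$. Fix $t\notin N$, $u\in K^y(t)$, and put $z=f(t,u)$. As a candidate in $K^y(t+h)$ take the pointwise clipping
\[
 P_h(\xi)=\min\{\max\{u(\xi)+hz(\xi),0\},y(t+h)\}
\]
(componentwise). The pointwise distance from $u(\xi)+hz(\xi)$ to $[0,y(t+h)]$ is bounded by two terms: $(u_i+hz_i)^-$, and
\[
 \bigl(u_i+hz_i-y_i(t+h)\bigr)^+\leq\bigl(u_i-y_i(t)+h(z_i-y_i'(t))\bigr)^++o(h)\,\text{(uniform in }\xi).
\]
At points where $u_i(\xi)=0$, quasipositivity gives $g_i(t,u(\xi))\geq0$. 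At points where $u_i(\xi)=y_i(t)$, the envelope gives $g_i(t,u(\xi))\leq\widehat g_i(t,y(t))\leq y_i'(t)$, because $u(\xi)$ lies on the $i$-th upper face. At points where $u_i(\xi)$ is strictly inside, the pointwise quotient tends to zero. In every case the pointwise distance divided by $h$ tends to $0$, and it is dominated by $|g(t,u(\xi))|_p+|y'(t)|_p+1$, which lies in $L^p$ because $\mu(\Omega)<\infty$ and $g(t,\cdot)$ is bounded on $[0,R]^m$. Dominated convergence in $L^p$ then gives $h^{-1}d(u+hz,K^y(t+h))\to0$. This is the step I expect to be the main obstacle: the uniform control of the upper face needs care in how the $o(h)$ from differentiability of $y$ is used, since the convergence holds only along right Lebesgue points and not pointwise in $\xi$.

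\emph{Motion estimate.} For \eqref{RD-exceptional-motion} I use \eqref{RD-static-interval-invariance} for $S(h)$. The map $S(h)$ maps $K^y(t)$ into itself, since $K^y(t)$ is the box $[0,b]$ with $b=y(t)$ constant in $\xi$. The pointwise clipping of $S(h)u$ into $[0,y(t+h)]$ moves each component by at most $|y_i(t+h)-y_i(t)|\leq\int_t^{t+h}|y_i'|$. Taking the $L^p$ norm and using the $\ell^p$ combination over $i$ yields the factor $\mu(\Omega)^{1/p}\int_t^{t+h}|y'(s)|\,ds$.

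\emph{Consequences.} For \eqref{RD-A-tangency} I invoke Proposition~\ref{separated-tangency-proposition}. Resolvent invariance $J_\lambda K^y(s)\subset K^y(s)$ is exactly \eqref{RD-static-interval-invariance} with $b=y(s)$, and $\overline{D(A)}=X$, so $K_A=K$; hence $T_{K^y}\subset T^A_{K^y}$ and \eqref{RD-ordinary-tangency} gives \eqref{RD-A-tangency}. Finally, \eqref{RD-exceptional-motion} holds for every $t$, so the positive part in \eqref{RD-exceptional-condition} vanishes identically with $\psi=\mu(\Omega)^{1/p}|y'|$. This gives the exceptional condition for all $t$, in particular on $N$.
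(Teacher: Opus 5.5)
Your proposal is correct and follows essentially the same route as the paper. The ordinary tangency comes from the pointwise face conditions (quasipositivity on the lower faces, $g_i(t,u(\xi))\leq\widehat g_i(t,y(t))\leq y_i'(t)$ on the upper faces) together with dominated convergence. The motion estimate comes from upper clipping of $S(h)u\in K^y(t)$, and the $A$-tangency comes from Proposition~\ref{separated-tangency-proposition} together with \eqref{RD-static-interval-invariance}.

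The obstacle you anticipate does not arise. The remainder $y(t+h)-y(t)-hy'(t)=o(h)$ does not depend on $\xi$, so differentiability of $y$ at $t$ already suffices. Enlarging $N$ by the non-right-Lebesgue points of $y'$ is therefore harmless but unnecessary.
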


\begin{proof}
Choose $N$ so that, outside $N$, $y$ is differentiable,
\eqref{RD-upper-solution} holds, and $g(t,\cdot)$ is continuous.  Fix
such a $t$, $u\in K^y(t)$, and put $q=f(t,u)$.  For almost every
$\xi\in\Omega$, set $z=u(\xi)$.  The lower faces of the rectangle are
controlled by quasipositivity: if $z_i=0$, then $q_i(\xi)\geq0$.  At an
upper face, $z_i=y_i(t)$ and $0\leq z\leq y(t)$, whence
\[
 q_i(\xi)=g_i(t,z)
 \leq\widehat g_i(t,y(t))
 \leq y_i'(t).
\]
These inequalities imply
\begin{equation}\label{RD-pointwise-tangency}
 h^{-1} d_{\mathbb R^m}
 \bigl(z+hq(\xi),[0,y(t+h)]\bigr)
 \rightarrow0
 \qquad(h\downarrow0).
\end{equation}
Indeed, a component lying strictly between its lower and upper bounds remains
inside the interval for small $h$, while at a lower or upper face the sign inequalities
and differentiability of $y$ give the assertion.

For small $h$, the quotients in \eqref{RD-pointwise-tangency} are bounded
by $|q(\xi)|+C_t$, where $C_t$ bounds the difference quotients of $y$ near
$t$.  The first
term belongs to $L^p(\Omega)$ by Lemma~\ref{RD-Nemytskii-lemma}.  Moreover,
$(|q(\xi)|+C_t)^p\leq2^{p-1}(|q(\xi)|^p+C_t^p)$ is integrable because
$\mu(\Omega)<\infty$.  Since
the pointwise metric projection onto a box is measurable,
\begin{equation}\label{RD-distance-formula}
 d(v,K^y(s))^p
 =\int_\Omega
 d_{\mathbb R^m}(v(\xi),[0,y(s)])^p\,d\mu(\xi).
\end{equation}
Dominated convergence therefore proves
\eqref{RD-ordinary-tangency}.

By \eqref{RD-static-interval-invariance}, $S(h)u\in K^y(t)$.  If
$v=S(h)u$, clip its components from above at $y_i(t+h)$.  The resulting
function belongs to $K^y(t+h)$ and
\begin{align*}
 d(v,K^y(t+h))
 &\leq\mu(\Omega)^{1/p}|(y(t)-y(t+h))^+|\\
 &\leq\mu(\Omega)^{1/p}|y(t+h)-y(t)| \leq\mu(\Omega)^{1/p}\int_t^{t+h}|y'(s)|\,ds.
\end{align*}
This proves \eqref{RD-exceptional-motion} and hence
\eqref{RD-exceptional-condition}.  Finally,
\eqref{RD-static-interval-invariance},
\eqref{RD-ordinary-tangency}, and Proposition~\ref{separated-tangency-proposition} give
\eqref{RD-A-tangency}.
\end{proof}

\subsection{Existence in invariant rectangles}

The first compactness alternative is imposed only on the bounded order
interval generated by the finite-dimensional upper solution, rather than on
the full state space.

\begin{theorem}[Abstract reaction--diffusion system]\label{RD-main-theorem}
Let $A_i$ be $m$-$T$-accretive in $L^p(\Omega)$ with dense domain and
satisfy \eqref{RD-constant-supersolutions}, and let $g$ satisfy
\emph{(R1)}--\emph{(R3)}.  Let
$u^0\in L^p(\Omega;\mathbb R^m)\cap L^\infty(\Omega;\mathbb R_+^m)$.
Suppose that $y\in W^{1,1}(J;\mathbb R_+^m)$ satisfies
\eqref{RD-upper-solution} and
\begin{equation}\label{RD-initial-upper-bound}
 u^0(\xi)\leq y(0)\qquad\text{for a.e. }\xi\in\Omega.\vspace{-0.1in}
\end{equation}
Consider\vspace{-0.05in}
\begin{equation}\label{RD-abstract-system}
 u_i'(t)+A_i u_i(t)\ni
 g_i(t,u_1(t),\ldots,u_m(t)),
 \quad u_i(0)=u_i^0,
 \quad\text{for all }i=1,\ldots,m,
\end{equation}
where the right-hand side is interpreted through
\eqref{RD-Nemytskii}.
Assume one of the following:
\begin{enumerate}
\item[(a$_1$)] for every $h>0$ and every bounded
$B\subset\bigcup_{t\in J}K^y(t)$, the set $S(h)B$ is relatively compact
in $X$;
\item[(a$_2$)] for every $0\leq s<T$, every $x\in K^y(s)$, every
$\phi\in L^1_+([s,T])$, and every sequence
$(w_n)\subset L^1([s,T];X)$ satisfying $\|w_n(t)\|\leq\phi(t)$ for
almost every $t\in[s,T]$ and all $n\geq1$, the set
$\{u(t;s,x,w_n):n\geq1\}$ is relatively compact in $X$ for every
$t\in(s,T]$;
\item[(b)] the local Lipschitz condition
\eqref{RD-finite-Lipschitz} holds.
\end{enumerate}
Then \eqref{RD-abstract-system} has a mild solution
$u\in C(J;X)$ satisfying
\begin{equation}\label{RD-invariant-bound}
 0\leq u_i(t,\xi)\leq y_i(t)
 \quad\text{for every }t\in J,
 \text{ a.e. }\xi\in\Omega,
 \quad\text{for all }i=1,\ldots,m.
\end{equation}
Under alternative \emph{(b)} the solution is unique among all mild
solutions which remain in the rectangle $K^y$, and the solution map is
continuous from $K^y(0)$, equipped with the norm of $X$, into $C(J;X)$.
\end{theorem}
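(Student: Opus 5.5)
The plan is to apply the abstract viability theory on the moving rectangle $K:=K^y$ and read off \eqref{RD-invariant-bound} from viability. Since $\overline{D(A)}=X$, the effective tube is $K_A(t)=K^y(t)$. By Lemma~\ref{RD-Nemytskii-lemma}, the sections are nonempty, closed, bounded and convex, and $\Gr(K^y)$ is closed, hence closed from the left. The same lemma shows that $f$ is a jointly measurable Carath\'eodory map on $\Gr(K^y)$ with $\|f(t,u)\|\le\mu(\Omega)^{1/p}c_R(t)$, where $R\ge1$ bounds $y(J)$ and $y\in W^{1,1}$ is continuous. Moreover, $\Gr(K^y)$ is product measurable, so alternative (G)/(SR1) holds. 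Lemma~\ref{RD-rectangle-tangency} supplies a null set $N$ such that regular-time $A$-subtangentiality holds outside $N$, and the exceptional condition holds on $N$ with $\psi=\mu(\Omega)^{1/p}|y'|\in L^1_+$. Putting $c:=\mu(\Omega)^{1/p}c_R+\psi$, the data satisfy the bounded effective-tube setting of Definition~\ref{bounded-effective-setting} directly, with no need for Lemma~\ref{bounded-submultifunction}. Finally, $u^0\in K^y(0)$ by \eqref{RD-initial-upper-bound} and $u^0\ge0$.

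Next I would split into the three alternatives. Under (a$_1$), condition \eqref{compact-on-tube} is exactly the hypothesis, because bounded subsets of $K_A(J)=\bigcup_tK^y(t)$ are those appearing in (a$_1$). Theorem~\ref{ex1}(a) then gives viability of $K$, hence a mild solution from $(0,u^0)$ that stays in $K^y(t)$. Under (a$_2$), the hypothesis is precisely \eqref{forced-trajectory-compactness} for every initial pair in the tube, so Theorem~\ref{ex1}(b) applies; Remark~\ref{initial-pair-existence} shows that only the pair $(0,u^0)$ is really needed. Under (b), Lemma~\ref{RD-Nemytskii-lemma} gives the global Lipschitz estimate \eqref{RD-Nemytskii-Lipschitz} on $K^y(t)$ with coefficient $\ell_R\in L^1_+$. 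This is in particular local Lipschitz continuity in the sense of \eqref{local-lipschitz}, with $\omega=\ell_R$ for every centre. Theorem~\ref{ex4}(a) therefore yields existence and uniqueness among viable solutions, together with continuous dependence in $C(J;X)$ on the initial value in $K^y(0)$.

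In each case, membership $u(t)\in K^y(t)$ for all $t$ is exactly \eqref{RD-invariant-bound}. The composition $f(\cdot,u(\cdot))$ is the Nemytskii right-hand side \eqref{RD-Nemytskii}, so $u$ is a mild solution of \eqref{RD-abstract-system}. For the uniqueness assertion, any mild solution that remains in $K^y$ is a viable solution in the sense of Section~\ref{s:subt}, since its forcing is automatically integrable by the bound \eqref{RD-Nemytskii-bound}. It is therefore covered by the uniqueness statement of Theorem~\ref{ex4}, or directly by the Gronwall argument with $\ell_R$.

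The main point requiring care is the bookkeeping that the bounded effective-tube setting holds with a single finite-valued integrable $c$ and one common null set. This means merging the null sets from the preliminary modification of $g$, from Lemma~\ref{RD-rectangle-tangency}, and from the a.e.\ Lipschitz estimate, and taking finite-valued representatives of $c_R$ and $|y'|$. I would also check that the time interval $[0,T]$ plays the role of $J=[0,a]$ throughout. Beyond this, no new estimates are needed.
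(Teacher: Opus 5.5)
Your proposal is correct and follows essentially the same route as the paper. The paper likewise verifies the bounded effective-tube setting directly with $c_*=\mu(\Omega)^{1/p}(c_R+|y'|)$ via Lemmas~\ref{RD-Nemytskii-lemma} and~\ref{RD-rectangle-tangency} and notes that (SR1) holds. It then applies Theorem~\ref{ex1}(a), Theorem~\ref{ex1}(b), or Theorem~\ref{ex4}(a) according to the alternative, and reads off \eqref{RD-invariant-bound} as viability.
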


\begin{proof}
Choose $R\geq1$ such that $y(t)\in[0,R]^m$ for every $t\in J$.
By Lemma~\ref{RD-Nemytskii-lemma}, $f$ is a jointly measurable
Carath\'eodory map on $\Gr(K^y)$ and is bounded there by
$\mu(\Omega)^{1/p}c_R$.  Put
$c_*(t):=\mu(\Omega)^{1/p}\bigl(c_R(t)+|y'(t)|\bigr)$.
The graph of $K^y$ is closed in $J\times X$, hence product measurable and
closed from the left, and $K_A^y(t)=K^y(t)$ because
$\overline{D(A)}=X$.  Since $y$ is continuous on the compact interval,
$K^y(J)$ is bounded.  Lemma~\ref{RD-rectangle-tangency} supplies the
regular subtangential condition and the exceptional condition; enlarging
the forcing and exceptional bounds to $c_*$ therefore gives exactly the
bounded effective-tube setting of Definition~\ref{bounded-effective-setting}.
Hence \emph{(SR1)} holds.

Under alternatives (a$_1$) and (a$_2$), respectively,
Theorem~\ref{ex1}(a) and (b) give a viable mild solution.  Under
alternative (b), Lemma~\ref{RD-Nemytskii-lemma} gives the local Lipschitz
condition required in Theorem~\ref{ex4}, which yields existence,
uniqueness, and continuous dependence.  Viability is exactly
\eqref{RD-invariant-bound}.
\end{proof}

The canonical choice of the upper function is supplied by the envelope
ODE.

\begin{corollary}[Canonical upper solution and global continuation]
\label{RD-envelope-corollary}
Let $A_i$ be $m$-$T$-accretive in $L^p(\Omega)$ with dense domain and
satisfy \eqref{RD-constant-supersolutions}, for $i=1,\ldots,m$, and let
$u^0\in L^p(\Omega;\mathbb R^m)\cap L^\infty(\Omega;\mathbb R_+^m)$.
Let $g:\mathbb R_+\times\mathbb R_+^m\to\mathbb R^m$ be such that its
restriction to every compact time interval satisfies
\emph{(R1)}--\emph{(R3)}, and put
$y_i^0:=\|u_i^0\|_{L^\infty(\Omega)}$.  Let $y$ be a nonnegative Carath\'eodory solution of
\eqref{RD-envelope-ODE} with $y(0)=y^0$, defined on its maximal interval
$[0,T_y)$.  Assume that one fixed alternative
\emph{(a$_1$)}, \emph{(a$_2$)}, or \emph{(b)} of
Theorem~\ref{RD-main-theorem} holds on every interval $[0,T_0]$,
$T_0<T_y$, for the rectangle generated by this $y$.  Then the
reaction--diffusion system has a mild solution on $[0,T_0]$ for every
$T_0<T_y$, and
\[
 0\leq u_i(t,\xi)\leq y_i(t)
 \qquad\mbox{for all }t\in[0,T_0]
\]
for almost every $\xi\in\Omega$ and every $i$.

If $T_y=\infty$, a global mild solution can be constructed successively
on consecutive compact time intervals.  Under \emph{(a$_1$)}, the semigroup compactness condition is inherited
by every later subinterval, whereas \emph{(a$_2$)} is already formulated
for arbitrary initial times.  A
sufficient condition for $T_y=\infty$ is
\begin{equation}\label{RD-global-envelope-growth}
 |\widehat g(t,z)|_p\leq a(t)(1+|z|_p)
 \qquad\text{for a.e. }t\geq0\text{ and all }z\in\mathbb R_+^m
\end{equation}
with $a\in L^1_{\rm loc}(\mathbb R_+)$.  Under this condition the
reaction--diffusion solution is global as well.
\end{corollary}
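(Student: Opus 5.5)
The plan is to apply Theorem~\ref{RD-main-theorem} on each compact interval $[0,T_0]$ with the envelope solution $y$ as the upper function. Global continuation is then handled by concatenating solutions on consecutive intervals and ruling out blow-up of $y$ under the growth bound \eqref{RD-global-envelope-growth}.

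\emph{Step 1: the local statement.} Fix $T_0<T_y$. Since $y$ is a Carath\'eodory solution of \eqref{RD-envelope-ODE}, it is absolutely continuous on $[0,T_0]$, nonnegative, and continuous. It is therefore bounded by some $R\geq1$, and Lemma~\ref{RD-envelope-lemma} gives $|y'|_p=|\widehat g(t,y)|_p\leq m^{1/p}c_R\in L^1$. Hence $y\in W^{1,1}([0,T_0];\mathbb R_+^m)$, and \eqref{RD-upper-solution} holds with equality. The choice $y_i^0=\|u_i^0\|_{L^\infty}$ gives \eqref{RD-initial-upper-bound}. Since (R1)--(R3) hold on $[0,T_0]$ and one of the alternatives is assumed there, Theorem~\ref{RD-main-theorem} with $T=T_0$ yields the mild solution together with the bound \eqref{RD-invariant-bound}.

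\emph{Step 2: global continuation when $T_y=\infty$.} Under (b) the solutions on $[0,T_0]$ are unique in the rectangle, so they are consistent and define a global solution directly. Under (a$_1$) or (a$_2$) uniqueness is unavailable, so the solution is built successively. Given a solution on $[0,k]$ viable in $K^y$, restart at time $k$ from $u(k)\in K^y(k)$ and solve on $[k,k+1]$ in the restricted rectangle. This uses Theorem~\ref{ex1} after the time shift, in the initial-pair form of Remark~\ref{initial-pair-existence}. The required hypotheses transfer as follows:
\begin{itemize}
\item (a$_1$) on $[k,k+1]$ follows because bounded subsets of $\bigcup_{t\in[k,k+1]}K^y(t)$ lie in the corresponding union over $[0,k+1]$;
\item (a$_2$) is already stated for arbitrary initial times;
\item the subtangential and exceptional conditions come from Lemma~\ref{RD-rectangle-tangency} on the subinterval.
\end{itemize}
Concatenation preserves mild solutions by the semigroup property, using uniqueness for the prescribed-forcing problem. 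It also preserves the bound $0\leq u\leq y$.

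\emph{Step 3: the growth condition implies $T_y=\infty$.} Suppose $T_y<\infty$. Since $y\geq0$ and $|\cdot|_p$ is a norm, \eqref{RD-global-envelope-growth} gives
\[
 |y(t)|_p\leq|y^0|_p+\int_0^t a(s)\bigl(1+|y(s)|_p\bigr)\,ds.
\]
Gronwall's inequality then bounds $|y|_p$ on $[0,T_y)$ by a constant. Consequently $y'$ is dominated by $a(1+C)\in L^1(0,T_y)$, so $y(t)$ converges as $t\uparrow T_y$, and the limit lies in $\mathbb R_+^m$ because that set is closed. The finite-dimensional Carath\'eodory viability theorem for the cone $\mathbb R_+^m$, whose Nagumo condition is quasipositivity of $\widehat g$, then extends $y$ beyond $T_y$. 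This contradicts maximality.

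The main obstacle is the meaning of ``maximal interval'' when the envelope ODE has no uniqueness. One must make sure that the chosen maximal solution can only terminate by blow-up, which is a Zorn-type maximality argument. With that in hand, Step 3 applies and the reaction--diffusion solution is global by Step 2.
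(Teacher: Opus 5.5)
Your proposal is correct and takes essentially the paper's route: apply Theorem~\ref{RD-main-theorem} on each $[0,T_0]$ with the envelope solution $y$ as upper function, concatenate on consecutive compact intervals (your nested-interval uniqueness argument under (b) and your restart via Theorem~\ref{ex1} rather than Theorem~\ref{RD-main-theorem} are only cosmetic variants), and use Gronwall to exclude blow-up of $y$. The ``obstacle'' you flag at the end requires no additional Zorn argument, because $y$ is assumed to be non-continuable and your Step~3 contradiction already shows that $T_y<\infty$ would force $y$ to be unbounded.
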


\begin{proof}
Since $y$ solves \eqref{RD-envelope-ODE}, it satisfies
\eqref{RD-upper-solution}.  Hence, for each $T_0<T_y$,
Theorem~\ref{RD-main-theorem} applies to the restrictions of $g$ and $y$ to
$[0,T_0]$.

If $T_y=\infty$, choose consecutive compact time intervals whose union is
$[0,\infty)$ and construct the solution recursively, using the endpoint of
one segment as the initial value for the next.  Under \emph{(a$_1$)}, the
compactness condition passes to every later interval; under
\emph{(a$_2$)}, Theorem~\ref{RD-main-theorem} applies at each new initial
time; and under \emph{(b)}, the local Lipschitz hypothesis is preserved
under restriction.  The continuation identity for prescribed-forcing mild
solutions then shows that the concatenation is a global mild solution.
Finally, \eqref{RD-global-envelope-growth} and Gronwall's inequality exclude
finite-time blow-up of the finite-dimensional solution $y$.
\end{proof}

\begin{corollary}[Comparison of cooperative systems]\label{RD-comparison-corollary}
Let $A_i$ be $m$-$T$-accretive in $L^p(\Omega)$ with dense domain,
$i=1,\ldots,m$, and let $g$ satisfy \emph{(R1)}--\emph{(R2)}, the local
Lipschitz condition \eqref{RD-finite-Lipschitz}, and the Kamke condition
\eqref{RD-Kamke} for almost every $t$.  Let
$y:J\to\mathbb R_+^m$ be continuous, and let $u$ and $v$ be mild solutions
of \eqref{RD-abstract-system} such that
\[
 u(t),v(t)\in K^y(t)\qquad\text{for all }t\in J.
\]
If $u(0)\leq v(0)$, then $u(t)\leq v(t)$ for all $t\in J$.
\end{corollary}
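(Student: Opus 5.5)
The plan is to apply the direct comparison principle, Theorem~\ref{direct-comparison}, to the reaction--diffusion setting. The needed ingredients are the abstract operator, the tube, and the forcing. The product operator $A$ from \eqref{RD-product-operator} is $m$-$T$-accretive in the Banach lattice $X$, and $\overline{D(A)}=X$, so $K_A(t)=K^y(t)$. Note that \eqref{RD-constant-supersolutions} is not assumed here, but it is also not needed: Theorem~\ref{direct-comparison} uses only $m$-$T$-accretivity and none of the tangency hypotheses.

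\emph{Step 1: lattice compatibility.} Check that the rectangles are closed under suprema. If $u,v\in K^y(t)$, then $0\leq u_i\vee v_i\leq y_i(t)$ almost everywhere, because the order is componentwise and almost everywhere. Hence $u\vee v\in K^y(t)$, which is the hypothesis $x,y\in K_A(t)\Rightarrow x\vee y\in K_A(t)$.

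\emph{Step 2: local Lipschitz continuity.} Choose $R\geq1$ with $y(J)\subset[0,R]^m$; this is possible since $y$ is continuous on the compact interval $J$. Lemma~\ref{RD-Nemytskii-lemma}, using \eqref{RD-finite-Lipschitz}, gives \eqref{RD-Nemytskii-Lipschitz} with coefficient $\ell_R\in L^1_+(J)$ for all $u,v\in K^y(t)$. This is a global Lipschitz bound on the sections, so it is in particular the local state-Lipschitz condition \eqref{local-lipschitz} with $\omega=\ell_R$ and any radius. Lemma~\ref{RD-Nemytskii-lemma} was stated under (R1)--(R2), which are assumed, so it applies; quasipositivity is not needed for it.

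\emph{Step 3: quasimonotonicity.} Lemma~\ref{RD-quasimonotone-lifting} converts the Kamke condition \eqref{RD-Kamke} into quasimonotonicity of $f(t,\cdot)$ on $K^y(t)$ for almost every $t$. The dominating function in that proof is $|g(t,v(\xi))-g(t,u(\xi))|_p\leq 2c_R(t)$, which is integrable on the finite measure space by (R2). This is the step that needs the most care. The lifting lemma is phrased for $f$ on $K^y(t)$, so it applies as stated. The one check is that the almost-everywhere sets in $t$ can be made common: apply the Kamke condition on a single null set, together with the preliminary null-set modification of $g$ made after (R3).

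\emph{Step 4: conclusion.} With Steps 1--3, Theorem~\ref{direct-comparison} applies on $I=J$ to the given mild solutions $u,v$. These take values in $K^y(t)=K_A(t)$ by hypothesis and satisfy $u(0)\leq v(0)$. The theorem yields $u(t)\leq v(t)$ for all $t\in J$.
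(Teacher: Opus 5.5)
Your proposal is correct and is essentially the paper's own proof: lattice compatibility of the rectangles $K^y(t)=K_A(t)$, the Nemytskii Lipschitz estimate from Lemma~\ref{RD-Nemytskii-lemma}, quasimonotonicity from Lemma~\ref{RD-quasimonotone-lifting}, and $m$-$T$-accretivity of the product operator are fed directly into Theorem~\ref{direct-comparison}. One cosmetic point: \emph{(R3)} is not assumed in this corollary, so the preliminary null-set modification should be built from \emph{(R1)}--\emph{(R2)}, \eqref{RD-finite-Lipschitz}, and \eqref{RD-Kamke} alone, which changes nothing in the argument.
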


\begin{proof}
Since the coordinate domains are dense, $\overline{D(A)}=X$ and hence the
effective sections are $K^y(t)$; these sections are closed under lattice
suprema.  Lemma~\ref{RD-Nemytskii-lemma} gives the required local state
Lipschitz estimate on $\Gr(K^y)$, while
Lemma~\ref{RD-quasimonotone-lifting} gives quasimonotonicity on each
$K^y(t)$ for almost every $t$.  The product diffusion operator is
$m$-$T$-accretive, so Theorem~\ref{direct-comparison} applies.
\end{proof}

\begin{corollary}[Periodic solution in a moving return rectangle]
\label{RD-periodic-corollary}
Let $A_i$ be $m$-$T$-accretive in $L^p(\Omega)$ with dense domain and
satisfy \eqref{RD-constant-supersolutions}, for $i=1,\ldots,m$.
Suppose that $g$ is defined on
$\mathbb R\times\mathbb R_+^m$, is $T$-periodic in time, and that its
restriction to $[0,T]\times\mathbb R_+^m$ satisfies
\emph{(R1)}--\emph{(R3)} and \eqref{RD-finite-Lipschitz}.  Let
$y\in W^{1,1}([0,T];\mathbb R_+^m)$ be an upper solution of
\eqref{RD-upper-solution} such that
\begin{equation}\label{RD-periodic-endpoint}
        y(T)\leq y(0).
\end{equation}
No periodicity of $y$ is required.  Assume also the semigroup compactness
condition \emph{(a$_1$)} of Theorem~\ref{RD-main-theorem} on $[0,T]$.
Then there exists an initial state $u_*\in K^y(0)$ such that the
$T$-periodic system on $\mathbb R$
\begin{equation}\label{RD-periodic-system}
 u_i'(t)+A_i u_i(t)\ni
 g_i(t,u_1(t),\ldots,u_m(t)),
 \quad u_i(0)=u_{*,i},\quad i=1,\ldots,m.
\end{equation}
has a nonnegative $T$-periodic mild solution.  Its restriction to $[0,T]$
satisfies \eqref{RD-invariant-bound}.
\end{corollary}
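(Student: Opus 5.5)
The plan is to apply Theorem~\ref{periodic-theorem} with $K:=K^y$ on $J=[0,T]$, the extension $F(t,u)(\xi):=g(t,u(\xi))$ on the fixed domain $D_f:=\{u\in X:u\geq0\}$ (or on the bounded set $\{0\leq u\leq R\}$ with $R\geq\max_J|y|_\infty$), uniqueness condition (i), and compactness condition (a).

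First, the tube data. Since $\overline{D(A)}=X$, we have $K^y_A=K^y$. By Lemma~\ref{RD-Nemytskii-lemma}, the graph $\Gr(K^y)$ is closed, and hence closed from the left. The initial section $C:=K^y(0)$ is a nonempty, bounded, closed, convex order interval. The return inclusion \eqref{periodic-endpoint-inclusion} is immediate from $y(T)\leq y(0)$: if $0\leq u\leq y(T)$, then $0\leq u\leq y(0)$.

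Next, the forcing. I would take $D_f:=\{u\in X:0\leq u_i\leq R\}$. This set contains $K^y(J)$ and does not depend on time. Periodicity of $F$ in $t$ is inherited from $g$. The Carath\'eodory property on $J\times D_f$ is the cylinder part of the proof of Lemma~\ref{RD-Nemytskii-lemma} (clipping with $P_R$). The local Lipschitz condition (i) holds because integrating \eqref{RD-finite-Lipschitz} with $\rho=R$ gives $\|F(t,u)-F(t,v)\|\leq\ell_R(t)\|u-v\|$ on all of $D_f$, which is global on $D_f$. The linear growth \eqref{periodic-linear-growth} follows from \eqref{RD-Nemytskii-bound} with $c=\mu(\Omega)^{1/p}c_R$.

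Then the subtangential hypotheses. Lemma~\ref{RD-rectangle-tangency} gives $f(t,u)\in T^A_{K^y}(t,u)$ outside a null set $N$. On $N$ it gives the integral exceptional condition with $\psi=\mu(\Omega)^{1/p}|y'|$, which is in $L^1_+$ because $y\in W^{1,1}$; I would choose a finite-valued representative. Compactness condition (a) of Theorem~\ref{periodic-theorem} is exactly (a$_1$), since $K_A(J)=\bigcup_tK^y(t)$.

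Theorem~\ref{periodic-theorem} then yields a fixed point $u_*\in C$ of $P_T$ and a $T$-periodic mild solution of $u'+Au\ni F(t,u)$. Its restriction to $[0,T]$ is viable in $K^y$, which gives \eqref{RD-invariant-bound} and nonnegativity. On each later period the solution is a translate of the first one, so it stays in $\{0\leq u\leq R\}\subset D_f$. There $F$ coincides with the Nemytskii map of $g$, so it solves \eqref{RD-periodic-system}.

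The main obstacle is the requirement in Theorem~\ref{periodic-theorem} of a time-independent $D_f$ on which $F$ is Carath\'eodory and locally Lipschitz. I would handle it with the bounded box $\{0\leq u\leq R\}$, or, if needed, with $F(t,u):=g(t,P_Ru)$ on the whole cone, where $P_R$ is the clipping onto $[0,R]^m$. Both choices agree with $f$ on $\Gr(K^y)$.
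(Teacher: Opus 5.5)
Your proposal is correct and follows the paper's proof essentially step by step. You apply Theorem~\ref{periodic-theorem} to $K^y$ with the Nemytskii map on a fixed box containing every $K^y(t)$, get uniqueness condition~(i) by integrating \eqref{RD-finite-Lipschitz}, get compactness~(a) from (a$_1$), and take the regular and exceptional conditions from Lemma~\ref{RD-rectangle-tangency}. The one point the paper makes explicit that you pass over concerns the section's null-set modification of $g$, which underlies the every-$t$ bounds you quote from Lemma~\ref{RD-Nemytskii-lemma}: it must be performed on $[0,T)$ and then extended $T$-periodically, since otherwise the representative satisfying those bounds need not be $T$-periodic as Theorem~\ref{periodic-theorem} requires.
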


\begin{proof}
Condition \eqref{RD-periodic-endpoint} gives $K^y(T)\subset K^y(0)$.
Enlarge the common null set from the beginning of this section to include
the exceptional sets in \eqref{RD-finite-Lipschitz}, perform the resulting
modification on the half-open period $[0,T)$, and extend the representative
$T$-periodically.  This does not change the mild equation and makes the
reaction field and its Nemytskii map strictly periodic.  The set $K^y(0)$ is
nonempty, bounded, closed, and convex in $X$.

The verification in Theorem~\ref{RD-main-theorem}, together with
Lemma~\ref{RD-rectangle-tangency}, gives the regular and exceptional
subtangential conditions on the original effective tube $K^y$.  The
Nemytskii map is bounded on $\Gr(K^y)$ by one function in $L^1_+([0,T])$,
and hence satisfies the linear-growth hypothesis
\eqref{periodic-linear-growth}.  Put
$b_i:=\max_{0\leq t\leq T}y_i(t)$ and
$D_*:=\{u\in X:0\leq u\leq b\}$.  The $T$-periodic Nemytskii map
induced by $g$ satisfies the separate Carath\'eodory conditions
\eqref{fixed-product-carath-cond} and is locally Lipschitz on the fixed
state domain $D_*$, which contains $K^y(t)$ for every $t\in[0,T]$.
Together with \emph{(a$_1$)} and the return inclusion, these facts verify
all hypotheses of Theorem~\ref{periodic-theorem}, with
$K_A(T)=K^y(T)\subset K^y(0)=K_A(0)$.  Its fixed point supplies the initial
state $u_*$ in \eqref{RD-periodic-system}.  The resulting segment repeats
as a mild solution because the reaction field is $T$-periodic.
\end{proof}

\begin{remark}[Static rectangles and the role of uniqueness]
\label{RD-static-periodic-remark}
Assume that $g$ is $T$-periodic and satisfies \emph{(R1)}--\emph{(R3)},
but not necessarily \eqref{RD-finite-Lipschitz}.  If there is a
$b\in\mathbb R_+^m$ such that
\[
 \widehat g_i(t,b)\leq0
 \quad\text{for a.e. }t\in[0,T],\qquad i=1,\ldots,m,
\]
then $K_b=\{u\in X:0\leq u\leq b\}$ is a static rectangle satisfying
the corresponding lower- and upper-face conditions.
Here $K_{b,A}=K_b$ because $\overline{D(A)}=X$.  The set $K_b$ is closed,
bounded, convex, and proximinal, with metric projection given by pointwise
clipping, and its resolvent invariance follows from
\eqref{RD-static-interval-invariance}.  Lemmas~\ref{RD-Nemytskii-lemma} and~\ref{RD-rectangle-tangency} give
an integrably bounded Carath\'eodory
forcing satisfying the ordinary subtangential condition.  Setting the
reaction to zero on the common exceptional null set and extending the
representative periodically preserves this condition at every time.  Hence,
if $S(h)K_b$ is relatively compact for every $h>0$, the static periodic
theorem \cite[Theorem~3]{Bo12} yields a $T$-periodic mild solution in $K_b$
without uniqueness of the initial-value problem.

The moving result above is complementary.  It requires only the return
inequality $y(T)\leq y(0)$ and allows the bounds to vary over the period, but
uses local Lipschitz well-posedness to obtain a single-valued Poincar\'e
operator.  The tangency-preserving approximation in \cite{Bo12} relies on a
fixed convex tangent geometry and does not directly preserve either the
time-dependent upper-face inequalities or the variable graph of a moving
rectangle.
\end{remark}

\section*{Statements and Declarations}
\noindent\textbf{Competing interests.}
The author declares no competing interests.

\smallskip
\noindent\textbf{Data availability.}
No data were generated or analysed in this study.

\smallskip
\noindent\textbf{Use of generative AI.}
OpenAI's ChatGPT was used for language and structural editing and for
auxiliary consistency checks.  The author reviewed and verified the resulting
text and assumes full responsibility for its content.


\begin{thebibliography}{99}
\small
\setlength{\itemsep}{0pt}
\setlength{\parsep}{0pt}
\bibitem{AHN18} S.\ Adly, A.\ Hantoute, B.T.\ Nguyen,
Invariant sets and Lyapunov pairs for differential inclusions with
maximal monotone operators, J. Math. Anal. Appl. {\bf 457}, 1017--1037
(2018).  \url{https://doi.org/10.1016/j.jmaa.2017.04.059}.
\bibitem{AHT12} S.\ Adly, A.\ Hantoute, M.\ Th\'era,
Nonsmooth Lyapunov pairs for infinite-dimensional first-order
differential inclusions, Nonlinear Anal. {\bf 75}, 985--1008 (2012).
\url{https://doi.org/10.1016/j.na.2010.11.009}.
\bibitem{APS06} S.\ Aizicovici, N.S.\ Papageorgiou, V.\ Staicu,
Periodic solutions of nonlinear evolution inclusions in Banach spaces,
J. Nonlinear Convex Anal. {\bf 7}, 163--177 (2006).
\bibitem{AmannODE} H.\ Amann,
{\it Ordinary Differential Equations: An Introduction to Nonlinear Analysis},
De Gruyter Studies in Mathematics 13, Walter de Gruyter, Berlin--New York, 1990.
Translated from the German by G.\ Metzen; German original:
{\it Gew\"ohnliche Differentialgleichungen}, Walter de Gruyter,
Berlin--New York, 1983.
\url{https://doi.org/10.1515/9783110853698}.
\bibitem{AubinCellina} J.-P.\ Aubin, A.\ Cellina,
{\it Differential Inclusions: Set-Valued Maps and Viability Theory},
Grundlehren der mathematischen Wissenschaften 264, Springer-Verlag,
Berlin--Heidelberg, 1984.
\url{https://doi.org/10.1007/978-3-642-69512-4}.
\bibitem{Bader98} R.\ Bader,
The periodic problem for semilinear differential inclusions in Banach
spaces, Comment. Math. Univ. Carolin. {\bf 39}, 671--684 (1998).
\url{https://eudml.org/doc/22372}.
\bibitem{Bader00} R.\ Bader,
On the semilinear multi-valued flow under constraints and the periodic
problem, Comment. Math. Univ. Carolin. {\bf 41}, 719--734 (2000).
\url{https://eudml.org/doc/248592}.
\bibitem{BaderKryszewski03} R.\ Bader, W.\ Kryszewski,
On the solution sets of differential inclusions and the periodic problem
in Banach spaces, Nonlinear Anal. {\bf 54}, 707--754 (2003).
\url{https://doi.org/10.1016/S0362-546X(03)00098-1}.
\bibitem{Barbu} V.\ Barbu, {\it Nonlinear Semigroups and Differential Equations
in Banach Spaces}, Noordhoff, Leyden, 1976.
\bibitem{Barbu2010} V.\ Barbu, {\it Nonlinear Differential Equations of
Monotone Types in Banach Spaces}, Springer, New York, 2010.
\url{https://doi.org/10.1007/978-1-4419-5542-5}.
\bibitem{Becker81} R.I.\ Becker,
Periodic solutions of semilinear equations of evolution of compact type,
J. Math. Anal. Appl. {\bf 82}, 33--48 (1981).
\url{https://doi.org/10.1016/0022-247X(81)90223-7}.
\bibitem{BenCr} P.\ B\'enilan, M.G.\ Crandall, Completely accretive operators, in
{\it Semigroup Theory and Evolution Equations}, Lecture Notes in Pure and
Applied Mathematics 135, Marcel Dekker, New York, 41--75 (1991).
\bibitem{BCP} P.\ B\'enilan, M.G.\ Crandall, A.\ Pazy,
{\it Nonlinear Evolution Equations in Banach Spaces}, preprint book,
Besan\c{c}on, 1994.
\bibitem{BeSh24} J.\ Beurich, P.\ Sharma,
Interpolation results for convergence of implicit Euler schemes with
accretive operators, Nonlinear Differ. Equ. Appl. {\bf 31},
Art.\ 109 (2024).
\url{https://doi.org/10.1007/s00030-024-01003-9}.
\bibitem{BressanColombo92} A.\ Bressan, G.\ Colombo,
Selections and representations of multifunctions in paracompact spaces,
Studia Math. {\bf 102}, 209--216 (1992).
\url{https://doi.org/10.4064/sm-102-3-209-216}.
\bibitem{Bo92} D.\ Bothe, Multivalued differential equations on graphs,
Nonlinear Anal. {\bf 18}, 245--252 (1992).
\url{https://doi.org/10.1016/0362-546X(92)90062-J}.
\bibitem{Bo9} D.\ Bothe, Flow invariance for perturbed nonlinear evolution
equations, Abstr. Appl. Anal. {\bf 1}, 417--433 (1996).
\url{https://doi.org/10.1155/S1085337596000231}.
\bibitem{Bo10} D. Bothe, Reaction-diffusion systems with discontinuities - A viability approach,
Proc.\ 2$^{\rm nd}$ World Congress of Nonlinear Analysts,
Nonlinear Anal. {\bf 30}, 677--686 (1997).
\url{https://doi.org/10.1016/S0362-546X(97)00247-2}.
\bibitem{Bo-1998} D.\ Bothe, Multivalued perturbations of $m$-accretive differential inclusions, Israel J. Math. {\bf 108}, 109--138 (1998).
\url{https://doi.org/10.1007/BF02783044}.
\bibitem{Bo12} D.\ Bothe, Periodic solutions of a nonlinear evolution problem
from heterogeneous catalysis, Differential and Integral Equations {\bf 14},
641--670 (2001).
\url{https://doi.org/10.57262/die/1356123241}.
\bibitem{Bo-JEE03} D.\ Bothe, Nonlinear evolutions with Carath\'eodory forcing,
J. Evol. Equ. {\bf 3}, 375--394 (2003).
\url{https://doi.org/10.1007/s00028-003-0099-5}.
\bibitem{Bo-JEE05} D.\ Bothe, Flow invariance for nonlinear accretive evolutions under range conditions,
J. Evol. Equ. {\bf 5}, 227--252 (2005).
\url{https://doi.org/10.1007/s00028-005-0185-z}.
\bibitem{BoWi12} D.\ Bothe, P.\ Wittbold, Abstract reaction--diffusion systems with
$m$-completely accretive diffusion operators and measurable reaction rates,
Commun. Pure Appl. Anal. {\bf 11}, 2239--2260 (2012).
\url{https://doi.org/10.3934/cpaa.2012.11.2239}.
\bibitem{Browder65} F.E.\ Browder,
Existence of periodic solutions for nonlinear equations of evolution,
Proc. Natl. Acad. Sci. USA {\bf 53}, 1100--1103 (1965).
\url{https://doi.org/10.1073/pnas.53.5.1100}.
\bibitem{calvert} B.\ Calvert, Nonlinear equations of evolution,
Pacific J. Math. {\bf 39}, 293--350 (1971).
\url{https://doi.org/10.2140/pjm.1971.39.293}.
\bibitem{CalvertT} B.\ Calvert, On $T$-accretive operators, Ann. Mat.
Pura Appl. {\bf 94}, 291--314 (1972).
\url{https://doi.org/10.1007/BF02413616}.
\bibitem{CaDaFr18} P.\ Cannarsa, G.\ Da Prato, H.\ Frankowska,
Invariance for quasi-dissipative systems in Banach spaces, J. Math.
Anal. Appl. {\bf 457}, 1173--1187 (2018).
\url{https://doi.org/10.1016/j.jmaa.2016.11.087}.
\bibitem{CaDaFr20} P.\ Cannarsa, G.\ Da Prato, H.\ Frankowska,
Domain invariance for local solutions of semilinear evolution equations
in Hilbert spaces, J. Lond. Math. Soc. (2) {\bf 102}, 287--318 (2020).
\url{https://doi.org/10.1112/jlms.12320}.
\bibitem{CaMot} O.\ C\^arj\u{a}, D.\ Motreanu, Characterization of Lyapunov pairs in the nonlinear case and applications, Nonlinear Anal. {\bf 70}, 352--363 (2009).
\url{https://doi.org/10.1016/j.na.2007.12.004}.
\bibitem{CV} O.\ C\^arj\u{a}, I.I.\ Vrabie, Viable domains for differential equations
governed by Carath\'eodory perturbations of nonlinear $m$-accretive operators,
pp.\ 109--130 in Lecture Notes in Pure and Appl.\ Math. {\bf 225},
Marcel Dekker, New York (2002).
\url{https://doi.org/10.1201/9780203902189.ch8}.
\bibitem{CGMS22} C.\ Castaing, C.\ Godet-Thobie,
M.D.P.\ Monteiro Marques, A.\ Salvadori, Evolution problems with
$m$-accretive operators and perturbations, Mathematics {\bf 10}, Art.\ 317
(2022).  \url{https://doi.org/10.3390/math10030317}.
\bibitem{CaVa} C.\ Castaing, M.\ Valadier, {\it Convex Analysis and
Measurable Multifunctions}, Lecture Notes in Mathematics 580, Springer, Berlin, 1977.
\bibitem{CrLi} M.G.\ Crandall, T.M.\ Liggett, Generation of semi-groups of
nonlinear transformations on general Banach spaces, Amer. J. Math. {\bf 93},
265--298 (1971).
\url{https://doi.org/10.2307/2373376}.
\bibitem{CGK22} A.\ \v{C}wiszewski, G.\ Gabor, W.\ Kryszewski,
Invariance and strict invariance for nonlinear evolution problems with
applications, Nonlinear Anal. {\bf 218}, Art.\ 112756 (2022).
\url{https://doi.org/10.1016/j.na.2021.112756}.
\bibitem{MDE} K.\ Deimling, {\it Multivalued Differential Equations}, de Gruyter, Berlin, 1992.
\bibitem{DeLak79} K.\ Deimling, V.\ Lakshmikantham, Existence and comparison
theorems for differential equations in Banach spaces, Nonlinear Anal.
{\bf 3}, 569--575 (1979).
\url{https://doi.org/10.1016/0362-546X(79)90085-3}.
\bibitem{Engelking} R.\ Engelking, {\it General Topology}, revised and
completed ed., Heldermann, Berlin, 1989.
\bibitem{Fremlin4} D.H.\ Fremlin, {\it Measure Theory, Vol.\ 4:
Topological Measure Spaces}, Torres Fremlin, Colchester, 2013 edition.
\url{https://www1.essex.ac.uk/maths/people/fremlin/mt4.2013/index.htm}.
\bibitem{GhavidelRuess12} S.M.\ Ghavidel, W.M.\ Ruess,
Flow invariance for nonautonomous nonlinear partial differential delay
equations, Commun. Pure Appl. Anal. {\bf 11}, 2351--2369 (2012).
\url{https://doi.org/10.3934/cpaa.2012.11.2351}.
\bibitem{Herzog01} G.\ Herzog, Quasimonotonicity, Nonlinear Anal.
{\bf 47}, 2213--2224 (2001).
\url{https://doi.org/10.1016/S0362-546X(01)00346-7}.
\bibitem{HiranoShioji04} N.\ Hirano, N.\ Shioji,
Invariant sets for nonlinear evolution equations, Cauchy problems and
periodic problems, Abstr. Appl. Anal. {\bf 2004}, 183--203 (2004).
\url{https://doi.org/10.1155/S1085337504311073}.
\bibitem{Hirsch} M.W.\ Hirsch, H.L.\ Smith, Monotone dynamical systems, in
{\it Handbook of Differential Equations: Ordinary Differential Equations},
Vol.\ II, Elsevier B.V., Amsterdam, 239--357 (2005).
\bibitem{ItoKa} K.\ Ito, F.\ Kappel, {\it Evolution Equations and Approximations}, World Scientific, Singapore, 2002.
\bibitem{JiKe25} J.\ Jiang, C.\ Keller,
Viability for locally monotone evolution inclusions and lower
semicontinuous solutions of Hamilton--Jacobi--Bellman equations in infinite
dimensions, ESAIM Control Optim. Calc. Var. {\bf 31}, Art.\ 4 (2025).
\url{https://doi.org/10.1051/cocv/2024086}.
\bibitem{Kamke} E.\ Kamke, Zur Theorie der Systeme gew\"ohnlicher Differentialgleichungen II, Acta Math. {\bf 58}, 57--85 (1932).
\bibitem{KoSo} M.\ Kocan, P.\ Soravia, Lyapunov functions for infinite-dimensional systems, J. Funct. Anal. {\bf 192}, 342--363 (2002).
\url{https://doi.org/10.1006/jfan.2001.3910}.
\bibitem{KryszewskiGaborSiemianowski18} W.\ Kryszewski, D.\ Gabor,
J.\ Siemianowski,
The Krasnosel'skii formula for parabolic differential inclusions
with state constraints, Discrete Contin. Dyn. Syst. Ser. B {\bf 23},
295--329 (2018).
\url{https://doi.org/10.3934/dcdsb.2018021}.
\bibitem{Kucia} A.\ Kucia, Scorza--Dragoni type theorems, Fund. Math. {\bf 138}, 197--203 (1991).
\url{https://doi.org/10.4064/fm-138-3-197-203}.
\bibitem{Martin} R.\ Martin, {\it Nonlinear Operators and Differential Equations in Banach Spaces}, Wiley-Interscience, New York, 1976.
\bibitem{MartinSmith90} R.H.\ Martin, Jr., H.L.\ Smith, Abstract functional
differential equations and reaction--diffusion systems, Trans. Amer. Math.
Soc. {\bf 321}, 1--44 (1990).
\url{https://doi.org/10.1090/S0002-9947-1990-0967316-X}.
\bibitem{MartinSmith91} R.H.\ Martin, Jr., H.L.\ Smith, Reaction--diffusion
systems with time delays: monotonicity, invariance, comparison and
convergence, J. Reine Angew. Math. {\bf 413}, 1--35 (1991).
\url{https://doi.org/10.1515/crll.1991.413.1}.
\bibitem{Michael56} E.\ Michael, Continuous selections. II,
Ann. Math. (2) {\bf 64}, 562--580 (1956).
\bibitem{Mi} I. Miyadera, {\it Nonlinear Semigroups}, American Mathematical Society, Providence, RI, 1992.
\bibitem{Mueller} M.\ M\"uller, \"Uber das Fundamentaltheorem in der
Theorie der gew\"ohnlichen Differentialgleichungen, Math. Z. {\bf 26},
619--645 (1927).
\bibitem{Nagumo} M.\ Nagumo, \"Uber die Lage der Integralkurven gew\"ohnlicher
Differentialgleichungen, Proc. Phys.-Math. Soc. Japan, 3rd Ser., {\bf 24},
551--559 (1942).
\url{https://doi.org/10.11429/ppmsj1919.24.0_551}.
\bibitem{Paicu08} A.\ Paicu,
Periodic solutions for a class of differential inclusions in general
Banach spaces, J. Math. Anal. Appl. {\bf 337}, 1238--1248 (2008).
\url{https://doi.org/10.1016/j.jmaa.2007.04.053}.
\bibitem{Pavel77} N.H.\ Pavel, Invariant sets for a class of semilinear equations
of evolution, Nonlinear Anal. {\bf 1}, 187--196 (1977).
\url{https://doi.org/10.1016/0362-546X(77)90009-8}.
\bibitem{PavelBook} N.H.\ Pavel,
{\it Differential Equations, Flow Invariance and Applications},
Research Notes in Mathematics 113, Pitman, Boston--London--Melbourne, 1984.
\bibitem{Pazy81} A.\ Pazy, The Lyapunov method for semigroups of nonlinear
contractions in Banach spaces, J. Anal. Math. {\bf 40}, 239--262 (1981).
\url{https://doi.org/10.1007/BF02790164}.
\bibitem{peressini} A.L.\ Peressini, {\it Ordered Topological Vector Spaces}, Harper and Row, New York, 1967.
\bibitem{Pierre78} M.\ Pierre, Invariant closed subsets for nonlinear
semigroups, Nonlinear Anal. {\bf 2}, 107--117 (1978).
\url{https://doi.org/10.1016/0362-546X(78)90046-9}.
\bibitem{Pruss79} J.\ Pr\"uss,
Periodic solutions of semilinear evolution equations,
Nonlinear Anal. {\bf 3}, 601--612 (1979).
\url{https://doi.org/10.1016/0362-546X(79)90089-0}.
\bibitem{Ruess09} W.M.\ Ruess,
Flow invariance for nonlinear partial differential delay equations,
Trans. Amer. Math. Soc. {\bf 361}, 4367--4403 (2009).
\url{https://doi.org/10.1090/S0002-9947-09-04833-8}.
\bibitem{STD26} H.\ Saoud, M.\ Th\'era, M.N.\ Dao,
Geometric stability analysis for differential inclusions governed by
maximally monotone operators, Set-Valued Var. Anal. {\bf 34}, Art.\ 22 (2026).
\url{https://doi.org/10.1007/s11228-026-00810-9}.
\bibitem{Schaefer} H.\ H.\ Schaefer, {\it Banach Lattices and Positive Operators}, Springer, Berlin, 1974.
\bibitem{Smith95} H.L.\ Smith, {\it Monotone Dynamical Systems: An Introduction to
the Theory of Competitive and Cooperative Systems}, Mathematical Surveys and
Monographs 41, American Mathematical Society, Providence, RI, 1995.
\bibitem{Tol23} A.A.\ Tolstonogov,
Comparison theorems for evolution inclusions with maximal monotone
operators. $L^2$-theory, Sb. Math. {\bf 214}, 853--877 (2023).
\url{https://doi.org/10.4213/sm9736e}.
\bibitem{Tol24} A.A.\ Tolstonogov,
Evolution inclusions with state-dependent maximal monotone operators,
Proc. Steklov Inst. Math. {\bf 327}, Suppl. 1, S226--S238 (2024).
\url{https://doi.org/10.1134/S0081543824070174}.
\bibitem{Volkmann72} P.\ Volkmann, Gew\"ohnliche Differentialungleichungen mit
quasimonoton wachsenden Funktionen in topologischen Vektorr\"aumen, Math. Z.
{\bf 127}, 157--164 (1972).
\url{https://doi.org/10.1007/BF01112607}.
\bibitem{Vra1} I.I.\ Vrabie, Compactness methods and flow-invariance
for perturbed nonlinear semigroups, Anal. Stiin. Univ. Iasi {\bf 27}, 117--124 (1981).
\bibitem{Vrabie90} I.I.\ Vrabie,
Periodic solutions for nonlinear evolution equations in a Banach space,
Proc. Amer. Math. Soc. {\bf 109}, 653--661 (1990).
\url{https://doi.org/10.1090/S0002-9939-1990-1015686-4}.
\end{thebibliography}
\end{document}